\documentclass[11pt]{article}%

\usepackage[colorinlistoftodos]{todonotes}

\usepackage{amssymb}
\usepackage{amsfonts}
\usepackage{amsmath}
\usepackage{mathtools}
\usepackage{dsfont}

\usepackage{comment}
\usepackage{amsthm}

\usepackage{hyperref}
\hypersetup{
    colorlinks=true,
    linkcolor=blue,
    citecolor=red,
    urlcolor=blue,
    pdfborder={0 0 0}
}

\usepackage{cleveref}

\usepackage{float}
\usepackage{graphicx}
\usepackage{wrapfig}
\usepackage{subfig}
\usepackage{fancybox}
\usepackage{framed}
\usepackage{caption}
\usepackage{epstopdf} 
\usepackage[all]{xy} 
\usepackage{tikz} 
\usepackage{tabularx}
\usepackage{afterpage}
\usepackage{arydshln}

\usepackage[titletoc, title]{appendix} 
\usepackage{titling} 
\usepackage{url} 
\usepackage{color}
\usepackage{enumitem}
\usepackage{enumerate, multirow, longtable} 

\usepackage{pdfpages}

\usepackage[latin1]{inputenc}
\usepackage[a4paper]{geometry}
\usepackage{microtype}

\providecommand{\U}[1]{\protect\rule{.1in}{.1in}}
\textwidth 15.3cm
\textheight 24cm
\topmargin -2cm
\evensidemargin -1mm
\oddsidemargin -1mm
\abovedisplayskip 3mm
\belowdisplayskip 3mm
\abovedisplayshortskip 0mm
\belowdisplayshortskip 2mm

\newtheorem{theo}{Theorem}[section]
\newtheorem{conjecture}{Conjecture}
\newtheorem{prop}[theo]{Proposition}
\newtheorem{lem}[theo]{Lemma}
\newtheorem{cor}[theo]{Corollary}

\newtheorem{rem}[theo]{Remark}

\numberwithin{equation}{section}


\newcommand{\EE}{\mathbb{E}}
\newcommand{\E}{\mathbb{E}}

\newcommand{\RR}{\mathbb{R}}

\def\E{\EE}

\newcommand{\Ia}{ {\mathcal I }}

\newcommand{\Pa}{ {\mathcal P }}

\newcommand{\R}{\mathbb{R}}

\def\bl{\boldsymbol{\lambda}}
\def\ef{\mathbf{f}}




\newcommand{\lhk}{\mathbf{p}^{\gamma, D}} 

\newcommand{\bdec}[3]{\mathbf{E}_{#1 \overset{#3}{\to} #2}}

\newcommand{\pdec}[3]{\mathbf{P}_{#1 \overset{#3}{\to} #2}}

\title{Weyl's law in Liouville quantum gravity}
\author{
Nathana\"el Berestycki\thanks{University of Vienna, \href{mailto:nathanael.berestycki@univie.ac.at}{nathanael.berestycki@univie.ac.at}}\:
and Mo Dick Wong\thanks{Durham University, \href{mailto:mo-dick.wong@durham.ac.uk}{mo-dick.wong@durham.ac.uk}}}
\date{\today}
\begin{document}
\maketitle

\begin{abstract}
    Can you hear the shape of Liouville quantum gravity? We obtain a Weyl law for the
 eigenvalues of Liouville Brownian motion: the $n$-th eigenvalue grows
 linearly with $n$, with the proportionality constant given by the
 Liouville area of the domain and a certain deterministic constant $c_\gamma$
 depending on $\gamma \in (0, 2)$. The constant $c_\gamma$, initially a complicated function of Sheffield's quantum cone, can be evaluated explicitly and is strictly greater than the equivalent Riemannian constant. 
 
 At the heart of the proof we obtain
 sharp asymptotics of independent interest for the small-time behaviour of the
 on-diagonal heat kernel. Interestingly, we show that the scaled heat
 kernel displays nontrivial pointwise fluctuations. Fortunately, 
 at the level of the heat trace these pointwise fluctuations
 cancel each other, which leads to the result.

 We complement these results with
 a number of conjectures on the spectral geometry of Liouville quantum gravity, notably suggesting a connection with quantum chaos.
\end{abstract}

\tableofcontents 

\section{Problem setting and result}

\subsection{Weyl's law}

Let $D \subset \mathbb{R}^2 \cong \mathbb{C}$ be a simply connected\footnote{This assumption is probably not necessary but is convenient for some estimates. We have chosen not to make the assumptions on the domain as general possible in order to keep the paper to a reasonable length. With some effort it should be possible to prove the results assuming only that $D$ is a bounded domain with at least one boundary regular point. To avoid any confusion, recall that a point $z \in \partial D$ is called regular if, for a planar Brownian motion $(W_t)_{t \ge 0}$ starting from $z$, we have $\mathbb{P}_z(\inf \{t > 0: W_t \not \in D\} = 0) = 1$, i.e., $W$ leaves $D$ immediately. 
}, 
bounded domain 
and let $h(\cdot)$ be the Gaussian free field on $D$ with Dirichlet boundary condition, i.e. $h(\cdot)$ is a centred Gaussian field on $D$ with covariance kernel given by
\begin{align*}
\mathbb{E}[h(x) h(y)] = G_0^D(x, y) \qquad \forall x, y \in D
\end{align*}

\noindent where $G_0^D(x, y)$ is the Dirichlet-boundary Green's function on $D$. In other words, for all $x \ne y$ in $D$ we have
\begin{align*}
G_0^D(x, y) = \pi\int_0^\infty p_t^D(x,y) dt
\end{align*}

\noindent where $p_t^D(\cdot, \cdot)$ is the Dirichlet heat kernel on $D$, with our time parametrisation chosen such that it represents the transition density of a standard (two-dimensional) Brownian motion (with killing at the boundary). In particular, for any $x \in D$ we have
\begin{align*}
p_t^D(x,x) \overset{t \to 0+}{\sim} (2\pi t)^{-1} \qquad
\text{and} \qquad G_0^D(x, y) \overset{y \to x}{=} - \log|x-y| + \mathcal{O}(1).
\end{align*}
Note that there is no factor of two or $\pi$ in the logarithmic blow-up on the right hand side above, which is a result of our conventions on the Green function and the Gaussian free field (these are consistent with other works on Liouville quantum gravity).

For $\gamma \in (0, 2)$, we denote by $\mu_{\gamma} (d\cdot)$ the \textbf{Liouville  measure} (or Gaussian multiplicative chaos measure) associated to $h(\cdot)$, i.e. 
\begin{align} \label{eq:def_LQG}
\mu_{\gamma}(dx) 
& = \lim_{\epsilon \to 0^+} \epsilon^{\frac{\gamma^2}{2}}e^{\gamma h_{\epsilon}(x)} dx\\
\notag & = \lim_{\epsilon \to 0^+} R(x; D)^{\frac{\gamma^2}{2}}e^{\gamma h_{\epsilon}(x) - \frac{\gamma^2}{2} \mathbb{E}[h_{\epsilon}(x)^2]} dx, \qquad x \in D
\end{align}

\noindent where $R(x; D)$ is the conformal radius of $D$ from $x$. The Liouville measure plays a central role in the emerging theory of Liouville quantum gravity (LQG) \cite{KPZ, DuplantierMillerSheffield}, or equivalently (but with a slightly different perspective), Liouville conformal field theory \cite{DKRV, DOZZ}; see again \cite{BP} for a survey including a discussion of the physical motivations and references.

\begin{figure}[h!]
\begin{center}
\includegraphics[width=0.49\textwidth]{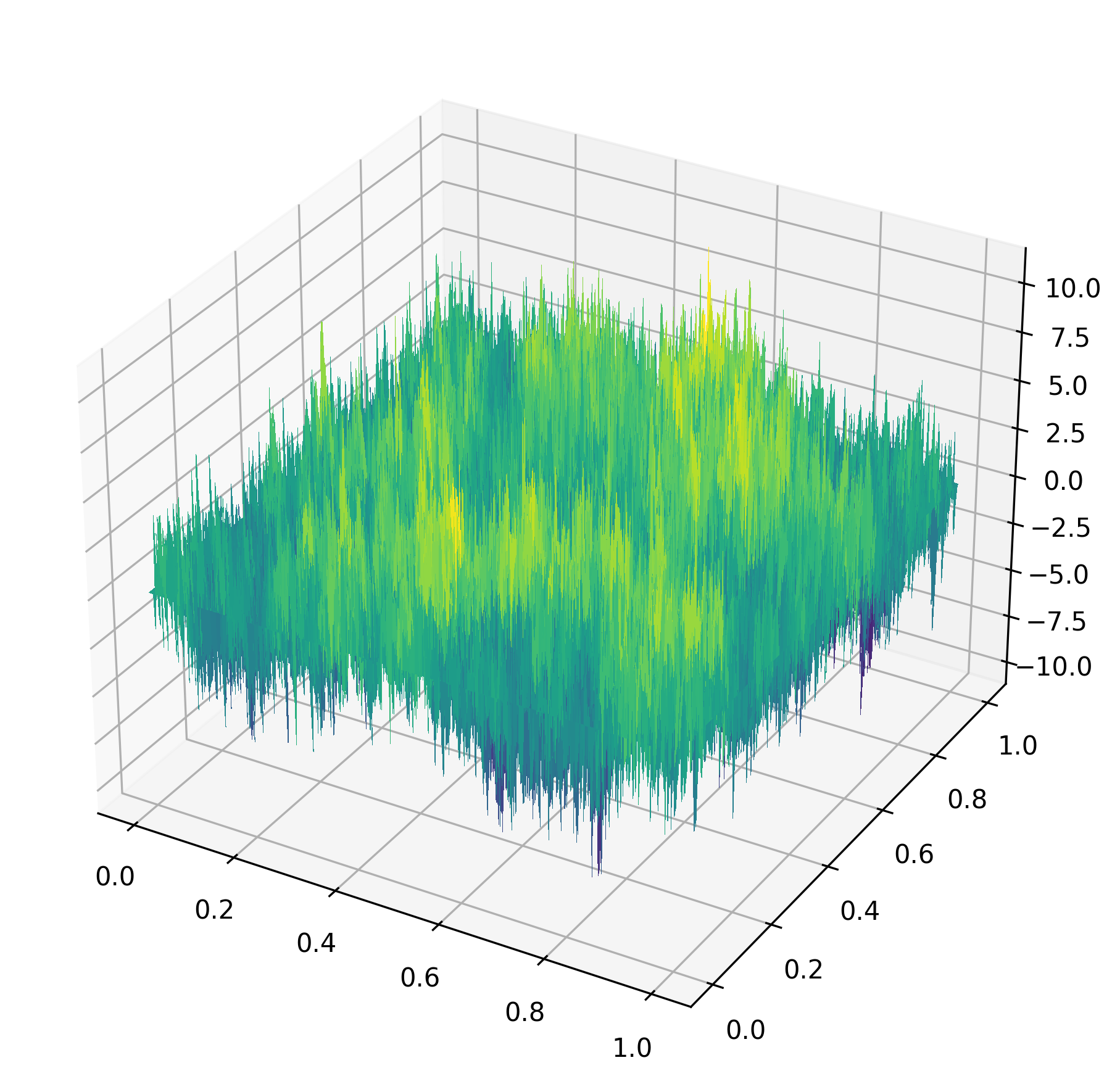}
\includegraphics[width=0.49\textwidth]{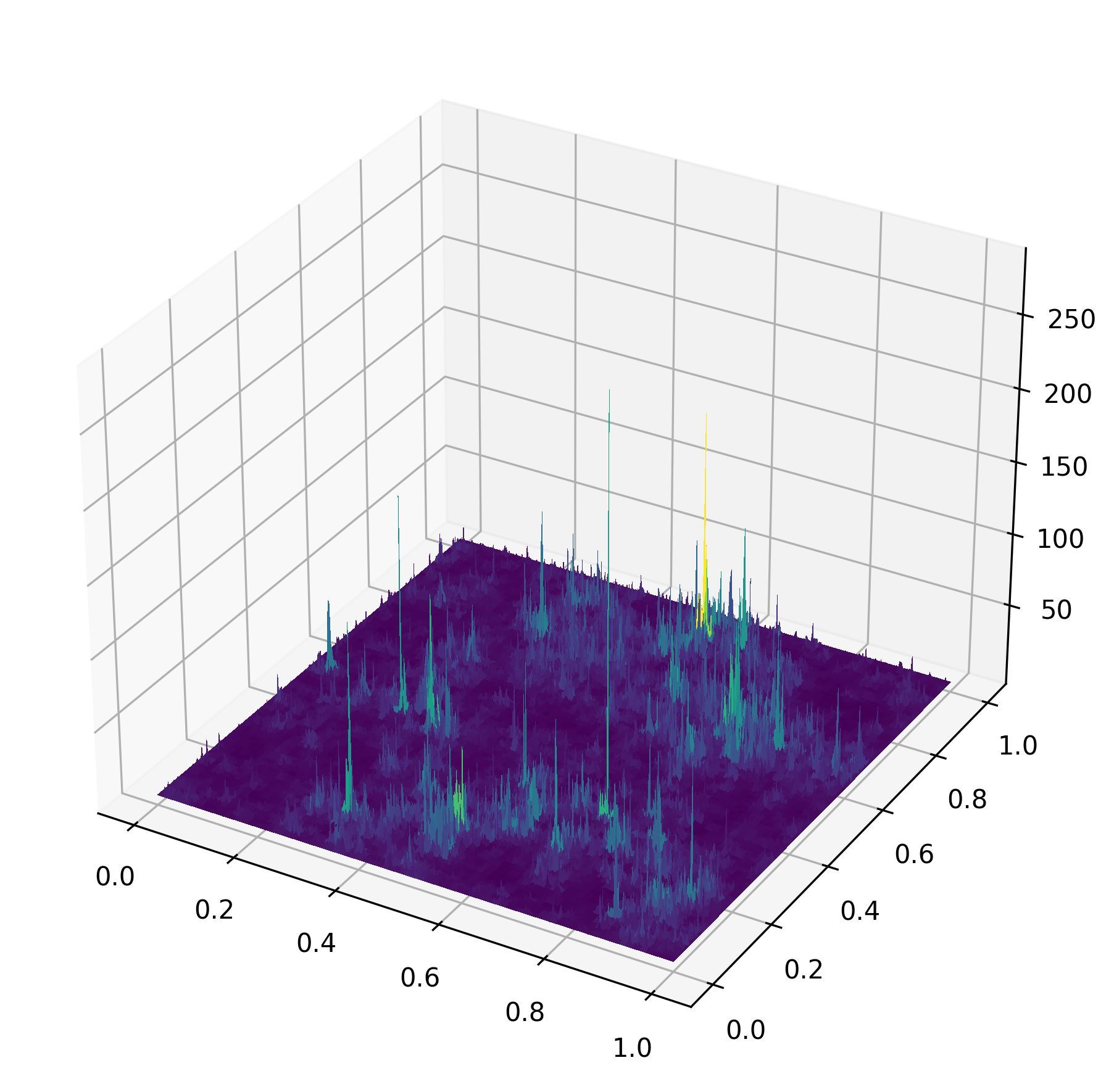}
\end{center}
\caption{\label{fig:GFFGMC} Left: realisation of a mollified GFF $h_\epsilon$. Right: density profile of $e^{\gamma h_\epsilon}$ with $\gamma = 0.5$. The mollification/discretisation scale is chosen to be of order $\epsilon \approx 10^{-3}$ on $D = [0,1]^2$.}
\end{figure}

In this article we are interested in some fundamental questions pertaining to the geometry of Liouville quantum gravity. The basic problem which motivates us is the following analogue of Mark Kac's celebrated question \cite{Kac1966}:

\begin{center}
\text{Can one hear the shape of Liouville quantum gravity?}
\end{center}

In Mark Kac's original question, the setting is the following: we are given a bounded domain $D \subset \R^d$, and the sequence of eigenvalues $(\lambda_n)_{n\ge 0}$ corresponding to $-\tfrac{1}{2}\Delta$ with Dirichlet boundary conditions in $D$, and ask if this sequence determines $D$ up to isometry (i.e., up to translation, reflection and rotation). Kac's question has served as a motivation for a remarkable body of work. As is well known since the fundamental work of Weyl \cite{Wey1911}, the eigenvalues determine at least the volume of $D$, since if we call 
$N_0(\lambda) = \sum_{n\ge 0}1_{\{ \lambda_n \le \lambda\}}$ 
the eigenvalue counting function, then the celebrated Weyl law asserts that
\begin{equation}
\frac{N_0(\lambda)}{(2\lambda)^{d/2} } \to \frac{\omega_d}{(2\pi)^d}\text{Leb}(D)    
\end{equation}
where $\omega_d$ is the volume of the unit ball in $\mathbb{R}^d$. Weyl's law is known to hold in a great degree of generality including Neumann boundary conditions and can be extended to the setting of Riemannian geometry (see e.g. \cite{Chavel}). However, it is also known that the answer to Kac's question in general is negative (counterexamples were obtained first by Milnor for five-dimensional surfaces \cite{Mil1964}, and by Gordon, Webb and Wolpert for concrete bounded planar domains \cite{GWW1992}).\\

In this paper we initiate the study of this problem in the context of Liouville quantum gravity, and more generally we begin an investigation of the spectral geometry of LQG, see Figure \ref{F:eigen}. Given a bounded domain $D$, let $(\mathbf{B}_t)_{t\ge 0}$ denote the Liouville Brownian motion on $D$ (\cite{Ber2015}, \cite{GRV}) which we recall is the canonical diffusion in the geometry of LQG. While the infinitesimal generator of this process may not be easily described, the Green measure $\mathbf{G}(x, dy)$ associated to it is rather straightforward, since by construction $\mathbf{B}$ is a time-change of ordinary Brownian motion. This leads to the expression (\cite{GRV_hk}):
\begin{equation}
\mathbf{G}(x, dy) = G_0^D(x, y) \mu_\gamma(dy).
\end{equation}
It is not hard to check that for a fixed $x \in D$, the right hand side is a finite measure on $D$ when $\gamma<2$, and this can also be made sense a.s. for all $x\in D$ simultaneously. The spectral theorem can then be applied (see \cite[Section 3]{MRVZ} on the torus, and \cite[Proposition 5.2]{AK2016} for the case of a bounded domain with Dirichlet boundary conditions, which is of interest here; see also \cite{GRV_hk} for the definition of the Liouville Green function). By definition (\cite{MRVZ, AK2016}) the eigenvalues $\boldsymbol{\lambda}_n = \boldsymbol{\lambda}_n(\gamma)$ of Liouville Brownian motion are the inverses of the eigenvalues of $\mathbf{G}$; we also call $\mathbf{f}_n(\cdot) = \mathbf{f}_n(\cdot; \gamma)$ the corresponding eigenfunctions, normalised to have unit $L^2 ( \mu_\gamma)$ norms. (The eigenvalues and eigenfunctions are fundamentally related to the \textbf{Liouville heat kernel} via a trace formula -- see in particular \cite{MRVZ} and \cite{AK2016} for a careful discussion -- this will play an important role in our paper but will be discussed later in \Cref{sec:introheat}). Equivalently, the eigenpairs $(\boldsymbol{\lambda}_n, \mathbf{f}_n)$ could be defined from the Dirichlet form associated to Liouville Brownian motion \cite{GRV}: we have
$$
\int_ D (\nabla g \cdot \nabla \mathbf{f}_n)\ dx = \boldsymbol{\lambda}_n \int_D g \mathbf{f}_n \mu_\gamma(dx) \qquad \forall g \in L^2(\mu_\gamma) \cap H_0^1(D).
$$

We are now ready to state our main conjecture concerning the analogue of Kac's question for Liouville quantum gravity:

\begin{conjecture}\label{C:Kac}
One \textbf{can} almost surely hear the shape of Liouville quantum gravity. More precisely, the Gaussian free field $h$ is a measurable function of the eigenvalues: that is, 
there exists a measurable function $\phi$ such that 
$$
h = \phi ( (\boldsymbol{\lambda}_n)_{n\ge 0}),
$$
almost surely.
\end{conjecture}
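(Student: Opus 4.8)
The plan is to reduce the problem to two steps: (i) show that the eigenvalues $(\boldsymbol{\lambda}_n)_{n \ge 0}$ almost surely determine the Liouville measure $\mu_\gamma$ as a (random) Borel measure on $D$; and (ii) appeal to the fact that $\mu_\gamma$ in turn almost surely determines the field $h$ — for instance, the circle averages $h_\epsilon(x)$ can be recovered from the ball masses $\mu_\gamma(B(x,\epsilon))$ as $\epsilon \to 0$ by a regularisation argument (this is known; see \cite{BP} and the references therein). Step (ii) being standard, the entire difficulty is concentrated in step (i), which is where we expect the main obstacle to lie.

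For step (i), the starting point is that the data of the multiset $\{\boldsymbol{\lambda}_n : n \ge 0\}$ is equivalent to the data of the spectral invariants
\begin{align}
\tr(\mathbf{G}^k) &= \sum_{n \ge 0} \boldsymbol{\lambda}_n^{-k}\notag\\
&= \int_{D^k} G_0^D(x_1,x_2)\,G_0^D(x_2,x_3)\cdots G_0^D(x_k,x_1)\,\mu_\gamma(dx_1)\cdots\mu_\gamma(dx_k)\label{eq:trace-powers}
\end{align}
for every $k \ge 2$ (the series converges, i.e.\ $\mathbf{G}$ is almost surely Hilbert--Schmidt, precisely because $\boldsymbol{\lambda}_n$ grows linearly by Weyl's law; note that $\mathbf{G}$ itself is not trace class, which is why one needs $k\ge 2$), or equivalently to the heat trace $\sum_{n\ge 0} e^{-t\boldsymbol{\lambda}_n} = \int_D \lhk_t(x,x)\,\mu_\gamma(dx)$, $t>0$. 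Weyl's law, as proved in this paper, says precisely that the leading small-$t$ asymptotics of the heat trace recover the single number $\mu_\gamma(D)$; just as in the classical setting the subleading term reveals the perimeter, one expects the subleading corrections to the Liouville heat trace to reveal further functionals of $\mu_\gamma$ (such as a boundary Liouville measure), and \Cref{C:Kac} asserts that the full collection of spectral invariants recovers $\mu_\gamma$ in its entirety.

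The crux, and the step I expect to be the genuine obstacle, is therefore a reconstruction (or rigidity) statement: if $\mu$ and $\mu'$ are admissible Liouville-type measures on $D$ with $\tr(\mathbf{G}_\mu^k) = \tr(\mathbf{G}_{\mu'}^k)$ for all $k$, then $\mu = \mu'$. Such a statement is plainly false for general measures — this is exactly the isospectrality phenomenon underlying the negative answer to Kac's original question — so any proof must genuinely use the almost sure structure of $\mu_\gamma$: its full (multi)fractal support, its non-atomicity, the absence of any nontrivial symmetry, and the detailed local geometry near its thick points. We see two complementary routes. The first is ``deterministic up to symmetry'': once its universal logarithmic singularity is removed, $G_0^D$ is a genuine function of the \emph{ordered} pair $(x,y)$ encoding the conformal geometry of $D$ (it reduces to a function of $|x-y|$ only when $D$ is a disk), and one may hope that the cyclic moments \eqref{eq:trace-powers} determine $\mu$ up to the isometry group of $(D, G_0^D)$ — which is almost surely incompatible with $\mu_\gamma$ unless trivial. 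The second, closer in spirit to the present work, is to invert the small-time heat trace \emph{locally}: the sharp pointwise asymptotics of $\lhk_t(x,x)$ established here show that it fluctuates according to the local quantum cone at $x$, so that, heuristically, distinct regions of $D$ leave distinguishable signatures in $\sum_{n\ge 0}e^{-t\boldsymbol{\lambda}_n}$ as $t\to 0^+$; turning this heuristic into an actual inversion formula for $\mu_\gamma$ would complete step (i).

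Two final remarks. First, the analogous statement in which the eigenfunctions are also given is considerably softer: knowing the $L^2(\mu_\gamma)$-normalised eigenfunctions $\mathbf{f}_n$ as functions on $D$ together with the $\boldsymbol{\lambda}_n$, one recovers $\int_D G_0^D(x,\cdot)\,\mathbf{f}_n\, d\mu_\gamma = \mathbf{f}_n(x)/\boldsymbol{\lambda}_n$ for every $x$ and $n$, and the closed linear span of the functions $y \mapsto G_0^D(x,y)\mathbf{f}_n(y)$ is rich enough to pin down $\mu_\gamma$; the content of \Cref{C:Kac} is precisely that this information is already present in the eigenvalues alone. Second, we expect the conjectured link with quantum chaos (discussed below) to be relevant here: the spectral rigidity predicted by random-matrix heuristics is, morally, exactly what should forbid $\mu_\gamma$ from admitting a hidden isospectral partner.
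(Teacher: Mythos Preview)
The statement you are attempting to prove is \emph{Conjecture}~\ref{C:Kac} in the paper, and the paper does not prove it. The authors state explicitly that they ``will not aim to prove this conjecture but instead show that the analogue of Weyl's law for Liouville quantum gravity holds''; the conjecture is offered as motivation and left open. There is therefore no proof in the paper to compare your proposal against.

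As for your proposal itself, it is an outline of a strategy rather than a proof, and you say as much: you identify the reconstruction step --- recovering $\mu_\gamma$ from the traces $\tr(\mathbf{G}^k)$ or equivalently from the heat trace --- as ``the genuine obstacle'', and then describe two heuristic routes without carrying either through. The first route (determine $\mu$ up to isometries of $(D,G_0^D)$, then argue $\mu_\gamma$ has no nontrivial symmetries) would require a moment-type rigidity theorem for measures against the kernel $G_0^D$ that does not currently exist and is known to fail for general measures. The second route (invert the small-time heat trace locally) runs into the difficulty that the heat trace is a single function of $t$, so ``distinct regions leaving distinguishable signatures'' is not obviously extractable without additional spatial information; indeed the paper's own results show that the pointwise heat kernel $t\,\mathbf{p}_t^{\gamma,D}(x,x)$ has nontrivial random fluctuations that \emph{cancel} upon integration over $D$, which is precisely what makes the heat trace well-behaved but also what erases the local information you want. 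Your step (ii), that $\mu_\gamma$ determines $h$, is correct and is mentioned in the paper with a reference to \cite{BerestyckiSheffieldSun}; but the paper also points out that this does not by itself imply the conjecture, for exactly the reason you identify.
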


In this conjecture the domain $D$ was fixed and assumed to be known. If we do not assume $D$ to be known then it is natural to ask whether the sequence $(\boldsymbol{\lambda}_n)_{n\ge 0}$ determines both the domain $D$ and the Gaussian free field $h$ living on it. However, one quickly realises that if two pairs  $(D_1,h_1)$ and $(D_2, h_2)$ are equivalent in the sense of random surfaces (see \cite{DS2011}) then they generate the same eigenvalue sequence. 
A slightly stronger form of Conjecture \ref{C:Kac} is therefore:

\begin{conjecture}\label{conj:Kac2}
The eigenvalue sequence $(\boldsymbol{\lambda}_n)_{n\ge 0}$ determines the pair $(D, h)$ modulo equivalence of random surfaces.
\end{conjecture}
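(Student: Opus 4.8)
Since Conjecture~\ref{conj:Kac2} is open, what follows is the line of attack I would pursue rather than a complete argument.

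The first step is to reduce the problem to reconstructing the measure $\mu_\gamma$. In dimension two the Dirichlet energy $\int_D \nabla g\cdot\nabla f\,dx$ is conformally invariant, so the spectrum $(\boldsymbol{\lambda}_n)_{n\ge 0}$ depends on $(D,h)$ only through the conformal class of the weighted space $(D,\mu_\gamma)$; moreover a change of conformal coordinates acts on $h$ exactly by the LQG coordinate change $h\mapsto h\circ\psi^{-1}+Q\log|(\psi^{-1})'|$ (with $Q=\tfrac{\gamma}{2}+\tfrac2\gamma$), i.e.\ precisely the equivalence of random surfaces we are allowed to quotient by. Combined with the fact (Shamov) that a Gaussian multiplicative chaos measure measurably determines the underlying field, it suffices to show that, after uniformising to $D=\mathbb{D}$, the sequence $(\boldsymbol{\lambda}_n)_{n\ge0}$ determines $\mu_\gamma$ up to the three-dimensional group of M\"obius automorphisms of $\mathbb{D}$, almost surely.

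The second step is to extract $\mu_\gamma$ from spectral invariants. The spectrum determines the Liouville heat trace $Z(t)=\sum_{n\ge 0}e^{-t\boldsymbol{\lambda}_n}=\int_{\mathbb D}\lhk_t(x,x)\,\mu_\gamma(dx)$ for every $t>0$, and with it all spectral zeta values and the regularised determinant. The main theorem of this paper pins down the leading $t\to0$ behaviour of $Z(t)$, which recovers only the global quantity $\mu_\gamma(\mathbb D)$. The plan is to push the small-time analysis of $\lhk_t(x,x)$ to a full asymptotic expansion whose subleading coefficients are integrals against $\mu_\gamma$ of functionals built from the local (multifractal) structure of the field; the hope is that, taken together, these functionals generate the full $\sigma$-algebra of $\mu_\gamma$ modulo M\"obius. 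Since the sharp on-diagonal asymptotics obtained here already display genuine pointwise fluctuations that cancel only at leading order in the trace, the correct bookkeeping of these fluctuation fields at all subsequent orders is exactly what would feed this expansion.

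Finally, even granting such an expansion one must rule out the Gordon--Webb--Wolpert phenomenon: in general a weighted domain is \emph{not} spectrally determined, and the point is that the GFF surface is almost surely not one of the exceptional configurations. I would establish this by a genericity/transversality statement --- the set of measures isospectral but inequivalent to a given one should be thin (positive codimension, or null for the law of $\mu_\gamma$) in the relevant configuration space, and the strong non-degeneracy of the GFF (absolute continuity of all finite-dimensional marginals, the conformal Markov property, scaling ergodicity) should force $\mu_\gamma$ to avoid it with probability one; this is also where the conjectural quantum-chaotic delocalisation of the eigenfunctions $\mathbf{f}_n$ would enter, since isospectral counterexamples rely precisely on the sort of hidden symmetries that such delocalisation excludes. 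The main obstacle is the second step: producing a heat-trace expansion whose coefficients genuinely see $\mu_\gamma$ locally rather than through a handful of global invariants --- given that the fluctuations conveniently cancel at leading order, partial cancellations may well persist, so that reconstructing $\mu_\gamma$ from $Z(\cdot)$ plausibly requires controlling a whole hierarchy of fluctuation fields. (The weaker Conjecture~\ref{C:Kac}, where $D$ is fixed and one need not quotient by M\"obius, would follow from the same expansion together with a simpler final reconstruction step.)
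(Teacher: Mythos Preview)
This statement is a \emph{conjecture} in the paper, not a theorem; there is no proof to compare against. The paper's only substantive remark about it is the observation that ``it is not hard to see that Conjecture~\ref{C:Kac} implies the stronger form Conjecture~\ref{conj:Kac2}'': once one can recover $h$ from the eigenvalues on a \emph{fixed} domain, conformal invariance of the Dirichlet form together with the LQG coordinate change does the rest. Your first step is precisely this reduction, carried out correctly (uniformise to $\mathbb{D}$, use conformal invariance and the coordinate-change rule, then invoke the fact that $\mu_\gamma$ determines $h$). So on the one point where the paper actually says something, you agree.

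Your steps~2 and~3 go well beyond anything the paper attempts; they constitute a plausible but highly speculative research programme toward Conjecture~\ref{C:Kac} itself. One comment on the internal logic: your closing parenthetical treats Conjecture~\ref{C:Kac} as a corollary of your approach to Conjecture~\ref{conj:Kac2}, whereas the paper's point is the reverse implication --- Conjecture~\ref{C:Kac} is the genuine target, and Conjecture~\ref{conj:Kac2} then follows formally. This does not affect the substance of your proposal (your step~1 already effects the reduction), but it does mean the ``quotient by M\"obius'' layer in your step~3 is not an additional difficulty: it is absorbed by the reduction. The real obstacle, as you correctly identify, is step~2 --- there is at present no mechanism for extracting \emph{local} information about $\mu_\gamma$ from the heat trace, and the cancellation of pointwise fluctuations at leading order (Theorem~\ref{T:HK} versus Theorem~\ref{theo:lhk_laplace}) is, if anything, evidence that such an expansion may be delicate.
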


In fact, it is not hard to see that Conjecture \ref{C:Kac} implies the stronger form Conjecture \ref{conj:Kac2}. These conjectures are partly motivated by the results of Zelditch \cite{Zelditch} which show that spectral determination is ``generically'' possible subject to analyticity conditions on the boundary and some extra symmetries.

\medskip In this paper we will not aim to prove this conjecture but instead show that the analogue of Weyl's law for Liouville quantum gravity holds: that is, $(\boldsymbol{\lambda}_n)_{n\ge 0}$ determines at least the LQG volume $\mu_\gamma(D)$ of $D$. More precisely, our main result is the following. Suppose the eigenvalues $(\boldsymbol{\lambda}_n)_{n \ge 0}$ are sorted in increasing order, and define the eigenvalue counting function by
\begin{equation}
\mathbf{N}_{\gamma}(\lambda) := \sum_{n\ge 0} 1_{\{\boldsymbol{\lambda}_n \le \lambda\}}.
\end{equation}

\begin{theo}\label{T:Weyl_intro} Let $0< \gamma <2$.
We have
\begin{align}\label{eq:Weyl}
\frac{\mathbf{N}_{\gamma}(\lambda)}{\lambda} \xrightarrow[\lambda \to \infty]{p} c_\gamma \mu_{\gamma}(D).
\end{align}
Here, the constant $c_\gamma = c_\gamma(Q-\gamma)$, where $Q = \frac{\gamma}{2} + \frac{2}{\gamma}$ and for $m>0$, $c_\gamma (m)$ is defined as follows:
\begin{align}\label{eq:constant}
c_\gamma(m) :=  \frac{1}{\pi}\Bigg\{\mathbb{E}\left[\int_{0}^\infty
\mathcal{I}\left(e^{\gamma (B_t - mt)}\right)dt\right] 
+\mathbb{E}\left[\int_{0}^{\infty} 
\mathcal{I}\left(e^{\gamma \mathcal{B}_t^{m}} \right)dt\right]\Bigg\}
\end{align}
where
\begin{align}\label{eq:Ical}
\mathcal{I}(x) := x e^{-x}, \qquad x \in \mathbb{R},
\end{align}
$(B_t)_{t \ge 0}$ is a standard (1-dimensional) Brownian motion, and $(\mathcal{B}_t^m)_{t \ge 0}$ is a Brownian motion with drift $m>0$ conditioned to be non-negative at all times $t \ge 0$.
\end{theo}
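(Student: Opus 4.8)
The plan is to pass through the heat trace and a Tauberian theorem. Writing $\Theta_\gamma(t):=\sum_{n\ge0}e^{-\boldsymbol{\lambda}_n t}$ for the heat trace, the trace formula relating the Liouville eigenvalues to the Liouville heat kernel (\cite{MRVZ,AK2016}) gives $\Theta_\gamma(t)=\int_D \lhk_t(x,x)\,\mu_\gamma(dx)$, and since $\Theta_\gamma(t)=\int_0^\infty e^{-t\lambda}\,d\mathbf N_\gamma(\lambda)$ with $\mathbf N_\gamma$ nondecreasing, Karamata's Tauberian theorem reduces \eqref{eq:Weyl} to showing
\begin{equation}\label{eq:plan-trace}
t\,\Theta_\gamma(t)\;\longrightarrow\;c_\gamma\,\mu_\gamma(D)\qquad\text{in probability, as }t\to0^+.
\end{equation}
The one subtlety here is that Karamata's theorem is deterministic; I would first obtain \eqref{eq:plan-trace} along a deterministic sequence $t_k\downarrow0$ with $t_{k+1}/t_k\to1$ (which comes for free from the $L^2$ estimate below, with a rate), then use monotonicity of $\Theta_\gamma$ to sandwich and get almost sure convergence of the whole family $t\mapsto t\Theta_\gamma(t)$, and finally apply Karamata pathwise, concluding by a standard subsequence argument.

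The heart of the proof is the small-time behaviour of the on-diagonal heat kernel $\lhk_t(x,x)$. Since Liouville Brownian motion $\mathbf B$ is the time-change of planar Brownian motion with Revuz measure $\mu_\gamma$, at small Liouville time $t$ it is confined, with overwhelming probability, to Euclidean scales $e^{-u}$ with $u$ near $u_\ast(t):=\inf\{u\ge0:\mu_\gamma(B(x,e^{-u}))\le t\}$; localising $\lhk_t(x,x)$ to such a ball and using that the probability of still being confined to a region of Liouville mass $m$ at time $t$ is of order $e^{-t/m}$, while the density at $x$ given such confinement is of order $1/m$, I would arrive at the heuristic identity
\begin{equation}\label{eq:plan-hk}
t\,\lhk_t(x,x)\;\approx\;\frac1\pi\int_{0}^{\infty}\mathcal I\!\Big(\tfrac{t}{\mu_\gamma(B(x,e^{-u}))}\Big)\,du ,
\end{equation}
with $\mathcal I$ as in \eqref{eq:Ical}; the factor $1/\pi$ comes from $p_t^D(x,x)\sim(2\pi t)^{-1}$ together with the $\pi$ in the Green function convention. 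Crucially, $u\mapsto\log\mu_\gamma(B(x,e^{-u}))$ is a Brownian motion with drift (driven by the circle average $h_{e^{-u}}(x)$), so $u\mapsto t/\mu_\gamma(B(x,e^{-u}))$ is not monotone and \eqref{eq:plan-hk} is a genuine Brownian functional. To evaluate it I would rescale the ball $B(x,e^{-u_\ast(t)})$ to the unit disc using the conformal covariance of the GMC measure; since the heat trace weights $x$ by $\mu_\gamma(dx)$, the rooted-measure (Girsanov) identity inserts a $\gamma\log\tfrac1{|\cdot-x|}$ singularity at $x$, and zooming in at such a $\mu_\gamma$-typical point yields, as $t\to0^+$, Sheffield's $\gamma$-quantum cone. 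Its radial process, in the logarithmic coordinate recentred at $u_\ast(t)$, is the two-sided process whose positive-time part is a Brownian motion with drift $-(Q-\gamma)$ and whose negative-time part is a Brownian motion with drift $Q-\gamma$ conditioned to stay non-negative; substituting into \eqref{eq:plan-hk} and splitting the integral at the recentring point produces exactly the two terms of $c_\gamma=c_\gamma(Q-\gamma)$. Thus $t\lhk_t(x,x)$ converges in distribution to $\frac1\pi\int_\R\mathcal I(e^{\gamma\mathcal R_v})\,dv$ (with $\mathcal R$ the cone radial process) but \emph{not} in probability — the recentring scale $u_\ast(t)$ runs off to infinity and the surrounding mass profile keeps fluctuating — which is the announced pointwise fluctuation of the scaled heat kernel.

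It remains to see that these pointwise fluctuations wash out once integrated against $\mu_\gamma(dx)$. For $x\ne x'$ the fine-scale environments of $h$ near $x$ and near $x'$ decouple as $t\to0^+$, so the one- and two-point $\mu_\gamma$-rooted expectations of $\lhk_t(x,x)$ and $\lhk_t(x,x)\lhk_t(x',x')$ factor asymptotically into one, respectively two, independent copies of the quantum-cone computation above. Combined with uniform-integrability bounds (finiteness of $q$-th moments of $\mu_\gamma$ for $q<4/\gamma^2$ being the relevant input), this gives $\mathbb E[t\Theta_\gamma(t)]\to\mathbb E[c_\gamma\mu_\gamma(D)]$, $\mathbb E[(t\Theta_\gamma(t))^2]\to\mathbb E[(c_\gamma\mu_\gamma(D))^2]$ and $\mathbb E[t\Theta_\gamma(t)\cdot c_\gamma\mu_\gamma(D)]\to\mathbb E[(c_\gamma\mu_\gamma(D))^2]$, hence $\mathbb E[(t\Theta_\gamma(t)-c_\gamma\mu_\gamma(D))^2]\to0$, which yields \eqref{eq:plan-trace} and completes the argument.

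The main obstacle, by a wide margin, is to make \eqref{eq:plan-hk} rigorous \emph{with the sharp constant} and not merely up to multiplicative constants: this needs two-sided small-time heat-kernel control precise and uniform enough in $x$ to survive integration, and requires handling the contribution of atypical points — thick points, and points where the mass profile $u\mapsto\mu_\gamma(B(x,e^{-u}))$ behaves multifractally — where the picture behind \eqref{eq:plan-hk} degenerates. A close second is the identification of the limiting mass profile with the quantum-cone radial process, and in particular the emergence of the \emph{conditioned} Brownian motion in the second term of \eqref{eq:constant}, which reflects the fact that a $\mu_\gamma$-typical point is, in the precise sense of Sheffield's cone, a local maximum of thickness. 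The Tauberian reduction and the second-moment cancellation are comparatively routine once this heat-kernel input is available.
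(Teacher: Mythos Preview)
Your high-level architecture---trace formula, small-time heat trace, then Tauberian---matches the paper's, and your identification of the quantum cone as the local limit (with the conditioned Brownian motion appearing via Williams' decomposition) is right. But the route you propose to \eqref{eq:plan-trace} has two genuine gaps.

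First, you intend to work at fixed Liouville time $t$ and make \eqref{eq:plan-hk} rigorous with the sharp constant. The paper explicitly argues this is out of reach: no available estimate on $\lhk_t(x,x)$ is sharp to the constant, and your formula replaces the Liouville clock $F_\gamma(\mathbf b)$ along a Brownian bridge by a ball mass, which is only a proxy. The paper avoids fixed-$t$ analysis altogether. It multiplies the heat trace by $u$ (to move the index of regular variation away from the logarithmic regime where Tauberian theory is delicate), takes a Laplace transform, and uses the bridge decomposition with $f(s)=s e^{-\lambda s}$ to obtain the \emph{exact} identity
\[
\int_0^\infty e^{-\lambda u}\,u\,\mathbf S_\gamma(u)\,du
=\int_D\mu_\gamma(dx)\int_0^\infty\frac{du}{2\pi u}\,\bdec{x}{x}{u}\!\big[\mathcal I(\lambda F_\gamma(\mathbf b))\,1_{\{u<\tau_D\}}\big].
\]
This is where $\mathcal I$ really enters---through the choice of $f$, not a heat-kernel heuristic. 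The $L^1$ limit of the right-hand side is then computed (Theorem~\ref{theo:Weyl}), and a probabilistic Tauberian theorem together with an asymptotic-differentiation lemma for monotone functions (Appendix~\ref{app:probasy}) converts this back to $t\mathbf S_\gamma(t)\to c_\gamma\mu_\gamma(D)$ in probability; no rate and no subsequence argument are needed.

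Second, your second-moment step, with ``finiteness of $q$-th moments of $\mu_\gamma$'' as the input, breaks down for $\gamma\ge\sqrt2$: after Cameron--Martin the two-point integrand carries $e^{\gamma^2 G_0^D(x,y)}\asymp|x-y|^{-\gamma^2}$, which is not integrable, and raw moment bounds do not fix this. The paper restricts to the good event $\mathcal G_{[n_0,\infty)}(x)=\{h_{2^{-n}}(x)\le\alpha n\log2\ \forall n\ge n_0\}$ in the spirit of \cite{Ber2017}; the indicator is dominated by an exponential in the circle average which, after a further Girsanov shift, produces a compensating power $|x-y|^{(2\gamma-\alpha)\beta-\beta^2/2}$, rendering both the cross and diagonal terms integrable for every $\gamma\in(0,2)$. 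Making this truncation coexist with Gaussian comparison and exact scale invariance (which want convex functionals of GMC masses, not indicators) is the real technical core of the paper---Lemmas~\ref{lem:G1_uniform}, \ref{lem:cross_uniform}, \ref{lem:G2_uniform} and the main Lemma~\ref{lem:main}---and is absent from your outline.
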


Readers familiar with Sheffield's theory of quantum cones (\cite{zipper}, see also \cite{DuplantierMillerSheffield}) will recognise the constant $c_\gamma$ as a somewhat complicated functional of the so-called $\gamma$-quantum cone. Perhaps surprisingly, this constant can be evaluated explicitly: 
\begin{theo}\label{theo:constant}
For any $\gamma\in (0,2), m > 0$, we have
$c_\gamma(m) = 1/({\pi \gamma m})$.
In particular, 
\begin{equation}
c_\gamma= \frac1{\pi ( 2 - \gamma^2/2)}.
\end{equation}
Moreover, $\lim_{\gamma \to 0^+} c_\gamma =c_0 := 1/(2\pi)$ and $c_\gamma > c_0$.
\end{theo}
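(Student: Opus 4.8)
The plan is to evaluate the two expectations in \eqref{eq:constant} by direct computation; the structural point is that integrating the Gaussian transition densities in time decouples the drift from the exponential weight, so everything reduces to elementary one-dimensional integrals. Since $\mathcal{I}(x) = xe^{-x} \ge 0$ for $x>0$, all integrands are non-negative and Tonelli's theorem licenses every interchange of integrals with no a priori integrability checks; the finiteness of $c_\gamma(m)$ will emerge at the end. The only external inputs are the two classical identities
\begin{equation}\label{eq:plan-id}
\int_0^\infty t^{-1/2} e^{-a/t-bt}\,dt = \sqrt{\pi/b}\; e^{-2\sqrt{ab}}, \qquad \int_0^\infty t^{-3/2} e^{-a/t-bt}\,dt = \sqrt{\pi/a}\; e^{-2\sqrt{ab}} \qquad (a,b>0),
\end{equation}
the second following from the first by $t\mapsto 1/t$.

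For the first term, I set $Y_t = B_t - mt$, write $\mathbb{E}[\mathcal{I}(e^{\gamma Y_t})]$ as a Gaussian integral in the space variable $y$, and integrate first in $t$. Completing the square and using the first identity in \eqref{eq:plan-id} gives $\int_0^\infty (2\pi t)^{-1/2} e^{-(y+mt)^2/(2t)}\,dt = \tfrac1m e^{-m(|y|+y)}$, i.e.\ the potential kernel of Brownian motion with drift $-m$ from $0$ to $y$. Hence the first expectation equals $\tfrac1m \int_{\mathbb{R}} \mathcal{I}(e^{\gamma y})\, e^{-m(|y|+y)}\,dy$; substituting $u = e^{\gamma y}$ and splitting the integral at $y=0$ (where the weight $e^{-m(|y|+y)}$ changes from $1$ to $e^{-2my}$) turns this into $\tfrac{1}{\gamma m}\big[(1-e^{-1}) + \int_1^\infty u^{-2m/\gamma} e^{-u}\,du\big]$.

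For the second term I first identify the law of $\mathcal{B}^m$. Brownian motion with drift $m>0$ killed at $0$ reaches $+\infty$ without hitting $0$, from $x>0$, with probability $\varphi(x) := 1 - e^{-2mx}$, and $\varphi$ is harmonic for $\tfrac12\partial_{xx} + m\partial_x$; thus $\mathcal{B}^m$ is the Doob transform of the killed process by $\varphi$. Using Girsanov together with the reflection principle I will compute its entrance law from $0$ to be $q_t(0,y) = \tfrac{y(1-e^{-2my})}{mt}\cdot (2\pi t)^{-1/2}\, e^{-(y-mt)^2/(2t)}$ for $y>0$; as a consistency check, letting $m\to 0$ recovers the entrance law of a three-dimensional Bessel process, matching the classical fact that driftless Brownian motion conditioned to stay positive is $\mathrm{BES}(3)$. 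Integrating $q_t(0,y)$ in $t$ via the second identity in \eqref{eq:plan-id} yields $\int_0^\infty q_t(0,y)\,dt = \tfrac{1-e^{-2my}}{m}$, so the second expectation equals $\tfrac1m \int_0^\infty \mathcal{I}(e^{\gamma y})(1-e^{-2my})\,dy$, which the same substitution $u = e^{\gamma y}$ turns into $\tfrac{1}{\gamma m}\big[e^{-1} - \int_1^\infty u^{-2m/\gamma} e^{-u}\,du\big]$.

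Adding the two contributions, the incomplete-Gamma integrals cancel and so do the $e^{-1}$ terms, leaving $\tfrac{1}{\gamma m}$; hence $c_\gamma(m) = \tfrac{1}{\pi\gamma m}$. Specialising to $m = Q-\gamma$ and using $\gamma(Q-\gamma) = 2 - \gamma^2/2$ gives $c_\gamma = \tfrac{1}{\pi(2 - \gamma^2/2)}$, and since $0 < 2 - \gamma^2/2 < 2$ on $(0,2)$ this is strictly larger than $\tfrac{1}{2\pi}$ and tends to $\tfrac{1}{2\pi}$ as $\gamma \to 0^+$. The only non-mechanical step is the identification of the entrance law $q_t(0,\cdot)$ of $\mathcal{B}^m$ (and the check of its $\mathrm{BES}(3)$ limit); everything after that is two applications of \eqref{eq:plan-id} and elementary bookkeeping.
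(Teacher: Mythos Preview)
Your computation is correct, and the route is genuinely different from the paper's. The paper does \emph{not} evaluate the two expectations in \eqref{eq:constant} separately. Instead it uses \Cref{lem:main} to write $\pi c_\gamma(m) = \lim_{\lambda\to\infty}\mathbb{E}\big[\int_0^\infty \mathcal{I}(\lambda e^{\gamma(B_t-mt)})\,dt\big]$, applies the Tauberian theorem to replace $\mathcal{I}(x)=xe^{-x}$ by $\widetilde{\mathcal{I}}(x)=x\,1_{\{x\le 1\}}$, and then observes via the strong Markov property at $\widetilde{\tau}_\lambda=\inf\{t: e^{\gamma(B_t-mt)}=1/\lambda\}$ that the resulting expectation is actually \emph{independent of $\lambda$}. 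This collapses the whole thing to the single Gaussian integral $\int_0^\infty e^{(\gamma^2/2-\gamma m)t}\,\Phi((m-\gamma)\sqrt{t})\,dt$, evaluated by one integration by parts. In particular the paper never touches the law of $\mathcal{B}^m$.

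Your approach is more self-contained: it uses only the definition \eqref{eq:constant}, Tonelli, the two elementary Laplace identities \eqref{eq:plan-id}, and the entrance law of $\mathcal{B}^m$ (which you correctly identify via the Doob $\varphi$-transform with $\varphi(x)=1-e^{-2mx}$, taking the $x\downarrow 0$ limit). The cancellation of the incomplete Gamma integrals at the end is a nice structural feature that the paper's method hides. The trade-off is that the paper's argument avoids any discussion of $\mathcal{B}^m$ at the cost of importing \Cref{lem:main} and the Tauberian machinery from elsewhere in the paper, whereas yours is a stand-alone computation but requires justifying the entrance-law formula $q_t(0,y)$ (your $x\to 0$ limit of $\tfrac{\varphi(y)}{\varphi(x)}p^{m,0}_t(x,y)$ is standard, and the $\mathrm{BES}(3)$ check is a good sanity test).
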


\begin{figure}[h!]
\begin{center}
\includegraphics[width=.99\textwidth]{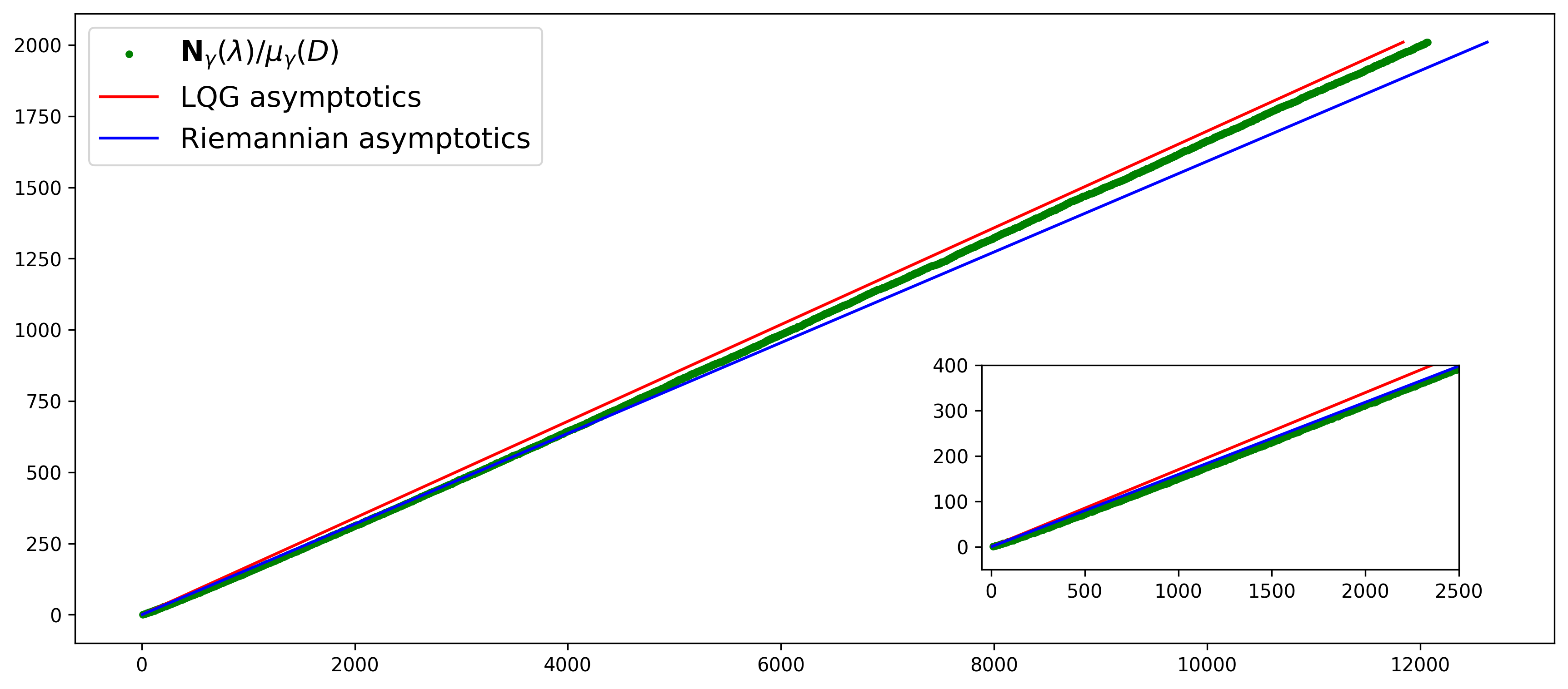}
\end{center}
\caption{Weyl's law for LQG with $\gamma = 0.5$. Green: volume-normalised eigenvalue counting function $\lambda \mapsto \mathbf{N}_\gamma(\lambda) / \mu_{\gamma}(D)$. Red: prediction from \Cref{T:Weyl_intro} ($\lambda \mapsto c_\gamma \lambda)$. Blue: Riemannian prediction ($\lambda \mapsto c_0\lambda $).}
\label{F:eigen}
\end{figure}

Theorem \ref{T:Weyl_intro} corresponds to a Weyl law where the dimension $d$ is taken to be $d=2$. (Note that taking the limit $\gamma \to 0^+$ we recover, at least formally, the classical Weyl's law for Euclidean domains). This corresponds to the fact that the spectral dimension of Liouville quantum gravity is equal to two (see \cite{RhodesVargas_spectral}, conjectured earlier by Ambj\o{}rn \cite{ABNRW1998}). At the same time, the fact that $c_\gamma>c_0$ shows that one cannot merely naively extrapolate the Riemannian result to LQG. This should probably be viewed as a consequence of the highly disordered, multifractal nature of the geometry in LQG; see Figure \ref{F:eigen}.

Finally, it is known that the Liouville measure $\mu_\gamma$ determines the Gaussian free field $h$ (see \cite{BerestyckiSheffieldSun}). This, however, does not imply Conjecture \ref{C:Kac} since we would need to know not only the LQG-mass of the domain $D$ but also that of any (say, open) subset of $D$ in order to entirely determine the measure $\mu_\gamma$.

\subsection{Conjectures and questions on the spectral geometry of LQG}
In addition to Conjectures \ref{C:Kac} and \ref{conj:Kac2} above, we record in this section a number of conjectures on the spectral geometry of Liouville quantum gravity.
\Cref{F:eigen} shows the growth of the volume-normalised eigenvalue counting function $\lambda \mapsto \mathbf{N}_{\gamma}(\lambda) / \mu_\gamma(D)$ associated to the realisation of GFF in \Cref{fig:GFFGMC} and compares it against theoretical predictions from \Cref{T:Weyl_intro} as well as Weyl's law for Riemannian manifolds. It is curious to see that the Riemannian prediction provides a better fit for the initial eigenvalues.  This may be explained by the fact that the low-frequency eigenpairs computed do not ``feel'' the roughness of $\gamma$-LQG surface (which could be an artefact of the numerical experiment as it involves mollified Gaussian free field on a discretised domain); see \Cref{fig:efcontour} for a comparison of eigenfunctions.

\begin{figure}[h!]
\begin{center}
\includegraphics[width=.99\textwidth]{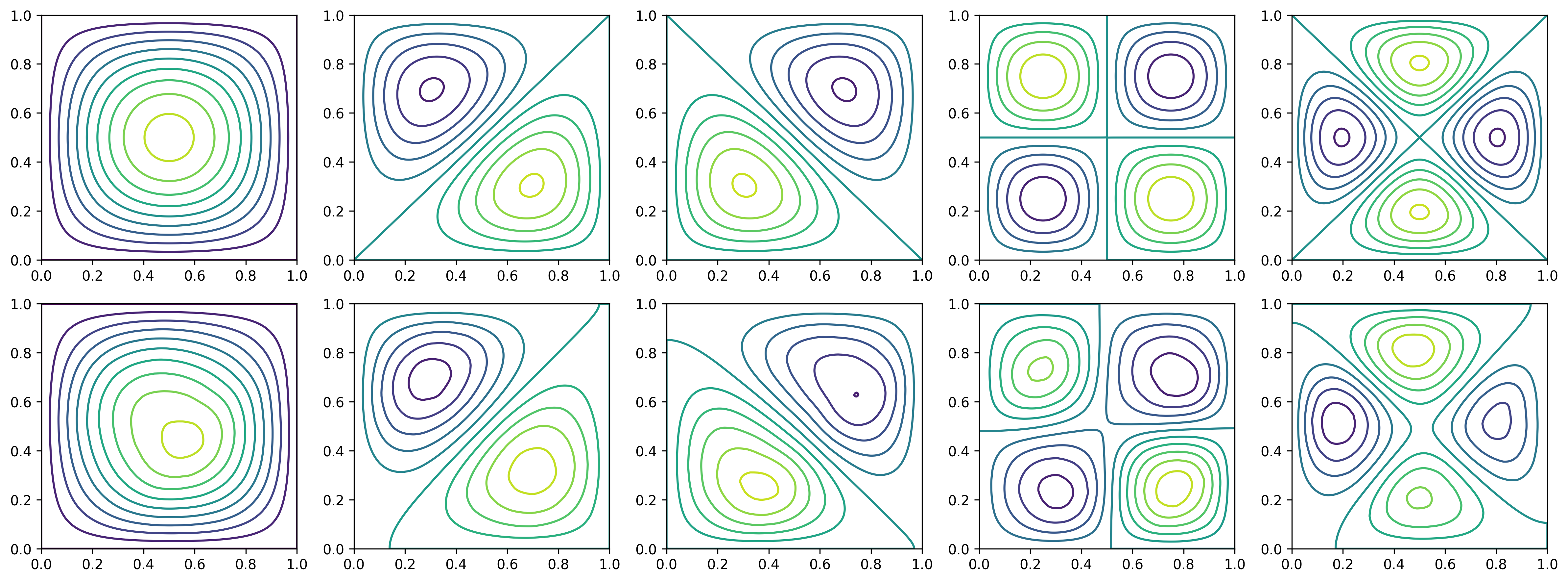}
\end{center}
\caption{Contour maps of the first $5$ eigenfunctions - Euclidean (top) versus LQG (bottom).}
\label{fig:efcontour}
\end{figure}

These simulations and others below suggest a rich picture for the spectral geometry of LQG; we view the results in this article as the first step of an in-depth study in this direction.

\paragraph{P\'olya's conjecture.} To begin with, we note that the eigenvalue counting function appears to always stay below the linear function predicted by its Weyl's law.

\begin{conjecture}
With probability one, $\mathbf{N}_\gamma(\lambda) \le c_\gamma \mu_\gamma(D) \lambda$ for all $\lambda\ge 0$. 
    \end{conjecture}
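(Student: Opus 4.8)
\medskip
\noindent\emph{Sketch of a possible approach.} The natural plan is two-stage: first establish a sharp Berezin--Li--Yau (Riesz-mean) inequality for the eigenvalues $\boldsymbol{\lambda}_n$, and then attempt to upgrade it to the pointwise bound above. For the first stage, recall that the eigenvalues admit the min--max representation
$$
\boldsymbol{\lambda}_n \;=\; \min_{\substack{V\subset H^1_0(D)\cap L^2(\mu_\gamma)\\ \dim V = n+1}}\ \max_{f\in V\setminus\{0\}}\ \frac{\int_D |\nabla f|^2\,dx}{\int_D f^2\,\mu_\gamma(dx)},
$$
so that $\mathbf{N}_\gamma(\lambda)$ is the largest dimension of a subspace on which $\int_D|\nabla f|^2\,dx\le\lambda\int_D f^2\,\mu_\gamma(dx)$. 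In the classical case ($\mu_\gamma$ replaced by Lebesgue measure) Berezin's inequality $\sum_n(\lambda-\boldsymbol{\lambda}_n)_+\le \tfrac12 c_0\,\mathrm{Leb}(D)\,\lambda^2$ is proved by testing against the coherent states $\{e^{i\xi\cdot x}\mathbf 1_D\}_{\xi\in\RR^2}$, which both resolve the identity of $L^2(\mathrm{Leb})$ and diagonalise $-\Delta$. The structural obstacle in LQG is that the quadratic form $\int_D|\nabla f|^2\,dx$ is Lebesgue-based whereas the mass form $\int_D f^2\,\mu_\gamma$ is $\mu_\gamma$-based, so plane waves no longer resolve the identity of $L^2(\mu_\gamma)$. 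One would instead build a coherent family adapted to $\mu_\gamma$---for instance a random multiresolution/wavelet system with one member per dyadic square weighted by its $\mu_\gamma$-mass, or local eigenfunctions of $-\Delta$ arranged so that their ``spectral density'' is $c_\gamma\,\mu_\gamma(dx)$---and control its Dirichlet energy using the scaling and moment estimates for the $\mu_\gamma$-mass of small squares (and very likely the quantum-cone computation behind Theorems~\ref{T:Weyl_intro}--\ref{theo:constant}). Feeding this into Berezin's convexity argument should yield the sharp Riesz-mean bound, hence, by a Legendre transform, $\mathbf{N}_\gamma(\lambda)\le C\,c_\gamma\mu_\gamma(D)\lambda$ for an absolute constant $C>1$---the usual Li--Yau-type loss, which equals $2$ in dimension two.

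Removing the constant $C$ is the genuine difficulty, and one cannot simply invoke the Euclidean case: the classical Dirichlet P\'olya conjecture is itself open (known only for tiling domains, after P\'olya, and for balls), and the $\gamma\to0^+$ limit of the present conjecture is essentially a form of it. Two routes seem conceivable. One is a tiling/self-similarity argument exploiting the exact scale invariance of the whole-plane GFF (stationarity of the $\gamma$-quantum cone under KPZ log-scaling), in analogy with P\'olya's use of Euclidean tilings: pave $D$ by rescaled ``quantum tiles'', bound $\mathbf{N}_\gamma(\lambda)$ by a sum of contributions matching the Weyl density $c_\gamma\mu_\gamma(\cdot)$ exactly, and absorb the overlap and boundary errors using the heat-kernel estimates of this paper; the snag is that LQG surfaces do not literally tile, so this would have to be carried out in an approximate, probabilistic sense. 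The other is a fluctuation-sign argument: this paper shows that the rescaled on-diagonal heat kernel $t\,\mathbf{p}^{\gamma,D}_t(x,x)$ fluctuates pointwise but that these fluctuations cancel in the trace; if one can further show that the two-term expansion $\sum_n e^{-\boldsymbol{\lambda}_n t}=c_\gamma\mu_\gamma(D)\,t^{-1}-(\text{nonnegative})+o(1)$ holds with a one-sided remainder, a Tauberian theorem with remainder (in the spirit of Hardy--Littlewood and Karamata) would transfer the sign to $\mathbf{N}_\gamma$.

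In either route the main obstacle is the one that keeps the classical P\'olya conjecture open: sharp constants are generically accessible only for \emph{averaged} spectral quantities (Riesz means, heat traces), and converting them into a one-sided bound for the \emph{pointwise} counting function $\mathbf{N}_\gamma$ at every $\lambda$ requires genuinely new input. In the LQG setting this is compounded by the randomness and multifractality of $\mu_\gamma$---in particular the thick points that dominate the heat-kernel fluctuations established here---so that even the first-stage coherent-state construction \emph{with the correct constant} $c_\gamma$ is already substantial. A more indirect but possibly more robust route would be to establish the analogue of the bound for a discrete model (random planar maps, or mated-CRT maps) and pass to the scaling limit.
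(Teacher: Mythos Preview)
The statement you are addressing is labelled as a \textbf{Conjecture} in the paper, not a theorem; the paper does not provide a proof, nor does it claim one. It is presented as the LQG analogue of P\'olya's conjecture, which the authors note is open even in the Euclidean setting (known only for tiling domains and, recently, for balls). Hence there is no ``paper's own proof'' against which to compare your proposal.

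That said, your write-up is well calibrated: you do not claim a proof but rather outline candidate strategies and identify the genuine obstructions. Your observation that the $\gamma\to 0^+$ limit of the conjecture essentially contains the classical Dirichlet P\'olya conjecture is correct and important---it shows that a full proof here would in particular resolve a long-standing open problem. Your two-stage plan (sharp Riesz-mean inequality, then upgrade) and the two speculative routes (quantum tiling; one-sided heat-trace remainder plus Tauberian with sign) are reasonable heuristics, and you correctly flag that the loss of a constant in the Li--Yau step is the crux. The only caveat is that neither route, as you yourself acknowledge, currently constitutes an actual argument: the ``quantum tiling'' idea lacks a concrete mechanism, and the one-sided remainder for the heat trace is itself at least as hard as the conjecture. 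In short, your proposal is an honest and informed discussion of why the statement is open, which is the appropriate response here.
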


This is the LQG analogue of a famous conjecture of P\'olya \cite{Pol1954} for Euclidean domains, which is open in general (P\'olya proved it for the so-called tiling domains \cite{Polya_tilingdomains}, whereas the case for Euclidean balls has been established by Filonov et al. \cite{FLPS2023} only very recently). A closely related result is the Berezin--Li--Yau inequality \cite{Berezin, LiYau} which, informally, says that the conjecture holds for Euclidean domains in a Cesaro sense. Note that this conjecture is only plausible because $c_\gamma > c_0$. \\

\paragraph{Second term in Weyl's law.} A fascinating question concerns the second order term for the asymptotics of $\mathbf{N}_\gamma(\lambda)$ as $\lambda \to \infty$. In the Euclidean world, Weyl famously conjectured that this is of order $\sqrt{\lambda}$ for smooth domains $D$; more precisely (under our normalisation)
$$
N_0(\lambda) = c_0 \text{Leb} (D) \lambda - \frac1{2\pi} |\partial D| \sqrt{\lambda} + o( \sqrt{\lambda})
$$
where $|\partial D|$ denotes the length of the boundary of $D$. Surprisingly this conjecture is still open in general, as it has been established under an additional geometric assumption by Ivrii \cite{Ivrii100} (essentially, there should not be ``too many'' periodic geodesics). While this assumption is believed to hold for any smooth domains, it remains to be verified.

In the LQG context, it would be interesting to understand what the correct order of $ c_\gamma \mu_\gamma(D) \lambda - \mathbf{N}_\gamma(\lambda)$ should be, and whether one could ``hear the perimeter" of the domain. Answers to these questions could be subtle, as it was observed in the literature of random fractals that there could be competitions between boundary corrections and random fluctuations (see e.g. \cite{CharmoyCroydonHambly}). The choice of the Dirichlet variant of GFF here may also affect the subleading order, since the mass distribution with respect to $\mu_\gamma$ has a rapid decay near the boundary $\partial D$. In our simulation with $\gamma = 0.5$, $\mathbf{N}_{\gamma}(\lambda)$ behaves like $c_{\gamma} \mu_\gamma(D) \lambda + \mathcal{O}(\lambda^{b})$ with $b$ being much smaller than $1/2$, and the deviation from the best fitting power-law curve appears to follow some central limit theorem, see \Cref{fig:error}.

\begin{figure}[h!]
\begin{center}
\includegraphics[width = \textwidth]{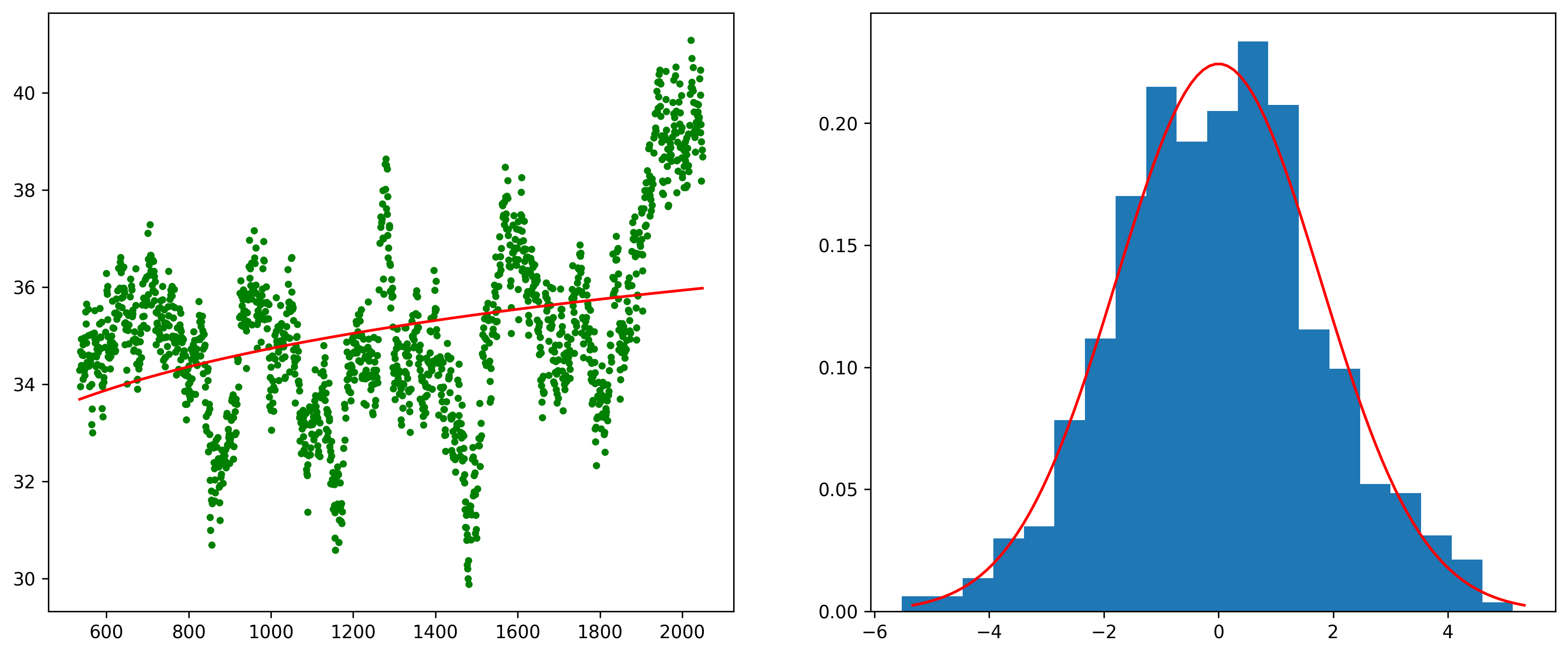}
\end{center}
\caption{Subleading order of the eigenvalue counting function in the window $\lambda \in [\boldsymbol{\lambda}_{501}, \boldsymbol{\lambda}_{2000}]$. Left: scatter plot of $c_\gamma \mu_{\gamma}(D)\lambda - \mathbf{N}_\gamma(\lambda)$ (green) versus fitted power-law curve (red). Right: histogram of deviations from the power-law curve (blue) versus fitted Gaussian density.
}
\label{fig:error}
\end{figure}

\paragraph{Delocalisation of eigenfunctions; quantum chaos.} Another natural question concerns the behaviour of eigenfunctions in the high energy (semiclassical) limit. As we increase the energy levels $\bl_n$, do the corresponding eigenfunctions $\ef_n$ typically become delocalised in the sense that their $L^2$ mass is spread out (as is the case for standard planar Brownian motion, the eigenfunctions of which are akin to sine waves with high frequency), or do they remain localised in some given region (as can happen e.g. in \textbf{Anderson localisation} owing to medium impurities)? 

\begin{figure}[h!]
\begin{center}
\includegraphics[width = .52\textwidth]{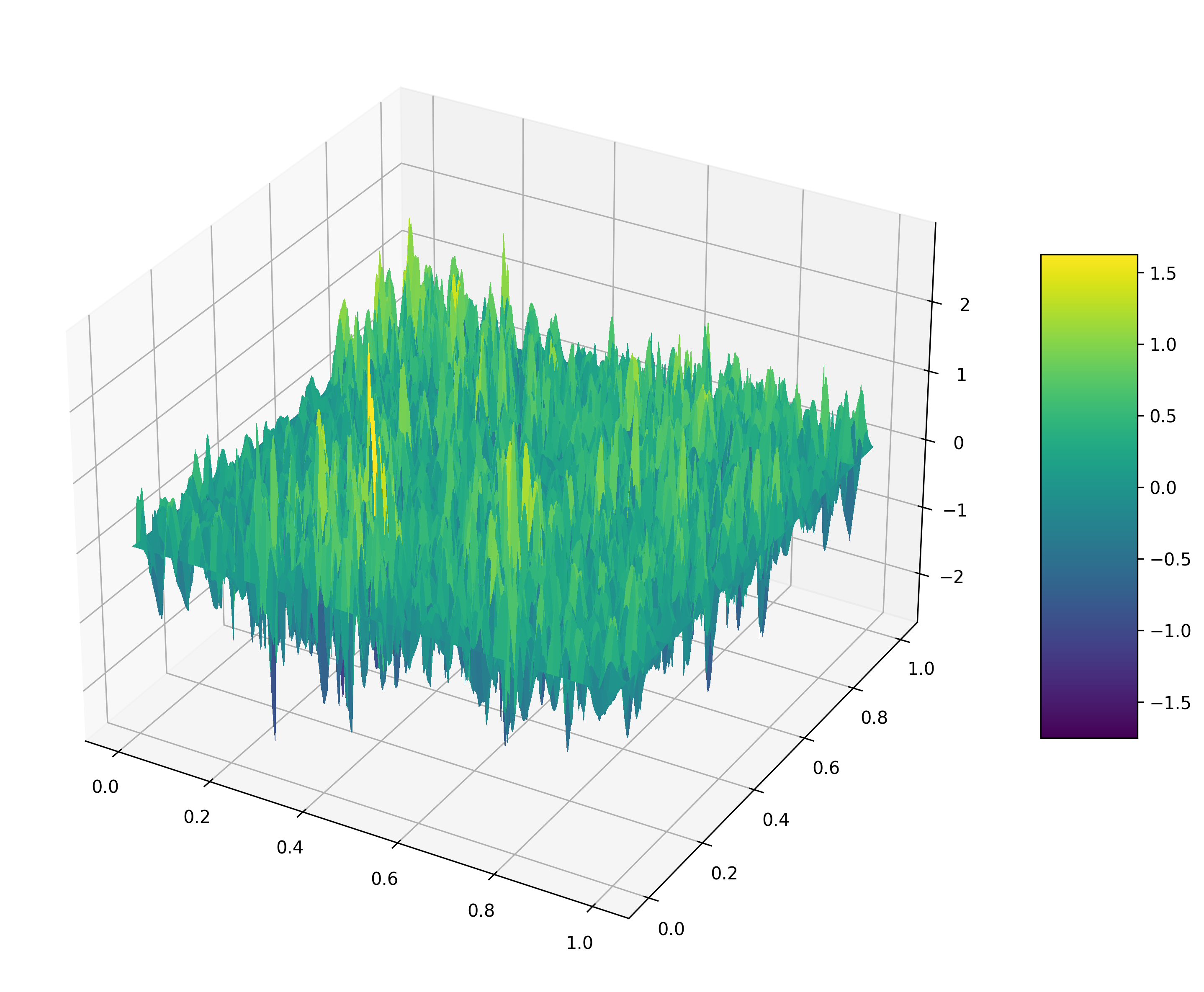}
\includegraphics[width = .47\textwidth]{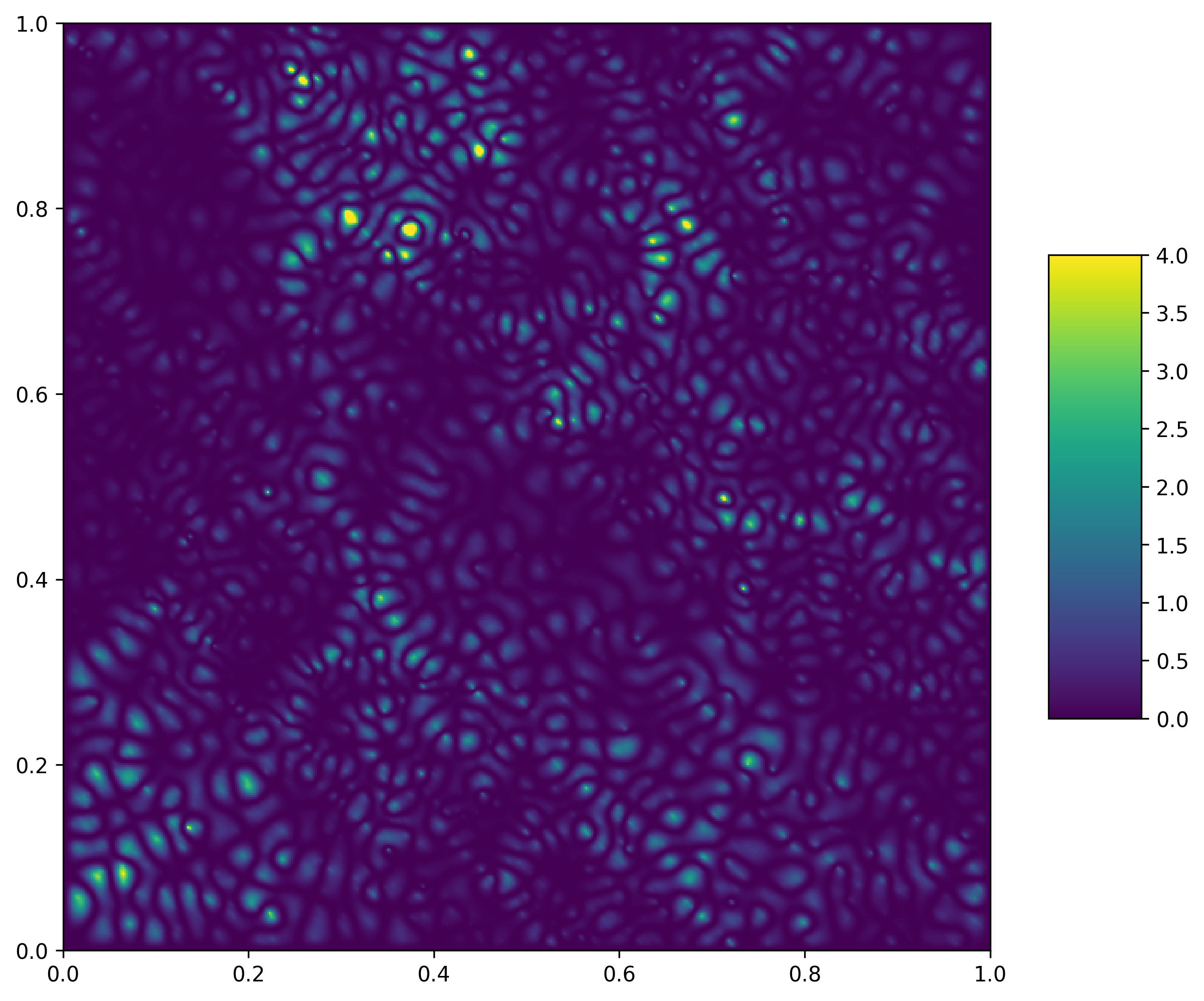}
\end{center}
\caption{Plot of the 2000th LQG eigenfunction $\mathbf{f}_{2000}$ (left) and heatmap for $|\mathbf{f}_{2000}|^2$ (right).
}
\label{F:eigenfunction}
\end{figure}

We conjecture that eigenfunctions are typically delocalised, see Figure \ref{F:eigenfunction}. In fact, by analogy with \textbf{quantum chaos} (see e.g. \cite{Ber1977}) and more precisely the celebrated \textbf{quantum unique ergodicity} conjecture of Rudnick and Sarnak  \cite{RudnickSarnak}, we make the following conjecture:

\begin{conjecture}\label{C:QUE}
 Fix $\gamma \in (0,2)$, and suppose the eigenfunctions $\ef_n$ are normalised to have unit $L^2 ( \mu_\gamma)$-norm. Then as $n \to \infty$,
$$
|\ef_n(x)|^2 \mu_\gamma(dx) \Rightarrow \frac{\mu_\gamma(dx)}{\mu_{\gamma}(D)} 
$$
 in the weak-$*$ topology in probability.
\end{conjecture}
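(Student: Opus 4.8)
The plan is to first establish the averaged form of the statement --- \emph{quantum ergodicity} (QE): along a density-one subsequence $(n_k)$ one has $|\ef_{n_k}|^2\,d\mu_\gamma \Rightarrow d\mu_\gamma/\mu_\gamma(D)$ in probability over the GFF --- and only then to address the passage from QE to the full conclusion of Conjecture \ref{C:QUE}, which is the analogue of the Rudnick--Sarnak conjecture \cite{RudnickSarnak} and should be at least as delicate here as in the arithmetic setting. For QE I see two complementary routes. The first is microlocal, in the spirit of Schnirelman--Zelditch--Colin de Verdi\`ere: the generator of Liouville Brownian motion, self-adjoint on $L^2(\mu_\gamma)$ with eigenvalues $-\boldsymbol{\lambda}_n$, has formal principal symbol $\tfrac12 e^{-\gamma h(x)}|\xi|^2$, so the underlying classical dynamics is the geodesic flow of the LQG metric $e^{\gamma h}(dx^2+dy^2)$, and QE would follow from (i) an Egorov-type theorem for this operator and (ii) ergodicity of the LQG geodesic flow for the Liouville volume on the unit cotangent bundle. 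The second route is probabilistic, in the spirit of the Anantharaman--Le Masson approach: one would derive QE from Benjamini--Schramm convergence of the random surface $(D,h,\mu_\gamma)$, rescaled near a $\mu_\gamma$-typical point, to Sheffield's $\gamma$-quantum cone (\cite{zipper}, see also \cite{DuplantierMillerSheffield}), together with a quantitative decorrelation estimate for the heat kernel on the quantum cone --- an estimate closely related to, and reusing the technology behind, the small-time heat-kernel asymptotics obtained in this paper.

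I would pursue the second route, as it dovetails with the machinery already developed here. The argument then splits into: (a) a local Benjamini--Schramm statement, matching the limiting near-diagonal eigenfunction statistics with a spectral object attached to the quantum cone; (b) a \emph{local Weyl law}, $\int_D g(x)\,\mathbf k_t(x,x)\,\mu_\gamma(dx)\sim (c_\gamma/t)\int_D g\,d\mu_\gamma$ as $t\to 0^+$, for every $g\in C(\overline{D})$, where $\mathbf k_t(x,y)$ denotes the Liouville heat kernel, so that $\mathbf k_t(x,x)=\sum_n e^{-t\boldsymbol{\lambda}_n}|\ef_n(x)|^2$; this upgrades the heat-trace asymptotics underlying \Cref{T:Weyl_intro} by showing that the pointwise heat-kernel fluctuations still cancel once averaged against $g\,d\mu_\gamma$, and via a Tauberian theorem (applied to the positive and negative parts of $g$) it yields the first-moment identity $\mathbf{N}_\gamma(\lambda)^{-1}\sum_{\boldsymbol{\lambda}_n\le\lambda}\langle g\,\ef_n,\ef_n\rangle_{\mu_\gamma}\to \mu_\gamma(D)^{-1}\int_D g\,d\mu_\gamma$; and (c) a \emph{variance bound}, $\mathbf{N}_\gamma(\lambda)^{-1}\sum_{\boldsymbol{\lambda}_n\le\lambda}\bigl(\langle g\,\ef_n,\ef_n\rangle_{\mu_\gamma}-\mu_\gamma(D)^{-1}\int_D g\,d\mu_\gamma\bigr)^2\to 0$. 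Using $\sum_n e^{-2t\boldsymbol{\lambda}_n}\langle g\,\ef_n,\ef_n\rangle_{\mu_\gamma}^2\le \int_D\int_D g(x)g(y)\,\mathbf k_t(x,y)^2\,\mu_\gamma(dx)\mu_\gamma(dy)$ (valid since $\langle g\,\ef_n,\ef_m\rangle_{\mu_\gamma}^2\ge 0$) together with the Tauberian theorem again, step (c) reduces to the decay estimate $\int_D\int_D g(x)g(y)\,\mathbf k_t(x,y)^2\,\mu_\gamma(dx)\mu_\gamma(dy)=o(1/t)$ as $t\to 0^+$ whenever $\int_D g\,d\mu_\gamma=0$. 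Granted (a)--(c), a Chebyshev argument produces the density-one subsequence.

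The main obstacle is step (c). In the classical setting an \emph{exact} Egorov theorem for the unitary group $e^{it\Delta}$ together with ergodicity of a deterministic geodesic flow converts the local Weyl law into the variance bound; here the only readily available substitute is the parabolic semigroup, and a plain Cauchy--Schwarz bound $\langle g\,\ef_n,\ef_n\rangle_{\mu_\gamma}^2\le\int_D g^2\,|\ef_n|^2\,d\mu_\gamma$ only yields $\mathbf{N}_\gamma(\lambda)^{-1}\sum_{\boldsymbol{\lambda}_n\le\lambda}\langle g\,\ef_n,\ef_n\rangle_{\mu_\gamma}^2=O(1)$, not the required decay, because near the diagonal the weight $g(x)g(y)$ does not ``see'' the mean-zero condition $\int_D g\,d\mu_\gamma=0$. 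The cancellation demanded by (c) must therefore come from the off-diagonal behaviour of $\mathbf k_t(x,y)^2$ and is a genuine form of small-scale ergodicity of $\mu_\gamma$-LQG; establishing it --- equivalently, a low-regularity Egorov theorem for the generator of Liouville Brownian motion plus ergodicity of the (as yet barely constructed) LQG geodesic flow, or a sufficiently strong mixing estimate on the quantum cone --- appears to require genuinely new input. Finally, even granted QE, the full statement of Conjecture \ref{C:QUE} requires excluding every flow-invariant quantum limit other than $\mu_\gamma/\mu_\gamma(D)$ (absence of scarring), for which there is at present no LQG analogue of the Hecke-recurrence and measure-rigidity mechanisms available in the arithmetic case; I would thus expect QE to be within reach with substantial but foreseeable effort, and full QUE to require a new idea.
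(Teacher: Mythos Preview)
The statement you are addressing is \emph{Conjecture}~\ref{C:QUE}, not a theorem: the paper does not prove it and does not claim to. It is listed among the open problems in Section~1.2, motivated by analogy with the Rudnick--Sarnak quantum unique ergodicity conjecture, and no argument toward its resolution is offered in the paper. There is therefore no ``paper's own proof'' against which to compare your proposal.

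Your write-up is not a proof either, and you are candid about this: you explicitly identify step~(c) --- the variance bound, equivalently the off-diagonal cancellation in $\int\int g(x)g(y)\mathbf{k}_t(x,y)^2\mu_\gamma(dx)\mu_\gamma(dy)$ --- as requiring ``genuinely new input'', and you note that even quantum ergodicity (the density-one statement) would leave the full conjecture open absent a scarring-exclusion mechanism with no known LQG analogue. What you have written is a thoughtful research programme, not a proof attempt, and should be presented as such. Step~(b), the local Weyl law with a test function $g$, is indeed a modest extension of Theorem~\ref{theo:lhk_laplace} (which already allows an arbitrary open set $A\subset D$), but steps~(a) and~(c) are the substance of the problem and remain entirely open.
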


The reason for making such a conjecture is that Liouville conformal field theory is, to the first order, a theory of random \emph{hyperbolic} surfaces, as emphasised by the fact that the ground state of the Polyakov action is given by solutions to Liouville's equation, which have constant negative curvature (see \cite{LacoinRhodesVargas}, and also \cite[Chapter 5.7]{BP}). The above can therefore be seen as an extension of the aforementioned quantum chaos conjectures to the Liouville CFT setting. \\

\paragraph{Eigenvalue spacing.} Also motivated by the literature on quantum chaos is the question of eigenvalue fluctuations. Following the Bohigas-Giannoni-Schmit conjecture on spectral statistics  \cite{BGS1984} (see also a celebrated conjecture of Sarnak \cite{Sarnak} for deterministic hyperbolic surfaces), we conjecture that level fluctuations of LQG eigenvalues should resemble those of Gaussian Orthogonal Ensemble (GOE) of random matrices (see e.g. \cite{AGZ, Meh2004} for an introduction). For instance, in the concrete example of level spacing distribution of eigenvalues, we conjecture:

\begin{conjecture}\label{C:Sar} 
For each $x \ge 0$, 
\begin{equation}\label{CSar}
\frac1{N} \sum_{j=1}^{N} 1_{\{ c_\gamma \mu_\gamma(D)(\boldsymbol{\lambda}_{j+1} - \boldsymbol{\lambda}_j) \le x\} } \xrightarrow[N\to\infty]{p} F_{\emph{GOE}}(x),
\end{equation}
where $F_{\emph{GOE}} (x)$ is the GOE \emph{level-spacing distribution}. 
\end{conjecture}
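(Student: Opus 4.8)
} We stress at the outset that \Cref{C:Sar} is far beyond current technology: it is the LQG version of the Bohigas--Giannoni--Schmit universality principle, which is open even for a fixed generic negatively curved surface and for Euclidean billiards, so what follows is a research programme rather than a proof sketch. The first step would be to reduce the spacing statistic to a statement about local correlation functions at high energy. After the Weyl normalisation $\lambda\mapsto c_\gamma\mu_\gamma(D)\lambda$, which by \eqref{eq:Weyl} makes the mean spacing equal to $1$, the left-hand side of \eqref{CSar} is an average of a local two-point statistic over the first $N$ eigenvalues; since $\boldsymbol{\lambda}_N\to\infty$, this is dominated by high-energy eigenvalues, so one expects it to converge to the value of the same statistic for the point process obtained by rescaling the spectrum around a typical large $\lambda$. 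By the classical inclusion--exclusion relations between gap distributions and $k$-point correlation functions (see \cite{AGZ, Meh2004}), it then suffices to show that these rescaled correlations exist in probability and to identify them with the GOE correlation functions; the symmetry class should be GOE rather than GUE because Liouville Brownian motion is $\mu_\gamma$-symmetric, hence time-reversible.

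The core of the programme would be a trace formula for the Liouville Laplacian. For chaotic systems the only known route to the pair correlation passes through a formula expressing the oscillatory part of the counting function as a sum over closed orbits of the classical dynamics -- Selberg's formula on hyperbolic surfaces, Gutzwiller's in general -- and one would try to develop the analogue here, relating the fluctuating part of $\mathbf{N}_\gamma(\lambda)$, equivalently the subleading fluctuations of the heat trace $\sum_n e^{-t\boldsymbol{\lambda}_n}$ whose leading behaviour is the subject of this paper, to a weighted sum over periodic geodesics of the $\gamma$-LQG geometry. Given such a formula, the Hannay--Ozorio de Almeida diagonal approximation yields the leading-order pair correlation, and the Sieber--Richter and M\"uller et al.\ resummation over correlated orbit families would, order by order, reproduce the full GOE form factor; a further bootstrap argument then controls the regime beyond the Heisenberg time, and analogous orbit families would be needed for the higher correlations entering the gap distribution. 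A second, more probabilistic line -- perhaps better adapted to the present setting -- would be to use the randomness of $h$ itself as the averaging mechanism, in the spirit of random Schr\"odinger operators and random band matrices: expand $\E[R_2]$ perturbatively, with the Gaussian structure of $h$ and the exact moments of Gaussian multiplicative chaos playing the role of the orbit sums, which would require a resolvent/heat-kernel expansion for the Liouville Laplacian at energy of order $\lambda$ together with quantitative control of the GMC moments entering each order.

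The main obstacle is unambiguous: there is at present no notion of periodic geodesic, let alone a trace formula, attached to Liouville Brownian motion; the LQG metric constructed in recent years is a genuinely different object whose geodesics are not known to govern this spectrum, and even in the deterministic chaotic case the orbit-pair resummation above is a formal expansion rather than a theorem. Nor is the preliminary step for free in the random setting: the mere existence of the rescaled correlations demands control of $\mathbf{N}_\gamma$ on the scale of a single mean spacing at high energy, far beyond the leading-order statement of \Cref{T:Weyl_intro}, and one would first need to resolve the localisation--delocalisation dichotomy of \Cref{C:QUE} in favour of delocalisation for the random-matrix prediction even to be plausible. We therefore regard \Cref{C:Sar} as a long-term target and, for now, as a guide to the finer spectral quantities of LQG one should aim to access.
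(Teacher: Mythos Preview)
The statement is a \emph{conjecture} in the paper, not a theorem; the paper offers no proof, only motivation via the Bohigas--Giannoni--Schmit conjecture and Sarnak's conjecture for hyperbolic surfaces, together with a numerical illustration (Figure~\ref{fig:ev-spacing}). You correctly identify this status at the outset and present a research programme rather than a proof, which is the appropriate response. Your discussion of possible approaches (trace formulae, orbit-pair resummations, and a probabilistic averaging route via the randomness of $h$) goes well beyond what the paper itself says and is reasonable as speculation, though of course none of it constitutes a proof and the paper makes no such claims either.
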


Note that the rescaled eigenvalue gap $c_\gamma \mu_\gamma(D)(\boldsymbol{\lambda}_{j+1} - \boldsymbol{\lambda}_j)$ is considered above since it is approximately  equal to $1$ on average in the long run, as established by our Weyl's law (\Cref{T:Weyl_intro}). 
The spacing distribution $F_{\text{GOE}}$, also known as Gaudin distrbution (for $\beta = 1$) in the literature, may be expressed in terms of a Fredholm determinant involving the Sine kernel \cite{Gau1961} as well as the Painlev\'e transcendents  \cite{ForresterWitte}. 
See  \Cref{fig:ev-spacing} for a comparison between the empirical LQG eigenvalue spacing distribution and our GOE conjecture.\\

\begin{figure}[h!]
\begin{center}
\includegraphics[width=0.98\textwidth]{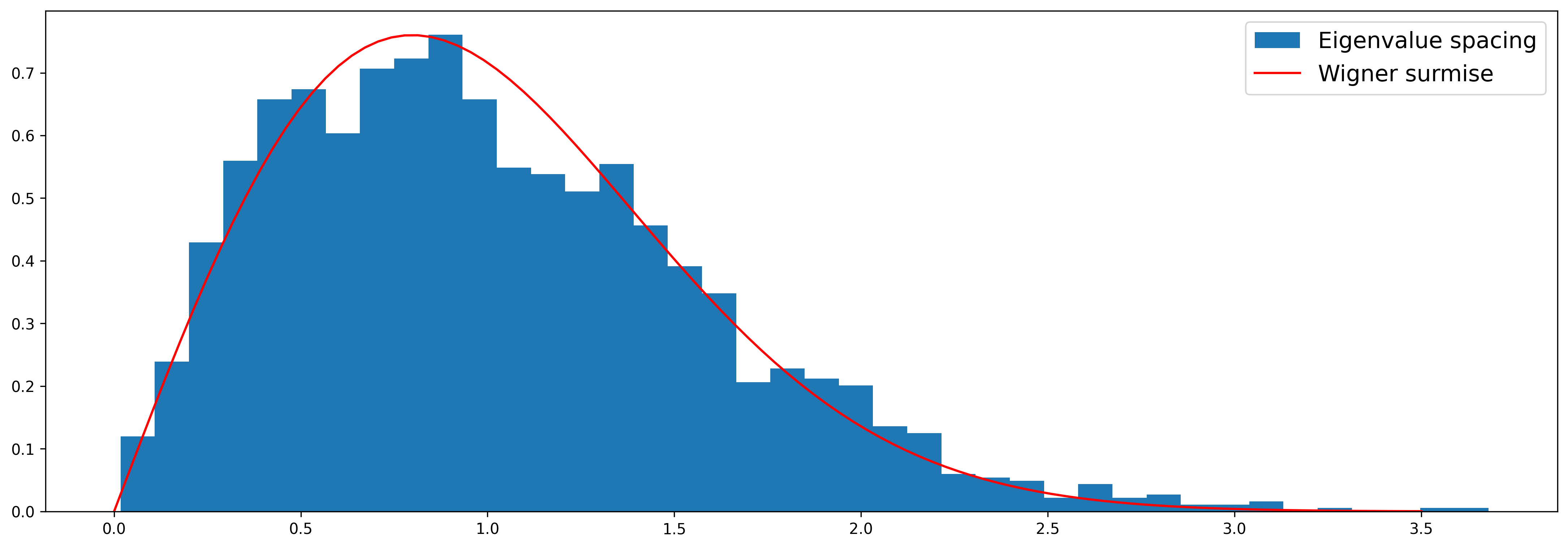}
\end{center}
\caption{Empirical spacing distribution based on the first 2000 LQG eigenvalues (blue) versus GOE statistics approximated by Wigner surmise (red).}
\label{fig:ev-spacing}
\end{figure}

\paragraph{Boundary conditions.} All the conjectures (and results in this paper) above have natural analogues for the eigenvalues of Liouville Brownian motion with Neumann, i.e. reflecting, boundary conditions, both when the underlying GFF itself has Dirichlet or Neumann boundary conditions. However we do not discuss these variants here in order to keep the paper at a reasonable length. \\

\paragraph{Random planar maps.} Likewise, we believe that these conjectures have natural analogues on {random planar maps}, for which Liouville Brownian motion is conjectured to describe the scaling limit of random walk (this has now been proved for instance for mated-CRT planar maps, see \cite{BerestyckiGwynne}). For instance, we conjecture that on a uniformly chosen triangulation with $n$ vertices, the eigenvalue sequence $(\bl_1, \ldots, \bl_n)$ associated to the discrete Laplacian (i.e., of $I - P$, where $P$ is the transition matrix of simple random walk) grows linearly with the eigenvalue level $1\le k \le n$. The linear coefficient should itself be proportional to $n^{-1/4}$ (which should correspond to the order of magnitude of the spectral gap, and to the correct scaling in order to obtain Liouville Brownian motion; see e.g., \cite{GwynneHutchcroft}, \cite{GwynneMiller_rw}) and to the constant $c_{\sqrt{8/3}} = 3/(2\pi)$ if the eigenvalues are scaled so that random walk converges to Liouville Brownian motion (note that the result of \cite{BerestyckiGwynne} involves an additional constant in the scaling, hence the chosen formulation above). Whether this linear growth should be uniform in $k$ as $n\to \infty$, or only hold as $k\ge 1$ is fixed but large and $n\to \infty$, is unclear to us at this stage. 

We also conjecture that the associated eigenfunctions $\mathbf{f}_k$ are delocalised for large $k$, and in fact approximately uniformly distributed over the planar map in an $L^2$ sense. Finally, we conjecture that the eigenvalue spacing is also given by the GOE ensemble in the limit $n\to \infty$, in agreement with 
\eqref{C:Sar}. \\

\paragraph{Critical LQG.} We end this series of conjectures on the spectral geometry of LQG by asking what (if any) of these results and conjectures become in the critical case $\gamma=2$. Note that $c_\gamma= 1/ [\pi (2 - \gamma^2/2)] \to \infty$ so it is likely that the Weyl law would require a different way of scaling the eigenvalue counting function compared to Theorem \ref{T:Weyl_intro}. 
\subsection{Short-time heat trace and heat kernel asymptotics}\label{sec:introheat}
Theorem \ref{T:Weyl_intro} may be understood from the perspective of the short-time asymptotics of the heat kernel of Liouville Brownian motion, for which we establish various results that could be of independent interest.

For points $x,y \in D$, let $\mathbf{p}_t^{\gamma, D}(x, y)$ denote the heat kernel (\cite{GRV_hk, RhodesVargas_spectral}). Recall from \cite{MRVZ} and \cite{AK2016} that there exists a jointly continuous version of the heat kernel in all three arguments $(t>0, x \in D, y \in D)$ which therefore identifies the function $\mathbf{p}_t^{\gamma, D}(x, y)$ uniquely. The heat kernel and spectrum of Liouville Brownian motion are related by the following fundamental trace formula: almost surely, for all $t>0$ and all $x, y \in D$, 
$$
\mathbf{p}_t^{\gamma, D}(x, y) = \sum_{n=1}^\infty e^{ - \boldsymbol{\lambda}_n t} \mathbf{f}_n(x) \mathbf{f}_n(y);
$$
see \cite[equation (5.10)]{AK2016}. In particular, setting $y = x$ (which is allowed since this formula holds a.s. simultaneously for \emph{all} $x,y\in D$ and $t>0$), and integrating, we obtain:
\begin{equation}\label{eq:trace}
\int_D \mathbf{p}_t^{\gamma, D}(x, x) \mu_\gamma (dx) = \sum_{n=1}^\infty e^{ - \boldsymbol{\lambda}_n t}.
\end{equation}
The integral on the left hand side is known as the \textbf{heat trace} and will be denoted in the following by $\mathbf{S}_\gamma(t;D)$. 

Note that the identity \eqref{eq:trace} implies that the heat trace  $\mathbf{S}_{\gamma}(t;D)$ is equal to the Laplace transform of the eigenvalue counting function: in other words,
\begin{align}\label{eq:lhk_counting}
\mathbf{S}_{\gamma}(t;D) := \int_0^\infty e^{-t \lambda} d\mathbf{N}_{\gamma}(\lambda)
\qquad \text{where} \quad
\mathbf{N}_{\gamma}(\lambda) := \sum_{k} 1_{\{\boldsymbol{\lambda}_k \le \lambda\}}.
\end{align}

As a consequence, using a probabilistic extension of the Hardy--Littlewood Tauberian theorem (see \Cref{theo:tauberian}),
 the behaviour of the eigenvalue counting function at high energy values is closely related to short time heat-trace asymptotics. Indeed we will obtain Theorem \ref{T:Weyl_intro} from the following result:

\begin{theo}\label{theo:lhk_laplace}
Let $\gamma \in (0, 2)$ and $A \subset D$ be any fixed open set. Denoting $\mathbf{S}_{\gamma}(t) = \mathbf{S}_{\gamma}(t; A) := \int_A \lhk_t(x, x) \mu_{\gamma}(dx)$, we have
\begin{align}\label{eq:lhk_averaged}
t \mathbf{S}_{\gamma}(t;A) \to c_{\gamma} \mu_{\gamma}(A)
\end{align}
in probability as $t \to 0^+$.
\end{theo}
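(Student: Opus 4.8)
The plan is to establish the short-time asymptotics of the local heat trace $\mathbf{S}_\gamma(t;A)$ by analysing the on-diagonal heat kernel $\lhk_t(x,x)$ pointwise, and then integrating against $\mu_\gamma$. The starting point is the observation that, by the time-change construction of Liouville Brownian motion, $\lhk_t(x,x)$ is governed by the behaviour of the GMC measure $\mu_\gamma$ in a small Euclidean neighbourhood of $x$. More precisely, for the Liouville Brownian motion started at $x$, the on-diagonal heat kernel $\lhk_t(x,x)$ at small $t$ is comparable to $1/\mu_\gamma(B(x,r_t))$ for a suitable (random) radius $r_t$ determined by the local time–area relationship; this forces us to understand the multifractal behaviour of $\mu_\gamma$ around a typical point, which is precisely captured by Sheffield's $\gamma$-quantum cone after a coordinate change $x \mapsto$ scale. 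Using the rooted measure / KPZ-type change of measure (integrating $x$ against $\mu_\gamma(dx)$ effectively tilts the law of the field near $x$ to that of a quantum cone), one rewrites $\mathbb{E}[t\mathbf{S}_\gamma(t;A)]$ as $\mu_\gamma(A)$ times an expectation over the quantum cone environment of a functional of the heat kernel at the origin. First I would make this reduction precise: establish a Cameron–Martin / Girsanov computation showing $\mathbb{E}\left[\int_A \lhk_t(x,x)\,\mu_\gamma(dx)\right] = \int_A \mathbb{E}^{\text{cone}}_x\!\left[\lhk_t^{\text{cone}}(0,0)\right]dx$ up to boundary and conformal-radius corrections that are negligible as $t\to 0$.

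Second, with the quantum-cone environment in hand, I would compute the limit $\lim_{t\to 0} t\,\mathbb{E}^{\text{cone}}[\lhk_t^{\text{cone}}(0,0)]$. Here the radial decomposition of the cone field — its projection onto the radial part is a Brownian motion with drift, namely $B_t - (Q-\gamma)t$ in the bulk, while the conditioning to stay on the relevant side contributes the term with $\mathcal{B}^m_t$ — produces exactly the two expectations appearing in \eqref{eq:constant}. The function $\mathcal{I}(x) = xe^{-x}$ enters because $t\lhk_t(0,0)$, after the time change, is (asymptotically) of the form $(\text{mass at scale } s)\times e^{-(\text{mass at scale }s)}$ integrated over the logarithmic scale $s = \log(1/r)$, i.e. an occupation-time integral against a GMC of a one-dimensional log-correlated (in fact, after projection, Brownian) field. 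Carrying this out amounts to: (i) writing the heat kernel via the resolvent/Green's function expansion and isolating the dominant scale; (ii) using the exact law of the radial part of the quantum cone to turn the scale integral into the Brownian integrals $\int_0^\infty \mathcal{I}(e^{\gamma(B_t-mt)})dt$ and $\int_0^\infty\mathcal{I}(e^{\gamma\mathcal{B}^m_t})dt$ with $m = Q-\gamma$; (iii) checking integrability so Fubini and dominated convergence apply (the drift $m>0$ guarantees the integrals converge). This identifies the constant as $c_\gamma = c_\gamma(Q-\gamma)$.

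Third, to upgrade convergence in expectation to convergence in probability, I would control the variance (or a suitable second moment) of $t\mathbf{S}_\gamma(t;A)$. The key point is that $\lhk_t(x,x)$ and $\lhk_t(y,y)$ become asymptotically independent once $|x-y|$ is large compared with the diffusive scale, since each depends (to leading order) only on the field near its own base point; so $\mathrm{Var}(t\mathbf{S}_\gamma(t;A)) \to 0$. Concretely I would split the double integral $\int_A\int_A$ into a near-diagonal part (small contribution by an a priori moment bound on $\lhk_t(x,x)$, using that $\mu_\gamma$ gives small mass to small balls and the heat kernel has at most polynomial-in-$1/t$ moments from \cite{MRVZ, AK2016}) and a far-diagonal part where a decorrelation estimate for the GFF — conditioning on the field outside two well-separated balls and using near-independence of the restrictions — shows the covariance decays. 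Then Chebyshev gives the result, with the open set $A$ handled by monotonicity (inner/outer approximation) and the conformal-radius weight $R(x;D)^{\gamma^2/2}$ absorbed into lower-order terms. The main obstacle I anticipate is Step 2, specifically the rigorous identification of the scale integral with the two Brownian functionals: one must show that the dominant contribution to $t\lhk_t(0,0)$ genuinely comes from a single mesoscopic scale $r \asymp \sqrt t$ (so that the multifractal fluctuations at other scales do not contribute to the trace even though they do contribute pointwise — this is the "pointwise fluctuations cancel" phenomenon highlighted in the abstract), and to control the error terms in the heat-kernel expansion uniformly enough to pass to the limit. The decorrelation in Step 3 is technically involved but should follow standard GMC/GFF localisation arguments.
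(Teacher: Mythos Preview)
Your proposal has a genuine structural gap: you attempt to analyse $t\,\lhk_t(x,x)$ directly at fixed $t$, but this is precisely what the paper argues cannot be done and must be circumvented. Indeed the paper \emph{proves} (Theorem~\ref{T:HK}) that after rooting at $x\sim\mu_\gamma$ and averaging over the field, the Laplace-smoothed version $J_\gamma^\lambda(x)=\lambda\int_0^\infty e^{-\lambda t}t\,\lhk_t(x,x)\,dt$ converges to a \emph{non-constant} random variable $J_\gamma^\infty$; consequently $t\,\lhk_t(x,x)$ cannot converge to $c_\gamma$ in probability, and the fixed-$t$ limit in your Step~2 is at best a conjecture (the paper states it as such). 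Your Step~2 also misidentifies the origin of $\mathcal{I}(x)=xe^{-x}$: it does \emph{not} come from any heat-kernel expansion of the form ``mass at scale $s$ times $e^{-\text{mass}}$''. It appears because the paper first passes to the Laplace transform $\int_0^\infty e^{-\lambda u}\,u\,\mathbf{S}_\gamma(u)\,du$, and then the bridge decomposition (Lemma~\ref{L:bbDec}) turns the weight $u\,e^{-\lambda u}$ into $F_\gamma(\mathbf{b})e^{-\lambda F_\gamma(\mathbf{b})}=\lambda^{-1}\mathcal{I}(\lambda F_\gamma(\mathbf{b}))$ via the time-change. The actual route is: prove $L^1$-convergence of $\lambda\int_0^\infty e^{-\lambda u}u\,\mathbf{S}_\gamma(u)\,du\to c_\gamma\mu_\gamma(A)$ (Theorem~\ref{theo:Weyl}), then apply a probabilistic Tauberian theorem plus an asymptotic-differentiation lemma exploiting the monotonicity of $t\mapsto\mathbf{S}_\gamma(t)$ (Appendix~\ref{app:probasy}) to recover the fixed-$t$ statement. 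The integrated asymptotics is what makes the problem tractable; your direct fixed-$t$ approach has no known way forward.

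There is a second gap in Step~3: for $\gamma\in[1,2)$ the measure $\mu_\gamma(A)$ is not in $L^2$, so the variance of $t\,\mathbf{S}_\gamma(t;A)$ is infinite and a naive second-moment/Chebyshev argument fails. The paper addresses this by restricting to the good event $\mathcal{G}_{[n_0,\infty)}(x)=\{h_{2^{-n}}(x)\le\alpha n\log 2\ \forall n\ge n_0\}$ with $\alpha>\gamma$, showing in $L^1$ that the bad-event contribution is negligible (Lemma~\ref{lem:bad_event}), and only then running a second-moment computation on the truncated measure $\mu_\gamma^{\kappa,n_0}$ (Lemmas~\ref{lem:cross_uniform}--\ref{lem:diagonal_pointwise}). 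Your decorrelation sketch does not address this integrability obstruction.
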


\medskip \noindent \textbf{Pointwise asymptotics.} Since $A$ was an arbitrary open subset of $D$, it is natural to wonder if the asymptotics in Theorem \ref{theo:lhk_laplace} holds pointwise. In other words, if we sample $x$ from the Liouville measure $\mu_\gamma$ and fix it, does $\mathbf{p}^{\gamma, D}_t (x,x)$ behave asymptotically (in probability) like $c_\gamma /t$ as $t \to 0^+$? 

It turns out that the small-time behaviour of the heat kernel is much more subtle. We can in fact \emph{prove} that the answer to the above question is negative by establishing the following result:

\begin{theo} \label{T:HK}
Let $\gamma \in (0, 2)$. Sampling from  $\mu_\gamma$, 
$$
J_\gamma^\lambda(x) = \lambda \int_0^\infty e^{-\lambda t} t \lhk_t (x,x) dt  \xrightarrow{\lambda \to \infty} J_\gamma^\infty
$$
in distribution (where the average is over the law of the Gaussian free field $h$). Here $J_\gamma^\infty \in (0, \infty)$ is a non-constant random variable with expectation $c_\gamma$. More precisely, for any $f\in C_b(\overline{D} \times \mathbb{R}_+)$, we have
\begin{equation}\label{E:HK_Taub}
\mathbb{E}\left[ \int_D \mu_\gamma (dx) f (x, J_\gamma^\lambda(x) )  \right] \xrightarrow{\lambda \to \infty} 
\mathbb{E}\left[ \int_D \mu_{\gamma}(dx)\mathbb{E} [ f(x, J_\gamma^\infty) ] \right]
= \int_D dx R(x; D)^{\frac{\gamma^2}{2}} \mathbb{E} [ f(x, J_\gamma^\infty) ].
\end{equation}

\end{theo}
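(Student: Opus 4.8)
The plan is to combine a Girsanov/rooting argument for the Liouville measure with a localisation of the heat kernel at small times and the exact scale-invariance of Sheffield's $\gamma$-quantum cone. First I would rewrite the left-hand side of \eqref{E:HK_Taub} using the Cameron--Martin formula for Gaussian multiplicative chaos: by the definition \eqref{eq:def_LQG},
\begin{equation*}
\mathbb{E}\Big[\int_D \mu_\gamma(dx)\, f\big(x, J_\gamma^\lambda(x)\big)\Big]
= \int_D dx\, R(x;D)^{\gamma^2/2}\,\widetilde{\mathbb{E}}_x\big[f\big(x, J_\gamma^\lambda(x)\big)\big],
\end{equation*}
where under $\widetilde{\mathbb{E}}_x$ the underlying field is $h + \gamma G_0^D(x,\cdot)$, i.e.\ the field $h$ with a $\gamma$-logarithmic singularity inserted at $x$. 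Since $f$ is bounded and $\int_D R(x;D)^{\gamma^2/2}\,dx<\infty$, by dominated convergence in $x$ it suffices to prove that for Lebesgue-a.e.\ $x\in D$, under this shifted field, $J_\gamma^\lambda(x)\to J_\gamma^\infty$ in distribution as $\lambda\to\infty$, with the law of $J_\gamma^\infty$ independent of $x$, supported in $(0,\infty)$, non-constant, and with mean $c_\gamma$. This reduces \Cref{T:HK} to a statement about the on-diagonal heat kernel near a $\mu_\gamma$-typical point; this is precisely the point at which \Cref{T:HK} departs from \Cref{theo:lhk_laplace}, where the extra averaging of $x$ against $\mu_\gamma$ makes the fluctuations cancel.

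Next I would localise. By the spatial Markov property of the GFF, near $x$ the shifted field is $-\gamma\log|\cdot - x|$ plus a field that, after zooming in at $x$, has the law of the $\gamma$-quantum cone $\mathcal{C}$ with marked point $o$; this is the standard identification of the $\gamma$-rooted field with the $\gamma$-quantum cone. Using small-time heat-kernel bounds together with the time-change representation of Liouville Brownian motion — the on-diagonal heat kernel is governed by how fast the Liouville clock accumulates mass on the small scales explored by the underlying Brownian motion started at $x$ — I would show that $t\,\lhk_t(x,x) = t\,\mathbf{p}_t^{\gamma,\mathcal{C}}(o,o) + (\text{error})$ as $t\to 0^+$, with an error negligible after the Abel average (the boundary of $D$ and the harmonic part of the field only affect the macroscopic scales). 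It then suffices to analyse $J_\gamma^\lambda$ on the exact cone $\mathcal{C}$, where the key input is that $\mathcal{C}$ is \emph{exactly} scale-invariant: rescaling space by any $c>0$, together with the corresponding additive shift of the field, preserves its law; under such an operation Liouville area and Liouville-Brownian-motion time both rescale by the same deterministic factor. Writing $Z_t := t\,\mathbf{p}_t^{\gamma,\mathcal{C}}(o,o)$ and substituting $u=\lambda t$ gives $J_\gamma^\lambda(o)=\int_0^\infty e^{-u}Z_{u/\lambda}\,du$, while scale-invariance forces $\{Z_{u/\lambda}\}_{u>0}\overset{d}{=}\{Z_u\}_{u>0}$; hence in fact $J_\gamma^\lambda(o)\overset{d}{=}\int_0^\infty e^{-u}Z_u\,du=:J_\gamma^\infty$ for \emph{every} $\lambda$, so the limit $\lambda\to\infty$ is only needed to absorb the localisation error on the bounded domain. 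Positivity of $\lhk$ gives $J_\gamma^\infty>0$ a.s., and non-constancy follows either by exhibiting two positive-probability events, concerning the radial field around $o$, on which $J_\gamma^\infty$ lies in disjoint ranges, or by computing its second moment and comparing with $c_\gamma^2$.

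It remains to compute $\mathbb{E}[J_\gamma^\infty] = \int_0^\infty e^{-u}\,\mathbb{E}[Z_u]\,du = \mathbb{E}[\mathbf{p}_1^{\gamma,\mathcal{C}}(o,o)]$ (the last step again by scale-invariance) and to identify it with $c_\gamma=c_\gamma(Q-\gamma)$. Using the time-change representation together with Sheffield's decomposition of the radial part of the cone field around $o$ — which, in the logarithmic coordinate, splits at its last running infimum into two independent pieces, a Brownian motion with drift and a Brownian motion with drift conditioned to stay non-negative — I would express this expectation via Fubini as a sum of two integrals. In these, the quantum mass carried at scale $e^{-t}$ is, after the GMC normalisation, of the form $e^{\gamma(B_t-(Q-\gamma)t)}$ and $e^{\gamma\mathcal{B}_t^{Q-\gamma}}$ respectively, while the Abel transform $\lambda\int_0^\infty e^{-\lambda s}\,s\,(\cdot)\,ds$ applied to the single-scale heat-kernel profile produces exactly the kernel $\mathcal{I}(x)=xe^{-x}$ of \eqref{eq:Ical}; collecting the $\pi$'s coming from the heat-kernel and Green's-function conventions then yields \eqref{eq:constant}, which \Cref{theo:constant} evaluates explicitly.

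I expect the main obstacle to be the distributional convergence itself, and in particular the localisation step: one needs $t\,\lhk_t(x,x)$ and $t\,\mathbf{p}_t^{\gamma,\mathcal{C}}(o,o)$ to agree up to an error controlled uniformly enough to survive the Abel average, together with tightness of $\{J_\gamma^\lambda(x)\}_\lambda$ and identification of the subsequential limits as the functional of $\mathcal{C}$ above. This requires sharp two-sided small-time heat-kernel estimates and a careful analysis of the multiscale (radial GMC) structure of the field near $x$; by contrast, once the representation in terms of the cone's radial process is available, the mean computation is an explicit, if somewhat lengthy, calculation.
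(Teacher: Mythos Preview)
Your high-level strategy (Girsanov rooting, localisation to the quantum cone, scale invariance) is the right intuition, but the localisation step as you describe it is a genuine gap. You propose to compare $t\,\lhk_t(x,x)$ with a cone heat kernel $t\,\mathbf{p}_t^{\gamma,\mathcal{C}}(o,o)$ and control the error; but this would require precisely the ``sharp two-sided small-time heat-kernel estimates'' that you yourself flag as the main obstacle, and the paper is explicit (Section~1.4) that such pointwise estimates are out of reach. Moreover, the on-diagonal heat kernel of LBM on the infinite cone at its apex is never defined in the paper, and making sense of it is itself nontrivial. The paper sidesteps both issues by applying the bridge decomposition (\Cref{L:bbDec}) \emph{before} any localisation: after the Girsanov step one has
\[
J_\gamma^\lambda(x)=\int_0^\infty \frac{du}{2\pi u}\,\bdec{x}{x}{u}\big[\mathcal{I}\big(\lambda F_\gamma^{\{x\}}(\mathbf{b})\big)\,1_{\{u<\tau_D(\mathbf{b})\}}\big],
\]
a concrete random variable in the field and the bridge. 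All localisation (truncating $u$, restricting the bridge to $\overline{\mathcal H}_n$, replacing the local field by an exactly scale-invariant one) is then done at the level of the clock $F_\gamma$, never the heat kernel, using only the bound $\mathcal{I}\le 1$ and \Cref{lem:G1_uniform}.

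A second point you miss: once localised, the Gaussian comparison used for the heat trace in Section~3 is unavailable here because $f$ is an arbitrary bounded test function, not a convex one. The paper instead uses the explicit decomposition $h^{\eta\mathbb{D}}=X^{\eta\mathbb{D}}-Y^{\eta\mathbb{D}}$ of \Cref{lem:GFF_dec} to isolate an exactly scale-invariant piece, and then Williams' path decomposition (\Cref{lem:time_reversal}) on the radial part to produce the two-sided process $\beta^{Q-\gamma}$ and the explicit formula for $J_\gamma^\infty$ in \Cref{lem:Jinfty_limit}. Your heuristic that on the cone $J_\gamma^\lambda\overset{d}{=}J_\gamma^\infty$ for every $\lambda$ is essentially what emerges (see the passage from \eqref{eq:pre-beta} to \eqref{eq:pre-limit}), but it is realised inside the bridge integral via a stopping time $\widetilde\tau$ and time reversal, not via an a priori cone heat kernel. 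The mean computation $\mathbb{E}[J_\gamma^\infty]=c_\gamma$ is then obtained by reversing these steps and invoking \Cref{lem:main}, rather than by the direct argument you sketch.
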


By adapting the proof of \Cref{T:HK}, one could generalise the above to a multiple-point setting and show e.g. for any $f \in C_b(\overline{D} \times \overline{D} \times \mathbb{R}_+ \times \mathbb{R}_+)$, 
\begin{align*}
& \mathbb{E}\left[\int_{D\times D} \frac{\mu_\gamma (dx)}{\mu_\gamma (D)}\frac{\mu_\gamma (dy)}{\mu_\gamma (D)} f (x, y, J_\gamma^\lambda(x), J_\gamma^\lambda(y) )  \right] \\
& \qquad \xrightarrow{\lambda \to \infty}  \mathbb{E}\left[\int_{D\times D} \frac{\mu_\gamma (dx)}{\mu_\gamma (D)}\frac{\mu_\gamma (dy)}{\mu_\gamma (D)} f (x, y, J_\gamma^\infty(x), J_\gamma^\infty(y) )  \right]
\end{align*}
where $J_\gamma^\infty(\cdot)$ are i.i.d. random variables independent of the Gaussian free field $h(\cdot)$.

We now explain why this result rules out that $ t \mathbf{p}^{\gamma, D}_t (x,x)$ converges to any constant in probability. Suppose by contradiction that 
$$
 t \mathbf{p}^{\gamma, D}_t (x,x) \to c
$$
in probability. Then by applying our probabilistic extension of the Hardy--Littlewood Tauberian theorem (see \Cref{theo:tauberian}) we would then have $J^\lambda_\gamma(x)$ converges (in probability) as $\lambda \to \infty$ to $c$. This would imply that $J^\infty_\gamma$ is the constant random variable equal to $c$, which is a contradiction. 

Note that if the Tauberian theorem (\Cref{theo:tauberian}) could be extended to cover convergence in distribution, Theorem \ref{T:HK} would imply that if we sample $x$ from the Liouville measure $\mu_\gamma(dx)$, then the distribution of $t \mathbf{p}^{\gamma, D}_t (x,x) $ converges (when averaged with respect to the law of the Gaussian free field $h$) to a nontrivial random variable. We formulate this as a conjecture:

\begin{conjecture} Let $\gamma \in (0, 2)$. Sample $x$ from Liouville measure. Then as $t\to 0^+$ and we average of the law of the Gaussian free field $h$,
$$
t \mathbf{p}^{\gamma, D}_t (x,x) \xrightarrow[t \to 0^+]{d} \xi_{\gamma}
$$
for some (non-constant) random variable $\xi_\gamma > 0$. In other words, 
$$
\E \left[ \int_D \mu_\gamma(dx) f(x, t\mathbf{p}^{\gamma, D}_t(x,x) ) \right] \to  \int_D \E [ f(x,\xi_\gamma) ] R(x;D)^{\frac{\gamma^2}{2}} dx 
$$
for any test function $f \in C_b(\overline{D} \times \mathbb{R}_+)$.
\end{conjecture}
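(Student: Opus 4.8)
\medskip
\noindent\emph{Proof idea.} To prove \Cref{T:HK} the plan is to combine three ingredients: a \emph{localisation} principle, to the effect that $J_\gamma^\lambda(x)$ depends -- up to an error vanishing as $\lambda\to\infty$ -- only on the Gaussian free field near $x$, in fact only on its radial profile at Euclidean scales $\lesssim\lambda^{-1/2}$; the rooted (Peyri\`ere\,/\,Girsanov) description of the field seen from a $\mu_\gamma$-typical point together with the exact scale invariance of the circle-average decomposition, which upgrades tightness to a genuine distributional limit and identifies it with a functional of Sheffield's $\gamma$-quantum cone; and an \emph{explicit evaluation} of that functional. As a preliminary reduction, the substitution $u=\lambda t$ gives $J_\gamma^\lambda(x)=\int_0^\infty e^{-u}\big(t\,\lhk_t(x,x)\big)\big|_{t=u/\lambda}\,du$, and, writing $\mathbf r_\lambda(x,x)=\int_0^\infty e^{-\lambda t}\lhk_t(x,x)\,dt$ for the Liouville resolvent density, $J_\gamma^\lambda(x)=-\lambda\,\partial_\lambda\mathbf r_\lambda(x,x)$. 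The first genuine step is a representation of $\lhk_t(x,x)$ (equivalently of $\mathbf r_\lambda(x,x)$) localised around $x$: via an excursion-type decomposition of the planar Brownian path $W$ between concentric circles centred at $x$, together with the Liouville clock $A$ -- the positive continuous additive functional of $W$ with Revuz measure $\mu_\gamma$ -- accumulated on each dyadic annulus, the governing quantity being the occupation identity $\mathbb{E}_x\big[A_{\tau_{e^{-s_0}}}\big]=\tfrac1\pi\int_{s_0}^\infty\mu_\gamma\big(B(x,e^{-s})\big)\,ds$ (obtained by integrating two-dimensional Brownian local time against $\mu_\gamma$ and integrating by parts).

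Granting such a representation, the core of the argument is the scaling limit. Under the rooted measure $\mathbb{E}\big[\int_D\mu_\gamma(dx)\,(\cdot)\big]$ the field seen from $x$ is $h+\gamma G_0^D(x,\cdot)$, so $s\mapsto h_{e^{-s}}(x)$ is, at small scales, a standard Brownian motion which under the tilt acquires drift $\gamma$; together with the deterministic scaling factors in the definition of $\mu_\gamma$ this makes $s\mapsto\tfrac1\gamma\log\mu_\gamma(B(x,e^{-s}))$ behave, for $s\ge0$, like $B_s-(Q-\gamma)s$ for a standard Brownian motion $B$, and the complementary ``outward'' portion ($s\le0$) like the same drifted motion conditioned to stay non-negative -- precisely the processes $(B_t-mt)_{t\ge0}$ and $(\mathcal{B}_t^m)_{t\ge0}$ of \eqref{eq:constant} with $m=Q-\gamma$ -- while the angular (non-radial) part of the field is scale invariant in the limit and, crucially, decouples from the coarse field away from $x$. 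Substituting this into the representation above and invoking the exact Markov\,/\,self-similarity structure of the radial decomposition shows that $t\,\lhk_t(x,x)$, viewed as a process in $\log(1/t)$, converges to a stationary process, so that $J_\gamma^\lambda(x)$ converges in distribution to a limit $J_\gamma^\infty$. Transporting a bounded test function $f(x,\cdot)$ and, simultaneously, a bounded functional of the coarse field through the same computation then yields the averaged statement \eqref{E:HK_Taub}, the factor $R(x;D)^{\gamma^2/2}\,dx=\mathbb{E}[\mu_\gamma(dx)]$ arising from the rooting and $J_\gamma^\infty$ being independent of $h$ because of the decoupling.

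It remains to identify and characterise the limit. The excursion representation gives $J_\gamma^\infty=\tfrac1\pi\int_{\mathbb{R}}\mathcal{I}\big(e^{\gamma X_s}\big)\,ds$, where $X$ is the two-sided radial process ($X_s=B_s-ms$ for $s\ge0$, and $X_{-s}=\mathcal{B}_s^m$ for $s\ge0$, the two halves independent); this lies in $(0,\infty)$ a.s.\ since $\mathcal{I}>0$ on $(0,\infty)$ while $e^{\gamma X_s}\to0$ (resp.\ $\to\infty$) exponentially fast as $s\to+\infty$ (resp.\ $s\to-\infty$), and $\mathbb{E}[J_\gamma^\infty]=c_\gamma(Q-\gamma)=c_\gamma$ by \eqref{eq:constant}; alternatively, once convergence in distribution and uniform integrability of $\{J_\gamma^\lambda(x)\}$ under the rooted measure are established, the constant $c_\gamma$ can be read off from the already-proven averaged asymptotics (\Cref{theo:lhk_laplace}) via \eqref{eq:lhk_counting}. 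Non-constancy of $J_\gamma^\infty$ follows from its explicit nondegenerate dependence on $B$ -- its law has strictly positive variance, as can be checked by a short coupling\,/\,perturbation argument on the radial representation -- and is in any case forced: a constant limit would, through the probabilistic Tauberian theorem \Cref{theo:tauberian}, make $t\,\lhk_t(x,x)$ converge in probability, which the genuinely fluctuating radial profile prevents.

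The principal obstacle is the localisation step: turning the heuristic ``$J_\gamma^\lambda(x)$ only sees the field near $x$'' into a quantitative statement. This requires a uniform displacement (off-diagonal) bound for Liouville Brownian motion, ensuring that the path contributing to $\lhk_t(x,x)$ for small $t$ stays, with overwhelming probability, within Euclidean distance $t^{1/2-o(1)}$ of $x$ -- so that neither $\partial D$ nor the far field is felt -- together with control, at finite scale $\lambda$, of the discrepancy between the true radial decomposition of the GFF near $x$ and its exact scale-invariant limit, which enters through the circle-average Markov property but must be propagated through the clock $A$ and the excursion decomposition. Estimates of this type already underlie \Cref{theo:lhk_laplace}; the extra difficulty here is that they must be maintained \emph{pathwise in $x$}, not merely after integration against $\mu_\gamma$, in order to obtain a statement about the \emph{law} of $J_\gamma^\lambda(x)$, and that one must extract a genuine limit (not merely subsequential limits) by using the \emph{exact} self-similarity available at the level of the quantum cone.
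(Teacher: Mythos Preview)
The statement you were asked to prove is a \emph{Conjecture} in the paper, not a theorem; the paper does not prove it and explicitly leaves it open. The remark immediately preceding the conjecture explains why: Theorem~\ref{T:HK} establishes convergence in distribution of the Laplace-averaged quantity $J_\gamma^\lambda(x)=\lambda\int_0^\infty e^{-\lambda t}\,t\,\lhk_t(x,x)\,dt$, and the conjecture about $t\,\lhk_t(x,x)$ itself would follow if the probabilistic Tauberian theorem (Theorem~\ref{theo:tauberian}) could be upgraded from convergence in probability to convergence in distribution --- but no such extension is available.

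Your proposal conflates the two statements. You open by announcing a proof of Theorem~\ref{T:HK}, and indeed the ingredients you list (rooting/Girsanov, radial--lateral decomposition, identification with the quantum cone) are broadly those used in Section~\ref{sec:proof_ptwise} to prove that theorem. But the additional step you need for the conjecture --- that $t\,\lhk_t(x,x)$, as a function of $\log(1/t)$, converges to a stationary process --- is precisely the missing piece, and your sketch does not supply it. The paper's machinery (the bridge decomposition, Lemma~\ref{L:bbDec}) only gives access to \emph{time-integrals} $\int_0^\infty f(t)\,\lhk_t(x,x)\,dt$; it says nothing about $\lhk_t(x,x)$ at a single fixed Liouville time $t$. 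Your alternative route via an ``excursion-type decomposition between concentric circles'' is not the bridge decomposition and is not developed: to pin down a fixed Liouville time you would have to invert the clock $A$, and the whole point of the bridge decomposition is to avoid that inversion by integrating $t$ out. The occupation identity you quote, $\mathbb{E}_x[A_{\tau_{e^{-s_0}}}]=\tfrac{1}{\pi}\int_{s_0}^\infty \mu_\gamma(B(x,e^{-s}))\,ds$, is a statement about a Brownian stopping time, not about the Liouville-time-$t$ heat kernel.

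In short: the part of your outline that parallels the paper's actual proof of Theorem~\ref{T:HK} is fine in spirit (though the paper works via the bridge decomposition and Lemma~\ref{lem:main} rather than an excursion picture, and the limit $J_\gamma^\infty$ it obtains also involves the lateral field $\widehat X$ and the Brownian bridge, not just the radial process). But your claim that the same mechanism delivers the fixed-$t$ law is exactly the open problem, and nothing in the proposal bridges that gap.
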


We also believe that if we sample multiple points $x_1, \ldots, x_n$ from the Liouville measure $\mu_\gamma$ then the same convergence holds jointly with the limiting random variables $\xi_{\gamma, 1}, \ldots, \xi_{\gamma, n}$ being independent of each other as well as the Gaussian free field, similar to what we discussed after Theorem \ref{T:HK}.

\medskip Coming back to quenched heat kernel fluctuations (i.e., when we do not average over the law of the environment) Theorem \ref{T:HK} suggests that $t\mathbf{p}^{\gamma,D}_{t} (x,x)$ has considerable fluctuations. In fact we believe there are nontrivial logarithmic fluctuations in both directions (see below for further discussions).

\subsection{Previous work and our approach} 
\label{SS:mainidea}
\paragraph{Existing results for self-similar fractals}
Weyl laws in a random geometric context were derived for finitely ramified, random recursive fractals, starting in particular with the work of Hambly \cite{Hambly}.  Croydon and Hambly \cite{CH2008, CH2010} obtained similar results for the random fractals given respectively by Aldous' continuous random tree and more generally stable trees. The paper by Charmoy, Croydon and Hambly \cite{CharmoyCroydonHambly} obtained considerable refinements including Gaussian fluctuations. (We thank Takashi Kumagai for drawing our attention to these works and Ben Hambly for subsequent highly illuminating discussions). See also earlier works e.g. by Kigami and Lapidus on self-similar (non-random) fractals such as the Sierpinski gasket \cite{kigami1993weyl} where however periodicity phenomena preclude a strict Weyl asymptotics for the eigenvalues. 

Unlike the case of smooth geometries, the analysis of short-time behaviour of heat kernel is extremely challenging on fractals. The best one might hope with current technology is a two-sided sub-Gaussian bound on $\mathbf{p}^{\gamma, D}_t (x,y)$, but obviously such estimates will not identify the leading order coefficient in Weyl laws. This is further complicated by the fact that the heat kernel is expected to exhibit non-trivial fluctuations on the diagonal (see e.g. \cite{Kaj2013} for results concerning p.c.f. fractals), and thus any approaches that require short-time asymptotic expansions of the heat kernel are bound to be infeasible even for the weaker problem of identifying the correct order of magnitude (say, up to constant) of the heat trace.

Therefore, instead of using the trace formula, the aforementioned works investigated the spectral problems via the classical Dirichlet-Neumann bracketing method. Essentially, one performs a multi-scale decomposition of the domain and derives the asymptotics for the associated eigenvalue counting function or heat trace using techniques from renewal theory (where one renewal corresponds to changing scale). Despite its power and elegance, the renewal framework does not seem applicable to LQG as it relies heavily on a strong form of independence across scales which is not present in the context of LQG. Moreover, quantitative control of the difference between Dirichlet and Neumann eigenvalue counting functions is crucial for the application of the bracketing method. Unfortunately these estimates are not available beyond the class of finitely ramified fractals (with the only exception of Sierpinski carpets), and a very different approach is needed for the spectral analysis of LQG surfaces.

\paragraph{Existing results for Liouville Brownian motion}
Not much is known about the spectral geometry of LQG surfaces. Prior to our work, the only available result in this direction is that the spectral dimension is equal to two: with probability $1$, we have
\begin{align*}
    \lim_{t \to \infty} \frac{2\log\mathbf{p}^{\gamma, D}_t (x,x)}{-\log t} = 2 \qquad \text{for $\mu_\gamma$-a.e. $x\in D$}.
\end{align*}

\noindent This behaviour was predicted by Ambj\o rn et al. \cite{ABNRW1998} in the physics literature, and first established by Rhodes and Vargas in \cite{RhodesVargas_spectral}. It is interesting to note that \cite[Remark 3.7]{RhodesVargas_spectral} suggested that one might investigate the convergence of $t\mathbf{p}^{\gamma, D}_t (x,x)$ to some random variable as $t \to 0^+$, which we have now shown is impossible (in the sense of convergence in probability) as a consequence of our \Cref{T:HK}.

The challenging problem of obtaining pointwise estimates for different variants of Liouville heat kernel was also explored in the work of \cite{AK2016} and \cite{MRVZ}.
For our setting, \cite[Theorems 1.2 and 1.3]{AK2016} led to another proof of the spectral dimension. The same paper also provided some estimates for the logarithmic corrections, and further explained why one could not hope for the complete removal of such corrections. 

\medskip As such, even the weaker goal of strengthening $\mathbf{S}_{\gamma}(t) = t^{-1+o(1)}$ to the tightness of $t\mathbf{S}_{\gamma}(t)$ (as $t \to 0^+$) presents very serious difficulties requiring new insights. Our main theorems are thus a significant improvement over existing results in both the LQG and fractal literature in that:
\begin{itemize}
    \item we are the first to establish not only the tightness of the rescaled LQG heat trace, but also convergence of the leading order coefficient, all achieved without the bracketing method; and
    
    \item we identify the leading order coefficient explicitly, including the formula for the special constant $c_\gamma$ and its relation to the on-diagonal behaviour of the heat kernel, all of which would not have been possible even if the renewal techniques had been applicable.
\end{itemize}

To the best of our knowledge, our paper is the \textbf{first successful application of the trace formula} in a random geometric context where self-similarity and independence are absent, and we now explain at a high level the novelty of our analysis.

\paragraph{Main idea}
Let us focus on the proof of Theorem \ref{theo:lhk_laplace}, and recall our goal of establishing (for fixed open subset $A \subset D$)
\begin{equation}\label{eq:goalintro}
    \mathbf{S}_{\gamma}(t) 
    = \mathbf{S}_{\gamma}(t; A) 
    :=  \int_A \lhk_t(x, x) \mu_{\gamma}(dx)
    \overset{t \to 0}{\sim} \frac{c_{\gamma} \mu_{\gamma}(A) }{t}
\end{equation}
in probability (where $a_n \sim b_n$ in probability means $a_n /b_n \to 1$ in probability as $n \to \infty$). 

\medskip Given that fine estimates for Liouville heat kernel are out of reach with standard machinery as we discussed just now, it is very difficult to have a direct handle on $\mathbf{S}_{\gamma}(t)$ at a fixed time $t>0$. Instead, we take advantage of the fact that Liouville Brownian motion is a time-change of ordinary Brownian motion. This leads us naturally to try to establish a suitable `integrated asymptotics': that is, we seek to establish a form of \eqref{eq:goalintro} where we integrate with respect to time.

\medskip It turns out that this integrated asymptotics is equivalent to \eqref{eq:goalintro}. It should be noted, however, that the equivalence between the pointwise and integrated probabilistic asymptotics should not be seen as immediate consequence of deterministic counterparts (i.e., Tauberian theorems). Indeed extra considerations are needed because our probabilistic asymptotics only hold in the sense of convergence in probability and not almost surely (see \Cref{app:probasy}). 

\medskip At the heart of our proof of the integrated asymptotics is the bridge decomposition (\cite{RhodesVargas_spectral}, \cite{HKPZ}; see below for more details) which relates time integrals involving the Liouville heat kernel to their Euclidean counterparts. This exploits the fact that Liouville Brownian motion is a time-change of ordinary Brownian motion (a feature of conformal invariance) and lies behind the ``solvability'' (including computation of leading constants) of our results.

\medskip Let us give a few more details about the integrated version of  \eqref{eq:goalintro} we consider. As a first guess, one may naively consider a quantity such as $\int_t^1 \mathbf{S}_\gamma(s) ds$. In that case it would appear that the first step would be to prove that this blows up logarithmically as $t\to 0$ with a proportionality constant dictated by \eqref{eq:goalintro} and then try to apply Tauberian theory. Unfortunately, this logarithmic behaviour falls precisely outside the scope of the most classical results in Tauberian theory even in the deterministic case: one would instead need to appeal to so-called de Haan theory (see e.g. \cite[Chapter IV.6]{Kor2004}), which is only applicable if one has good control over the subleading order terms, and this is out of question in our setting.

\medskip Luckily, there is a simple solution around this. 
Since  $t \mapsto \mathbf{S}_{\gamma}(t)$ is monotone in $t$, it suffices (see Lemma \ref{lem:asympdiff} for a proof) 
to establish the integrated asymptotics in probability
\begin{align}\label{eq:2nd_trans}
\int_0^t u \mathbf{S}_{\gamma}(u) du &\sim c_{\gamma} \mu_{\gamma}(A) t
\qquad \text{as} \qquad t \to 0^+,
\end{align}
(note that the multiplication by $u$ in the integral in the left hand side effectively changes the index of regular variation). Equivalently, by the probabilistic extension of the Tauberian theorem (see Theorem \ref{theo:tauberian}), it suffices to prove
\begin{align}
\label{eq:2nd_tau}
\int_0^\infty e^{-\lambda u} u\mathbf{S}_{\gamma}(u) du &\sim \frac{c_\gamma \mu_{\gamma}(A)}{\lambda}
\qquad \text{as} \qquad \lambda \to \infty
\end{align}
in probability. As already alluded to above, a key tool for obtaining \eqref{eq:2nd_tau} is the following bridge decomposition (\cite{RhodesVargas_spectral}, \cite{HKPZ}):

\begin{lem}\label{L:bbDec}
For any measurable $f: [0, \infty) \to [0, \infty)$, we have
\begin{align}\label{eq:bridge_dec}
\int_0^\infty f(t) \lhk_t(x, y) dt
= \int_0^\infty \bdec{x}{y}{t}[f(F_{\gamma}(\mathbf{b}))1_{\{t < \tau_D(\mathbf{b})\}}] p_t(x, y) dt
\end{align}

\noindent where

\begin{itemize}[leftmargin=*]
\item $\bdec{x}{y}{t} = $ law of Brownian bridge $(\mathbf{b}_s)_{s \le t}$ of duration $t$ from $x$ to $y$ (without killing); and 
\item for any process $\mathbf{b}$ defined on $\mathbb{R}^2$ with starting position $\mathbf{b}_0 \in D$ and duration $\ell = \ell(\mathbf{b})$:
\begin{itemize}
\item $\tau_D(\mathbf{b}) := \inf\{t > 0: \mathbf{b}_t \in \partial D\}$,
\item $F_{\gamma}(\mathbf{b})$ is the Liouville clock associated to $\mathbf{b}$, i.e. $F_\gamma(\mathbf{b}) := \int_0^\ell F_\gamma(ds; \mathbf{b})$ with
\begin{align}\label{eq:liouville_clock}
F_{\gamma}(ds; \mathbf{b}) := e^{\gamma h(\mathbf{b}_s) - \frac{\gamma^2}{2} \mathbb{E}[h(\mathbf{b}_s)^2]} R(\mathbf{b}_s; D)^{\frac{\gamma^2}{2}} 1_{\{\mathbf{b}_s \in D\}}ds,
\end{align}
\end{itemize}
\item $p_t(x, y)$ is the transition density of standard 2-dimensional Brownian motion (in particular $p_u(x,x) = 1/ (2\pi u)$ for any $u > 0$ and $x \in D$).
\end{itemize}
\end{lem}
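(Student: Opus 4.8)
The plan is to prove the bridge decomposition \eqref{eq:bridge_dec} by unpacking both sides through the time-change representation of Liouville Brownian motion and reducing everything to a statement about ordinary Brownian motion bridges. Recall that Liouville Brownian motion $\mathbf{B}$ is, by construction, $\mathbf{B}_t = W_{\sigma(t)}$ where $W$ is a standard planar Brownian motion (with killing at $\partial D$) and $\sigma$ is the inverse of the Liouville clock $t \mapsto F_\gamma([0,t]; W) = \int_0^t e^{\gamma h(W_s) - \frac{\gamma^2}{2}\mathbb{E}[h(W_s)^2]} R(W_s; D)^{\gamma^2/2}\, ds$. Consequently, for a test function $g$ on $D$, the semigroup of $\mathbf{B}$ satisfies $\mathbb{E}_x[g(\mathbf{B}_t)\mathbf{1}_{\{t < \zeta\}}] = \mathbb{E}_x[g(W_{\sigma(t)})\mathbf{1}_{\{\sigma(t) < \tau_D(W)\}}]$, and the heat kernel $\lhk_t(x, \cdot)$ is the density of this semigroup with respect to $\mu_\gamma$. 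The cleanest route is therefore to work with the \emph{time-integrated} quantity directly, since integrating against $t$ from $0$ to $\infty$ lets us change variables from Liouville time to Brownian time and avoid having to differentiate the (singular) time-change.

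Concretely, first I would fix $x, y \in D$ and write, for the left-hand side,
\begin{align*}
\int_0^\infty f(t)\, \lhk_t(x,y)\, dt = \text{(density in $y$, w.r.t.\ $\mu_\gamma$, of)} \quad \int_0^\infty f(t)\, \mathbb{P}_x(\mathbf{B}_t \in \cdot, \, t < \zeta)\, dt,
\end{align*}
and then substitute $\mathbf{B}_t = W_{\sigma(t)}$ and change variables $u = \sigma(t)$, i.e.\ $t = F_\gamma([0,u]; W)$, $dt = F_\gamma(du; W)$. This turns the left side into $\mathbb{E}_x\big[\int_0^{\tau_D(W)} f\big(F_\gamma([0,u];W)\big)\, F_\gamma(du; W)\, \delta(W_u \in \cdot)\big]$ in the appropriate density sense. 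Next I would disintegrate the law of the (killed) planar Brownian motion $W$ started at $x$ over its position at time $u$: the contribution of paths with $W_u \in dy$ and $u < \tau_D$ is governed by the full (non-killed) Brownian bridge from $x$ to $y$ of duration $u$ weighted by $p_u^{(\text{full})}(x,y) = p_u(x,y)$ and the event $\{u < \tau_D(\mathbf{b})\}$ — this is the standard bridge/excursion disintegration of Brownian motion, $\mathbb{P}_x(W_u \in dy, u < \tau_D) = \bdec{x}{y}{u}[\mathbf{1}_{\{u < \tau_D(\mathbf{b})\}}]\, p_u(x,y)\, dy$. Under this disintegration, both $F_\gamma([0,u]; W)$ and $F_\gamma(du; W)$ (and the killing event) depend on $W$ only through its path on $[0,u]$, so they become $F_\gamma([0,u];\mathbf{b})$ and $F_\gamma(du;\mathbf{b})$ under the bridge law. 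Finally, inside the bridge expectation I would undo the change of variables, going from $u$ (bridge duration in Brownian time) back to $t = F_\gamma([0,u];\mathbf{b})$, which precisely produces $\int_0^\infty \bdec{x}{y}{t}[f(F_\gamma(\mathbf{b}))\mathbf{1}_{\{t < \tau_D(\mathbf{b})\}}]\, p_t(x,y)\, dt$ after checking that the Jacobian and the time parametrisation of $p$ match — here one uses that the bridge of Brownian duration $u$, reparametrised, is again a bridge and the factor $p_u(x,y)$ reorganises correctly. Throughout I would work first with nonnegative simple $f$ and then pass to general measurable $f \ge 0$ by monotone convergence.

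The main obstacle is making the change of variables rigorous at the level of the \emph{densities} (the heat kernel), rather than just for integrals against smooth test functions, and ensuring measurability/joint continuity so that the identity holds almost surely simultaneously for all $x, y, f$ in the relevant classes. In particular one must be careful that the Liouville clock $u \mapsto F_\gamma([0,u];\mathbf{b})$ is almost surely continuous and strictly increasing along Brownian (bridge) paths — so that the substitution $t \leftrightarrow u$ is a genuine bijection — and that $F_\gamma(\mathbf{b}) = F_\gamma([0,\ell];\mathbf{b})$ is a.s.\ finite and positive for the bridge, which is where $\gamma < 2$ enters; these facts are available from \cite{Ber2015, GRV, RhodesVargas_spectral}. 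A secondary technical point is justifying the Fubini/disintegration step, i.e.\ that $\int_0^\infty f(t)\lhk_t(x,y)\,dt$ really is obtained as a density by integrating the bridge representation, which follows from the jointly continuous version of $\lhk$ from \cite{MRVZ, AK2016} together with the identification of $\lhk$ as the density of the Liouville semigroup with respect to $\mu_\gamma$. Once these measure-theoretic points are handled, the computation itself is just the two changes of variables sandwiching the classical Brownian bridge disintegration, and no new probabilistic input beyond the time-change construction of Liouville Brownian motion is needed.
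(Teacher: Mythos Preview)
The paper does not provide its own proof of this lemma; it cites \cite{RhodesVargas_spectral, HKPZ} and remarks that the argument there ``is straightforward to adapt'' to the killed setting. Your overall strategy --- pass from Liouville time to Brownian time via the clock $u\mapsto F_\gamma([0,u];W)$, then disintegrate the law of $W$ over its position at Brownian time $u$ into Brownian bridges --- is exactly the standard argument in those references, so you are on the right track.

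There is, however, a genuine confusion in your final step. You propose to ``undo the change of variables, going from $u$ (bridge duration in Brownian time) back to $t = F_\gamma([0,u];\mathbf{b})$''. This is both unnecessary and ill-posed. After the disintegration you already have
\[
\int_0^\infty du\, p_u(x,y)\, \bdec{x}{y}{u}\!\left[f(F_\gamma(\mathbf{b}))\,1_{\{u<\tau_D(\mathbf{b})\}}\right],
\]
which \emph{is} the right-hand side of \eqref{eq:bridge_dec} after renaming the dummy variable $u\to t$. The variable $t$ on the right-hand side of the lemma is the \emph{Brownian} bridge duration, not Liouville time; the two occurrences of ``$t$'' in \eqref{eq:bridge_dec} are unrelated integration variables. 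Moreover, $F_\gamma(\mathbf{b})$ is random under the bridge law, so it cannot serve as an outer integration variable anyway.

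A second, smaller gap: you do not spell out where the Jacobian $F_\gamma(du;W)$ goes. Evaluated at $W_u=y$ it equals $e^{\gamma h(y)-\frac{\gamma^2}{2}\mathbb{E}[h(y)^2]}R(y;D)^{\gamma^2/2}\,du$, and after disintegration this factor multiplies the Lebesgue density $p_u(x,y)\,dy$ to produce $p_u(x,y)\,\mu_\gamma(dy)$. That is precisely how the $\mu_\gamma$-density $\lhk$ on the left matches the Lebesgue density $p_u$ on the right, and it is the one-line observation that makes the identity work. Once you make this explicit and drop the spurious ``undo'' step, your proof is complete.
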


The bridge decomposition as stated in \cite{RhodesVargas_spectral, HKPZ} has slightly different assumptions (e.g., we need here to restrict to trajectories remaining inside of $D$, which was not the case in \cite{RhodesVargas_spectral, HKPZ}, but the proof is straightforward to adapt. 
Using Lemma \ref{L:bbDec}, the left-hand side of \eqref{eq:2nd_tau} can be rewritten as
\begin{align}
\label{eq:bridge_tau}
\int_0^\infty e^{-\lambda u} u\mathbf{S}_{\gamma}(u) du 
& = \int_A \mu_{\gamma}(dx) \int_0^\infty \bdec{x}{x}{u}\left[F_{\gamma}(\mathbf{b}) e^{-\lambda F_{\gamma}(\mathbf{b})}1_{\{u < \tau_D(\mathbf{b})\}}\right] p_u(x, x) du.
\end{align}

\noindent Thus \Cref{theo:lhk_laplace} will follow from the following result:
\begin{theo}\label{theo:Weyl}
Let $A \subset D$ be a fixed open subset of $D$ and $\gamma \in (0, 2)$. Then
\begin{align*}
\lim_{\lambda \to \infty}\int_A \mu_{\gamma}(dx) \int_0^\infty \frac{du}{2\pi u} \bdec{x}{x}{u}[ \mathcal{I} \left(\lambda  F_{\gamma}(\mathbf{b})\right)1_{\{u< \tau_D(\mathbf{b}) \}}] = c_{\gamma} \mu_{\gamma}(A)
\end{align*}

\noindent in the sense of $L^1$-convergence (and hence convergence in probability).
\end{theo}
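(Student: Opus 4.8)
The plan is to prove \Cref{theo:Weyl} by first reducing, via a Cameron--Martin shift, to an annealed statement about Sheffield's $\gamma$-quantum cone rooted at the integration point, then identifying the limiting constant through a decomposition of the based Brownian loop at that point according to the \emph{scales} it visits, and finally upgrading first-moment convergence to $L^1$ by a two-point decorrelation estimate.

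First I would take expectation over the Gaussian free field $h$ and apply the rooted-measure (Girsanov) identity for Gaussian multiplicative chaos: for nonnegative functionals $\Phi$,
\[
\mathbb{E}\Big[\int_A \mu_\gamma(dx)\,\Phi(h,x)\Big]=\int_A R(x;D)^{\gamma^2/2}\,\mathbb{E}\big[\Phi\big(h+\gamma G_0^D(x,\cdot),\,x\big)\big]\,dx .
\]
Under this shift the Liouville clock in \Cref{L:bbDec} gains a weight $e^{\gamma^2 G_0^D(x,\mathbf{b}_s)}\asymp|x-\mathbf{b}_s|^{-\gamma^2}$ along the loop, i.e. the field develops exactly the singularity $-\gamma\log|\cdot-x|$ of the $\gamma$-quantum cone rooted at $x$. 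It then suffices to show that, for Lebesgue-a.e. fixed $x$, with $\widehat F_\gamma$ the clock of the shifted field,
\[
g_\lambda(x):=\int_0^\infty\frac{du}{2\pi u}\,\mathbb{E}\Big[\bdec{x}{x}{u}\big[\mathcal{I}(\lambda\widehat F_\gamma(\mathbf{b}))\,1_{\{u<\tau_D(\mathbf{b})\}}\big]\Big]\longrightarrow c_\gamma ,
\]
together with a bound uniform in $\lambda$ permitting one to pass to the limit under $\int_A R(x;D)^{\gamma^2/2}dx$; such a bound should come from $\mathcal{I}\le e^{-1}$, the integrable small-$u$ behaviour $\widehat F_\gamma\asymp u^{1-o(1)}$ (loop confined near $x$), and the Dirichlet spectral gap controlling $\mathbb{P}^{x\to x}_u(u<\tau_D)$ for large $u$.

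Next I would evaluate $\lim_{\lambda\to\infty}g_\lambda(x)$ by parametrising the based loop $(\mathbf{b}_s)$ by the scales $r=|\mathbf{b}_s-x|$ it visits, with $t=-\log r$. After this exponential change of variables the circle-average process of $h$ around $x$ is a standard one-dimensional Brownian motion $B$, and the local contribution of the original chaos at scale $e^{-t}$ (which carries the factor $e^{\gamma B_t-\frac{\gamma^2}{2}t}$), the planar area element, and the cone weight $|x-\cdot|^{-\gamma^2}\asymp e^{\gamma^2 t}$ combine to produce $e^{\gamma(B_t-mt)}$ with $m=Q-\gamma=\tfrac{2}{\gamma}-\tfrac{\gamma}{2}$, while the angular part of the field contributes only an independent multiplicative fluctuation of good integrability (a GMC on a bounded piece of the infinite cylinder) and the conformal radius is an $O(1)$ correction. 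Because $\mathcal{I}(y)=ye^{-y}$ is effectively supported on $y\asymp 1$, the integral concentrates on loops whose clock is $\asymp1/\lambda$, i.e. on scales $t$ up to roughly $\tfrac{1}{\gamma m}\log\lambda$; a renewal/ergodic argument for the downward-drifting process $t\mapsto B_t-mt$ should then split $g_\lambda(x)$ into a contribution from generic scales — where the radial picture is a stationary (ergodic-average) copy of $B_t-mt$, giving the first term of $c_\gamma(m)$ — and a contribution from the deepest scales reached — where this process is seen near its running minimum and a Doob $h$-transform conditioning the drifted Brownian motion to stay non-negative appears, giving the second term; loops of atypically large duration, loops wandering macroscopically far from $x$, and the killing constraint should all be negligible in the limit, and the prefactor $1/\pi$ emerges from the change of variables together with the angular normalisations.

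Finally, to promote this to $L^1$-convergence I would control $\mathbb{E}[(\mathrm{LHS}_\lambda)^2]$ and $\mathbb{E}[\mathrm{LHS}_\lambda\,\mu_\gamma(A)]$ by running the same Girsanov computation with two insertions $x,y$ sampled from $\mu_\gamma^{\otimes 2}$: when $|x-y|$ stays bounded below, the two local pictures live at vanishing scales $\asymp\lambda^{-1/(\gamma m)}$ and decouple, so these converge to $c_\gamma^2\,\mathbb{E}[\mu_\gamma(A)^2]$ and $c_\gamma\,\mathbb{E}[\mu_\gamma(A)^2]$ respectively, whence $\mathbb{E}[(\mathrm{LHS}_\lambda-c_\gamma\mu_\gamma(A))^2]\to0$; the diagonal $|x-y|\to0$ is handled using $\mathcal{I}\le e^{-1}$ and the integrability of $\mathbb{E}[\mu_\gamma(dx)\mu_\gamma(dy)]\asymp|x-y|^{-\gamma^2}dx\,dy$, which is fine for $\gamma<\sqrt2$ and for $\gamma\in[\sqrt2,2)$ requires first truncating the chaos at a slightly subcritical scale, yielding convergence in $L^1$ rather than $L^2$. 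The hardest part will be the scale analysis of the second step — making the radial decomposition rigorous and, above all, extracting \emph{exactly} the two terms of \eqref{eq:constant}: this requires uniformity of the renewal limit in the loop duration $u$ and in the visited scales, a careful treatment of the entrance law of the deepest excursion (which is precisely what produces the drifted Brownian motion \emph{conditioned} to be non-negative), and checking that the angular field, the conformal-radius correction and the boundary killing genuinely drop out; a secondary technical point is the uniform-in-$\lambda$ domination needed in the first step and, for $\gamma$ near $2$, the chaos truncation in the last step.
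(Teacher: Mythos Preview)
Your overall architecture (Cameron--Martin for the first moment, then a two-point computation for $L^1$) matches the paper, but two of the steps you describe as ``should work'' contain genuine gaps.

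First, the uniform-in-$\lambda$ bound on $g_\lambda(x)$ and the identification of $c_\gamma$. Invoking $\mathcal{I}\le e^{-1}$ is useless against $\int_0^1\tfrac{du}{u}$, and the rough scaling $\widehat F_\gamma\asymp u^{1-o(1)}$ is not sufficient because the GFF is \emph{not} exactly scale-invariant, so the law of the rescaled clock genuinely depends on $u$. The paper's mechanism is different from your ``parametrise the loop by radial scales'' picture: one partitions according to the range of the loop (events $\mathcal{H}_k$); on each $\mathcal{H}_k$ one uses Gaussian comparison (\Cref{lem:Gcompare}, i.e.\ Kahane's inequality) to replace the GFF by an exactly log-correlated field on $B(x,k\sqrt u)$; exact scaling then converts $\int\tfrac{du}{u}$ into $\int_0^\infty\mathbb{E}[\mathcal{I}(\lambda\mathcal{E}\,e^{\gamma(B_t-(Q-\gamma)t)})]\,dt$ for a drifted Brownian motion independent of $\mathcal{E}$; and this integral is bounded and its $\lambda\to\infty$ limit identified via Williams' time-reversal (\Cref{lem:time_reversal}, packaged as the Main \Cref{lem:main}): shifting to the hitting time of level $-\tfrac1\gamma\log(\lambda\mathcal{E})$ turns the pre-hitting path into precisely the conditioned process $\mathcal{B}^{Q-\gamma}$, producing the two terms of $c_\gamma(m)$. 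Your ``renewal/ergodic'' split and ``entrance law of the deepest excursion'' gesture in this direction but do not supply this reduction; Gaussian comparison, which you do not mention at all, is used repeatedly and is essential.

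Second, your treatment of $\gamma\in[\sqrt2,2)$ does not work. ``Truncating the chaos at a slightly subcritical scale'' --- whether this means a mollification $\mu_{\gamma,\epsilon}$ or a different parameter --- yields no estimate uniform in the truncation, so the argument cannot be closed. The paper instead restricts to the good event $\mathcal{G}_{[n_0,\infty)}(x)=\{h_{2^{-j}}(x)\le\alpha j\log 2\ \forall j\ge n_0\}$ with $\alpha>\gamma$ (as in \cite{Ber2017}); the bad event is controlled in $L^1$ via the exponential trick $1_{\mathcal{G}^c}\le\sum_j e^{\beta(h_{2^{-j}}(x)-\alpha j\log2)}$, and after the two-point Girsanov shift the good-event indicator supplies an extra factor $|x-y|^{(2\gamma-\alpha)\beta-\beta^2/2}$ which, for $\alpha$ near $\gamma$ and $\beta=2\gamma-\alpha$, makes the two-point kernel integrable against $|x-y|^{-\gamma^2}\,dx\,dy$ for \emph{all} $\gamma<2$. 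A further subtlety is that the good-event indicator is not a convex functional of the chaos mass, so it does not commute with Gaussian comparison; the paper works around this with a \emph{conditional} comparison on only the lateral part of the field, after verifying its independence from the good event. Without this device the second-moment computation simply diverges for $\gamma\ge\sqrt2$, and nothing in your proposal replaces it.
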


Unlike in \cite{RhodesVargas_spectral} where the bridge decomposition was used to deduce $\mathbf{S}_{\gamma}(t) = t^{-1 + o(1)}$ from crude moment estimates for the mass of $\mu_\gamma$ (which would not have been sufficient for the removal of $o(1)$ error in the exponent), the proof of Theorem \ref{theo:Weyl} requires a very refined analysis of the short-time (i.e. small $u$) behaviour of the random variables $\bdec{x}{x}{u}[ \mathcal{I} \left(\lambda  F_{\gamma}(\mathbf{b})\right)1_{\{u< \tau_D(\mathbf{b}) \}}]$ simultaneously for all $x \in D$. At a high level, we shall draw inspiration from the thick points approach described in \cite{Ber2017}, however the details are of course much more technical. In particular we develop a general method for handling correlations of possibly different functionals applied to small neighbourhoods in the vicinity of Liouville typical points; see Lemma \ref{lem:main} for a statement and Section \ref{sec:proofWeyl} for a proof of Theorem \ref{theo:Weyl}.

\subsection{Outline of the paper.}

We start in Section \ref{S:prelim} with some preliminaries on Gaussian comparison, estimates for the Green's function and decomposition results both for the GFF and Brownian motion, which leads us to the main lemma (Lemma \ref{lem:main}). 

Section \ref{sec:proofWeyl} contains the proof of the main results of this paper, namely Theorem \ref{theo:Weyl}. We start in Proposition \ref{prop:toy} with a quick and simple illustration for how the main lemma is used throughout the paper by computing ``one-point estimates'' with it. We end with a very brief description of how Theorem \ref{theo:Weyl} implies both Theorem \ref{theo:lhk_laplace} and Theorem \ref{T:Weyl_intro}.

Section \ref{sec:proof_ptwise} gives the proof of Theorem \ref{T:HK} that pertain to the pointwise asymptotics of the heat kernel (as opposed to the heat trace asymptotics which are at the heart of Theorem \ref{theo:Weyl}, and which involve by definition a spatially averaged heat kernel asymptotics). The identification of the limiting constant in Theorem \ref{T:Weyl_intro}, i.e. Theorem \ref{theo:constant}, is also proved in that section. 

Finally, Appendix \ref{app:probasy} contains probabilistic extensions of results from asymptotic analysis, namely ``asymptotic differentiation under the integral sign'' (Lemma \ref{lem:asympdiff})  and Tauberian theorem (Theorem \ref{theo:tauberian}).

\paragraph{Notations.} For the readers' convenience we list a few crucial notations below which are used repeatedly in the main proofs in \Cref{sec:proofWeyl} and \Cref{sec:proof_ptwise}, and provide pointers to their defining equations.
\begin{itemize}
\item $F_{\gamma}(\mathbf{p})$ and $F_\gamma(ds; \mathbf{p})$: Liouville clock associated to the path $\mathbf{p}$, see \eqref{eq:liouville_clock}.
\item $F_\gamma^{\mathcal{S}}(\mathbf{p})$: Liouville clock with insertions in $\mathcal{S}$, see \eqref{eq:Girsanov1}; when $\mathcal{S} = \emptyset$ this coincides with the previous definition.
\item $\mathcal{G}_I^{\mathcal{S}}(p)$: `good event' concerning the thickness of Gaussian free field at $p \in D$ at dyadic levels in $I$, see \eqref{eq:eventGir}; when $\mathcal{S} = \emptyset$ we suppress its dependence in the notation.
\item $\overline{F}_\gamma^{\mathcal{S}}(\mathbf{p}; Y)$: random clock associated to the path $\mathbf{p}$ with respect to background field $Y$ and insertions in $\mathcal{S}$, see \eqref{eq:overlineF}.
\end{itemize}

\paragraph{Acknowledgement.} Part of the work was carried out when both authors were in residence at the Mathematical Sciences Research Institute in Berkeley for the semester programme `The Analysis and Geometry of Random Spaces' in 2022, and we wish to thank the Institute for its hospitality and support (NSF grant 1440140). NB's research is supported by FWF grant P33083 on ``Scaling limits in random conformal geometry''.

\section{Preliminaries}\label{S:prelim}
\subsection{Gaussian comparison}

\begin{lem}\label{lem:Gcompare}
Let $X(\cdot)$ and $Y(\cdot)$ be two continuous centred Gaussian field on $D$, $\rho$ be a Radon measure on $D$, and $P: \mathbb{R}_+ \to \mathbb{R}$ be a smooth function with at most polynomial growth at infinity. For $t \in [0, 1]$, define $Z_t(x) := \sqrt{t} X(x) + \sqrt{1-t} Y(x)$ and
\begin{align*}
\varphi(t):= \mathbb{E}[P(M_t)], \qquad
M_t := \int_D e^{Z_t(x) - \frac{1}{2}\mathbb{E}[Z_t(x)^2]}\rho(dx).
\end{align*}

\noindent Then
\begin{align*}
\varphi'(t) & =
\frac{1}{2}\int_D \int_D \left(\mathbb{E}[X(x)X(y)] - \mathbb{E}[Y(x) Y(y)]\right)\\
& \qquad \times \mathbb{E}\left[e^{Z_t(x) + Z_t(y) - \frac{1}{2}\mathbb{E}[Z_t(x)^2] - \frac{1}{2}\mathbb{E}[Z_t(y)^2]}P''(M_t)\right] \rho(dx) \rho(dy).
\end{align*}

\noindent In particular, if there exists some constant $C>0$ such that
\begin{align*}
\left| \mathbb{E}[X(x)X(y)] - \mathbb{E}[Y(x) Y(y)]\right| \le C \qquad \forall x, y \in D,
\end{align*}

\noindent then
\begin{align*}
\left|\varphi(1) - \varphi(0)\right|
\le \frac{C}{2} \int_0^1 \mathbb{E}\left[M_t^2 |P''(M_t)|\right] dt.
\end{align*}

\end{lem}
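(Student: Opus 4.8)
The statement is a classical Gaussian interpolation (Kahane-type) identity, so the plan is to apply Gaussian integration by parts (Stein's lemma) along the Ornstein--Uhlenbeck-type interpolation path $Z_t = \sqrt{t}X + \sqrt{1-t}Y$.

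First I would differentiate $\varphi(t) = \mathbb{E}[P(M_t)]$ in $t$, differentiating under both the expectation and the integral sign over $D$ (justified by the smoothness of $P$, its polynomial growth, and the assumption that $\rho$ is Radon together with continuity of the fields, which gives enough integrability uniformly in $t$ on the compact interval $[0,1]$). Writing $M_t = \int_D e^{Z_t(x) - \frac12 \mathbb{E}[Z_t(x)^2]}\rho(dx)$, the derivative of the integrand with respect to $t$ produces a factor $\frac{d}{dt}\big(Z_t(x) - \frac12\mathbb{E}[Z_t(x)^2]\big) = \frac{1}{2\sqrt t}X(x) - \frac{1}{2\sqrt{1-t}}Y(x) - \frac12\frac{d}{dt}\mathbb{E}[Z_t(x)^2]$, and since $\mathbb{E}[Z_t(x)^2] = t\,\mathbb{E}[X(x)^2] + (1-t)\,\mathbb{E}[Y(x)^2]$ the last term is $-\frac12(\mathbb{E}[X(x)^2] - \mathbb{E}[Y(x)^2])$. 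So
\[
\varphi'(t) = \int_D \mathbb{E}\Big[ \Big( \tfrac{1}{2\sqrt t}X(x) - \tfrac{1}{2\sqrt{1-t}}Y(x) - \tfrac12(\mathbb{E}[X(x)^2]-\mathbb{E}[Y(x)^2])\Big) e^{Z_t(x) - \frac12\mathbb{E}[Z_t(x)^2]} P'(M_t)\Big]\rho(dx).
\]

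Next I would apply Gaussian integration by parts in the variables $X(x)$ and $Y(x)$ separately. For a jointly Gaussian family, $\mathbb{E}[X(x)F] = \sum \mathbb{E}[X(x)\cdot(\text{Gaussian})]\,\mathbb{E}[\partial_{(\text{Gaussian})}F]$; applied to $F = e^{Z_t(x) - \frac12\mathbb{E}[Z_t(x)^2]}P'(M_t)$, the relevant Gaussian directions are $Z_t(x)$ (appearing in the exponential) and $Z_t(y)$ for all $y$ (through $M_t$ inside $P'$). The self-interaction term from differentiating the exponential at the same point $x$ yields $\mathbb{E}[X(x)\cdot \sqrt t X(x)] = \sqrt t\,\mathbb{E}[X(x)^2]$, and similarly for $Y$; after combining the $X$- and $Y$-contributions these self-terms precisely cancel the deterministic $-\frac12(\mathbb{E}[X(x)^2]-\mathbb{E}[Y(x)^2])$ correction (this is the usual Wick-renormalisation bookkeeping). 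The surviving term comes from differentiating $P'(M_t)$ in the direction $Z_t(y)$: this brings down $P''(M_t)\cdot \partial_{Z_t(y)}M_t = P''(M_t)\,e^{Z_t(y)-\frac12\mathbb{E}[Z_t(y)^2]}$, weighted by $\mathbb{E}[X(x)\cdot\sqrt t X(y)]\cdot\frac{1}{2\sqrt t} - \mathbb{E}[Y(x)\cdot\sqrt{1-t}Y(y)]\cdot\frac{1}{2\sqrt{1-t}} = \frac12(\mathbb{E}[X(x)X(y)] - \mathbb{E}[Y(x)Y(y)])$, and integrated against $\rho(dy)$. Collecting everything gives exactly the claimed formula for $\varphi'(t)$.

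Finally, for the ``in particular'' bound I would simply take absolute values inside the double integral: $|\varphi'(t)| \le \frac{C}{2}\int_D\int_D \mathbb{E}\big[ e^{Z_t(x)+Z_t(y) - \frac12\mathbb{E}[Z_t(x)^2]-\frac12\mathbb{E}[Z_t(y)^2]}|P''(M_t)|\big]\rho(dx)\rho(dy)$, then use Tonelli to pull the expectation outside and recognize $\int_D\int_D e^{Z_t(x)+Z_t(y)-\cdots}\rho(dx)\rho(dy) = M_t^2$, giving $|\varphi'(t)| \le \frac{C}{2}\mathbb{E}[M_t^2|P''(M_t)|]$; integrating over $t\in[0,1]$ and using $|\varphi(1)-\varphi(0)| \le \int_0^1|\varphi'(t)|dt$ concludes.

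The main obstacle is rigour in the differentiation and integration-by-parts steps rather than any conceptual difficulty: one must justify exchanging $\frac{d}{dt}$ with the $\mathbb{E}$ and the $\rho$-integral, and justify the (infinite-dimensional) Gaussian integration by parts where the ``direction'' $M_t$ involves an integral of Gaussians over $D$. The clean way is to first reduce to a finite-dimensional situation --- approximate $\rho$ by a finite sum of point masses (or discretize via a mesh), so that $M_t$ becomes a finite sum $\sum_i w_i e^{Z_t(x_i) - \frac12\mathbb{E}[Z_t(x_i)^2]}$ and everything reduces to ordinary multivariate Gaussian calculus and Stein's lemma --- prove the identity there, and then pass to the limit using the continuity of the fields, the polynomial growth of $P$ and its derivatives, and dominated convergence (the polynomial-growth hypothesis on $P$ is exactly what guarantees uniform integrability of $M_t^k|P^{(j)}(M_t)|$ along the approximation, since all positive moments of the relevant Gaussian chaos integrals are finite). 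This limiting argument, though standard, is where the care is needed; the algebra of the cancellation is the part one should not belabour.
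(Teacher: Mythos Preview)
Your approach is correct and is exactly the standard Kahane interpolation argument. The paper in fact states this lemma without proof (treating it as well known), so your sketch is already more detailed than what the paper provides; there is nothing further to compare.
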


\begin{cor} \label{cor:Gcompare}
Using the same notations as in \Cref{lem:Gcompare}, suppose
$\mathbb{E} [ X(x) X(y)] \le \mathbb{E}[Y(x) Y(y)]$ and $P$ is convex, then $\varphi(1) \le \varphi(0)$, i.e., $\mathbb{E} [ P(M_0)] \le \mathbb{E}[ P(M_1)]$.
\end{cor}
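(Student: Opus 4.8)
The plan is to use the classical Gaussian interpolation (Kahane's) argument. First I would fix $t \in (0,1)$ and justify differentiating $\varphi(t) = \mathbb{E}[P(M_t)]$ under the expectation. Write $M_t = \int_D e^{Z_t(x) - \frac12 \mathbb{E}[Z_t(x)^2]} \rho(dx)$, where $Z_t = \sqrt{t}\,X + \sqrt{1-t}\,Y$ with $X,Y$ independent copies of the two fields realized on a common probability space. Since $P$ has at most polynomial growth, and $M_t$ has moments of all orders (being an integral of exponentials of a continuous Gaussian field over a compact set against a finite measure), dominated convergence lets us write
\begin{align*}
\varphi'(t) = \mathbb{E}\!\left[ P'(M_t) \frac{d}{dt} M_t \right], \qquad \frac{d}{dt} M_t = \int_D \left( \tfrac{\dot Z_t(x)}{1} - \tfrac12 \tfrac{d}{dt}\mathbb{E}[Z_t(x)^2] \right) e^{Z_t(x) - \frac12 \mathbb{E}[Z_t(x)^2]} \rho(dx),
\end{align*}
where $\dot Z_t(x) = \tfrac{1}{2\sqrt t} X(x) - \tfrac1{2\sqrt{1-t}} Y(x)$ and $\tfrac{d}{dt}\mathbb{E}[Z_t(x)^2] = \mathbb{E}[X(x)^2] - \mathbb{E}[Y(x)^2]$.

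Next I would apply Gaussian integration by parts (the identity $\mathbb{E}[ \xi\, G(\text{Gaussian vector})] = \sum_j \mathbb{E}[\xi \eta_j]\, \mathbb{E}[\partial_j G]$ for jointly Gaussian $\xi, \eta$) to the term $\mathbb{E}[P'(M_t)\, \dot Z_t(x)\, e^{Z_t(x) - \frac12\mathbb{E}[Z_t(x)^2]}]$. The derivative of $P'(M_t)\, e^{Z_t(x)-\frac12\mathbb{E}[Z_t(x)^2]}$ with respect to the Gaussian $\dot Z_t(x)$ pairing produces two contributions: one where the derivative hits $e^{Z_t(x)-\cdots}$, giving $\mathbb{E}[\dot Z_t(x) Z_t(x)] = \tfrac12(\mathbb{E}[X(x)^2] - \mathbb{E}[Y(x)^2])$, which exactly cancels the $-\tfrac12(\mathbb{E}[X(x)^2]-\mathbb{E}[Y(x)^2])$ drift term above; and one where the derivative hits $P'(M_t)$, producing $P''(M_t)$ times $\int_D \mathbb{E}[\dot Z_t(x) Z_t(y)]\, e^{Z_t(y)-\frac12\mathbb{E}[Z_t(y)^2]}\rho(dy)$. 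Computing $\mathbb{E}[\dot Z_t(x) Z_t(y)] = \tfrac12(\mathbb{E}[X(x)X(y)] - \mathbb{E}[Y(x)Y(y)])$ and collecting the double integral yields precisely the claimed formula for $\varphi'(t)$. Integrating over $t \in [0,1]$, taking absolute values, using the hypothesis $|\mathbb{E}[X(x)X(y)] - \mathbb{E}[Y(x)Y(y)]| \le C$ and the Cauchy–Schwarz-type bound $\int_D\int_D \mathbb{E}[e^{Z_t(x)+Z_t(y)-\cdots}|P''(M_t)|]\rho(dx)\rho(dy) = \mathbb{E}[M_t^2 |P''(M_t)|]$ gives $|\varphi(1)-\varphi(0)| \le \tfrac{C}{2}\int_0^1 \mathbb{E}[M_t^2 |P''(M_t)|]\,dt$. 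For the corollary: if $\mathbb{E}[X(x)X(y)] \le \mathbb{E}[Y(x)Y(y)]$ pointwise and $P$ is convex so $P'' \ge 0$, then every term in the integrand of $\varphi'(t)$ is $\le 0$ (the exponential factor and $P''(M_t)$ are nonnegative, the covariance difference is $\le 0$), hence $\varphi'(t) \le 0$ for all $t$, so $\varphi(1) \le \varphi(0)$, i.e. $\mathbb{E}[P(M_0)] \le \mathbb{E}[P(M_1)]$.

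The main technical obstacle is the rigorous justification of the differentiation under the integral sign and of Gaussian integration by parts when $P''$ is merely continuous with polynomial growth and $M_t$ is an uncountable integral: one must argue via a finite-dimensional/mollified approximation of the fields $X$ and $Y$, apply the smooth finite-dimensional Gaussian IBP and Fubini there, then pass to the limit using uniform integrability coming from the polynomial-growth bound on $P, P', P''$ together with uniform (in $t$ and in the approximation parameter) $L^p$ control of $M_t$ and of $\sup_x e^{Z_t(x)}$. I expect this limiting argument to be the only genuinely delicate point; once the identity is established for smooth approximations, everything else is bookkeeping.
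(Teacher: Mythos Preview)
Your argument is the standard Kahane interpolation proof and is correct: the formula for $\varphi'(t)$ from \Cref{lem:Gcompare} shows that under the covariance inequality $\mathbb{E}[X(x)X(y)] \le \mathbb{E}[Y(x)Y(y)]$ and convexity of $P$ (so $P'' \ge 0$), the integrand is pointwise nonpositive, whence $\varphi'(t) \le 0$ and $\varphi(1) \le \varphi(0)$. The paper does not supply its own proof of this corollary (nor of \Cref{lem:Gcompare}); both are stated as preliminaries, so there is nothing further to compare.
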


\subsection{Estimates for Brownian bridge}
Let $\mathbf{b}_\cdot = (\mathbf{b}_{\cdot, 1}, \mathbf{b}_{\cdot, 2})$ be a $2$-dimensional Brownian bridge with starting position $\iota(\mathbf{b}):= \mathbf{b}_0$ and duration $\ell(\mathbf{b})$ (e.g. $\iota(\mathbf{b}) = x$ and $\ell(\mathbf{b}) = u$ if $\mathbf{b} \sim \pdec{x}{x}{u}$). We recall the following formula for the distribution of running maximum of one-dimensional Brownian bridge:

\begin{lem}\label{lem:bb_exact}
For $i \in \{1, 2\}$ and any $k \ge 0$,
\begin{align}\label{eq:bb_exact}
\pdec{0}{0}{\ell}
\left(\max_{s \le \ell} \mathbf{b}_{s, i} \ge k \right)
= e^{-\frac{2}{\ell}k^2} \qquad \forall k \ge 0.
\end{align}
\end{lem}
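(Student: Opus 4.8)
The plan is to deduce the bridge formula from the reflection principle for free Brownian motion together with a disintegration over the terminal value. By symmetry of the two coordinates and of the Brownian bridge it suffices to treat $i=1$, and we may assume $k>0$, since for $k=0$ both sides equal $1$ (a bridge from $0$ to $0$ passes through $0$ at time $0$, so its maximum is nonnegative). Write $(W_s)_{0\le s\le \ell}$ for a standard one-dimensional Brownian motion started at $0$; then $\pdec{0}{0}{\ell}$ is the regular conditional law of $(W_s)_{s\le\ell}$ given $W_\ell=0$, and $\mathbf{b}_{\cdot,1}$ under $\pdec{0}{0}{\ell}$ has that law.

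First I would recall the classical joint law of the running maximum and terminal position obtained from the reflection principle: writing $p_\ell(x,y)=(2\pi\ell)^{-1/2}e^{-(x-y)^2/(2\ell)}$, one has, for every $k>0$ and every $y\le k$,
\[
\PP\Big(\max_{s\le\ell}W_s\ge k,\ W_\ell\in dy\Big)=p_\ell(2k,y)\,dy=\frac{1}{\sqrt{2\pi\ell}}\,e^{-(2k-y)^2/(2\ell)}\,dy,
\]
which follows by reflecting the path after its first hitting time of level $k$ and using the strong Markov property at that stopping time. Evaluating at $y=0$ gives the ``sub-density'' $p_\ell(2k,0)=(2\pi\ell)^{-1/2}e^{-2k^2/\ell}$.

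Next, I would invoke the standard disintegration formula for Brownian bridges: the $\pdec{0}{0}{\ell}$-probability of an event measurable with respect to the path is the corresponding sub-density at the terminal point divided by $p_\ell(0,0)$. Applying this with the display above,
\[
\pdec{0}{0}{\ell}\Big(\max_{s\le\ell}\mathbf{b}_{s,1}\ge k\Big)=\frac{p_\ell(2k,0)}{p_\ell(0,0)}=\frac{(2\pi\ell)^{-1/2}e^{-2k^2/\ell}}{(2\pi\ell)^{-1/2}}=e^{-2k^2/\ell},
\]
which is \eqref{eq:bb_exact}.

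The only point requiring a little care is the justification of the disintegration step, i.e.\ that the bridge probability of $\{\max_{s\le\ell}\mathbf{b}_{s,1}\ge k\}$ is genuinely this ratio of densities rather than merely an a.e.\ identity; this is completely routine, writing it as $\lim_{\varepsilon\to0}\PP(\max_{s\le\ell}W_s\ge k\mid |W_\ell|\le\varepsilon)$, using the joint-law display for the numerator and continuity of $p_\ell(\cdot,\cdot)$ at $0$ for the denominator. Alternatively one may simply quote the formula from a standard reference (e.g.\ the Borodin--Salminen handbook of Brownian motion). I do not anticipate any genuine obstacle in this lemma.
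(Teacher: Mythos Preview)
Your argument is correct and is the standard derivation via the reflection principle and conditioning on the endpoint. The paper does not actually supply a proof of this lemma; it simply recalls the formula as a known fact and proceeds directly to its consequences in \Cref{cor:bb_bound}, so there is nothing further to compare.
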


The exact formula \eqref{eq:bb_exact} leads to the following inequalities which we shall use repeatedly throughout this article:
\begin{cor}\label{cor:bb_bound}
For any $u > 0$,
\begin{align*}
\pdec{0}{0}{\ell} \left( \max_{s \le \ell} |\mathbf{b}_s| \le u \right) 
& \le 1 \wedge \frac{2u^2}{\ell}\\
\text{and}
\qquad 
\pdec{0}{0}{\ell} \left( \max_{s \le \ell} |\mathbf{b}_s| \ge u \right) 
& \le 4 e^{-\frac{u^2}{2l}}.
\end{align*}

\end{cor}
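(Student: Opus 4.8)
The plan is to obtain both estimates directly from the exact one-dimensional identity in Lemma~\ref{lem:bb_exact}, combined with the elementary inequality $1-e^{-x}\le x$ for $x\ge 0$ and a union bound over the two coordinates. Throughout I would freely use that the running maximum of a one-dimensional Brownian bridge has a continuous distribution, so that the events $\{\max_{s\le\ell}\mathbf{b}_{s,i}\ge k\}$ and $\{\max_{s\le\ell}\mathbf{b}_{s,i}> k\}$ carry the same probability, and that $-\mathbf{b}_{\cdot,i}$ is again a bridge from $0$ to $0$ of duration $\ell$, so that $\pdec{0}{0}{\ell}(\min_{s\le\ell}\mathbf{b}_{s,i}\le -k)=e^{-2k^2/\ell}$ as well.

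For the first inequality, the bound by $1$ is trivial since we are bounding a probability. For the bound by $2u^2/\ell$, I would use the pointwise domination $|\mathbf{b}_s|\ge \mathbf{b}_{s,1}$, which gives the inclusion $\{\max_{s\le\ell}|\mathbf{b}_s|\le u\}\subseteq\{\max_{s\le\ell}\mathbf{b}_{s,1}\le u\}$; by Lemma~\ref{lem:bb_exact} the right-hand event has probability $1-e^{-2u^2/\ell}\le 2u^2/\ell$, and taking the minimum with the trivial bound yields $1\wedge(2u^2/\ell)$.

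For the second inequality, I would use the triangle inequality $|\mathbf{b}_s|\le|\mathbf{b}_{s,1}|+|\mathbf{b}_{s,2}|$: if $|\mathbf{b}_{s^\ast}|\ge u$ for some $s^\ast\le\ell$, then $|\mathbf{b}_{s^\ast,i}|\ge u/2$ for at least one $i\in\{1,2\}$, hence $\max_{s\le\ell}|\mathbf{b}_{s,i}|\ge u/2$ for that $i$. A union bound over $i\in\{1,2\}$ and over the two signs, each of the four terms being controlled by Lemma~\ref{lem:bb_exact} applied with $k=u/2$, gives $\pdec{0}{0}{\ell}(\max_{s\le\ell}|\mathbf{b}_s|\ge u)\le 4e^{-2(u/2)^2/\ell}=4e^{-u^2/(2\ell)}$ as claimed. (Splitting instead via $|\mathbf{b}_s|^2=\mathbf{b}_{s,1}^2+\mathbf{b}_{s,2}^2$ with $k=u/\sqrt2$ would give the slightly sharper $4e^{-u^2/\ell}$, but the stated bound is all we need.)

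There is no genuine obstacle here: the corollary is a routine consequence of Lemma~\ref{lem:bb_exact}. The only points worth a line of justification are the passage from the strict to the non-strict running-maximum event (harmless by continuity of the law of the running maximum) and the bookkeeping of the numerical constant when passing from $|\mathbf{b}_s|$ to the coordinates $\mathbf{b}_{s,i}$.
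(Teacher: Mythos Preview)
Your proposal is correct and essentially identical to the paper's proof: both inequalities are derived from Lemma~\ref{lem:bb_exact} via exactly the coordinate domination $|\mathbf{b}_s|\ge\mathbf{b}_{s,1}$ for the first bound and the union bound over coordinates and signs with threshold $u/2$ for the second.
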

\begin{proof}
The two inequalities follow from
\begin{align*}
\pdec{0}{0}{\ell} \left( \max_{s \le \ell} |\mathbf{b}_s| \le u \right) 
\le \pdec{0}{0}{\ell} \left( \max_{s \le \ell} \mathbf{b}_{s, 1} \le u \right)
= 1 - e^{-2u^2 / l},
\end{align*}

\noindent and 
\begin{align*}
\pdec{0}{0}{\ell} \left( \max_{s \le \ell} |\mathbf{b}_s| \ge u \right) 
\le 2\pdec{0}{0}{\ell} \left( \max_{s \le \ell} |\mathbf{b}_{s, 1}| \ge \frac{u}{2} \right)
\le 4\pdec{0}{0}{\ell} \left( \max_{s \le \ell} \mathbf{b}_{s, 1} \ge \frac{u}{2} \right)
= 2e^{-\frac{u^2}{2l}}
\end{align*}

\noindent by \Cref{lem:bb_exact}.
\end{proof}

\subsection{Estimates for Green's function}
\begin{lem}\label{lem:Green_estimate}
Suppose $D$ is a bounded domain with at least one regular point on $\partial D$. Then the following estimates hold for our Green's function $G_0^D(\cdot, \cdot)$.
\begin{itemize}
\item For any $x, z \in D$ satisfying $|x-z| \le \frac{1}{3} d(x, \partial D)$, we have
\begin{align}\label{eq:Green_uniform_local}
\bigg| G_0^D(x, z) - \left[ -\log|x-z| + \log R(x; D)\right]\bigg| \le 6\frac{|x-z|}{d(x, \partial D)} \log \frac{R(x; D)}{d(x, \partial D)}.
\end{align}

\item For any $x, y, z \in D$ satisfying $d(x, z) \le \min(|x-y|, d(x, \partial D))$,
\begin{align}\label{eq:Green_uniform_global}
\bigg| G_0^D(z, y) - G_0^D(x, y)\bigg| 
\le 2\left[ \frac{|x-z|}{d(x, \partial D)} + \frac{|x-z|}{|x-y|}\right].
\end{align}
\end{itemize}
\end{lem}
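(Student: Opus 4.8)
The plan is to establish both estimates by comparing $G_0^D$ to the whole-plane (or ``local'') logarithmic behaviour, exploiting the decomposition $G_0^D(x,y) = -\log|x-y| + \xi_D(x,y)$ where $\xi_D(x,y) = \mathbb{E}[h(x)(\text{harmonic extension})]$ is the harmonic part. Concretely, write $G_0^D(x,y) = -\log|x-y| + g_D(x,y)$ with $g_D(x,\cdot)$ harmonic in $D$ and $g_D(x,y) = -\log|x-y|$ for $y \in \partial D$; then $g_D(x,x) = \log R(x;D)$ by the definition of conformal radius. The bound \eqref{eq:Green_uniform_local} then amounts to controlling $|g_D(x,z) - g_D(x,x)|$, i.e.\ the oscillation of the harmonic function $g_D(x,\cdot)$ near $x$, and \eqref{eq:Green_uniform_global} amounts to controlling $|G_0^D(z,y) - G_0^D(x,y)|$ for $z$ near $x$, which splits into the oscillation of $-\log|\cdot - y|$ (elementary, since $|x-z| \le |x-y|$) plus the oscillation of the harmonic part $g_D(\cdot, y)$.

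For the first estimate, I would fix $x$ and $y$ and apply standard interior gradient estimates for harmonic functions: if $u$ is harmonic on $D$ then $|\nabla u(x)| \le \tfrac{C}{d(x,\partial D)} \sup_{B(x, d(x,\partial D))} |u|$, with the sharp constant for two dimensions giving $|\nabla u(x)| \le \tfrac{2}{r}\,\mathrm{osc}$-type bounds (one can get the constant $6$ from a mean-value/Poisson-kernel argument on a ball of radius $\tfrac13 d(x,\partial D)$, where the Poisson kernel is bounded by a universal constant). The quantity being differentiated is $g_D(x,\cdot)$, whose size on $B(x, d(x,\partial D))$ is governed by $\log R(x;D) - \log d(x,\partial D)$ up to constants (since on that ball $g_D(x,\cdot) = G_0^D(x,\cdot) + \log|x-\cdot|$ and $G_0^D \ge 0$, while $G_0^D(x,z) \le \log\frac{C\,\mathrm{diam}(D)}{|x-z|}$ or more precisely $\le \log\frac{R(x;D)}{|x-z|}$-type bounds via the conformal map to the disk). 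Then $|g_D(x,z) - g_D(x,x)| \le |x-z| \sup_{B} |\nabla g_D(x,\cdot)|$, and assembling the constants gives the factor $6 \tfrac{|x-z|}{d(x,\partial D)} \log\frac{R(x;D)}{d(x,\partial D)}$. A clean way to package all of this is to conformally map $D$ to the unit disk $\mathbb{D}$ sending $x \mapsto 0$: then $G_0^D(x,z) = -\log|\psi(z)|$ where $\psi: D \to \mathbb{D}$ is the conformal map with $\psi(x)=0$, $|\psi'(x)| = 1/R(x;D)$, and the Koebe distortion theorem controls $|\psi(z)|$ and $|\psi'|$ on $B(x, \tfrac13 d(x,\partial D))$ in terms of $R(x;D)$ and $d(x,\partial D)$; this reduces everything to explicit estimates on the disk.

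For the second estimate, I would again use the conformal map and Koebe distortion, but now I cannot center the map at both $x$ and $y$. Instead write $G_0^D(z,y) - G_0^D(x,y) = [-\log|z-y| + \log|x-y|] + [g_D(z,y) - g_D(x,y)]$. The first bracket is at most $\log\frac{|x-y|}{|z-y|} \le \log\frac{|x-y|}{|x-y| - |x-z|} \le \frac{|x-z|}{|x-y| - |x-z|} \le 2\frac{|x-z|}{|x-y|}$ using $|x-z| \le \tfrac12|x-y|$ (which holds since $d(x,z) \le |x-y|$ — if one wants the factor exactly $2$ one restricts to $|x-z|\le\frac12|x-y|$, else iterate; the constant can be absorbed). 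For the second bracket, $g_D(\cdot, y)$ is harmonic in $D$ and on $B(x, d(x,\partial D))$ it equals $G_0^D(\cdot,y) + \log|\cdot - y|$, which is bounded there (both terms are, since $G_0^D \ge 0$ is bounded away from the boundary and $\log|\cdot - y|$ is controlled as $|x-z| \le |x-y|$), so the interior gradient estimate gives $|g_D(z,y) - g_D(x,y)| \le C\frac{|x-z|}{d(x,\partial D)}$ — and here one checks the relevant sup of $g_D(\cdot,y)$ on the ball is $O(1)$ uniformly, again via the conformal picture, yielding the constant $2$ after optimization. The main obstacle I anticipate is getting the \emph{explicit} constants ($6$ and $2$) rather than just ``some universal constant'': this requires being careful with the Koebe/distortion estimates and the Poisson-kernel bounds on the chosen radius $\tfrac13 d(x,\partial D)$, and checking that the logarithmic factor $\log\frac{R(x;D)}{d(x,\partial D)}$ is the right (nonnegative, by Koebe's $\tfrac14$-theorem which gives $R(x;D) \ge d(x,\partial D)$, actually $R(x;D)\in[d(x,\partial D), 4d(x,\partial D)]$) quantity controlling the oscillation — the upper Koebe bound $R(x;D) \le 4 d(x,\partial D)$ may in fact let one simplify, but keeping $\log\frac{R(x;D)}{d(x,\partial D)}$ makes the bound vacuously useful when $x$ is deep inside $D$. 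Everything else is routine once the conformal reduction is in place.
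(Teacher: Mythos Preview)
Your approach is correct in outline but differs from the paper's in execution, particularly in how the constants emerge.

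For \eqref{eq:Green_uniform_local}, the paper does \emph{exactly} the ``mean-value/Poisson-kernel argument'' you mention in passing, but commits to it rather than to gradient estimates or Koebe distortion. After rescaling so that $x=0$ and $d(x,\partial D)=1$, one writes the harmonic function $g_D(0,\cdot)$ on $\mathbb{D}$ via its Poisson integral over $\partial\mathbb{D}$. The key observation is that on $\partial\mathbb{D}$ one has $g_D(0,e^{i\theta}) = G_0^D(0,e^{i\theta}) \ge 0$, with circle average equal to $\log R(0;D)$. Thus $|g_D(0,z)-g_D(0,0)|$ is bounded by $\sup_\theta|H_{\mathbb{D}}(z,e^{i\theta})-H_{\mathbb{D}}(0,e^{i\theta})|$ times the $L^1$-norm of $G_0^D(0,\cdot)$ on the circle, and the latter is \emph{exactly} $2\log R(0;D)$ --- not a sup bound. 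A direct computation of the Poisson kernel difference for $|z|\le 1/3$ gives the factor $3$, hence the constant $6$. Your gradient-estimate route would instead need to control $\sup|g_D|$ on the ball, which is messier and would not directly produce the factor $\log\frac{R(x;D)}{d(x,\partial D)}$; the nonnegativity-plus-known-average trick is what makes this clean.

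For \eqref{eq:Green_uniform_global}, the paper uses the probabilistic representation $G_0^D(\cdot,y) = \mathbb{E}^y[\log|W_{\tau_D}-\cdot|] - \log|\cdot-y|$, which is your $g_D(\cdot,y)$ written as an expectation over the exit point $W_{\tau_D}\in\partial D$. Both terms are then handled in one line via $|\log|1+w||\le 2|w|$ for $|w|\le\tfrac12$: the $-\log|\cdot-y|$ term gives $2|x-z|/|x-y|$, and the expectation term gives $2|x-z|/d(x,\partial D)$ since $|W_{\tau_D}-x|\ge d(x,\partial D)$. This bypasses any interior gradient estimate for the harmonic part and delivers the constant $2$ immediately. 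Your splitting is the same decomposition, but the probabilistic form makes the second piece trivial rather than requiring a separate oscillation bound.
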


\begin{proof}
For the first estimate, it suffices to consider the case where $x = 0$ and $d(x, \partial D) = 1$ by translation and rescaling. But then
\begin{align*}
\left|G_0^D(0, z) - [-\log|z| + \log R(0; D)]\right|
\le \frac{1}{\pi} \int_0^{2\pi} G_0^D(0, e^{i\theta}) \left|H_{\mathbb{D}}(z, e^{i\theta}) - H_{\mathbb{D}}(0, e^{i\theta})\right|d\theta.
\end{align*}

\noindent Using the fact that
\begin{align*}
\frac{1}{2\pi} \int_0^{2\pi} G_0^D(0, e^{i\theta})  d\theta =  \log R(0; D)
\end{align*}

\noindent and the explicit formula for the Poisson kernel on the unit disc $\mathbb{D}$
\begin{align*}
H_{\mathbb{D}}(z, e^{i\theta})
= \frac{1}{2} \frac{1-|z|^2}{|e^{i\theta} - z|^2}, \qquad |z| < 1,
\end{align*}

\noindent we obtain the upper bound \eqref{eq:Green_uniform_local} by a direct computation.\\

For the second estimate, we recall the probabilistic representation of the Green's function
\begin{align*}
G_0^D(\cdot, y) = \mathbb{E}^y \left[\log|W_{\tau_D} - \cdot|\right] - \log|\cdot - y|
\end{align*}

\noindent where $(W_t)_{t \ge 0}$ is a (planar) Brownian motion starting from $y \in D$ (with respect to the probability measure $\mathbb{P}^y)$ and $\tau_D$ is its hitting time of $\partial D$. Then \eqref{eq:Green_uniform_global} can be verified directly using the elementary inequality $\log |1+x| \le 2|x|$ for any $|x| \le \frac{1}{2}$.

\end{proof}

We state a useful consequence of the above estimate.
\begin{cor}\label{cor:gff_exact}
Let $a, b, x \in D$ be such that $\max (|x-a|, |x-b|) \le \frac{1}{4} d(x, \partial D)$. Then
\begin{align*}
\left|G_0^D(a, b) -  \left[ -\log|a-b| + \log R(x; D)\right]\right| \le 4.
\end{align*}

\noindent In particular, for any $z \in B(x, \frac{1}{4} d(x, \partial D))$, we have
\begin{align*}
| \log R(z; D) - \log R(x; D)| \le 4.
\end{align*}

\end{cor}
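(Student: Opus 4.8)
The plan is to deduce Corollary \ref{cor:gff_exact} directly from the two estimates in Lemma \ref{lem:Green_estimate}, keeping careful track of which point plays the role of the ``centre''. First I would apply \eqref{eq:Green_uniform_global} with the triple $(x, b, a)$ — i.e.\ compare $G_0^D(a,b)$ with $G_0^D(x,b)$ — which is legitimate since $|x-a| \le \frac14 d(x,\partial D) \le \min(|x-b|, d(x,\partial D))$ fails only if $|x-b|$ is very small; I would handle that by noting that in the regime $|a-b| \ge \tfrac12 d(x,\partial D)$ (say) the bound is immediate from \eqref{eq:Green_uniform_local} applied around $x$, and otherwise one can assume $|x-b|$ is comparable to or larger than $|x-a|$, or simply re-centre. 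The cleaner route is: use \eqref{eq:Green_uniform_global} twice, first to pass from $G_0^D(a,b)$ to $G_0^D(x,b)$ (replacing $z$ by $a$, keeping $y=b$), picking up an error $2[|x-a|/d(x,\partial D) + |x-a|/|x-b|]$. By symmetry $G_0^D(x,b)=G_0^D(b,x)$, and then pass from $G_0^D(b,x)$ to $G_0^D(x,x)=+\infty$, which does not work — so instead, after reaching $G_0^D(x,b)$, I would apply \eqref{eq:Green_uniform_local} with the pair $(x,b)$ to get $G_0^D(x,b) = -\log|x-b| + \log R(x;D) + O\big(6\tfrac{|x-b|}{d(x,\partial D)}\log\tfrac{R(x;D)}{d(x,\partial D)}\big)$.

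The remaining issue is that the target expression features $-\log|a-b|$, not $-\log|x-b|$, so I need $|\log|a-b| - \log|x-b||$ to be $O(1)$; but this is false when $a$ is extremely close to $b$ (then $\log|a-b| \to -\infty$). This signals that a purely ``replace one argument at a time'' approach cannot work, and one must treat the singular part of the Green's function honestly. So the better plan: write $G_0^D(a,b) = -\log|a-b| + \widetilde G(a,b)$ where $\widetilde G(a,b) = \mathbb{E}^b[\log|W_{\tau_D} - a|]$ is the harmonic (bounded) part, using the probabilistic representation already invoked in the proof of Lemma \ref{lem:Green_estimate}. Then $G_0^D(a,b) - [-\log|a-b| + \log R(x;D)] = \widetilde G(a,b) - \log R(x;D)$, and it suffices to bound $|\widetilde G(a,b) - \log R(x;D)|$. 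Now $\widetilde G(x,x) = \log R(x;D)$ by definition of conformal radius, and $\widetilde G$ is jointly continuous; more quantitatively, $|\widetilde G(a,b) - \widetilde G(x,x)| \le |\widetilde G(a,b) - \widetilde G(x,b)| + |\widetilde G(x,b) - \widetilde G(x,x)|$, and each difference is controlled by \eqref{eq:Green_uniform_global} and \eqref{eq:Green_uniform_local} respectively — crucially, the errors there are bounded by multiples of $|x-a|/d(x,\partial D)$, $|x-a|/|x-b|$ and $\tfrac{|x-b|}{d(x,\partial D)}\log\tfrac{R(x;D)}{d(x,\partial D)}$ only after one separates out the $-\log|a-b|$ singularity, so they apply to $\widetilde G$ directly. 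The factors $|x-a|/|x-b|$ etc.\ are \emph{not} uniformly bounded, so one must instead revisit the derivation: the quantities actually controlled by \eqref{eq:Green_uniform_local}-\eqref{eq:Green_uniform_global} in terms of $|\log|a-b||$-free quantities are exactly the harmonic parts, and re-examining the proof shows $|\widetilde G(a,b) - \log R(x;D)|$ is bounded by an absolute constant once $\max(|x-a|,|x-b|) \le \tfrac14 d(x,\partial D)$; chasing the constants from the Poisson-kernel computation and the $\log|1+t|\le 2|t|$ step gives something like $4$.

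Concretely, the key steps in order: (i) subtract the explicit logarithmic singularity, reducing to the harmonic part $\widetilde G(a,b) = \mathbb{E}^b[\log|W_{\tau_D}-a|]$; (ii) use the probabilistic representation to write $\widetilde G(a,b) - \widetilde G(x,x) = \mathbb{E}^?[\log|W_{\tau_D}-a| - \log|W_{\tau_D}-x|]$ after also moving the starting point from $b$ to $x$ via a harmonic-measure comparison, or more simply bound $\widetilde G(a,b)$ above and below by evaluating at the worst-case boundary hitting points; (iii) since every boundary point $w = W_{\tau_D}$ satisfies $|w - x| \ge d(x,\partial D) \ge 4|x-a|$, we get $|\log|w-a| - \log|w-x|| = |\log|1 + (x-a)/(w-x)|| \le 2|x-a|/|w-x| \le 1/2$, so $|\widetilde G(a,b) - \widetilde G(x,b)| \le 1/2$; (iv) similarly compare $\widetilde G(x,b)$ to $\widetilde G(x,x) = \log R(x;D)$ using $|\log|w-x||$ bounds and the $|x-b| \le \tfrac14 d(x,\partial D)$ hypothesis — here one uses that $\widetilde G(x,\cdot)$ is harmonic in the second variable on $B(x,\tfrac14 d(x,\partial D))$ with a Harnack/gradient bound, or directly the Poisson-kernel estimate as in the proof of \eqref{eq:Green_uniform_local}, yielding a further $O(1)$; (v) combine to get the constant $4$, and finally deduce the ``in particular'' statement by taking $a = b = z$, so that $\widetilde G(z,z) = \log R(z;D)$ and $-\log|z-z| + \log R(x;D)$ must be interpreted via the bound on $\widetilde G$ alone, giving $|\log R(z;D) - \log R(x;D)| \le 4$. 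The main obstacle is step (iv)/(ii): making sure that moving the \emph{starting point} of the Brownian motion from $b$ to $x$ (equivalently, the second argument of $\widetilde G$) is controlled by an absolute constant rather than something blowing up — this is where one genuinely needs the harmonicity of $\widetilde G(x,\cdot)$ together with the explicit Poisson kernel bound on the disc $B(x, \tfrac14 d(x,\partial D))$, exactly mirroring the computation already carried out in the proof of Lemma \ref{lem:Green_estimate} but with the cruder $\tfrac14$ (rather than $\tfrac13$) constant giving room for a clean numerical bound.
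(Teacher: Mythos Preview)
Your final plan (steps (i)--(v)) is correct and is essentially the paper's proof: subtract the logarithmic singularity, then bound $|\widetilde G(a,b)-\log R(x;D)|$ by the triangle inequality through the intermediate point $x$, handling one piece via the elementary bound $|\log|1+t||\le 2|t|$ and the other via \eqref{eq:Green_uniform_local} together with Koebe's quarter theorem. The only simplification you are missing is that your ``main obstacle'' step (iv) does not require redoing any Poisson-kernel computation: since $\widetilde G(x,b)=G_0^D(x,b)+\log|x-b|$ and $\widetilde G(x,x)=\log R(x;D)$, the quantity $|\widetilde G(x,b)-\widetilde G(x,x)|$ is \emph{literally} $|G_0^D(x,b)-[-\log|x-b|+\log R(x;D)]|$, to which \eqref{eq:Green_uniform_local} applies directly (with $z=b$), giving $\le 6\cdot\tfrac14\log 4\le 3$.
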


\begin{proof}
Let $\mathbb{E}^a$ be the expectation with respect to a planar Brownian motion $(W_t)_{t \ge 0}$ starting from $a \in D$, and $\tau_D :=\{ t > 0: W_t \in \partial D\}$. Then
\begin{align*}
&\left|G_0^D(a, b) -  \left[ -\log|a-b| + \log R(x; D)\right]\right|\\
& = \left|\mathbb{E}^a \left[ \log |W_{\tau_D} - b| \right] - \log R(x; D) \right|\\
& \le \left|\mathbb{E}^a \left[ \log |W_{\tau_D} - x| \right] - \log R(x; D)\right| +  \left|\mathbb{E}^a \left[ \log |W_{\tau_D} - b| \right] -  \mathbb{E}^a \left[ \log |W_{\tau_D} - x| \right]\right|\\
& = \left|G_0^D(a, x) -  \left[ -\log|a-x| + \log R(x; D)\right]\right| + \left|\mathbb{E}^a \left[ \log \left| \frac{(W_{\tau_D} - x) + (x - b)}{ |W_{\tau_D} - x| }\right| \right]\right|.
\end{align*}

\noindent Using \eqref{eq:Green_uniform_local} and Koebe quarter theorem, we have
\begin{align*}
 \left|G_0^D(a, x) -  \left[ -\log|a-x| + \log R(x; D)\right]\right|
&\le 6 \frac{|x-a|}{d(x, \partial D)}\log \frac{R(x; D)}{d(x, \partial D)}\\
& \le 6 \frac{1}{4} \log 4 \le 3,
\end{align*}

\noindent whereas the elementary inequality  $|\log |1 + x| | \le 2|x|$ for any $|x| \le \tfrac{1}{2}$ implies
\begin{align*}
 \left|\mathbb{E}^a \left[ \log \left| \frac{(W_{\tau_D} - x) + (x - b)}{ |W_{\tau_D} - x| }\right| \right]\right|
\le 2 \frac{|x-b|}{d(x, \partial D)} \le 1
\end{align*}

\noindent which gives the desired claim.
\end{proof}

\begin{lem}[{cf. \cite[Lemma 3.5]{Ber2017}}]
\label{lem:mollified_cov}
For each $r > 0$, let $h_r(\cdot)$ be the circle average of the Gaussian free field over $\partial B(\cdot, r)$. Then for any $\epsilon, \delta > 0$, 
\begin{align*}
\mathbb{E} \left[ h_\epsilon(x) h_\delta(y)\right]
= -\log \left(|x-y| \vee \epsilon \vee \delta\right) + \mathcal{O}(1)
\end{align*}

\noindent where the $\mathcal{O}(1)$ error is uniform for all $x, y \in D$ bounded away from $\partial D$.
\end{lem}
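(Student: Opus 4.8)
The plan is to reduce the computation of $\mathbb{E}[h_\epsilon(x)h_\delta(y)]$ to the Green's function via the harmonicity of circle averages, and then invoke the local expansion of $G_0^D$ from \Cref{lem:Green_estimate}. First I would recall that for a fixed point $y$ and radius $\delta$, the circle average $h_\delta(y)$ is, by the definition of the GFF, a Gaussian with the property that for any test point the covariance is obtained by averaging the Green's function; concretely, $\mathbb{E}[h(x)h_\delta(y)] = \fint_{\partial B(y,\delta)} G_0^D(x,z)\,|dz|$ whenever $x \notin \overline{B(y,\delta)}$, and more generally $\mathbb{E}[h_\epsilon(x)h_\delta(y)] = \fint_{\partial B(x,\epsilon)}\fint_{\partial B(y,\delta)} G_0^D(w,z)\,|dw|\,|dz|$. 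This double average is well-defined because $G_0^D$ is locally integrable against arc-length on circles.

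The key step is then to evaluate this double average. I would split into the two regimes. When $|x-y| > \epsilon + \delta$ the two circles are disjoint, and since $z \mapsto G_0^D(w,z)$ is harmonic away from $w$ (in fact away from $\overline{B(x,\epsilon)}$ as $w$ ranges over that circle), the mean value property collapses the inner average to $G_0^D(w,y)$, and a second application collapses the outer average to $G_0^D(x,y)$; by the logarithmic singularity $G_0^D(x,y) = -\log|x-y| + \mathcal{O}(1)$ stated in the introduction (with the $\mathcal{O}(1)$ uniform on compact subsets of $D$, which follows from \eqref{eq:Green_uniform_local} combined with the Koebe distortion bound on the conformal radius away from $\partial D$), this gives $-\log|x-y| + \mathcal{O}(1) = -\log(|x-y|\vee\epsilon\vee\delta) + \mathcal{O}(1)$ in this regime. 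When $|x-y| \le \epsilon+\delta$, i.e. the circles overlap or are close, I would use $G_0^D(w,z) = -\log|w-z| + g(w,z)$ where $g$ is the bounded harmonic correction, uniformly bounded on the relevant compact set; the $g$ part contributes $\mathcal{O}(1)$ to the double average, and for the $-\log|w-z|$ part one computes $\fint\fint -\log|w-z|\,|dw|\,|dz|$ explicitly. Using the classical identity $\fint_{\partial B(0,r)} \log|p - \zeta|\,|d\zeta| = \log(|p|\vee r)$, one inner average yields $-\log(|w-y|\vee\delta)$, and then averaging $w$ over $\partial B(x,\epsilon)$ yields (since $-\log(\cdot \vee \delta)$ is the restriction of a subharmonic-type function, handled by a second application of the same identity after comparing with $|x-y|$) a quantity equal to $-\log(|x-y|\vee\epsilon\vee\delta)$ up to an additive $\mathcal{O}(1)$. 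Combining the two regimes gives the claimed formula, and the uniformity of the $\mathcal{O}(1)$ for $x,y$ bounded away from $\partial D$ is inherited from the uniformity of the bound in \eqref{eq:Green_uniform_local} and the fact that the conformal radius $R(\cdot;D)$ is bounded above and below on such compact sets.

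The main obstacle I anticipate is the bookkeeping in the overlapping/near regime $|x-y| \le \epsilon + \delta$: one must be careful that the explicit double-circle-average of $-\log|w-z|$ really does produce $-\log(\epsilon\vee\delta)$ (up to $\mathcal{O}(1)$) rather than something depending more delicately on the ratio $\epsilon/\delta$ or on $|x-y|$, and one must track that the harmonic correction $g(w,z)$ stays uniformly bounded even when $w,z$ are forced close to each other but still a definite distance from $\partial D$ — this is exactly what \eqref{eq:Green_uniform_local} (together with $R(w;D) \asymp d(w,\partial D) \asymp 1$ on the compact set, by Koebe) delivers. Since the cited reference \cite[Lemma 3.5]{Ber2017} proves essentially the same statement, I would keep this argument brief, presenting the reduction to the double average and the two explicit evaluations, and referring to \cite{Ber2017} for the routine constants.
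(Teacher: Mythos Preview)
The paper does not supply its own proof of this lemma: it is stated with a reference to \cite[Lemma~3.5]{Ber2017} and used as a black box. Your proposal is correct and is essentially the standard argument one finds in that reference---reduce to the double circle average of $G_0^D$, use harmonicity to collapse the averages when the circles are disjoint, and use the explicit identity $\fint_{\partial B(0,r)}\log|p-\zeta|\,|d\zeta|=\log(|p|\vee r)$ together with the bounded harmonic remainder when they overlap. The only cosmetic point is that in the overlapping regime the second average is cleanest if you first assume $\epsilon\ge\delta$ by symmetry and then observe that $|z-x|\vee\epsilon$ and $|x-y|\vee\epsilon\vee\delta$ differ by at most a bounded multiplicative factor (since both lie in $[\epsilon,3\epsilon]$), which makes the $\mathcal{O}(1)$ immediate without any subharmonicity considerations.
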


\subsection{Decomposition of Gaussian free field}
Let us mention the following decomposition of Gaussian free field, which will play a crucial role in the proof of \Cref{T:HK}.

\begin{lem}\label{lem:GFF_dec}
Let $\kappa \in (0, 1]$. Then on some suitable probability space we can construct simultaneously three Gaussian fields $h^{\kappa \mathbb{D}}$, $X^{\kappa \mathbb{D}}$ and $\mathcal{G}^{\kappa \mathbb{D}}$ such that
\begin{align}\label{eq:GFF_dec}
h^{\kappa \mathbb{D}} (\cdot) = X^{\kappa \mathbb{D}}(\cdot) - Y^{\kappa \mathbb{D}}(\cdot) \qquad \text{on $B(0, \kappa)$}
\end{align}

\noindent where
\begin{itemize}
\item $h^{\kappa \mathbb{D}}$ is a Gaussian free field on $B(0, \kappa)$ with Dirichlet boundary condition;
\item $X^{\kappa \mathbb{D}}$ is the exactly scale invariant field with covariance given by $\mathbb{E}[X^{\kappa \mathbb{D}}(x) X^{\kappa \mathbb{D}}(y)] = -\log|x-y| + \log \kappa$ on $B(0, \kappa)$.
\item $Y^{\kappa \mathbb{D}}(\cdot)$ is a Gaussian field on $B(0, \kappa)$ independent of $h$, and is uniformly continuous when restricted to compact subset of $B(0, \kappa)$; moreover $Y^{\kappa \mathbb{D}}(0) = 0$.
\end{itemize}
\end{lem}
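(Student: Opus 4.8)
The plan is to construct the decomposition by explicitly identifying the exactly scale-invariant field $X^{\kappa\mathbb{D}}$ inside the Dirichlet GFF and showing the remainder is a nice (H\"older-continuous) Gaussian field independent of $h$. By scaling (replace $x$ by $\kappa x$ and add $\log\kappa$ to the covariance), it suffices to treat $\kappa=1$, i.e. to find on $B(0,1)=\mathbb{D}$ a decomposition $h^{\mathbb{D}}=X-Y$ with $X$ the scale-invariant field of covariance $-\log|x-y|$ and $Y$ a.s. uniformly continuous on compacts, independent of $h$, with $Y(0)=0$. The first step is to recall the covariance identity: for $x,y\in\mathbb{D}$,
\begin{align*}
G_0^{\mathbb{D}}(x,y) = -\log|x-y| + \log\left|1 - x\bar{y}\right|,
\end{align*}
so that writing $K(x,y) := \log|1-x\bar y|$ we have $\mathbb{E}[X(x)X(y)] = G_0^{\mathbb{D}}(x,y) + K(x,y)$. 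The point is that $K$ is the covariance of a smooth (real-analytic on compact subsets of $\mathbb{D}$, since $|1-x\bar y|$ is bounded away from $0$ and $\infty$ there) Gaussian field; more precisely $-K$ is a positive-definite kernel on compact subsets of $\mathbb{D}$ because it is (up to sign) the covariance of the harmonic extension part, and $\mathbb{E}[X(x)X(y)]$ must itself be positive-definite as $X$ exists (this is the standard exactly scale-invariant kernel, which is known to be positive-definite on $\mathbb{D}$). So we may take $Y$ to be a centred Gaussian field with covariance $-K(x,y) = -\log|1-x\bar y| = \mathbb{E}[X(x)X(y)] - G_0^{\mathbb{D}}(x,y)$, chosen \emph{independent} of $h^{\mathbb{D}}$, and then define $X := h^{\mathbb{D}} + Y$; one checks $\mathbb{E}[X(x)X(y)] = G_0^{\mathbb{D}}(x,y) + (-K(x,y))\cdot(-1)\cdots$ — being careful with signs, $\mathbb{E}[(h^{\mathbb{D}}+Y)(x)(h^{\mathbb{D}}+Y)(y)] = G_0^{\mathbb{D}}(x,y) - K(x,y)$ with $-K(x,y) = \log\frac{1}{|1-x\bar y|}$, wait — we need $\mathbb{E}[X(x)X(y)] = -\log|x-y|$, and indeed $G_0^{\mathbb{D}}(x,y) - \log|1-x\bar y| = -\log|x-y|$, so $Y$ should have covariance $-\log|1-x\bar y| = K(x,y)$ itself, which requires $K$ to be positive-definite; this holds because $\log\frac{1}{|1-x\bar y|} \le 0$... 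Let me restate cleanly in the write-up: one takes $Y$ with covariance equal to whichever of $\pm K$ is positive-definite on compacts and absorbs the sign into the definition $h^{\mathbb{D}} = X - Y$ versus $X + Y$; the arithmetic is routine and the existence of $X$ (hence positive-definiteness of its kernel) is classical.

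The second step is regularity of $Y$. Since $Y$ has covariance given by $\pm\log|1-x\bar y|$ restricted to $B(0,\kappa)$ (after undoing the scaling), and this function is $C^\infty$ (indeed real-analytic) jointly in $(x,y)$ on $\overline{B(0,\kappa')}\times\overline{B(0,\kappa')}$ for every $\kappa'<\kappa$ (because $|1-x\bar y|$ stays in a compact subset of $(0,\infty)$ there), the Kolmogorov--Chentsov continuity criterion applied to the Gaussian increments — for which $\mathbb{E}[(Y(x)-Y(y))^2]$ is controlled by $C|x-y|^2$ via the smoothness of the kernel — yields a version of $Y$ that is a.s. locally H\"older, in particular uniformly continuous on compact subsets of $B(0,\kappa)$. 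Adjusting by the deterministic centring $Y(\cdot)\mapsto Y(\cdot) - Y(0)$ makes $Y(0)=0$ while preserving continuity and independence from $h$; this does change the covariance by terms involving $Y(0)$, which one records as part of defining $Y^{\kappa\mathbb{D}}$ and which only affects the $\mathcal{O}(1)$-type constants, not the logarithmic singularity carried by $X^{\kappa\mathbb{D}}$.

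The third and final step is to transport back by scaling: set $h^{\kappa\mathbb{D}}(x) := h^{\mathbb{D}}(x/\kappa)$ (which is a Dirichlet GFF on $B(0,\kappa)$ by conformal invariance of the GFF), $X^{\kappa\mathbb{D}}(x) := X(x/\kappa) + \tfrac12\log\kappa\cdot(\text{suitable constant})$ — more precisely one uses that the scale-invariant field with covariance $-\log|x-y|+\log\kappa$ on $B(0,\kappa)$ is obtained from the $-\log|x-y|$ field on $\mathbb{D}$ by the map $x\mapsto x/\kappa$ together with adding $\log\kappa$ to the covariance, which is an honest Gaussian operation — and likewise rescale $Y$. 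Then \eqref{eq:GFF_dec} holds on $B(0,\kappa)$ by construction. The main obstacle, such as it is, is bookkeeping: verifying the exact covariance identity $G_0^{\mathbb{D}}(x,y) = -\log|x-y| + \log|1-x\bar y|$ and correctly tracking the signs so that the \emph{difference} $X^{\kappa\mathbb{D}} - Y^{\kappa\mathbb{D}}$ (rather than the sum) reproduces $h^{\kappa\mathbb{D}}$ with all boundary/centring constants accounted for; the probabilistic content (positive-definiteness of the scale-invariant kernel, Kolmogorov continuity for the smooth remainder, conformal invariance of the GFF) is standard.
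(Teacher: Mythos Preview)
Your overall approach is the same as the paper's: reduce to $\kappa=1$ by scaling, use the identity $G_0^{\mathbb{D}}(x,y)=-\log|x-y|+\log|1-x\bar y|$, construct $Y$ with covariance $-\log|1-x\bar y|$ independent of $h^{\mathbb{D}}$, and set $X:=h^{\mathbb{D}}+Y$. However, two points in your execution are shaky, and the paper's proof handles both with a single explicit construction.

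First, the positive-definiteness of $-\log|1-x\bar y|$ is not something you can finesse by ``taking whichever of $\pm K$ is positive-definite and absorbing the sign into $h=X\pm Y$'': since $Y$ and $-Y$ have the same covariance, the relation $\mathrm{Cov}(h)+\mathrm{Cov}(Y)=\mathrm{Cov}(X)$ forces $\mathrm{Cov}(Y)=-\log|1-x\bar y|$ specifically. Your remark that this is ``the covariance of the harmonic extension part'' is intuition, not a proof; knowing that $-\log|x-y|$ and $G_0^{\mathbb{D}}$ are both positive-definite says nothing about their difference. The paper instead \emph{constructs} $Y$ explicitly as $Y^{\mathbb{D}}(z)=\Re\bigl[\sum_{k\ge1}\sqrt{2/k}\,\mathcal{N}_k^{\mathbb{C}}z^k\bigr]$ with i.i.d.\ standard complex Gaussians $\mathcal{N}_k^{\mathbb{C}}$, and checks directly that this has covariance $-\log|1-z\bar w|$; positive-definiteness is then automatic.

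Second, your ``centring'' step $Y\mapsto Y-Y(0)$ is confused. You call $Y(0)$ deterministic, but it is a priori random; and if it were genuinely nonzero, subtracting it would change $\mathrm{Cov}(Y)$ and hence break $\mathrm{Cov}(X)=-\log|x-y|$. In fact no adjustment is needed: $\mathrm{Var}(Y(0))=-\log|1-0|=0$, so $Y(0)=0$ almost surely. The paper's explicit series makes this transparent (substitute $z=0$), and also gives uniform continuity on compacts for free, since $Y^{\mathbb{D}}$ is the real part of a random analytic function with radius of convergence $1$ --- so Kolmogorov--Chentsov is not needed either.
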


\begin{proof}
Since
\begin{align*}
h^{\kappa \mathbb{D}}(\cdot) \overset{d}{=} h^{\mathbb{D}}(\cdot / \kappa)
\qquad \text{and} \qquad 
X^{\kappa \mathbb{D}}(\cdot) \overset{d}{=} X^{\mathbb{D}}(\cdot / \kappa)
\end{align*}

\noindent on $B(0, \kappa)$, the general result follows from the special case $\kappa = 1$ using a scaling argument.

Let us now focus on $\kappa = 1$, and view $\mathbb{D} \subset \mathbb{C}$. Recall that
\begin{align*}
\mathbb{E}[X^{\mathbb{D}}(x) X^{\mathbb{D}}(y)]
&= - \log|x-y|\\
& = -\log \left| \frac{x-y}{1 - x\bar{y}}\right| - \log|1-x\bar{y}|
= G_0^{\mathbb{D}}(x, y) - \log|1-x\bar{y}| \qquad \forall x, y \in \mathbb{D}.
\end{align*}

We claim that the kernel $-\log|1-x\bar{y}|$ is positive definite on $\mathbb{D} \times \mathbb{D}$ and therefore could be realised as the covariance kernel of some Gaussian field $Y^{\mathbb{D}}$: indeed the field can be explicitly constructed by
\begin{align}\label{eq:GaussAnalytic}
Y^{\mathbb{D}}(z) :=  \Re \left[ \sum_{k=1}^\infty \sqrt{\frac{2}{k}} \mathcal{N}^{\mathbb{C}}_kz^k\right], \qquad z \in \mathbb{D}
\end{align}

\noindent where $\mathcal{N}^{\mathbb{C}}_k$ are i.i.d. standard complex Gaussian random variables. We can then construct a Gaussian free field $h^{\mathbb{D}}$ independent of $Y^{\mathbb{D}}$ and set $X^{\mathbb{D}} := h^{\mathbb{D}} + Y^{\mathbb{D}}$ so that \eqref{eq:GFF_dec} holds by definition.

Last but not least, since $Y^{\mathbb{D}}(z)$ is the real part of a random analytic function with radius of convergence equal to $1$, it follows immediately that $Y^{\mathbb{D}}(z)$ is uniformly continuous when restricted to any compact subset of $\mathbb{D}$, and substituting $z = 0$ into \eqref{eq:GaussAnalytic} we have $Y^{\mathbb{D}}(0) = 0$ almost surely, as claimed.
\end{proof}

\subsection{Williams' path decomposition of Brownian motion}\label{subsec:path_dec}
The following result is due to Williams \cite{Wil1974}; see also \cite{RP1981}.
\begin{lem}\label{lem:time_reversal}
Let $(B_t)_{t \ge 0}$ be a Brownian motion, and for $m> 0$ write $B_t^m := B_t + m t$. Fix $x > 0$ and define
\begin{align*}
\tau_x := \inf \{t > 0: B_t^m = x\}.
\end{align*}

\noindent Then we have the following equality of path distributions
\begin{align*}
(x - B_{\tau_x - t}^m)_{t \in [0, \tau_x]} \overset{d}{=} (\mathcal{B}_t^m)_{t \in [0, L_x]}
\end{align*}

\noindent where $(\mathcal{B}_t^m)_{t \ge 0}$ is a Brownian motion with drift $m$ conditioned to stay non-negative, and
\begin{align*}
L_x := \sup \{t > 0: \mathcal{B}_t^\mu = x\}.
\end{align*}
\end{lem}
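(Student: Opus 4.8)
\textbf{Proof proposal for Lemma \ref{lem:time_reversal} (Williams' path decomposition).}

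The plan is to prove the identity in law by combining the strong Markov property at $\tau_x$ with a time-reversal computation for the transient diffusion $B^m$, followed by a Doob $h$-transform identification of the reversed process. First I would recall that, since $m>0$, the drifted Brownian motion $B^m$ is transient to $+\infty$, so $\tau_x < \infty$ almost surely for every $x>0$ and moreover $B^m$ hits every level in $(0,x)$ before $\tau_x$; consequently $x - B^m_{\tau_x - t}$ runs from $0$ at $t=0$ up to $x - B^m_0 = x$ at $t = \tau_x$, and one checks from the definition that $x$ is a \emph{last} passage time of the reversed path, matching the role of $L_x$ on the right-hand side. The substantive step is to show that the reversed segment $(x - B^m_{\tau_x - t})_{t\in[0,\tau_x]}$, which is a diffusion started at $0$ and killed at its last visit to $x$, has the same generator as $\mathcal B^m$ run until $L_x$.

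For the generator computation I would use the classical time-reversal formula for one-dimensional diffusions (Nagasawa's theorem, or a direct argument via the transition densities of $B^m$): if $q_t(a,b)$ denotes the transition density of $B^m$ and $u(a) = \mathbb P_a(\tau_x < \infty) \equiv 1$ here is the relevant harmonic function for hitting, then the process $B^m$ viewed backwards from $\tau_x$ is itself a diffusion whose drift is obtained by an $h$-transform with $h(a) = $ the Green's function $g(a,x)$ of $B^m$ killed at... — more precisely, the reversed process of a transient diffusion from its (almost surely finite) last exit time from a point is governed by the $h$-transform of the \emph{dual} motion with respect to the invariant-type function $s(x) - s(\cdot)$, where $s$ is the scale function $s(y) = e^{-2my}$ (up to affine normalisation). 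A short computation then shows that the drift of the reversed process at height $y \in (0,x)$ equals $m \coth$-type correction; but since we reverse from $\tau_x$ (first passage) rather than from a last exit time, the cleaner route is: condition on $\tau_x$, reverse the bridge of $B^m$ from $0$-ish... Let me instead carry out the most robust version. I would write $\mathcal B^m$ explicitly as the Doob transform of $B^m$ by the function $\phi(y) = \mathbb P_y(B^m \text{ never hits }0) = 1 - e^{-2my}$ for $y \ge 0$ (this is the standard description of ``$B^m$ conditioned to stay non-negative''), compute its transition density $\bar q_t(a,b) = \tfrac{\phi(b)}{\phi(a)} q_t^{(0)}(a,b)$ where $q^{(0)}$ is the density of $B^m$ killed at $0$, and separately compute via the strong Markov property and the reflection/absorption identities the finite-dimensional distributions of $(x - B^m_{\tau_x - t})$; comparing the two expressions, the ratio $\phi(b)/\phi(a)$ emerges precisely from the reversal Jacobian $q_t^{(0)}(b,a)/q_t^{(0)}(a,b)$ adjusted by the scale function, giving equality of all finite-dimensional laws.

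The main obstacle is handling the \emph{endpoint} correctly: the left-hand process is reversed from a \emph{first} hitting time $\tau_x$ of $x$, whereas the right-hand process is run until a \emph{last} hitting time $L_x$ of $x$, and these are genuinely different stopping rules — so the identity is really a statement that ``first passage reversed = conditioned process run to last passage.'' I would resolve this by the time-reversal duality: for the transient diffusion $\mathcal B^m$, its last passage time $L_x$ of $x$ is a.s. finite, and reversing $\mathcal B^m$ from $L_x$ produces (by Nagasawa/Williams duality for $h$-transformed diffusions) exactly $B^m$ killed upon first hitting $x$, started from $x$ — equivalently, reading that statement backwards gives the Lemma. Thus the clean proof is: (i) establish $L_x<\infty$ a.s. and that $\mathcal B^m_0 = 0$ forces the reversed path to start at $x$ and end at $0$; (ii) apply the general time-reversal theorem for $h$-transforms of one-dimensional diffusions to identify the law of $(\mathcal B^m_{L_x - t})_{t\in[0,L_x]}$ with that of $B^m$ started at $x$, run until $\tau_0^{B^m}$... no — until it would hit $x$ again, i.e. absorbed appropriately; (iii) translate by $x$ and reflect (the map $y \mapsto x - y$) to match $(x - B^m_{\tau_x - t})$. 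Steps (i) and (iii) are routine; step (ii) is the heart and I would cite \cite{Wil1974} or \cite{RP1981} for the precise reversal statement, supplying the scale-function bookkeeping $s(y) = e^{-2my}$ and the conditioning function $\phi(y) = 1 - e^{-2my}$ as the only explicit inputs needed. Since the Lemma is quoted as due to Williams, in the paper I would most likely give only this short derivation sketch and defer to the cited references for the general reversal theorem.
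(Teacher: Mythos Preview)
The paper does not actually prove this lemma: it simply states the result with the attribution ``The following result is due to Williams \cite{Wil1974}; see also \cite{RP1981}'' and moves on. Your final sentence anticipates this exactly, so in that sense your proposal matches the paper.

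That said, your sketch of an independent proof is rather disorganised, with several false starts and self-corrections (``no --- until it would hit $x$ again'', ``Let me instead carry out the most robust version''), and a few of the intermediate claims are muddled: for instance, you never settle on which process is being reversed from which time, and the step-(ii) identification drifts between ``$B^m$ started at $x$ run until $\tau_0$'' and ``absorbed appropriately'' without resolution. The correct ingredients are all present --- the Doob $h$-transform description of $\mathcal{B}^m$ via $\phi(y)=1-e^{-2my}$, the scale function $s(y)=e^{-2my}$, and Nagasawa-type time reversal --- but as written the argument would not stand on its own without the cited references. Since the paper itself gives no proof, this is moot for the comparison, but if you intended to include a genuine self-contained argument you would need to clean up the reversal step considerably.
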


The following definition will be used in \Cref{sec:proof_ptwise} of the article: for each $m > 0$ we define the two-sided process $(\beta_t^m)_{t \in \mathbb{R}}$ by
\begin{align}\label{eq:beta_process}
\beta_t^m = \begin{cases}
B_t - mt & \text{if $t \ge 0$} \\
\mathcal{B}_{-t}^m & \text{if $t \le 0$}
\end{cases}
\end{align}

\noindent where $(B_t)_{t \ge 0}$ and $(\mathcal{B}_{t}^m)_{t \ge 0}$ are independent of each other.  In particular we can re-express the constant $c_\gamma(m)$ defined in \eqref{eq:constant} as
\begin{align}\label{eq:constant2}
c_{\gamma}(m) = \frac{1}{\pi} \mathbb{E} \left[ \int_{-\infty}^\infty \mathcal{I}\left(e^{\gamma \beta_t^m}\right)dt\right].
\end{align}

\noindent Before we proceed, let us explain why the constant $c_\gamma(m)$ is finite for positive $\gamma$ and $m$.
\begin{lem}\label{lem:const_finite}
The constant $c_{\gamma}(m)$ defined in \eqref{eq:constant} is finite for any $\gamma, m > 0$.
\end{lem}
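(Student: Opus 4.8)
The goal is to show that
\[
c_\gamma(m) = \frac1\pi\,\mathbb{E}\left[\int_{-\infty}^\infty \mathcal{I}\!\left(e^{\gamma \beta_t^m}\right)dt\right]
= \frac1\pi\,\mathbb{E}\left[\int_{-\infty}^\infty e^{\gamma\beta_t^m} e^{-e^{\gamma\beta_t^m}}\,dt\right] < \infty .
\]
Since the integrand is nonnegative, by Tonelli it suffices to bound $\int_{-\infty}^\infty \mathbb{E}\big[\mathcal{I}(e^{\gamma\beta_t^m})\big]\,dt$. The plan is to split the integral into the two rays $t\ge 0$ and $t\le 0$, which correspond respectively to $B_t-mt$ (a Brownian motion with negative drift) and $\mathcal{B}_s^m$ (a Brownian motion with positive drift conditioned to stay nonnegative), and to estimate $\mathbb{E}[\mathcal{I}(e^{\gamma X})]$ for a generic Gaussian-or-tilted random variable $X$ whose mean drifts linearly to $-\infty$, using the two elementary one-sided bounds $\mathcal{I}(x) = xe^{-x} \le x$ for all $x\ge 0$ and $\mathcal{I}(x) \le \sup_{y>0} y e^{-y} = e^{-1}$, together with $\mathcal{I}(x)\le C x^{-1}$ for large $x$.

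For the ray $t\ge 0$: here $\beta_t^m = B_t - mt \sim \mathcal{N}(-mt, t)$, so $e^{\gamma\beta_t^m}$ is lognormal and $\mathbb{E}[e^{\gamma\beta_t^m}] = e^{-\gamma m t + \gamma^2 t/2}$. Using $\mathcal{I}(x)\le x$ gives $\mathbb{E}[\mathcal{I}(e^{\gamma\beta_t^m})] \le e^{(\gamma^2/2-\gamma m)t}$, which is integrable on $[T,\infty)$ as soon as $t$ is large enough that $\gamma^2/2 - \gamma m < 0$ — but this requires $m > \gamma/2$, which need not hold. To handle all $m>0$, on the region where $\gamma^2/2-\gamma m \ge 0$ (a bounded time interval, or more robustly for the whole ray) I would instead use a negative moment: $\mathbb{E}[\mathcal{I}(e^{\gamma\beta_t^m})] \le \mathbb{E}[e^{-\gamma\beta_t^m}\wedge 1]$ is not quite it, rather use $\mathcal{I}(x) = xe^{-x} \le x e^{-x} \le \min(x, x^{-1}\cdot C)$... cleaner: write $\mathcal{I}(x) \le x^{-\alpha}$ for any fixed $\alpha\in(0,1)$ and all $x\ge x_0(\alpha)$, combined with $\mathcal{I}\le e^{-1}$ for small $x$, so $\mathbb{E}[\mathcal{I}(e^{\gamma\beta_t^m})] \le e^{-1}\,\mathbb{P}(e^{\gamma\beta_t^m}\le x_0) + \mathbb{E}[e^{-\alpha\gamma\beta_t^m}] = e^{-1}\mathbb{P}(\beta_t^m \le \tfrac{\log x_0}{\gamma}) + e^{\alpha\gamma m t + \alpha^2\gamma^2 t/2}$. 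The exponential term still grows unless $\alpha$ is small, so this bound alone does not suffice for the large-$t$ tail either; the correct move is to use the \emph{two-sided} Gaussian tail directly: since $\mathcal{I}(e^{\gamma x}) = e^{\gamma x - e^{\gamma x}}$ is bounded and decays super-exponentially as $x\to\pm\infty$, and $\beta_t^m$ concentrates around $-mt$ with Gaussian fluctuations of size $\sqrt t$, a Gaussian tail estimate gives $\mathbb{E}[\mathcal{I}(e^{\gamma\beta_t^m})] = \mathbb{E}[e^{\gamma\beta_t^m - e^{\gamma\beta_t^m}}]\le \mathbb{E}[e^{\gamma\beta_t^m}\mathbf 1_{\beta_t^m\le 0}] + e^{-1}\mathbb{P}(\beta_t^m > 0)$, and $\mathbb{P}(\beta_t^m>0) = \mathbb{P}(\mathcal{N}(0,1) > m\sqrt t) \le e^{-m^2 t/2}$ while $\mathbb{E}[e^{\gamma\beta_t^m}\mathbf 1_{\beta_t^m\le 0}]\le e^{\gamma\cdot 0}\mathbb P(\beta_t^m\le 0)\cdot$(no)... rather split $\{\beta_t^m \le -\tfrac{mt}{2}\}$ and its complement: on the complement, $\beta_t^m\ge -mt/2$ so $\mathcal{I}(e^{\gamma\beta_t^m})\le e^{-1}$ times the probability $\mathbb{P}(\beta_t^m \ge -mt/2)=\mathbb{P}(\mathcal{N}(0,1)\ge m\sqrt t/2)\le e^{-m^2 t/8}$; on $\{\beta_t^m \le -mt/2\}$ use $\mathcal{I}(x)\le x$ so the contribution is $\le \mathbb{E}[e^{\gamma\beta_t^m}\mathbf 1_{\beta_t^m\le -mt/2}]\le e^{-\gamma m t/2}$. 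Either way $\mathbb{E}[\mathcal{I}(e^{\gamma\beta_t^m})] \le e^{-ct}$ for a constant $c=c(\gamma,m)>0$ and all large $t$, giving an integrable tail; the finitely-many small-$t$ values are trivially bounded by $e^{-1}$.

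For the ray $t\le 0$, i.e. $\mathcal{B}_s^m$ with $s=-t\ge 0$: invoking Williams' time-reversal (\Cref{lem:time_reversal}), $(\mathcal{B}_s^m)_{s\le L_x} \overset{d}{=} (x - B_{\tau_x - s}^m)_{s\le\tau_x}$, which shows $\mathcal{B}_s^m$ behaves for large $s$ like a Brownian motion with drift $+m$, so $\mathcal{B}_s^m \approx ms \to +\infty$, and there $\mathcal{I}(e^{\gamma\mathcal{B}_s^m})$ decays \emph{doubly exponentially} because $e^{-e^{\gamma\mathcal{B}_s^m}}$ kills it. Concretely I would use $\mathcal{I}(x) = x e^{-x} \le x^{-1}$ for $x\ge 1$ together with a lower-tail estimate $\mathbb{P}(\mathcal{B}_s^m \le ms/2) \le e^{-cs}$ (which follows from the path-decomposition comparison with an unconditioned drifting Brownian motion, or from known estimates on Brownian motion with drift conditioned to stay positive), obtaining $\mathbb{E}[\mathcal{I}(e^{\gamma\mathcal{B}_s^m})]\le e^{-\gamma m s/2} + e^{-cs}$, integrable in $s$; near $s=0$ the process started at $0^+$ behaves like a Bessel-3-type process and $\mathcal{I}\le e^{-1}$ suffices. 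Summing the two rays gives $c_\gamma(m)<\infty$.

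The main obstacle is the ray $t\ge 0$ for small $m$ (specifically $m\le\gamma/2$): the naive first-moment bound $\mathbb{E}[e^{\gamma\beta_t^m}]$ is \emph{not} integrable because the lognormal mean grows, so one genuinely must exploit that $\mathcal{I}$ is bounded (not just $\le x$) and that the \emph{bulk} of $\beta_t^m$ sits near $-mt<0$ with only a Gaussian-small probability of being near or above $0$; the bounded-ness of $\mathcal I$ caps the contribution of the lognormal's fat upper tail at $e^{-1}\,\mathbb{P}(\beta_t^m \ge 0)$, which decays like $e^{-m^2 t/2}$. Making this split cleanly and uniformly in $t$ is the only step requiring care; the $t\le 0$ ray is comparatively easy thanks to the double-exponential cutoff.
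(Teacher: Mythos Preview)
Your proposal is correct and, after the initial exploratory digressions, lands on essentially the same argument as the paper. Both proofs split each ray at the half-drift level: for $t\ge 0$ you split on $\{\beta_t^m \le -mt/2\}$ and use $\mathcal{I}(x)\le x$ on the low side and $\mathcal{I}\le e^{-1}$ plus a Gaussian tail $\mathbb{P}(B_t \ge mt/2)\le e^{-m^2t/8}$ on the high side, exactly as the paper does; for the conditioned process $\mathcal{B}_s^m$ you split on $\{\mathcal{B}_s^m \le ms/2\}$ and invoke the stochastic domination $B_s+ms \preceq \mathcal{B}_s^m$ for the lower tail, which is precisely the paper's argument (the paper uses $xe^{-x}\le 2e^{-x/2}$ on the upper piece where you use $\mathcal{I}(x)\le x^{-1}$, but both yield integrable bounds). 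The only thing to tighten is your phrasing of the stochastic domination step---the paper states it crisply as ``$B_t+mt$ is stochastically dominated by $\mathcal{B}_t^m$''---but the content is the same.
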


\begin{proof}
We start with the first expectation in \eqref{eq:constant}, and consider
\begin{align*}
\mathbb{E}\left[\mathcal{I}\left(e^{\gamma (B_t - mt)}\right)\right] 
& = 
\mathbb{E}\left[e^{\gamma (B_t - mt)} \exp\left(-e^{\gamma (B_t - mt)}\right)1_{\{B_t - mt \le -\frac{1}{2}mt\}}\right] \\
& \qquad  +
\mathbb{E}\left[e^{\gamma (B_t - mt)} \exp\left(-e^{\gamma (B_t - mt)}\right)1_{\{B_t -mt > -\frac{1}{2}mt\}}\right]\\
& \le e^{-\frac{\gamma m}{2}t} + \mathbb{P}\left(B_t -mt > -\frac{1}{2}mt\right) \\
&\le e^{-\frac{\gamma m}{2}t} + e^{-\frac{1}{8} m^2 t}.
\end{align*}

\noindent This shows that
\begin{align*}
\mathbb{E}\left[\int_{0}^\infty 
\mathcal{I}\left(e^{\gamma (B_t - mt)} \right)dt \right]  
\le \int_0^\infty \left[ e^{-\frac{\gamma m}{2}t} + e^{-\frac{1}{8} m^2 t}\right] dt < \infty.
\end{align*}

\noindent As for the second expectation in \eqref{eq:constant}, we consider
\begin{align*}
\mathbb{E}\left[\mathcal{I}\left(e^{\gamma \mathcal{B}_t^{m}} \right)\right]
& = \mathbb{E}\left[e^{\gamma \mathcal{B}_t^{m}} \exp\left(-e^{\gamma \mathcal{B}_t^{m}}\right)1_{\{\mathcal{B}_t^m \le \frac{1}{2}mt\}}\right]
+  \mathbb{E}\left[e^{\gamma \mathcal{B}_t^{m}} \exp\left(-e^{\gamma \mathcal{B}_t^{m}} \right)1_{\{\mathcal{B}_t^m > \frac{1}{2}mt\}}\right].
\end{align*}

\noindent The fact that $B_t + mt$ is stochastically dominated by $\mathcal{B}_t^m$ implies that
\begin{align*}
\mathbb{E}\left[e^{\gamma \mathcal{B}_t^{m}} \exp\left(-e^{\gamma \mathcal{B}_t^{m}}\right)1_{\{\mathcal{B}_t^m \le \frac{1}{2}mt\}}\right]
\le \mathbb{P}\left(\mathcal{B}_t^m \le \frac{1}{2} mt \right)
\le \mathbb{P}\left(B_t + mt \le \frac{1}{2} mt \right)
\le e^{-\frac{1}{8} m^2 t}.
\end{align*}

\noindent Meanwhile, using the elementary inequality $x e^{-x} \le 2 e^{-x/2}$ for $x \ge 0$ we also obtain
\begin{align*}
\mathbb{E}\left[e^{\gamma \mathcal{B}_t^{m}} \exp\left(-e^{\gamma \mathcal{B}_t^{m}} \right)1_{\{\mathcal{B}_t^m > \frac{1}{2}mt\}}\right] \le 2 e^{-\frac{\gamma m}{4}t}.
\end{align*}

\noindent Hence,
\begin{align*}
\mathbb{E}\left[\int_{0}^{\infty} \mathcal{I}\left(
e^{\gamma \mathcal{B}_t^{m}} \right)dt\right]
\le \int_0^\infty \left[ e^{-\frac{1}{8} m^2 t} +  2e^{-\frac{\gamma m}{4}t}\right]dt < \infty
\end{align*}

\noindent and we conclude that $c_{\gamma}(m) < \infty$.
\end{proof}

\subsection{Main lemma}
The following lemma will be used to help us obtain uniform estimates and pointwise limits that are needed for the application of dominated convergence in the main proof. We will be using the following notation: for each $\gamma, m > 0$ and function $f: [0, \infty) \to [0, \infty)$, define
\begin{align}\label{eq:constantg}
\begin{split}
c_{\gamma}(m; f) 
:=& \frac{1}{\pi} \mathbb{E} \left[ \int_0^\infty f\left(e^{\gamma \beta_t^m}\right) dt \right]\\
=&\frac{1}{\pi}\Bigg\{
\mathbb{E}\left[\int_{0}^{\infty} 
f\left(e^{\gamma \mathcal{B}_t^{m}} \right)dt\right]
+ \mathbb{E}\left[\int_{0}^\infty
f\left(e^{\gamma (B_t - mt)}\right)dt\right] 
\Bigg\}
\end{split}
\end{align}

\noindent In particular, if $\mathcal{I}(x) = xe^{-x}$, then $c_{\gamma}(m; \mathcal{I}) = c_{\gamma}(m)$ as defined in \eqref{eq:constant}.

\begin{lem}\label{lem:main}
Consider the following random objects:
\begin{itemize}
\item $(B_{1,t})_{t \ge 0}$ and $(B_{2,t})_{t \ge 0}$ are two independent Brownian motions;
\item $\mathcal{E}_0, \mathcal{E}_1, \mathcal{E}_2$ are non-negative random variables that are independent of $(B_{1,t})_{t \ge 0}$ and $(B_{2, t})_{t\ge 0}$, and $\mathbb{E}[\mathcal{E}_0] < \infty$.
\end{itemize}

\noindent In addition, for each $i \in \{1, 2\}$ let $m_i, \gamma_i > 0$ and $\mathcal{I}_i: [0, \infty) \to [0, \infty)$ be such that $c_{\gamma_i}(m_i; \mathcal{I}_i) < \infty$  and that $\mathcal{I}_i(0) = 0$. Then the following statements hold.
\begin{itemize}[leftmargin=*]
\item For all $\lambda_1, \lambda_2 > 0$,
\begin{align}
\label{eq:uniform_main1}
\mathbb{E} \left[\mathcal{E}_0 \int_0^\infty \mathcal{I}_1\left(\lambda_1 \mathcal{E}_1 e^{\gamma_1 (B_{1,t} - m_1t)}\right)dt\right]
& \le \pi c_{\gamma_1}(m_1; \mathcal{I}_1)\mathbb{E} \left[\mathcal{E}_0 \right] \\
\label{eq:uniform_main2}
\text{and} \quad
\mathbb{E} \left[\mathcal{E}_0 \prod_{i=1}^2\left( \int_0^\infty \mathcal{I}_i\left(\lambda_i \mathcal{E}_i e^{\gamma_i (B_{i,t} - m_it)}\right)dt\right)\right]
&\le  \left[\prod_{i=1}^2 \pi c_{\gamma_i}(m_i; \mathcal{I}_i)\right]\mathbb{E} \left[\mathcal{E}_0 \right].
\end{align}

\item We have
\end{itemize}
\begin{align}
\label{eq:limit_main1}
\lim_{\lambda_1 \to \infty} \mathbb{E} \left[\mathcal{E}_0 \int_0^\infty \mathcal{I}_1\left(\lambda_1 \mathcal{E}_1 e^{\gamma_1 (B_{1,t} - m_1t)}\right)dt\right]
& = \pi c_{\gamma_1}(m_1; \mathcal{I}_1)\mathbb{E} \left[\mathcal{E}_0 \right] \\
\label{eq:limit_main2}
\text{and} \quad
\lim_{\lambda_1, \lambda_2 \to \infty} \mathbb{E} \left[\mathcal{E}_0 \prod_{i=1}^2\left( \int_0^\infty \mathcal{I}_i\left(\lambda_i \mathcal{E}_i e^{\gamma_i (B_{i,t} - m_it)}\right)dt\right)\right]
&=  \left[\prod_{i=1}^2 \pi c_{\gamma_i}(m_i; \mathcal{I}_i)\right]\mathbb{E} \left[\mathcal{E}_0 \right].
\end{align}
\end{lem}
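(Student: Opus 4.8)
\textbf{Proof plan for Lemma \ref{lem:main}.}

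The plan is to reduce everything to a single scalar computation involving the two-sided process $\beta^m$, using time-reversal (Williams' decomposition, Lemma \ref{lem:time_reversal}) to recognise the running maximum of $B_t - mt$ as the natural ``clock'' that generates the integral. First I would focus on the one-dimensional estimate \eqref{eq:uniform_main1}. Write $S := \sup_{t\ge 0}(B_{1,t}-m_1 t)$, which is a.s. finite and in fact exponentially distributed (with parameter $2m_1$), and decompose the trajectory at the (a.s. unique) time $t^*$ at which this supremum is attained. On $[0,t^*]$, running time backwards from $t^*$, the path $S - (B_{1,t^*-s}-m_1(t^*-s))$ has the law of $\mathcal{B}^{m_1}$ run until its last hitting time of $S$; on $[t^*,\infty)$ the path $S-(B_{1,t}-m_1 t)$ is a Brownian motion with drift $m_1$ started at $0$ killed... more precisely the two pieces together, after recentering by $S$, assemble exactly into the two-sided process $(\beta^{m_1}_t)_{t\in\RR}$ of \eqref{eq:beta_process}, up to the multiplicative shift $e^{\gamma_1 S}$ coming from the recentering. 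Concretely, conditionally on the whole picture,
\begin{align}\label{eq:plan_key}
\int_0^\infty \mathcal{I}_1\!\left(\lambda_1 \mathcal{E}_1 e^{\gamma_1(B_{1,t}-m_1 t)}\right)dt
\overset{d}{=} \int_{-\infty}^\infty \mathcal{I}_1\!\left(\lambda_1 \mathcal{E}_1 e^{\gamma_1 S} e^{\gamma_1 \beta^{m_1}_t}\right)dt.
\end{align}
The right-hand side has the form $g(\lambda_1 \mathcal{E}_1 e^{\gamma_1 S})$ where $g(a) := \int_{-\infty}^\infty \mathcal{I}_1(a e^{\gamma_1\beta^{m_1}_t})\,dt$; the point is that, by the substitution $a e^{\gamma_1 \beta^{m_1}_t} = e^{\gamma_1(\beta^{m_1}_t + \gamma_1^{-1}\log a)}$ and the fact that $\beta^{m_1}$ has stationary-type increments along the relevant decomposition, $\EE[g(a)]$ does \emph{not} depend on $a>0$ and equals $\pi c_{\gamma_1}(m_1;\mathcal{I}_1)$ by \eqref{eq:constant2}/\eqref{eq:constantg}. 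Hence taking expectations in \eqref{eq:plan_key} and using independence of $\mathcal{E}_0,\mathcal{E}_1$ from the Brownian motions gives \eqref{eq:uniform_main1} with equality in the limit — so in fact the ``$\le$'' in \eqref{eq:uniform_main1} can be upgraded and \eqref{eq:limit_main1} is immediate too (the quantity is actually \emph{constant} in $\lambda_1$, provided $\mathcal{E}_1>0$ a.s.; if $\PP(\mathcal{E}_1 = 0)>0$ one splits on that event and uses $\mathcal{I}_1(0)=0$, which forces the limit but only an inequality for finite $\lambda_1$, matching the statement).

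Wait — I should be careful that the constancy in $a$ is exactly what the shift-invariance along the decomposition gives, and that is the crux of why the limit holds: for finite $\lambda_1$ the integrand with $\mathcal{E}_1$ possibly vanishing is bounded by the $\mathcal{E}_1>0$ version (monotonicity, since $\mathcal{I}_1 \ge 0$ and $\mathcal{I}_1(0)=0$), which gives the uniform bound \eqref{eq:uniform_main1}; for the limit \eqref{eq:limit_main1}, on the event $\{\mathcal{E}_1>0\}$ the integral is honestly constant in $\lambda_1$, while on $\{\mathcal{E}_1=0\}$ it is identically $0$, so dominated convergence (dominating function $\pi c_{\gamma_1}(m_1;\mathcal{I}_1)\mathcal{E}_0$, integrable since $\EE[\mathcal{E}_0]<\infty$) yields the claim. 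Thus the one-point statements follow once \eqref{eq:plan_key} and the $a$-independence of $\EE[g(a)]$ are established.

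For the two-point statements \eqref{eq:uniform_main2} and \eqref{eq:limit_main2}, the key structural fact is that $B_1$ and $B_2$ are independent, and $\mathcal{E}_0,\mathcal{E}_1,\mathcal{E}_2$ are independent of both (but $\mathcal{E}_0,\mathcal{E}_1,\mathcal{E}_2$ may be mutually dependent). Condition on $(\mathcal{E}_0,\mathcal{E}_1,\mathcal{E}_2)$: then the two integrals $\int_0^\infty \mathcal{I}_i(\lambda_i \mathcal{E}_i e^{\gamma_i(B_{i,t}-m_it)})\,dt$, $i=1,2$, are conditionally independent, each with conditional expectation (given $\mathcal{E}_i$) equal to $\pi c_{\gamma_i}(m_i;\mathcal{I}_i)\,1_{\{\mathcal{E}_i>0\}}$ in the limit and bounded by $\pi c_{\gamma_i}(m_i;\mathcal{I}_i)$ for all $\lambda_i$, by the one-point analysis applied conditionally. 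Hence the conditional expectation of the product factorises and is bounded by $\prod_i \pi c_{\gamma_i}(m_i;\mathcal{I}_i)$; multiplying by $\mathcal{E}_0$ and taking the outer expectation gives \eqref{eq:uniform_main2}. For \eqref{eq:limit_main2}, apply dominated convergence one more time with dominating function $\big[\prod_i \pi c_{\gamma_i}(m_i;\mathcal{I}_i)\big]\mathcal{E}_0$ (integrable), the conditional limit being $\big[\prod_i \pi c_{\gamma_i}(m_i;\mathcal{I}_i)\big]\mathcal{E}_0$ a.s.\ on $\{\mathcal{E}_1>0,\mathcal{E}_2>0\}$ and $0$ otherwise; but the ``otherwise'' event does not disturb the bound, and since the statement's right-hand side is $\big[\prod_i \pi c_{\gamma_i}(m_i;\mathcal{I}_i)\big]\EE[\mathcal{E}_0]$ one sees that this is consistent precisely because — hmm, one needs $\PP(\mathcal{E}_i>0)$ to not matter. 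Let me reconsider: the cleanest route is to note that the one-point limit \eqref{eq:limit_main1} already has $\EE[\mathcal{E}_0]$ on the right without any indicator, which is only consistent if the contribution of $\{\mathcal{E}_1=0\}$ is negligible in the limit — indeed on that event the integral is $0$ for \emph{all} $\lambda_1$, so it contributes $0$ to both sides, and the identity \eqref{eq:limit_main1} as written is correct only if we interpret it as: the limit equals $\pi c_{\gamma_1}(m_1;\mathcal{I}_1)\EE[\mathcal{E}_0 1_{\{\mathcal{E}_1>0\}}]$, which the authors presumably intend to equal $\pi c_{\gamma_1}(m_1;\mathcal{I}_1)\EE[\mathcal{E}_0]$ under an implicit convention $\mathcal{E}_1>0$ a.s. In the application $\mathcal{E}_i$ will be strictly positive (being exponentials of Gaussians or conformal radii), so I will carry out the argument under that assumption and remark that the general case only affects the finite-$\lambda$ inequality.

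\textbf{Main obstacle.} The one genuine technical point is the identity \eqref{eq:plan_key} together with the $a$-independence of $\EE\big[\int_{-\infty}^\infty \mathcal{I}(ae^{\gamma\beta^m_t})\,dt\big]$: this requires carefully matching Williams' time-reversal (Lemma \ref{lem:time_reversal}), which describes the post-supremum and pre-supremum pieces of $B_t-mt$, with the definition \eqref{eq:beta_process} of $\beta^m$, and checking that the ``overshoot'' at the supremum is handled correctly (the supremum is attained, not approached, so there is no boundary term). The shift-invariance then follows because translating the level $a \mapsto ac$ is the same as translating the path $\beta^m \mapsto \beta^m + \gamma^{-1}\log c$, and the last-hitting-time / first-hitting-time structure of $\beta^m$ is invariant under such a level shift (this is exactly the content of the strong Markov property of $\mathcal{B}^m$ at its last passage time, resp.\ of $B-m\cdot$ at its first passage time). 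Everything else is routine: conditional independence, Fubini (justified by $c_{\gamma_i}(m_i;\mathcal{I}_i)<\infty$, Lemma \ref{lem:const_finite}), and two applications of dominated convergence with the integrable dominating functions exhibited above.
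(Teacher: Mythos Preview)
Your key identity \eqref{eq:plan_key} is incorrect, and this is where the argument breaks down. Williams' decomposition of $X_t:=B_{1,t}-m_1t$ at its global maximum says: conditionally on $S$, the pre-max path $(X_t)_{t\in[0,t^*]}$ is a Brownian motion with drift $+m_1$ from $0$ run until it first hits $S$, while the post-max path $(S-X_{t^*+s})_{s\ge 0}$ is an \emph{independent} copy of $\mathcal{B}^{m_1}$. Time-reversing the pre-max piece via Lemma~\ref{lem:time_reversal} then yields
\[
\int_0^\infty \mathcal{I}_1\big(ae^{\gamma_1 X_t}\big)\,dt \;\overset{d}{=}\; \int_0^{L_S}\mathcal{I}_1\big(ae^{\gamma_1 S}e^{-\gamma_1\mathcal{B}^{m_1}_s}\big)\,ds \;+\; \int_0^\infty \mathcal{I}_1\big(ae^{\gamma_1 S}e^{-\gamma_1\widetilde{\mathcal{B}}^{m_1}_u}\big)\,du
\]
with two independent copies of the \emph{conditioned} process and \emph{minus} signs in the exponents --- so neither leg matches the unconditioned $B_t-m_1t$ piece of $\beta^{m_1}$, and the two do not assemble into $\beta^{m_1}$. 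A sanity check confirms this: the left-hand side of \eqref{eq:plan_key} has expectation strictly increasing in $a=\lambda_1\mathcal{E}_1$ (this is precisely what the paper's argument establishes), which is incompatible with \eqref{eq:plan_key} together with your $a$-independence claim for the right-hand side. Your justification of the $a$-independence (``stationary-type increments'') is in any case not an argument: $\beta^{m}$ does not have stationary increments.

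The paper's decomposition is different: on $\{\lambda_1\mathcal{E}_1>1\}$ one splits at the \emph{first hitting time} $\widetilde\tau$ of $e^{\gamma_1 X_t}$ at level $(\lambda_1\mathcal{E}_1)^{-1}$, which is exactly the setting of Lemma~\ref{lem:time_reversal}. The post-$\widetilde\tau$ piece gives $\int_0^\infty\mathcal{I}_1(e^{\gamma_1(B_t-m_1t)})\,dt$ by strong Markov, and the pre-$\widetilde\tau$ piece time-reversed gives $\int_0^{\widetilde L}\mathcal{I}_1(e^{\gamma_1\mathcal{B}^{m_1}_t})\,dt$ with $\widetilde L=\sup\{t:e^{\gamma_1\mathcal{B}^{m_1}_t}=\lambda_1\mathcal{E}_1\}\uparrow\infty$ as $\lambda_1\to\infty$ --- matching precisely the two summands in \eqref{eq:constantg}. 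Equivalently, the one-sided integral equals $\int_{-\widetilde L}^\infty\mathcal{I}_1(e^{\gamma_1\beta^{m_1}_t})\,dt$, a \emph{truncated} two-sided integral rather than a shifted one; monotone convergence then gives both the uniform bound and the limit. The case $\lambda_1\mathcal{E}_1\in(0,1]$ is handled by a separate hitting time and contributes nothing in the limit. Your reduction of \eqref{eq:uniform_main2}--\eqref{eq:limit_main2} to the one-point case via conditional independence is correct once the one-point case is repaired, and your observation about $\{\mathcal{E}_i=0\}$ is legitimate (the stated limits tacitly assume $\mathcal{E}_i>0$ a.s., as holds in all applications).
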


\begin{rem}
The random variables $\mathcal{E}_0, \mathcal{E}_1, \mathcal{E}_2$ need not be independent of each other, and the limit as $\lambda_1, \lambda_2$ go to infinity on the LHS of \eqref{eq:limit_main2} can be taken in any order/along any subsequence. See also Proposition \ref{prop:toy} for a simple application of Lemma \ref{lem:main} which gives an idea of how it is applied to the problem of interest.
\end{rem}

\begin{proof}
Let us treat \eqref{eq:uniform_main1} and \eqref{eq:limit_main1}. The assumption on $\mathcal{I}_1$ means that
\begin{align*}
 \int_0^\infty \mathcal{I}_1\left(\lambda_1 \mathcal{E}_1 e^{\gamma_1 (B_{1,t} - m_1t)}\right)dt
=  1_{\{\lambda_1 \mathcal{E}_1 > 0\}} \int_0^\infty \mathcal{I}_1\left(\lambda_1 \mathcal{E}_1 e^{\gamma_1 (B_{1,t} - m_1t)}\right)dt\qquad a.s.
\end{align*}

\noindent and so we will analyse the expectation by splitting it into two contributions depending on whether $\lambda_1 \mathcal{E}_1 \in (0, 1]$ or $\lambda_1 \mathcal{E}_1 > 1$. We start with
\begin{align*}
& \mathbb{E} \left[\mathcal{E}_0 1_{\{\lambda_1 \mathcal{E}_1 \in (0, 1]\}}\int_0^\infty \mathcal{I}\left(\lambda_1 \mathcal{E}_1 e^{\gamma_1 (B_{1,t} - m_1t)}\right)dt \right] \\
& \qquad = \sum_{n \ge 0}  \mathbb{E} \left[\mathcal{E}_0 1_{\{\lambda_1 \mathcal{E}_1 \in (2^{-(n+1)}, 2^{-n}]\}}\int_0^\infty \mathcal{I}_1\left(\lambda_1 \mathcal{E}_1 e^{\gamma_1 (B_{1,t} - m_1t)}\right)dt \right]\\
& \qquad = \sum_{n \ge 0}  \mathbb{E} \left[\mathcal{E}_0 1_{\{\lambda_1 \mathcal{E}_1 \in (2^{-(n+1)}, 2^{-n}]\}}\int_{\widehat{\tau}_{\lambda_1 \mathcal{E}_1}^{(1)}}^\infty \mathcal{I}_1\left(e^{\gamma_1 (B_{1,t} - m_1t)}\right)dt \right]
\end{align*}

\noindent where 
\begin{align*}
\widehat{\tau}_{\lambda_1 \mathcal{E}_1}^{(1)} := \inf \{t \ge 0: e^{\gamma_1 (B_{1,t} - m t)} = \lambda_1 \mathcal{E}_1\}
\end{align*}

\noindent by strong Markov property. We may control the last expression with the rough upper bound
\begin{align*}
& \mathbb{E} \left[\mathcal{E}_0\left( \sum_{n \ge 0}1_{\{\lambda_1 \mathcal{E}_1 \in (2^{-n}, 2^{-(n-1)}]\}}\right)\int_{0}^\infty \mathcal{I}_1\left(   e^{\gamma_1 (B_{1,t} - m_1 t)} \right) dt\right]\\
& \qquad = \mathbb{E} \left[\mathcal{E}_0 1_{\{\lambda_1 \mathcal{E}_1 \in (0,1]\}}\right]\mathbb{E}\left[\int_{0}^\infty \mathcal{I}_1\left(   e^{\gamma_1 (B_{1,t} - m_1 t)} \right) dt\right]
\le \pi c_{\gamma_1}(m_1; \mathcal{I}_1) \mathbb{E}\left[\mathcal{E}_0 1_{\{ 0 < \lambda_1 \mathcal{E}_1 \le 2\}}\right]
\end{align*}

\noindent which is 
\begin{itemize}
\item uniformly bounded by $\pi c_{\gamma_1}(m_1; \mathcal{I}_1) \mathbb{E}\left[\mathcal{E}_0\right]$, and
\item converging to $0$ as $\lambda_1 \to \infty$ by monotone convergence.
\end{itemize}

Next, we look at the main term
\begin{align}
\label{eq:mainlem_main1}
& \mathbb{E} \left[\mathcal{E}_0 1_{\{\lambda_1 \mathcal{E}_1 > 1\}}\int_0^\infty \mathcal{I}_1\left(\lambda_1 \mathcal{E}_1 e^{\gamma_1 (B_{1,t} - m_1t)}\right)dt \right].
\end{align}

\noindent Let us introduce a different stopping time
\begin{align*}
\widetilde{\tau}_{\lambda_1 \mathcal{E}_1}^{(1)} := \inf \{t > 0: e^{\gamma_1 (B_{1,t} - m_1 t)} = (\lambda_1 \mathcal{E}_1)^{-1}\}
\end{align*}

\noindent which is strictly positive (and finite) on the event that $\lambda_1 \mathcal{E}_1 > 1$, where we have
\begin{align*}
&\int_0^\infty  \mathcal{I}_1\left(\lambda_1  \mathcal{E}_1 e^{\gamma_1 (B_{1,t} - m_1 t)}\right)dt\\
&\qquad \overset{d}{=}
\int_0^{\infty} \mathcal{I}_1\left(\exp\left(\gamma_1\left[(B_{1,t} - m_1t)  - (B_{1, \widetilde{\tau}_{\lambda_1 \mathcal{E}_1}^{(1)}}  - m_1\widetilde{\tau}_{\lambda_1 \mathcal{E}_1}^{(1)} ) \right]\right)\right) dt
\end{align*}

\noindent and the integral on the RHS can be split into two parts:
\begin{itemize}
\item $t \ge \widetilde{\tau}_{\lambda_1 \mathcal{E}_1}^{(1)}$. By strong Markov property, the process
\begin{align*}
\left[B_{1, \widetilde{\tau}_{\lambda_1 \mathcal{E}_1}^{(1)} + t} -m_1 ( \widetilde{\tau}_{\lambda_1 \mathcal{E}_1}^{(1)} + t)\right]
-\left[B_{1, \widetilde{\tau}_{\lambda_1 \mathcal{E}_1}^{(1)}}-m_1 \widetilde{\tau}_{\lambda_1 \mathcal{E}_1}^{(1)}\right], \qquad t \ge 0
\end{align*}

\noindent is a Brownian motion with negative drift $-m_1$ independent of $(B_{1, t} - m_1t)_{t \le \widetilde{\tau}_{\lambda_1 \mathcal{E}_1}^{(1)}}$.

\item $t \le \widetilde{\tau}_{\lambda_1 \mathcal{E}_1}^{(1)}$: we apply \Cref{lem:time_reversal} and write
\begin{align*}
&\left( \left[B_{1, \widetilde{\tau}_{\lambda_1 \mathcal{E}_1}^{(1)} - t} - m_1(\widetilde{\tau}_{\lambda_1 \mathcal{E}_1}^{(1)} - t)\right]- \left[B_{1, \widetilde{\tau}_{\lambda_1 \mathcal{E}_1}^{(1)}} - m_1\widetilde{\tau}_{\lambda_1 \mathcal{E}_1}^{(1)}\right] \right)_{t \in [0, \widetilde{\tau}_{\lambda_1 \mathcal{E}_1}^{(1)}]}
= (\mathcal{B}_{1,t}^{m_1})_{t \in [0, \widetilde{L}_{\lambda_1 \mathcal{E}_1}^{(1)}]}
\end{align*}

\noindent where $(\mathcal{B}_{1,t}^{m_1})_{t \ge 0}$ is a Brownian motion with drift $m_1$ conditioned to be non-negative (and independent of $\mathcal{E}_1$), and
\begin{align*}
\widetilde{L}_{\lambda_1 \mathcal{E}_1}^{(1)} := \sup \{t > 0: e^{\gamma_1 \mathcal{B}_{1,t}^{m_1}} = \lambda_1 \mathcal{E}_1\}.
\end{align*}
\end{itemize}

\noindent Substituting everything back to the expectation \eqref{eq:mainlem_main1}, we get
\begin{align*}
\mathbb{E} \left[\mathcal{E}_0 1_{\{\lambda_1 \mathcal{E}_1 > 1\}} 
\bigg\{ \int_0^{\widetilde{L}_{\lambda_1 \mathcal{E}_1}^{(1)}} \mathcal{I}_1\left(e^{\gamma_1 \mathcal{B}_{1,t}^{m_1}}\right)
dt+  \int_0^\infty \mathcal{I}_1\left(e^{\gamma_1 (B_{1,t} - m_1 t)} \right)dt\bigg\}
 \right]
\end{align*}

\noindent which is
\begin{itemize}
\item uniformly bounded by $\pi c_{\gamma_1}(m_1; \mathcal{I}_1)\mathbb{E} \left[\mathcal{E}_0 1_{\{\lambda_1 \mathcal{E}_1 > 1\}} \right]$, and
\item converging to $\pi c_{\gamma_1}(m_1; \mathcal{I}_1)\mathbb{E} \left[\mathcal{E}_0\right]$ as $\lambda_1 \to \infty$ by monotone convergence.
\end{itemize}

\noindent This gives \eqref{eq:uniform_main1} and \eqref{eq:limit_main1}. The proof of \eqref{eq:uniform_main2} and \eqref{eq:limit_main2} is similar and omitted.
\end{proof}

\section{Weyl's law and heat trace asymptotics}  \label{sec:proofWeyl}
This section is devoted to the proof of \Cref{theo:Weyl}. Before we begin, let us mention that we can assume without loss of generality that $\mathrm{diam}(D) := \sup_{x, y \in D} |x-y| < \frac{1}{2}$.  This is not a problem because of the scale-invariant nature of the asymptotics in \Cref{theo:Weyl} (and hence the other results). To simplify notation, we shall also write $c_{\gamma} = c_\gamma(Q-\gamma; \mathcal{I})$ where $\mathcal{I}(x) = xe^{-x}$ throughout this section.

The following is an outline of our proof of \Cref{theo:Weyl}, which  follows a modified second moment method:
\begin{itemize}
    \item To avoid any complication arising from the boundary, we perform several pre-processing steps in \Cref{sec:pf_preprocess} to show that boundary contributions are irrelevant in the limit $\lambda \to \infty$. To certain extent such analysis is a manifestation of Kac's principle of `not feeling the boundary'.
    \item For $\gamma \in [1, 2)$ it is well-known that $\mu_{\gamma}$ (and related random variables) are not $L^2$-integrable. Inspired by \cite{Ber2017}, we introduce a good event on which second moment method can be performed in the entire subcritical phase. We first establish in \Cref{sec:pf_badevent} that contribution from the complementary event vanishes as $\lambda \to \infty$, and then provide a roadmap for the remaining analysis.
    \item Finally, we will evaluate all the second moments by means of dominated convergence and show that they all coincide in the limit as $\lambda \to \infty$.
\end{itemize}

Note that the last part of the analysis makes heavy use of our Main lemma. To get a flavour of how \Cref{lem:main} may be applied, it may be instructive to look at the following toy computation.

\begin{prop}
\label{prop:toy}For $\gamma \in (0, 2)$, let $\widetilde{\mu}_{\gamma}(dx) := e^{\gamma X^{2\mathbb{D}}(x) - \frac{\gamma^2}{2}\mathbb{E}[X^{2\mathbb{D}}(x)^2]} dx$ be the GMC measure associated to the log-correlated Gaussian field $X^{2\mathbb{D}}$ with covariance 
\begin{align*}
    \mathbb{E}[X^{2\mathbb{D}}(x)X^{2\mathbb{D}}(y)] = -\log|x-y| + \log 2 \qquad \forall x, y \in B(0, 2).
\end{align*}

\noindent Then for any $A \subset B(0, 1)$, we have
\begin{align*}
\lim_{\lambda \to \infty} \mathbb{E}\left[\int_A \widetilde{\mu}_{\gamma}(dx) \int_0^1 \frac{du}{2\pi u} \mathcal{I}(\lambda \widetilde{\mu}_\gamma(B(x, \sqrt{u}))) \right]
=  c_\gamma \mathbb{E}[\widetilde{\mu}_\gamma(A)].
\end{align*}
\end{prop}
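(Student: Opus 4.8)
The plan is to use the exact scale-invariance of $X^{2\DD}$ together with the Williams path decomposition to recognise the double integral as a near-exact instance of the Main lemma (\Cref{lem:main}). First I would use the Cameron--Martin/Girsanov shift (the GMC rooted-measure trick): by Fubini and the definition of $\widetilde\mu_\gamma$, writing $F(x) := \int_0^1 \frac{du}{2\pi u}\,\Ica(\lambda\,\widetilde\mu_\gamma(B(x,\sqrt u)))$, we have
\[
\E\Big[\int_A \widetilde\mu_\gamma(dx)\, F(x)\Big] = \int_A dx\, R_0^{\gamma^2/2}\; \widetilde\E_x\big[F(x)\big],
\]
where under $\widetilde\E_x$ the field acquires a deterministic drift $x\mapsto \gamma\,\E[X^{2\DD}(x)X^{2\DD}(\cdot)] = -\gamma\log|\cdot - x| + \gamma\log 2$, and $R_0$ is the relevant normalising constant near $x$ (which by exact scale-invariance is a harmless bounded prefactor; since $A\subset B(0,1)$ is bounded away from $\partial B(0,2)$, the $\Oca(1)$-corrections contribute only a multiplicative factor that, as I'll explain below, drops out in the limit along with $\lambda$). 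The point of the shift is that the tilted mass $\widetilde\mu_\gamma(B(x,r))$ now behaves, for small $r$, like $r^{\gamma Q}$ times a bounded-below chaos, i.e. its logarithm is essentially $-\gamma Q\log(1/r)$ plus fluctuations.

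Next I would pass to the radial/log-coordinate picture. Set $r = e^{-s}$ (so $u = e^{-2s}$, $du/(2\pi u) = ds/\pi$) and introduce the circle-average process $t\mapsto h_{e^{-t}}(x)$ of $X^{2\DD}$ around $x$, which after the tilt is a Brownian motion with drift: by the exact scale-invariance $\E[X^{2\DD}(x+\cdot)X^{2\DD}(x+\cdot)]$-structure, the process $B_t := h_{e^{-t}}(x) - \E[\cdots]$ is a standard BM, and the tilt adds drift $-(Q-\gamma)t$ to the relevant exponent. Using the standard GMC estimate $\widetilde\mu_\gamma(B(x,e^{-s})) \approx e^{-\gamma Q s + \gamma h_{e^{-s}}(x)}\cdot(\text{bounded chaos } \mathcal E)$, the inner $x$-integrand becomes
\[
\frac1\pi \int_0^\infty \Ica\!\big(\lambda\, \mathcal E\, e^{\gamma(B_s - (Q-\gamma)s)}\big)\, ds
\]
up to a negligible region near $s=0$ (where $u$ is of order $1$ and $\Ica(\lambda\,\widetilde\mu_\gamma(B(x,\sqrt u)))\le 1$ is harmless and killed by the lone $ds$), plus the reversed-path contribution on $s<0$, which by Williams' decomposition (\Cref{lem:time_reversal}) is exactly the $\mathcal B^{Q-\gamma}$ term. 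This is precisely the shape appearing in \Cref{lem:main} with $m_1 = Q-\gamma$, $\gamma_1 = \gamma$, $\mathcal I_1 = \Ica$, $\mathcal E_1 = \mathcal E$ and $\mathcal E_0 = 1$ (or $\mathcal E_0$ the bounded prefactor), so the lemma gives uniform $L^1$ domination and the limit $\pi c_\gamma(Q-\gamma;\Ica)\,\E[\widetilde\mu_\gamma(A)] = \pi c_\gamma\,\E[\widetilde\mu_\gamma(A)]$ — dividing by the $\pi$ from $du/(2\pi u) = ds/(2\pi)\cdot 2$... more carefully, tracking the $1/(2\pi)$ against the $1/\pi$ in the definition of $c_\gamma(m;f)$ yields exactly $c_\gamma\,\E[\widetilde\mu_\gamma(A)]$.

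The main obstacle is making rigorous the replacement of $\widetilde\mu_\gamma(B(x,r))$ by the clean expression $e^{\gamma h_r(x) - \gamma^2\E[h_r(x)^2]/2}\cdot\text{(independent bounded factor)}$ uniformly well enough to justify dominated convergence: one must control both that the chaos mass is comparable to its "circle-average approximation" and that the errors do not blow up when multiplied by $\lambda$ and integrated against $\Ica$. The cleanest route is the decomposition $X^{2\DD}(x+z) = h_r(x) + (X^{2\DD}(x+z) - h_r(x))$ for $|z|<r$, where the second term is, by the Markov property of the field on $B(x,r)^c$ vs inside, a field independent (up to harmless corrections) of the circle average, so that $\widetilde\mu_\gamma(B(x,r)) = e^{\gamma h_r(x) - \frac{\gamma^2}2\E[h_r(x)^2]} r^{\gamma^2/2+2}\, Z_r$ with $Z_r$ a GMC over the unit disc of a field converging (as $r\to0$) to an exactly scale-invariant one; Kahane's convexity inequality (\Cref{cor:Gcompare}) and the $\Oca(1)$ covariance bounds of \Cref{lem:mollified_cov} and \Cref{cor:gff_exact} then let one sandwich everything and absorb the $Z_r$-fluctuations into the $\mathcal E_1$ slot of \Cref{lem:main}, whose beauty is precisely that it requires no control on the joint law of $\mathcal E_1$ with the Brownian motion beyond independence and $\Ica_i(0)=0$. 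With that in hand, uniform integrability (the first two displays of \Cref{lem:main}) plus pointwise convergence (the last two) give the result by dominated convergence over $x\in A$.
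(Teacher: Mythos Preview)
Your overall strategy---Cameron--Martin shift, log-radial change of variable $u=e^{-2t}$, then an appeal to \Cref{lem:main}---is exactly the paper's route. But you are making the computation harder than it is, and one remark is genuinely misplaced.

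First, the whole point of this toy proposition is that $X^{2\DD}$ is \emph{exactly} scale invariant: for $a,b\in B(0,1)$ one has $\E[X^{2\DD}(x+a\sqrt u)X^{2\DD}(x+b\sqrt u)]=\E[X^{2\DD}(a)X^{2\DD}(b)]-\log\sqrt u$. After the Girsanov tilt, the shifted mass $\widetilde\mu_\gamma(x,\sqrt u):=\int_{B(x,\sqrt u)}(|x-z|/2)^{-\gamma^2}\widetilde\mu_\gamma(dz)$ therefore admits an \emph{exact} distributional identity
\[
\widetilde\mu_\gamma(x,\sqrt u)\overset{d}{=}\mathcal E_1\,e^{\gamma(B_{t(u)}-(Q-\gamma)t(u))},\qquad t(u)=-\log\sqrt u,
\]
with $\mathcal E_1=\int_{B(0,1)}(|z|/2)^{-\gamma^2}e^{\gamma X^{2\DD}(z)-\frac{\gamma^2}{2}\E[X^{2\DD}(z)^2]}dz$ independent of $B$. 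There is no need for circle-average approximations, Kahane sandwiching, or \Cref{cor:Gcompare}; the ``bounded chaos $\mathcal E$'' you worry about is simply $\mathcal E_1$, exactly. Second, your ``reversed-path contribution on $s<0$'' does not exist here: $u\in(0,1]$ maps to $t\in[0,\infty)$, full stop. The Williams decomposition and the $\mathcal B^{Q-\gamma}$ piece live \emph{inside} the proof of \Cref{lem:main}, which takes as input precisely $\int_0^\infty \mathcal I(\lambda\mathcal E_1 e^{\gamma(B_t-(Q-\gamma)t)})\,dt$ and outputs $\pi c_\gamma$; you only need to invoke the lemma, not reprove it. With $\mathcal E_0=1/\pi$ the uniform bound \eqref{eq:uniform_main1} and limit \eqref{eq:limit_main1} give dominated convergence over $x\in A$ and the result.
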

\begin{proof}
By Fubini and Cameron-Martin theorem, we start by rewriting
\begin{align*}
& \mathbb{E}\left[\int_A \widetilde{\mu}_{\gamma}(dx) \int_0^1 \frac{du}{2\pi u} \mathcal{I}(\lambda \widetilde{\mu}_\gamma(B(x, \sqrt{u}))) \right]
= \int_A dx \mathbb{E}\left[ \int_0^1 \frac{du}{2\pi u} \mathcal{I}(\lambda \widetilde{\mu}_\gamma(x, \sqrt{u})) \right]
\end{align*}

\noindent where
\begin{align*}
\widetilde{\mu}_{\gamma}(x, \sqrt{u})
& := \int_{B(x, \sqrt{u})} \frac{e^{\gamma X^{2\mathbb{D}}(z) - \frac{\gamma^2}{2} \mathbb{E}[X^{2\mathbb{D}}(z)^2]}dz}{(|x-z|/2)^{\gamma^2}}.
\end{align*}

\noindent From exact scale invariance
\begin{align*}
 \mathbb{E}[X^{2\mathbb{D}}(x + a\sqrt{u})X^{2\mathbb{D}}(x + b\sqrt{u})]
 = \mathbb{E}[X^{2\mathbb{D}}(a)X^{2\mathbb{D}}(b)] - \log \sqrt{u} \qquad \forall a, b \in B(0, 1),
\end{align*}

\noindent it follows (with a substitution of variable $z \leftrightarrow x + \sqrt{u} z$) that
\begin{align*}
\widetilde{\mu}_{\gamma}(x, \sqrt{u}) & \overset{d}{=} \sqrt{u}^{2 - \gamma^2} e^{\gamma B_{t(u)} - \frac{\gamma^2}{2} \mathbb{E}[B_{t(u)}^2]} \underbrace{\int_{B(0, 1)} \frac{e^{\gamma X^{2\mathbb{D}}(z) - \frac{\gamma^2}{2} \mathbb{E}[X^{2\mathbb{D}}(z)^2]}dz}{(|z|/2)^{\gamma^2}}}_{=: \mathcal{E}_1}
\end{align*}

\noindent where $B_{t(u)} \sim \mathcal{N}(0, t(u))$ is independent of $\mathcal{E}_1$ with $t(u) := -\log \sqrt{u}$. Thus
\begin{align*}
\widetilde{\mu}_{\gamma}(x, \sqrt{u}) \overset{d}{=} \mathcal{E}_1 e^{\gamma (B_{t(u)} - m t(u))}
\qquad \text{where $m = Q-\gamma$ with $Q = \frac{\gamma}{2} + \frac{2}{\gamma}$}.
\end{align*}

Using the substitution $u = e^{-2t}$ we have
\begin{align*}
\int_A dx \mathbb{E}\left[ \int_0^1 \frac{du}{2\pi u} \mathcal{I}(\lambda \widetilde{\mu}_\gamma(x, \sqrt{u})) \right]
& = \int_A dx \mathbb{E}\left[ \int_0^\infty \frac{dt}{\pi}  \mathcal{I}(\lambda \mathcal{E}_1 e^{\gamma (B_{t} - m t)} ) \right].
\end{align*}

\noindent If we now apply \Cref{lem:main} with $\mathcal{E}_0 := \frac{1}{\pi}$, then:
\begin{itemize}
\item our integrand is uniformly bounded in $x \in A$ and $\lambda > 0$, and so we can apply dominated convergence when evaluating the limit $\lambda \to \infty$;
\item the pointwise limit of our integrand as $\lambda \to \infty$ is given by $c_\gamma = c_\gamma(m)$,
\end{itemize}

\noindent i.e. we conclude that
\begin{align*}
\lim_{\lambda \to \infty}\int_A dx \mathbb{E}\left[ \int_0^1 \frac{du}{2\pi u} \mathcal{I}(\lambda \widetilde{\mu}_\gamma(x, \sqrt{u})) \right]
= c_{\gamma} \int_A dx  = c_\gamma \mathbb{E}[\widetilde{\mu}_\gamma(A)].
\end{align*}
\end{proof}

\subsection{Pre-processing: removal of irrelevant contributions}\label{sec:pf_preprocess}
To avoid any complication when we derive uniform estimates in later steps, we show that contributions from Brownian bridges with high probability of hitting the boundary $\partial D$ are irrelevant in the following sense.
\begin{lem}\label{lem:u_cutoff}
We have
\begin{align*}
&\limsup_{\lambda \to \infty} \mathbb{E}\left[\int_D \mu_{\gamma}(dx) \int_{1}^\infty \frac{du}{2\pi u} \bdec{x}{x}{u}[ \mathcal{I} \left(\lambda  F_{\gamma}(\mathbf{b})\right)1_{\{u< \tau_D(\mathbf{b})\}}]\right] = 0.
\end{align*}

\end{lem}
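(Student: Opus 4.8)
The plan is to treat this as a dominated-convergence statement. The function $\mathcal{I}(z)=ze^{-z}$ is globally bounded by $e^{-1}$, so the integrand is controlled uniformly in $\lambda$ by the indicator $1_{\{u<\tau_D(\mathbf{b})\}}$; on the other hand, as $\lambda\to\infty$ the argument $\lambda F_\gamma(\mathbf{b})$ diverges on the event where the bridge stays in $D$, so that $\mathcal{I}(\lambda F_\gamma(\mathbf{b}))\to 0$ pointwise there. Thus it suffices to verify that the dominating contribution has finite total mass and that the pointwise limit holds almost everywhere with respect to the relevant measure.

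First I would bound the bridge probability of staying in $D$. Since we may assume $\mathrm{diam}(D)<\tfrac12$ (as arranged at the start of \Cref{sec:proofWeyl}), on the event $\{u<\tau_D(\mathbf{b})\}$ the bridge from $x$ to $x$ stays within distance $\mathrm{diam}(D)<\tfrac12$ of its starting point; translating to a bridge from $0$ to $0$ and applying the first inequality of \Cref{cor:bb_bound} with radius $\mathrm{diam}(D)$ and duration $u$,
\[
\bdec{x}{x}{u}[1_{\{u<\tau_D(\mathbf{b})\}}]\;\le\;1\wedge\frac{2\,\mathrm{diam}(D)^2}{u}\;\le\;\frac{1}{2u},\qquad u\ge 1,
\]
uniformly in $x\in D$. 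Hence
\[
\int_1^\infty\frac{du}{2\pi u}\,\bdec{x}{x}{u}[1_{\{u<\tau_D(\mathbf{b})\}}]\;\le\;\frac{1}{4\pi}\int_1^\infty\frac{du}{u^2}\;=\;\frac{1}{4\pi},
\]
so that, using $\mathcal{I}\le e^{-1}$, the quantity under the $\limsup$ is at most $\tfrac{1}{4\pi e}\,\mathbb{E}[\mu_\gamma(D)]<\infty$ for every $\lambda>0$. In particular the mixed measure --- the law of $h$ coupled with $\mu_\gamma(dx)$, $\tfrac{du}{2\pi u}$ and $\bdec{x}{x}{u}(d\mathbf{b})$, restricted to $\{u<\tau_D(\mathbf{b})\}$ --- is finite, and the constant $e^{-1}$ is an integrable dominating function.

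Next I would establish the pointwise vanishing. On $\{u<\tau_D(\mathbf{b})\}$ the entire path $(\mathbf{b}_s)_{s\le u}$ lies in $D$, and then $F_\gamma(\mathbf{b})=\int_0^u F_\gamma(ds;\mathbf{b})>0$ almost surely: this is the standard fact that the Liouville clock along a nondegenerate path that remains in $D$ is continuous and strictly increasing (see \cite{GRV, GRV_hk, RhodesVargas_spectral}), hence assigns positive mass to $[0,u]$ with $u\ge 1>0$. Therefore, almost everywhere on the event, $\lambda F_\gamma(\mathbf{b})\to\infty$ and so $\mathcal{I}(\lambda F_\gamma(\mathbf{b}))=\lambda F_\gamma(\mathbf{b})\,e^{-\lambda F_\gamma(\mathbf{b})}\to 0$ as $\lambda\to\infty$. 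Dominated convergence then shows that the expression tends to $0$, which is stronger than the claimed $\limsup=0$.

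The computation is short and I do not expect a genuine obstacle; the only points requiring a little care are (i) the a.s.\ positivity of $F_\gamma(\mathbf{b})$ on $\{u<\tau_D(\mathbf{b})\}$, so that the pointwise limit holds almost everywhere and not merely in distribution, and (ii) the bookkeeping that dominated convergence is applied against the correct finite mixed measure, in which $\mu_\gamma$ itself depends on $h$.
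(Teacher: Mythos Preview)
Your proposal is correct and follows essentially the same approach as the paper: bound $\mathcal{I}$ by a constant, control $\pdec{x}{x}{u}(u<\tau_D(\mathbf{b}))$ via \Cref{cor:bb_bound} to get an integrable dominating function, and then invoke dominated convergence using $\mathcal{I}(\lambda F_\gamma(\mathbf{b}))\to 0$ pointwise. The only cosmetic differences are that the paper uses the cruder bounds $\mathcal{I}\le 1$ and radius $1$ (rather than $e^{-1}$ and $\mathrm{diam}(D)$), and is less explicit than you are about the a.s.\ positivity of $F_\gamma(\mathbf{b})$ on the relevant event.
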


\begin{proof}
As $\mathcal{I}(x) \le 1$ for all $x \ge 0$,
\begin{align*}
\bdec{x}{x}{u}[ \mathcal{I} \left(\lambda  F_{\gamma}(\mathbf{b})\right)1_{\{u< \tau_D(\mathbf{b}) \}}]
& \le \pdec{x}{x}{u}\left( \mathbf{b}_s \in  D ~ \forall s \le u\right)\\
& \le \pdec{x}{x}{u} \left(\max_{s \le u} |\mathbf{b}_s - x| \le 1\right)
 \le 1 \wedge \frac{2}{u}
\end{align*}

\noindent by \Cref{cor:bb_bound}, and hence
\begin{align*}
\int_D \mu_{\gamma}(dx) \int_{1}^\infty \frac{du}{2\pi u} \bdec{x}{x}{u}[ \mathcal{I} \left(\lambda  F_{\gamma}(\mathbf{b})\right)1_{\{u< \tau_D(\mathbf{b}) \}}]
\le \mu_{\gamma}(D) \int_{1}^\infty \frac{du}{2\pi u} \frac{2}{u} \le \mu_{\gamma}(D)
\end{align*}

\noindent which has finite expectation. On the other hand, since $\mathcal{I}(x) \to 0$ as $x \to \infty$, we see that $\bdec{x}{x}{u}[ \mathcal{I} \left(\lambda  F_{\gamma}(\mathbf{b})\right)1_{\{u< \tau_D(\mathbf{b}) \}}] \to 0$ almost surely for almost every $x \in D$ and $u \ge 1$. The claim now follows from dominated convergence.
\end{proof}

Let us also highlight that boundary contributions are irrelevant in the following sense.
\begin{lem}\label{lem:L1-boundary}
We have
\begin{align}\label{eq:L1-boundary}
\limsup_{\kappa \to 0^+}
\limsup_{\lambda \to \infty} \mathbb{E}\left[\int_D 1_{\{d(x, \partial D) \le \kappa\}}\mu_{\gamma}(dx) \int_{0}^{1} \frac{du}{2\pi u} \bdec{x}{x}{u}[ \mathcal{I} \left(\lambda F_{\gamma}(\mathbf{b})\right)1_{\{u< \tau_D(\mathbf{b}) \}}]\right] = 0.
\end{align}
\end{lem}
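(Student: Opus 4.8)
The plan is to bound the Brownian-bridge expectation pathwise using the estimate $\mathcal{I}(x) \le 1$, exactly as in the proof of Lemma \ref{lem:u_cutoff}, and then estimate how much Liouville mass is concentrated in a $\kappa$-neighbourhood of $\partial D$. Concretely, since $\mathcal{I} \le 1$ and $\mathbf{b}$ must stay in $D$, we have $\bdec{x}{x}{u}[\mathcal{I}(\lambda F_\gamma(\mathbf{b})) 1_{\{u < \tau_D(\mathbf{b})\}}] \le \pdec{x}{x}{u}(\max_{s\le u}|\mathbf{b}_s - x| \le d(x,\partial D)) \le 1 \wedge (2 d(x,\partial D)^2 / u)$ by Corollary \ref{cor:bb_bound}. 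Integrating this against $du/(2\pi u)$ over $u \in (0,1)$ gives, for each fixed $x$ with $d(x,\partial D) \le \kappa \le 1$, a bound of order $d(x,\partial D)^2 (1 + |\log d(x,\partial D)|)$ (the integral $\int_0^{r^2} \frac{du}{2\pi u} + \int_{r^2}^1 \frac{du}{2\pi u}\frac{2r^2}{u}$ with $r = d(x,\partial D)$ is $O(r^2 \log(1/r))$, which is $\le O(\kappa^{2-\delta})$ for small $\kappa$, and in particular bounded). Crucially this bound is uniform in $\lambda$, so the inner $\limsup_{\lambda\to\infty}$ is controlled.

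Next I would take the expectation over $\mu_\gamma$. Since the above bound on the $u$-integral is uniformly bounded (say by a constant $C$), Fubini gives that the left-hand side of \eqref{eq:L1-boundary}, before taking $\limsup_\lambda$, is $\le C\, \mathbb{E}[\mu_\gamma(\{x : d(x,\partial D) \le \kappa\})]$; more precisely one gets $\le \mathbb{E}[\int_{\{d(x,\partial D)\le\kappa\}} g(d(x,\partial D)) \mu_\gamma(dx)]$ with $g(r) = O(r^2\log(1/r))$ bounded. Taking $\limsup_{\lambda\to\infty}$ is then trivial (the bound does not depend on $\lambda$), and it remains to send $\kappa \to 0^+$. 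By dominated convergence (using $\mathbb{E}[\mu_\gamma(D)] < \infty$ and that $g$ is bounded, with $\mu_\gamma(\partial D) = 0$ a.s.), $\mathbb{E}[\int_{\{d(x,\partial D)\le\kappa\}} g(d(x,\partial D))\mu_\gamma(dx)] \to 0$ as $\kappa\to 0^+$, which is the claim.

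An alternative, and perhaps cleaner, route is to not even use the refined $\kappa$-dependent decay but simply observe that the full integrand $\int_0^1 \frac{du}{2\pi u}\bdec{x}{x}{u}[\mathcal{I}(\lambda F_\gamma(\mathbf{b})) 1_{\{u<\tau_D(\mathbf{b})\}}]$ is dominated by the $x$-integrable (under $\mu_\gamma$), $\lambda$-independent function $x \mapsto C$ coming from $\mathcal{I}\le 1$ together with the bridge estimate $\int_0^1 \frac{du}{2\pi u}(1\wedge \frac{2 d(x,\partial D)^2}{u}) \le \int_0^1 \frac{du}{2\pi u}(1 \wedge \frac{2}{u})$, which is a finite constant; then $1_{\{d(x,\partial D)\le\kappa\}}$ times this tends to $0$ pointwise (for $\mu_\gamma$-a.e.\ $x$) as $\kappa\to 0$, so dominated convergence with respect to $\mu_\gamma(dx)$ (and then with respect to the GFF expectation) finishes the argument, uniformly in $\lambda$.

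The main obstacle is mostly bookkeeping: one must make sure the bound produced is genuinely uniform in $\lambda$ (which it is, since we only used $\mathcal{I}\le 1$ and never the $\lambda$-dependence), and that the dominating function is integrable against $\mathbb{E}[\mu_\gamma(\cdot)]$, for which we invoke $\mathbb{E}[\mu_\gamma(D)] = \int_D R(x;D)^{\gamma^2/2}\, dx < \infty$ (finite since $D$ is bounded and $R(x;D) \le \mathrm{diam}(D)$ is bounded). There is no delicate estimate here beyond Corollary \ref{cor:bb_bound}; the only mild subtlety is the order of the two limits, but since the $\lambda$-bound is uniform the inner $\limsup_{\lambda\to\infty}$ poses no difficulty and the outer $\kappa\to 0^+$ limit is a routine dominated-convergence argument.
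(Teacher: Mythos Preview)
There is a genuine gap in both routes you propose; neither produces a finite, $\lambda$-independent bound on the $u$-integral.

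\medskip

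\textbf{First route.} The inclusion you use is backwards: staying in $D$ does \emph{not} force the bridge to stay in $B(x,d(x,\partial D))$ (only the converse is true), so the inequality
\[
\bdec{x}{x}{u}\!\big[\mathcal{I}(\lambda F_\gamma(\mathbf{b}))1_{\{u<\tau_D(\mathbf{b})\}}\big]
\le \pdec{x}{x}{u}\!\big(\max_{s\le u}|\mathbf{b}_s-x|\le d(x,\partial D)\big)
\]
is unjustified. Moreover, even granting a bound of the form $1\wedge(2r^2/u)$ with $r=d(x,\partial D)$, your evaluation of the $u$-integral is incorrect: for $u\le 2r^2$ the minimum equals $1$, so the small-$u$ contribution is $\int_0^{2r^2}\frac{du}{2\pi u}=+\infty$, not $O(r^2\log(1/r))$.

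\medskip

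\textbf{Alternative route.} The same divergence kills your second argument: for every $u\in(0,1]$ one has $1\wedge(2/u)=1$, hence $\int_0^1\frac{du}{2\pi u}(1\wedge\tfrac{2}{u})=\int_0^1\frac{du}{2\pi u}=+\infty$. There is no finite constant dominating the $u$-integral if you only use $\mathcal{I}\le 1$.

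\medskip

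The underlying obstruction is that $\mathcal{I}\le 1$ discards precisely the information needed near $u=0$: one must exploit that $\mathcal{I}(\lambda F_\gamma(\mathbf{b}))$ is small when $\lambda F_\gamma(\mathbf{b})$ is large (which is typical for small $u$ after the Girsanov shift). The paper obtains a genuine $\lambda$-uniform bound by first applying Cameron--Martin to rewrite the expectation with $F_\gamma^{\{x\}}$, then partitioning according to the bridge range via the events $\mathcal{H}_k$, and finally invoking \Cref{lem:G1_uniform} (which uses Gaussian comparison, exact scale invariance, and the Main \Cref{lem:main}) to bound each piece by $C\,\pdec{0}{0}{1}(\mathcal{H}_k)$ uniformly in $\lambda$. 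Only after this nontrivial uniform bound --- yielding an $x$-integrable majorant of the form $C(1+\log\tfrac{1}{d(x,\partial D)})$ --- can one apply dominated convergence in $\kappa$.
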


In order to prove \Cref{lem:L1-boundary}, we first apply Fubini and Cameron-Martin theorem and rewrite \eqref{eq:L1-boundary} as
\begin{align}
\notag
&\mathbb{E}\left[\int_D 1_{\{d(x, \partial D) \le \kappa\}}\mu_{\gamma}(dx) \int_{0}^{1} \frac{du}{2\pi u} \bdec{x}{x}{u}[ \mathcal{I} \left(\lambda  F_{\gamma}(\mathbf{b}) \right)1_{\{u< \tau_D(\mathbf{b}) \}}]\right]\\
\label{eq:pf_boundary1}
& \qquad =
\int_D 1_{\{d(x, \partial D) \le \kappa\}} R(x; D)^{\frac{\gamma^2}{2}} dx 
\mathbb{E}\left[  \int_{0}^{1} \frac{du}{2\pi u} \bdec{x}{x}{u}[ \mathcal{I} \left(\lambda F_{\gamma}^{\{x\}}(\mathbf{b})\right)1_{\{u< \tau_D \}}]\right]
\end{align}

\noindent where, for any finite set $\mathcal{S} \subset D$ and process $\mathbf{p}$,
\begin{align}\label{eq:Girsanov1}
F_\gamma^{\mathcal{S}}(\mathbf{p})& :=
 \int_0^{\ell(\mathbf{p})}
e^{\gamma^2 \sum_{z \in \mathcal{S}} G_0^D(z, \mathbf{p}_s) }
F_\gamma(ds; \mathbf{p}).
\end{align}

To proceed further, we need to control the expectation on the RHS of \eqref{eq:pf_boundary1} uniformly in $\lambda > 0$. We now demonstrate how this can be done by partitioning the probability space according to the range of the Brownian bridge $\mathbf{b}$, a trick that will be used repeatedly throughout the rest of this article.
\begin{lem}\label{lem:G1_uniform}
For each $k \in \mathbb{N}$, let
\begin{align}\label{eq:eventH}
\mathcal{H}_k
= \mathcal{H}_k(\mathbf{b})
=  \left\{ \max_{s\le \ell(\mathbf{b})} \frac{|\mathbf{b}_s -\iota(\mathbf{b})|}{\sqrt{\ell(\mathbf{b})}} \in [k-1, k) \right\}
\end{align}

\noindent where $\ell(\mathbf{b})$ and $\iota(\mathbf{b})$ are the duration and starting point of the Brownian bridge $\mathbf{b}$ respectively. There exists some $C \in (0, \infty)$, possibly dependent on $\gamma$ but uniformly in $x \in D$, $\lambda > 0$ and $k \in \mathbb{N}$ such that
\begin{align}\label{eq:G1_uniform}
\mathbb{E}\left[  \int_{0}^{1} 1_{\{ d(x, \partial D) \ge 4k\sqrt{u}\}} \frac{du}{2\pi u} \bdec{x}{x}{u} [ \mathcal{I} \left(\lambda  F_\gamma^{\{x\}}(\mathbf{b})\right)1_{\mathcal{H}_k}]\right]
\le C \pdec{0}{0}{1}\left(\mathcal{H}_k\right).
\end{align}
\end{lem}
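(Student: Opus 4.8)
The plan is to condition on the scale $u$ and work on the event $\mathcal{H}_k$, where the Brownian bridge $\mathbf{b}$ (of duration $u$, from $x$ to $x$) has range of order $k\sqrt{u}$ around $x$; the indicator $1_{\{d(x,\partial D)\ge 4k\sqrt u\}}$ ensures that $\mathbf{b}$ stays comfortably inside $D$ so that $F_\gamma^{\{x\}}(\mathbf{b})$ is governed only by the behaviour of the field in the ball $B(x, k\sqrt u)$, where the Green's function estimates of \Cref{lem:Green_estimate} and \Cref{cor:gff_exact} apply. First I would use $\mathcal{I}(x)\le x$ together with a Cauchy--Schwarz (or simply a restriction of $F_\gamma^{\{x\}}(\mathbf b)$ to a fixed small sub-interval of $[0,\ell(\mathbf b)]$) to replace the random clock by $\lambda$ times the GMC mass of $B(x,k\sqrt u)$ times a suitable bridge-occupation factor; more precisely, on $\mathcal{H}_k$, one has $F_\gamma^{\{x\}}(\mathbf b)\le e^{\gamma^2 G_0^D(x,\cdot)}$-weighted Lebesgue-occupation measure of $\mathbf b$ against $\mu_\gamma$, and since $\mathcal{I}\le 1$ always, the truly useful bound is $\mathcal{I}(\lambda F_\gamma^{\{x\}}(\mathbf b))\le 1$ combined with the probability of $\mathcal{H}_k$.

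The key steps, in order, are: (i) split the $u$-integral dyadically, or rather note that the constraint $d(x,\partial D)\ge 4k\sqrt u$ restricts $u\le (d(x,\partial D)/4k)^2$, and on this range $\mathbf b$ never exits $D$ on $\mathcal{H}_k$ so the indicator $1_{\{u<\tau_D(\mathbf b)\}}$ equals $1$; (ii) on $\mathcal{H}_k$, bound the Girsanov-tilted clock: the insertion factor $e^{\gamma^2 G_0^D(x,\mathbf b_s)}=e^{-\gamma^2\log|x-\mathbf b_s|+\gamma^2\log R(x;D)+\mathcal{O}(1)}$ by \Cref{cor:gff_exact}, so that $F_\gamma^{\{x\}}(\mathbf b)$ is comparable (up to a multiplicative constant depending only on $\gamma$) to $\int_0^u |x-\mathbf b_s|^{-\gamma^2} e^{\gamma h_{|x-\mathbf b_s|}(\mathbf b_s)-\tfrac{\gamma^2}{2}\mathbb{E}[\cdots]}R(x;D)^{\gamma^2/2}\,ds$; (iii) take expectation over the field using Fubini and the fact $\mathbb{E}[e^{\gamma h_\epsilon(z)-\tfrac{\gamma^2}{2}\mathbb{E}[h_\epsilon(z)^2]}]=1$ to reduce the field expectation to a deterministic integral $\int_0^u |x-\mathbf b_s|^{-\gamma^2}\,ds$, then take the bridge expectation; (iv) rescale the bridge to duration $1$ via $\mathbf b_s = x+\sqrt u\,\tilde{\mathbf b}_{s/u}$ so that $|x-\mathbf b_s| = \sqrt u\,|\tilde{\mathbf b}_{s/u}|$, which turns the $u$-integral $\int_0^{(d/4k)^2}\frac{du}{2\pi u}\cdot u^{1-\gamma^2/2}\cdot(\text{const})$ into something finite and proportional to $\mathbb{P}_{0\to 0}^1(\mathcal{H}_k)$ after pulling out the probability of $\mathcal{H}_k$ (since on the complement of $\mathcal{H}_k$ the integrand vanishes by construction). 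Here one also uses that $\mathcal{I}(x)\le 1\wedge x$ to handle the regime where $\lambda F_\gamma^{\{x\}}(\mathbf b)$ is large, and $\mathcal{I}\le 1$ to bound $\mathbb{E}[\mathcal{I}(\lambda F_\gamma^{\{x\}}(\mathbf b))1_{\mathcal{H}_k}]\le \mathbb{P}(\mathcal{H}_k)$, which already gives a bound of the desired shape once the $u$-integral converges.

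The genuine subtlety — and the main obstacle — is making sure the $\int_0^1 \frac{du}{2\pi u}(\cdots)$ integral actually converges near $u=0$: naively $\mathbb{P}_{x\to x}^u(\mathcal{H}_k)\le \mathbb{P}_{0\to0}^1(\mathcal{H}_k)$ is $u$-independent, so $\int_0^1 \frac{du}{u}$ diverges logarithmically if one only uses $\mathcal{I}\le 1$. The resolution is that one must use $\mathcal{I}(x)\le x$ for small $x$: for $u$ very small, $F_\gamma^{\{x\}}(\mathbf b)$ is the mass of a tiny ball, of order $u^{1-\gamma^2/2}$ times a field factor, so $\mathcal{I}(\lambda F_\gamma^{\{x\}}(\mathbf b))\le \lambda F_\gamma^{\{x\}}(\mathbf b)$, and after taking the field expectation (which kills the multiplicative chaos exponential) one gets a deterministic integrand of order $\lambda\, u^{1-\gamma^2/2}$ — no, wait, this still has the wrong $\lambda$-dependence. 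The correct bookkeeping, mirroring the toy computation in \Cref{prop:toy} and the Main Lemma, is to recognise the integral over $u\in(0,1)$ as (after the substitution $u=e^{-2t}$) an integral over $t\in(0,\infty)$ of $\mathcal{I}$ applied to $\lambda$ times $e^{\gamma(B_t - mt)}$ times a field-dependent prefactor, and then invoke \Cref{lem:main} with $\mathcal{E}_0$ proportional to $1_{\mathcal{H}_k}$-type quantity to get a bound of the form $\pi c_\gamma(m;\mathcal{I})\,\mathbb{E}[\mathcal{E}_0]$, which is exactly $C\,\mathbb{P}_{0\to0}^1(\mathcal{H}_k)$ with $C$ depending only on $\gamma$. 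So the plan is: reduce, via \Cref{cor:gff_exact} and the exact-scale-invariance/Cameron-Martin structure, to a one-dimensional Brownian quantity of the precise form in \Cref{lem:main}, with the restriction to $\mathcal{H}_k$ absorbed into the auxiliary variable $\mathcal{E}_0$; then apply the uniform bound \eqref{eq:uniform_main1} of \Cref{lem:main}. The hardest part will be carefully tracking the error terms from \Cref{cor:gff_exact} (the $\mathcal{O}(1)$ corrections in the Green's function on $B(x, k\sqrt u)$, which a priori degrade as $k$ grows) and checking that they remain bounded uniformly in $k$ — here the constraint $d(x,\partial D)\ge 4k\sqrt u$ is exactly what keeps $k\sqrt u \le \tfrac14 d(x,\partial D)$ so that \Cref{cor:gff_exact} applies with a $k$-independent constant, and the growth in $k$ of $\mathbb{P}_{0\to0}^1(\mathcal{H}_k)$ (which decays like $e^{-ck^2}$ by \Cref{cor:bb_bound}) will later compensate any polynomial loss when one sums over $k$.
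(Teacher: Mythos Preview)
Your proposal correctly identifies the endgame --- reduce to the form of \Cref{lem:main} and invoke \eqref{eq:uniform_main1} with $\mathcal{E}_0 \propto 1_{\mathcal{H}_k}$ --- and you have the right supporting ingredients (the Green's function control from \Cref{cor:gff_exact}, the bridge rescaling, the role of the constraint $d(x,\partial D)\ge 4k\sqrt u$). But there is a genuine gap in how you get from the GFF $h$ to an \emph{exactly} scale-invariant field, which is what the Main Lemma requires.

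The estimates of \Cref{cor:gff_exact} control the \emph{covariance} $G_0^D(a,b)$ up to an additive $\mathcal{O}(1)$ on $B(x,k\sqrt u)$; they do not by themselves let you write $h(x+\sqrt u\,\cdot)\overset{d}{=}B_T+\overline{X}(\cdot)$ for a scale-independent field $\overline X$, because $h$ is only approximately log-correlated. Your plan (``reduce via \Cref{cor:gff_exact} and the exact-scale-invariance/Cameron--Martin structure'') glosses over exactly this step. The paper closes this gap with a Gaussian comparison: first apply Cameron--Martin to expand $\mathbb{E}[\mathcal{I}(\lambda F_\gamma^{\{x\}}(\mathbf{b}))]$ as $\lambda\int_0^u(\text{Girsanov factor})\,\mathbb{E}[e^{-\lambda F^{\{x,\mathbf b_{s_1}\}}_\gamma(\mathbf b)}]\,ds_1$; bound the deterministic Girsanov factor via \Cref{cor:gff_exact}; then use \Cref{cor:Gcompare} with the convex function $P(x)=e^{-x}$ to replace $h$ by an exactly scale-invariant field $X$ with $\mathbb{E}[X(a)X(b)]=-\log|a-b|+\log R(x;D)+4\ge G_0^D(a,b)$ on $B(x,k\sqrt u)$; finally undo Cameron--Martin to recover $\mathcal{I}$ of a new clock $\overline F_\gamma^{\{x\}}(\mathbf b;X)$. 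Only after this replacement can you invoke the exact scaling relation $X(x+k\sqrt u\,\cdot)\overset{d}{=}\overline X(\cdot)+B_{T_x(u;k)}$ and reduce to \Cref{lem:main}. Without Gaussian comparison (or an explicit field decomposition as in \Cref{lem:GFF_dec}), the scaling step you describe does not go through, and your ``$\mathcal{O}(1)$ error terms'' are covariance errors, not bounds on $\mathcal{I}$ itself.
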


\begin{proof}
Let us start by interchanging the order of expectations:
\begin{align}
\notag
&\mathbb{E}\left[  \int_{0}^{1} \frac{du}{2\pi u} 1_{\{d(x, \partial D) \ge  4k\sqrt{u}\}}\bdec{x}{x}{u}[ \mathcal{I} \left(\lambda  F_{\gamma}^{\{x\}}(\mathbf{b})\right)1_{\mathcal{H}_k}]\right]\\
\label{eq:G1_uniform_step0}
& \qquad \qquad = \int_{0}^{1} \frac{du}{2\pi u} 1_{\{d(x, \partial D) \ge  4k\sqrt{u}\}}\bdec{x}{x}{u}\left[ \mathbb{E}\left[ \mathcal{I} \left(\lambda  F_{\gamma}^{\{x\}}(\mathbf{b})\right)\right]1_{\mathcal{H}_k}\right].
\end{align}

\noindent Applying Cameron--Martin to the inner expectation, we have
\begin{align}
\notag
 \mathbb{E}\left[\mathcal{I}(\lambda F_{\gamma}^{\{x\}}(\mathbf{b}))\right]
& =  \mathbb{E}\left[\lambda F_{\gamma}^{\{x\}}(\mathbf{b}) e^{-\lambda F_{\gamma}^{\{x\}}(\mathbf{b})}\right]\\
\notag
& = \int_0^u \lambda e^{\gamma^2 G_0^D(x, \mathbf{b}_{s_1})} R(\mathbf{b}_{s_1}; D)^{\frac{\gamma^2}{2}} 1_{\{\mathbf{b}_{s_1} \in D\}} ds_1\\
\label{eq:bound_boundary1}
& \qquad \qquad \times \mathbb{E}\left[ \exp\left(-\lambda \int_{0}^u e^{\gamma^2 [G_0^D(x, \mathbf{b}_{s_1}) + G_0^D(\mathbf{b}_{s_1}, \mathbf{b}_{s_2})]}F_\gamma(ds_2; \mathbf{b})\right)\right].
\end{align}

\noindent The rest of the proof may be divided into three steps which we now explain.

\paragraph{Step (i): Gaussian comparison.}

\noindent On the event $\mathcal{H}_k$, we know that the Brownian bridge $(\mathbf{b}_{s})_{s \le u}$ stays in the ball $B(x, k\sqrt{u})$. Furthermore, since $d(x, \partial D) \ge 4k\sqrt{u}$, it follows from  \Cref{cor:gff_exact} that
\begin{align*}
 \left| G_0^D(\mathbf{b}_{s_1}, \mathbf{b}_{s_2}) - \left[ - \log |\mathbf{b}_{s_1} - \mathbf{b}_{s_2}| + \log R(x; D) \right] \right| &\le 4 .
\end{align*}

\noindent In particular this implies
\begin{align*}
 \left| G_0^D(x, \mathbf{b}_{s_1}) - \left[ - \log |x - \mathbf{b}_{s_1}| + \log R(x; D) \right] \right| &\le 4 \qquad \text{(by setting $s_2 = 0$)}\\
\text{and} \qquad \qquad  \qquad \qquad |\log R(x; D) - \log R(\mathbf{b}_{s_1}; D)| &\le 4 \qquad \text{(by letting $s_2 \to s_1$)}
\end{align*}

\noindent so that \eqref{eq:bound_boundary1} may be upper-bounded by
\begin{align}
\notag
& \lambda e^{6 \gamma^2} R(x; D)^{\frac{3\gamma^2}{2}} \int_0^u \frac{1_{\{\mathbf{b}_{s_1} \in D \}}ds_1}{|\mathbf{b}_{s_1}-x|^{\gamma^2}}\\
\label{eq:Girsanov_boundary2}
&\quad \times \mathbb{E}\left[ \exp\left(-\lambda  e^{-10\gamma^2} R(x; D)^{\frac{5\gamma^2}{2}} \int_{0}^u 1_{\{\mathbf{b}_{s_2} \in B(x, k\sqrt{u}) \}} \frac{e^{\gamma h(\mathbf{b}_{s_2}) - \frac{\gamma^2}{2} \mathbb{E}[h(\mathbf{b}_{s_2})^2]}ds_2}{|\mathbf{b}_{s_2}-x|^{\gamma^2} |\mathbf{b}_{s_1}-\mathbf{b}_{s_2}|^{\gamma^2}}\right)\right].
\end{align}

\noindent We would like to perform a Gaussian comparison using \Cref{cor:Gcompare} with the convex function $P(x) = \exp(-x)$, replacing the Gaussian free field with an exactly scale invariant field $X(\cdot)$ with covariance
\begin{align*}
\mathbb{E}[X(a) X(b)] = -\log |a-b| + \log R(x; D) + 4 \qquad \forall a, b \in B(x, k\sqrt{u}).
\end{align*}

\noindent This field is well-defined because the above kernel is positive definite in a ball of radius at least $R(x; D)$, whereas $k\sqrt{u} \le d(x, \partial D)/4 \le R(x; D)$ where the last inequality follows from Koebe quarter theorem. By construction, we have
\begin{align*}
\mathbb{E}[h(a)h(b)] = G_0^D(a, b)\le \mathbb{E}[X(a)X(b)] \qquad \forall a, b \in B(x, k\sqrt{u}),
\end{align*}

\noindent and thus \eqref{eq:Girsanov_boundary2} may be further upper-bounded by
\begin{align}
\notag
& \lambda e^{6 \gamma^2} R(x; D)^{\frac{3\gamma^2}{2}} \int_0^u \frac{1_{\{\mathbf{b}_{s_1} \in B(x, k\sqrt{u}) \}}ds_1}{|\mathbf{b}_{s_1}-x|^{\gamma^2}}\\
\notag
&\qquad \times \mathbb{E}\left[ \exp\left(-\lambda  e^{-10\gamma^2} R(x; D)^{\frac{5\gamma^2}{2}} \int_{0}^u 1_{\{\mathbf{b}_{s_2} \in B(x, k\sqrt{u}) \}}\frac{e^{\gamma X(\mathbf{b}_{s_2}) - \frac{\gamma^2}{2} \mathbb{E}[X(\mathbf{b}_{s_2})^2]} ds_2}{|\mathbf{b}_{s_2}-x|^{\gamma^2} |\mathbf{b}_{s_1}-\mathbf{b}_{s_2}|^{\gamma^2}}\right)\right]\\
\notag
& = e^{6 \gamma^2}  \mathbb{E}\left[\lambda R(x; D)^{\frac{3\gamma^2}{2}}
\overline{F}_{\gamma}^{\{x\}}(\mathbf{b}; X)
\exp\left(-\lambda  e^{-14\gamma^2} R(x; D)^{\frac{3\gamma^2}{2}}
\overline{F}_{\gamma}^{\{x\}}(\mathbf{b}; X)
\right)\right]\\
\label{eq:G1_uniform_step1}
& = e^{20\gamma^2} \mathbb{E}\left[\mathcal{I}\left(\widetilde{\lambda}\overline{F}_{\gamma}^{\{x\}}(\mathbf{b}; X)\right)\right] \qquad \qquad \text{with} \qquad \widetilde{\lambda}:=\lambda e^{-14\gamma^2}R(x; D)^{\frac{3\gamma^2}{2}} 
\end{align}

\noindent where, for any finite set $\mathcal{S} \subset D$,
\begin{align} \label{eq:overlineF}
\overline{F}_{\gamma}^{\mathcal{S}}(\mathbf{p};  Y)
& :=
\int_{0}^{\ell(\mathbf{p})} e^{\gamma Y(\mathbf{p}_s) - \frac{\gamma^2}{2}\mathbb{E}[Y(\mathbf{p}_s)^2]} \frac{ds}{\prod_{z \in \mathcal{S}}| \mathbf{p}_s - z |^{\gamma^2}}.
\end{align}

\paragraph{Step (ii): scale invariance.} Under $\bdec{x}{x}{u}$, the rescaled process
\begin{align}\label{eq:BB_rescale}
\left(\frac{1}{\sqrt{u}} \left(\mathbf{b}_{us} - x\right), \quad s \le 1\right)
\end{align}

\noindent has the same distribution as a Brownian loop of duration $1$ starting from the origin. It follows from \eqref{eq:bound_boundary1} and \eqref{eq:G1_uniform_step1} that
\begin{align}
\notag
\bdec{x}{x}{u}\left[ \mathbb{E}\left[ \mathcal{I} \left(\lambda  F_\gamma^{\{x\}}(\mathbf{b})\right)\right]1_{\mathcal{H}_k}\right]
& \le e^{20\gamma^2} \bdec{x}{x}{u}\left[ \mathbb{E}\left[ \mathcal{I} \left(\widetilde{\lambda}\overline{F}_{\gamma}^{\{x\}}(\mathbf{b}; X)
\label{eq:G1_uniform_step1b}
\right)\right]1_{\mathcal{H}_k}\right]\\
& = e^{20\gamma^2} \bdec{0}{0}{1}\left[ \mathbb{E}\left[ \mathcal{I} \left(\widetilde{\lambda}
\overline{F}_{\gamma}^{\{x\}}(x + \sqrt{u}\mathbf{b}_{\cdot / u}; X)
\right)\right]1_{\mathcal{H}_k}\right]
\end{align}

\noindent where
\begin{align}
\notag
\overline{F}_{\gamma}^{\{x\}}(x + \sqrt{u}\mathbf{b}_{\cdot / u}; X)
& =  \int_0^u 1_{\{x+\sqrt{u}\mathbf{b}_{s/u} \in B(x, k\sqrt{u}) \}}\frac{e^{\gamma X(x+\sqrt{u}\mathbf{b}_{s/u}) - \frac{\gamma^2}{2} \mathbb{E}[X(x + \sqrt{u}\mathbf{b}_{s/u})^2]}ds}{|x + \sqrt{u}\mathbf{b}_{s/u}-x|^{\gamma^2}}\\
\label{eq:Ftilde_std}
& = u ^{1 - \frac{\gamma^2}{2}}\int_0^1 1_{\{\mathbf{b}_{s} \in B(0, k) \}}\frac{e^{\gamma X(x+\sqrt{u}\mathbf{b}_{s}) - \frac{\gamma^2}{2} \mathbb{E}[X(x + \sqrt{u}\mathbf{b}_{s})^2]}ds}{|\mathbf{b}_{s}|^{\gamma^2}}.
\end{align}

\noindent Let us quickly mention that the presence of the indicator inside the integrands in \eqref{eq:Ftilde_std} is not exactly consistent with our definition in \eqref{eq:overlineF} but it does not change anything. We are adopting this abuse of notation (here and elsewhere in the article) as a reminder for the reader that the corresponding random variable is analysed on the event $\mathcal{H}_k$.

We now want to proceed by invoking the scale invariance of $X(\cdot)$. For this purpose, let $\overline{X}(\cdot)$ be a log-correlated Gaussian field on $B(0,1)$ with covariance $ \mathbb{E}[\overline{X}(x_1) \overline{X}(x_2)] = -\log|x_1 - x_2| + 4$, and $B_{T_x(u; k)}$ an independent Gaussian random variable with zero mean and variance $T_x(u; k) := -\log \left(k\sqrt{u}/R(x; D)\right)$. (Note that $T_x(u;k) \ge 0$ since $k\sqrt{u}/R(x; D) \le k\sqrt{u}/d(x, \partial D)$ by Koebe quarter theorem and we are working under the condition $d(x, \partial D) \ge 4k\sqrt{u}$, and thus $B_{T_x(u;k)}$ is well-defined.) Then
\begin{align*}
\mathbb{E}[\overline{X}(x_1) \overline{X}(x_2)] + \mathbb{E} \left[B_{T_x(u; k)}^2\right]
& = - \log |x_1 - x_2| + 4 - \log\left(k\sqrt{u}/R(x; D)\right)\\
& = \mathbb{E}\left[X(x + k\sqrt{u} x_1) X(x + k\sqrt{u} x_2)\right]  \qquad \forall x_1, x_2 \in B(0, 1),
\end{align*}

\noindent i.e. we have
\begin{align*}
X(x+ k\sqrt{u}~ \cdot) \overset{d}{=} \overline{X}(\cdot) + B_{T_x(u;k)} \qquad \text{on $B(0, 1)$}.
\end{align*}

\noindent Substituting this into $\overline{F}_{\gamma}^{\{x\}}(x + \sqrt{u}\mathbf{b}_{\cdot / u}; X)$, \eqref{eq:Ftilde_std} becomes
\begin{align*}
& u^{1-\frac{\gamma^2}{2}}e^{\gamma B_{T_x(u; k)} - \frac{\gamma^2}{2} T_x(u; k)}\underbrace{\int_0^{1} 1_{\{\mathbf{b}_{s} \in B(0, k)\}}\frac{e^{\gamma \overline{X}(k^{-1}\mathbf{b}_s) - \frac{\gamma^2}{2} \mathbb{E}[\overline{X}( k^{-1}\mathbf{b}_s)^2]}ds}{|\mathbf{b}_s|^{\gamma^2}}}_{=:\overline{F}_{\gamma}(k^{-1}\mathbf{b}; \overline{X})}\\
& = e^{\gamma \left( B_{T_x(u; k)} - (Q-\gamma) T_{x}(u; k)\right)} \left(k/R(x; D)\right)^{-(2-\gamma^2)}\overline{F}_{\gamma}(k^{-1}\mathbf{b}; \overline{X})\\
&=: e^{\gamma \left( B_{T_x(u; k)} - (Q-\gamma) T_{x}(u; k)\right)} \mathcal{E}.
\end{align*}

\noindent where the law of $\mathcal{E} = \left[k/R(x; D)\right]^{-(2-\gamma^2)}\overline{F}_\gamma(k^{-1}\mathbf{b}; \overline{X})$ does not depend on $u$. Summarising all the work we have done from \eqref{eq:G1_uniform_step0} and \eqref{eq:G1_uniform_step1b}, we have
\begin{align*}
&  \int_{0}^{1} \frac{du}{2\pi u} 1_{\{d(x, \partial D) \ge  4k\sqrt{u}\}}\bdec{x}{x}{u}\left[ \mathbb{E}\left[ \mathcal{I} \left(\lambda  F_\gamma(\mathbf{b})\right)\right]1_{\mathcal{H}_k}\right]\\
& \qquad \le  e^{20\gamma^2} \mathbb{E} \otimes \bdec{0}{0}{1}\left[ \int_0^1 \frac{du}{2\pi u} 1_{\{d(x, \partial D) \ge  4k\sqrt{u}\}} 
 \mathcal{I} \left(\widetilde{\lambda}
\overline{F}_{\gamma}^{\{x\}}(x + \sqrt{u}\mathbf{b}_{\cdot / u}; X)
\right)1_{\mathcal{H}_k}\right]\\
& \qquad = e^{20\gamma^2} \mathbb{E} \otimes \bdec{0}{0}{1} \left[\int_{0}^{1} \frac{du}{2\pi u} 1_{\{d(x, \partial D) \ge  4k\sqrt{u}\}} \mathcal{I} \left(\widetilde{\lambda} \mathcal{E} e^{\gamma \left( B_{T_x(u; k)} - (Q-\gamma) T_{x}(u; k)\right)}\right) 1_{\mathcal{H}_k}\right]\\
&\qquad   \le \frac{e^{20\gamma^2}}{\pi} \int_0^\infty dt \mathbb{E} \otimes \bdec{0}{0}{1}\left[\mathcal{I}\left(\widetilde{\lambda} \mathcal{E} e^{\gamma(B_t - (Q-\gamma)t)}\right) 1_{\mathcal{H}_k}\right]
\end{align*}

\noindent where $(B_t)_{t \ge 0}$ is a Brownian motion. By \Cref{lem:main}, the last expression is bounded by 
\begin{align*}
& e^{20 \gamma^2} c_{\gamma} \bdec{0}{0}{1}\left[1_ {\mathcal{H}_k}\right]
\end{align*}
\noindent uniformly in $x \in D$ and $\widetilde{\lambda} > 0$, which concludes the proof.
\end{proof}

\begin{proof}[Proof of \Cref{lem:L1-boundary}]
Observe that
\begin{align*}
& \mathbb{E}\left[  \int_{0}^{1} \frac{du}{2\pi u} \bdec{x}{x}{u}[ \mathcal{I} \left(\lambda  F_\gamma^{\{x\}}(\mathbf{b})\right)1_{\{u< \tau_D(\mathbf{b}) \}}]\right]\\
& \qquad \le \sum_{k \ge 1} \mathbb{E}\left[  \int_{0}^{1} \frac{du}{2\pi u} \bdec{x}{x}{u} 1_{\{ d(x, \partial D) \ge 4k\sqrt{u}\}}[ \mathcal{I} \left(\lambda  F_\gamma^{\{x\}}(\mathbf{b})\right)1_{\mathcal{H}_k}]\right]\\
& \qquad  \quad + \sum_{k \ge 1} \mathbb{E}\left[  \int_{0}^{1} \frac{du}{2\pi u} \bdec{x}{x}{u} 1_{\{ d(x, \partial D) \le 4k\sqrt{u}\}}[ \mathcal{I} \left(\lambda  F_\gamma^{\{x\}}(\mathbf{b})\right)1_{\mathcal{H}_k}]\right].
\end{align*}

\noindent We already saw from \Cref{lem:G1_uniform} that the first sum is uniformly bounded in $x \in D$ and $\lambda > 0$. As for the second sum,
\begin{align*}
& \sum_{k\ge 1} \mathbb{E}\left[  \int_{0}^{1} \frac{du}{2\pi u} 1_{\{d(x, \partial D) \le  4k\sqrt{u}\}}\bdec{x}{x}{u}[ \mathcal{I} \left(\lambda  F_\gamma^{\{x\}}(\mathbf{b})\right)1_{\mathcal{H}_k}]\right]\\
&\qquad \le   \sum_{k\ge 1} \int_{[d(x, \partial D)/4k]^2}^1 \frac{du}{2\pi u}\pdec{x}{x}{u}\left(\mathcal{H}_k\right)
\end{align*}

\noindent which may be further bounded, using 
\begin{align*}
\pdec{x}{x}{u}(\mathcal{H}_k) \le \pdec{x}{x}{u}\left(\max_{s \le u} |\mathbf{b}_s - x| \ge (k-1) \sqrt{u}\right)
\end{align*}

\noindent and \Cref{cor:bb_bound}, by
\begin{align*}
& \sum_{k \ge 1} \frac{4}{\pi}e^{-\frac{(k-1)^2}{2}} \log \frac{4k}{d(x, \partial D)}
\le C \left(1 + \log \frac{1}{d(x, \partial D)}\right)
\end{align*}

\noindent for some $C \in (0, \infty)$ uniformly in $\lambda > 0$. In other words, the integrand on the RHS of \eqref{eq:pf_boundary1} is bounded by some function independent of $\lambda$ (and $\kappa$) that is integrable with respect to $R(x; D)^{\frac{\gamma^2}{2}} dx$. The statement of \Cref{lem:L1-boundary} now follows from dominated convergence.
\end{proof}

Let us also show that
\begin{lem}
For any fixed $\kappa > 0$, we have
\begin{align*}
&\limsup_{\lambda \to \infty} \mathbb{E}\left[\int_D1_{\{d(x, \partial D) \ge  \kappa\}} \mu_{\gamma}(dx) \int_{0}^1 \frac{du}{2\pi u} \bdec{x}{x}{u}[ \mathcal{I} \left(\lambda  F_{\gamma}(\mathbf{b})\right)1_{\{u \ge  \tau_D(\mathbf{b}) \}}]\right] = 0.
\end{align*}
\end{lem}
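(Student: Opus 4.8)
The plan is to follow the same scheme as in \Cref{lem:u_cutoff} and \Cref{lem:L1-boundary}: first produce a $\lambda$-independent integrable dominating function, and then apply dominated convergence using that $\mathcal{I}(\lambda F_\gamma(\mathbf{b}))\to 0$ pointwise as $\lambda\to\infty$. The key observation is that on the event $\{u\ge\tau_D(\mathbf{b})\}$ a Brownian bridge of duration $u\le 1$ starting and ending at a point $x$ with $d(x,\partial D)\ge\kappa$ must make an excursion of size at least $\kappa$, which is a large-deviation event whose probability decays super-exponentially in $1/u$; this Gaussian gain is what makes the $u$-integral converge and renders the whole contribution vanishingly small.

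First I would discard $\mathcal{I}$ entirely, using the crude bound $\mathcal{I}(x)\le 1$ for all $x\ge 0$, to get
\begin{align*}
\bdec{x}{x}{u}[ \mathcal{I}(\lambda F_\gamma(\mathbf{b}))1_{\{u\ge\tau_D(\mathbf{b})\}}]\le \pdec{x}{x}{u}\big(u\ge\tau_D(\mathbf{b})\big).
\end{align*}
Since the bridge starts and ends at $x$, the event $\{u\ge\tau_D(\mathbf{b})\}$ forces $\max_{s\le u}|\mathbf{b}_s-x|\ge d(x,\partial D)$; combined with $d(x,\partial D)\ge\kappa$ and the running-maximum tail estimate of \Cref{cor:bb_bound}, this yields
\begin{align*}
\pdec{x}{x}{u}\big(u\ge\tau_D(\mathbf{b})\big)\le \pdec{x}{x}{u}\Big(\max_{s\le u}|\mathbf{b}_s-x|\ge\kappa\Big)\le 4e^{-\kappa^2/(2u)},
\end{align*}
uniformly in $\lambda>0$ and in $x$ with $d(x,\partial D)\ge\kappa$.

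Next I would integrate in $u$: because $\int_0^1 \frac{du}{2\pi u}\,4e^{-\kappa^2/(2u)}<\infty$ (the integrand decays super-exponentially as $u\to 0^+$ and is bounded near $u=1$), the quantity inside the outer expectation is bounded, for every $\lambda>0$, by $C_\kappa\,\mu_\gamma(D)$ for some deterministic constant $C_\kappa<\infty$, and $\mathbb{E}[\mu_\gamma(D)]<\infty$. Thus the family of integrands (indexed by $\lambda$) is dominated by a fixed integrable function. Finally, for fixed environment $h$ and fixed $(x,u,\mathbf{b})$ one has $\mathcal{I}(\lambda F_\gamma(\mathbf{b}))\to 0$ as $\lambda\to\infty$ — immediate if $F_\gamma(\mathbf{b})=0$, and a consequence of $\mathcal{I}(x)=xe^{-x}\to 0$ as $x\to\infty$ otherwise — so dominated convergence gives that the outer expectation tends to $0$, as claimed. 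There is no serious obstacle here; the only point requiring a little care is that the dominating bound be uniform in $\lambda$, which is precisely why one throws away $\mathcal{I}$ and keeps only the Gaussian bridge estimate.
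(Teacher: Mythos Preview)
Your proposal is correct and follows essentially the same argument as the paper: bound $\mathcal{I}\le 1$, use that hitting $\partial D$ forces a bridge excursion of size at least $\kappa$, apply the Gaussian tail bound from \Cref{cor:bb_bound} to get $4e^{-\kappa^2/(2u)}$, integrate to a finite constant times $\mu_\gamma(D)$, and conclude by dominated convergence. There is no meaningful difference between your write-up and the paper's.
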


\begin{proof}
Note that for $x \in D$ satisfying $d(x, \partial D) \ge \kappa$,
\begin{align*}
\bdec{x}{x}{u}[ \mathcal{I} \left(\lambda  F_{\gamma}(\mathbf{b})\right)1_{\{u > \tau_D(\mathbf{b}) \}}]
& \le  \pdec{x}{x}{u}\left( \exists s \le u:  ~ \mathbf{b}_s  \in  \partial D \right)\\
& \le \pdec{x}{x}{u}\left( \max_{s \le u}  |\mathbf{b}_s-x|  \ge \kappa \right)
\le 4 e^{-\frac{\kappa^2}{2u}}
\end{align*}

\noindent by \Cref{cor:bb_bound}. Therefore,
\begin{align*}
\int_D1_{\{d(x, \partial D) \ge  \kappa\}} \mu_{\gamma}(dx) \int_{0}^1 \frac{du}{2\pi u} \bdec{x}{x}{u}[ \mathcal{I} \left(\lambda  F_{\gamma} (\mathbf{b})\right)1_{\{u > \tau_D(\mathbf{b}) \}}]
\le \mu_{\gamma}(D) \underbrace{\int_0^1 \frac{du}{u} e^{-\frac{\kappa^2}{2u}}}_{<\infty}
\end{align*}

\noindent which has finite first moment, and the claim follows from dominated convergence again.
\end{proof}

\subsection{Part I: $L^1$-estimates for bad event}\label{sec:pf_badevent}
We shall denote by $h_r(x)$ the circle average of the field over $\partial B(x, r)$. Let us introduce the notation
\begin{align}
\label{eq:event}
\mathcal{G}_I(x)
:= \bigg\{ h_{2^{-n}}(x) \le \alpha \log (2^n) \quad \forall n \in I \cap \mathbb{N}\bigg\}.
\end{align}

As in \cite{Ber2017}, the key is to be able to work on this good event. The issue is that Gaussian comparison and scale invariance are key to computations of moments, but these do not mix well with good events (essentially, the indicator of the good event cannot be written as some convex function of the mass of the chaos). We will replace this indicator by exponentials in the $L^1$ computation showing that bad events do not contribute significantly to the expectation, and will need arguments in the subsequent $L^2$ computation.

\begin{lem}\label{lem:bad_event}
Let $\alpha > \gamma$. Then
\begin{align}
\lim_{n \to \infty} \mathbb{E}\left[\int_{D} 1_{\mathcal{G}_{[n, \infty)}(x)^c} \mu_{\gamma}(dx) \right] 
&= 0\\
\text{and} \quad \lim_{n \to \infty} \limsup_{\lambda \to \infty} \mathbb{E}\left[\int_{D} 1_{\mathcal{G}_{[n, \infty)}(x)^c} \mu_{\gamma}(dx) \int_0^1 \frac{du}{2\pi u}\bdec{x}{x}{u}[ \mathcal{I} \left(\lambda  F_{\gamma}(\mathbf{b})\right)]
\right] 
&= 0.
\end{align}
\end{lem}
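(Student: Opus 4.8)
The plan is to handle the two limits separately, in both cases reducing to a first-moment ($L^1$) computation in which the indicator of the bad event is replaced by an exponential weight, following the strategy indicated in the paragraph preceding the statement. For the first limit, after applying Fubini and the Cameron--Martin theorem, the claim becomes $\lim_n \int_D R(x;D)^{\gamma^2/2}\,\mathbb{E}[1_{\mathcal{G}_{[n,\infty)}(x)^c}\,\text{(weight)}]\,dx = 0$, where under the tilted law $h_{2^{-k}}(x)$ acquires a deterministic drift $\gamma^2 \log(2^k)+O(1)$. The key point is that, since $\alpha>\gamma$, on $\mathcal{G}_{[n,\infty)}(x)^c$ there is some $k\ge n$ with $h_{2^{-k}}(x)>\alpha\log(2^k)$, whereas the tilted mean is only $\gamma\log(2^k)$ (after passing to the Liouville measure the relevant drift is $\gamma$, not $\gamma^2$); thus one gets a Gaussian tail bound $\mathbb{P}(h_{2^{-k}}(x)-\gamma\log 2^k \ge (\alpha-\gamma)\log 2^k) \lesssim 2^{-c k}$ with $c = c(\alpha,\gamma)>0$, and summing over $k\ge n$ gives a bound of order $2^{-cn}$ uniformly in $x$ (away from the boundary; the boundary layer is controlled by Corollary \ref{cor:gff_exact} and integrability of $R(x;D)^{\gamma^2/2}$, or simply absorbed since $\mu_\gamma(D)$ has finite expectation). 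This yields the first display.

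For the second limit, I would first interchange expectations and integrate over the Brownian bridge, then use the range-partition trick from the proof of Lemma \ref{lem:G1_uniform}: write $1 = \sum_{k\ge 1} 1_{\mathcal{H}_k(\mathbf{b})}$ and split further according to whether $d(x,\partial D)\ge 4k\sqrt u$ or not. On the ``interior'' part, one repeats verbatim the Gaussian comparison of Step (i) and the scale-invariance reduction of Step (ii) in Lemma \ref{lem:G1_uniform}, \emph{except} that one must also carry along the factor $1_{\mathcal{G}_{[n,\infty)}(x)^c}$. The crucial observation is that, because $\mathcal{I}(y)\le y$, one has $\mathcal{I}(\lambda F_\gamma(\mathbf{b}))\le \lambda F_\gamma(\mathbf{b})$, so we may bound the whole $\mathcal{I}(\cdot)$ by the linear functional $\lambda \overline F_\gamma^{\{x\}}(\mathbf{b};X)$; this linear functional is compatible with Gaussian comparison (the exponential is convex, or one simply computes its mean), and the bad event $\mathcal{G}_{[n,\infty)}(x)^c$ can then be estimated by a union bound over scales as in the first part. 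After rescaling, the time integral $\int_0^1 \frac{du}{2\pi u}$ becomes $\frac1\pi\int_0^\infty dt$ and produces (via Lemma \ref{lem:main} applied with $\mathcal{E}_0 = 1_{\mathcal{G}_{[n,\infty)}(x)^c}$, or rather with the bad-event indicator absorbed into $\mathcal{E}_0$) a bound of the form
\begin{align*}
C\sum_{k\ge 1}\pdec{0}{0}{1}(\mathcal{H}_k)\,\sup_{x}\Big(R(x;D)^{\gamma^2/2}\,\mathbb{P}\big(\mathcal{G}_{[n',\infty)}(x)^c\big)\Big)
\end{align*}
where $n' = n'(n,u)$ grows with $n$; since $\sum_k \pdec{0}{0}{1}(\mathcal{H}_k) < \infty$ by Corollary \ref{cor:bb_bound} and the probability of the bad event is $O(2^{-cn})$ uniformly, this vanishes as $n\to\infty$ uniformly in $\lambda$. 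On the ``boundary'' part $d(x,\partial D)\le 4k\sqrt u$ one argues exactly as at the end of the proof of Lemma \ref{lem:L1-boundary}: bounding $\mathcal{I}\le 1$ and using the bridge estimates of Corollary \ref{cor:bb_bound} gives an integrand bounded by $C(1+\log(1/d(x,\partial D)))$ times $1_{\mathcal{G}_{[n,\infty)}(x)^c}$, which tends to $0$ in $L^1(R(x;D)^{\gamma^2/2}dx)$ by dominated convergence and the first part of the lemma.

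The main obstacle is the bookkeeping in the interior estimate: one needs the bad-event indicator to survive the sequence of manipulations (Cameron--Martin tilt, Gaussian comparison replacing $h$ by the exactly scale-invariant field $X$, and the scale-invariance splitting $X(x+k\sqrt u\,\cdot)\overset{d}{=}\overline X(\cdot)+B_{T_x(u;k)}$) in a usable form. The point is that $\mathcal{G}_{[n,\infty)}(x)^c$ is an event measurable with respect to the \emph{coarse} field near $x$ (circle averages at radii $2^{-k}$, $k\ge n$), which is largely independent of the fine-scale fluctuations inside $B(x,k\sqrt u)$ that drive the Liouville clock; so after the decomposition it factorises (up to $O(1)$ covariance errors handled by Corollary \ref{cor:gff_exact}) as a bad-event probability depending only on a Gaussian random walk increment, times the already-controlled clock contribution. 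Making this factorisation precise — in particular choosing the truncation level in the good event compatibly with the range parameter $k$ and the time scale $u$ so that the union bound still sums — is the one place requiring genuine care; everything else is a repetition of arguments already carried out in Lemmas \ref{lem:G1_uniform} and \ref{lem:L1-boundary}.
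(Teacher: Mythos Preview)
Your preamble correctly names the key device (replace the bad-event indicator by an exponential weight), but your detailed plan for the second limit does not actually implement it, and the alternative route you sketch has a genuine gap.

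The problem is the step where you propose to apply Lemma~\ref{lem:main} with $\mathcal{E}_0 = 1_{\mathcal{G}_{[n,\infty)}(x)^c}$. That lemma requires $\mathcal{E}_0$ to be \emph{independent} of the Brownian motion $(B_t)_{t\ge 0}$. But after the scale-invariance reduction of Lemma~\ref{lem:G1_uniform}, the process $(B_t)$ is precisely the radial part of the field at $x$: it encodes the circle averages $h_r(x)$ as $r\to 0$. The event $\mathcal{G}_{[n,\infty)}(x)^c$ is \emph{not} a coarse-field event as you claim in your last paragraph --- it constrains $h_{2^{-j}}(x)$ for \emph{all} $j\ge n$, i.e.\ at arbitrarily fine scales --- so it is fully correlated with $(B_t)$ and the factorisation you describe does not hold. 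Separately, the bound $\mathcal{I}(y)\le y$ you invoke to make Gaussian comparison tractable discards the $e^{-\lambda F}$ factor, after which the expression grows linearly in $\lambda$ and no $\limsup_{\lambda\to\infty}$ bound is possible.

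What the paper actually does is exactly what your first sentence promises. After Cameron--Martin, one bounds
\[
1_{\mathcal{G}^{\{x\}}_{[n,\infty)}(x)^c}\ \le\ C_\kappa\sum_{j\ge n} 2^{-\frac{\beta}{2}[2(\alpha-\gamma)-\beta]j}\, e^{\beta h_{2^{-j}}(x)-\frac{\beta^2}{2}\mathbb{E}[h_{2^{-j}}(x)^2]}
\]
via Chernoff, and then applies Cameron--Martin \emph{again} to absorb each exponential into the clock, producing a modified clock $F_{\gamma,(j,\beta)}^{\{x\}}(\mathbf{b})$ with an extra $\beta$-insertion at scale $2^{-j}$. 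The indicator has now disappeared entirely. For each fixed $j$ one splits the $u$-integral at $u\sim 2^{-2j}$: for small $u$ the extra insertion is essentially constant and one reduces to Lemma~\ref{lem:G1_uniform} with a renormalised $\widetilde{\lambda}$ (still uniformly bounded); for large $u$ one uses $\mathcal{I}\le 1$. The outer sum $\sum_{j\ge n} j\, 2^{-\frac{\beta}{2}[2(\alpha-\gamma)-\beta]j}$ (taking $\beta=\alpha-\gamma$) then supplies the vanishing as $n\to\infty$, uniformly in $\lambda$.
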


\begin{proof}
We only treat the second claim since the first one is simpler (and a similar statement was proved in \cite{Ber2017}). By \Cref{lem:L1-boundary}, it suffices to establish the analogous result with the domain of integration in the $x$-integral replaced by $\{x: d(x, \partial D) \ge \kappa\}$ for any $\kappa > 0$.

\medskip

Let us apply Fubini and Cameron-Martin again and rewrite
\begin{align*}
&  \mathbb{E}\left[\int_{\{d(x, \partial D) \ge \kappa\}} 1_{\mathcal{G}_{[n, \infty)}(x)^c} \mu_{\gamma}(dx) \int_0^1 \frac{du}{2\pi u} 
\bdec{x}{x}{u}[ \mathcal{I} \left(\lambda  F_{\gamma}(\mathbf{b}) \right)1_{\mathcal{H}_k}]
\right] \\
& \qquad =  \int_{\{d(x, \partial D) \ge \kappa\}} R(x; D)^{\frac{\gamma^2}{2}}dx \int_0^1 \frac{du}{2\pi u}\mathbb{E}\left[ 1_{\mathcal{G}^{\{x\}}_{[n, \infty)}(x)^c} 
\bdec{x}{x}{u}[ \mathcal{I} \left(\lambda  F_\gamma^{\{x\}}(\mathbf{b})\right) 1_{\mathcal{H}_k}]
\right]
\end{align*}

\noindent where $F_{\gamma}^{\{x\}}(\mathbf{b})$ was already defined in \eqref{eq:Girsanov1}, and for any finite set $\mathcal{S} \subset D$
\begin{align}\label{eq:eventGir}
\mathcal{G}_{I}^{\mathcal{S}}(x) := \bigg\{ h_{2^{-j}}(x)  + \gamma \sum_{z \in \mathcal{S}}\mathbb{E}[h_{2^{-j}}(x) h(z) ] \le \alpha \log (2^j) \quad \forall j \in I \cap \mathbb{N}\bigg\}.
\end{align}

Since $x$ is bounded away from $\partial D$, it follows from \Cref{lem:mollified_cov} that there exists some constant $C_\kappa > 0$ such that
\begin{align*}
\left|\mathbb{E}\left[h_{2^{-j}}(x) h_{\delta}(x)\right] + \log (2^{-j})\right| \le C_\kappa
\qquad \forall \delta \in [0, 2^{-j}], \qquad \forall j \ge n.
\end{align*}

\noindent In particular, for any $\beta > 0$ we have
\begin{align}
\notag
1_{\mathcal{G}^{\{x\}}_{[n, \infty)}(x)^c}
&\le \sum_{j \ge n} \exp\left( \beta [ h_{2^{-j}}(x) +  \gamma \mathbb{E}[h_{2^{-j}}(x) h(x) ] - \alpha \log (2^j)]\right) \\
\label{eq:ind_exp_trick0}
& \le e^{(\frac{\beta^2}{2} + \beta \gamma )C_{\kappa}}\sum_{j \ge n} 2^{-\frac{\beta}{2}\left[2(\alpha - \gamma ) - \beta\right]j} e^{\beta h_{2^{-j}}(x) - \frac{\beta^2}{2} \mathbb{E}[h_{2^{-j}}(x)^2]}
\end{align}

\noindent and thus
\begin{align*}
&\mathbb{E}\left[ 1_{\mathcal{G}^{\{x\}}_{[n, \infty)}(x)^c} \mathcal{I}\left(\lambda  F_{\gamma}^{\{x\}}(\mathbf{b})\right)\right]\\
& \qquad \le  e^{(\frac{\beta^2}{2} + \beta \gamma )C_{\kappa}}\sum_{j \ge n} 2^{-\frac{\beta}{2}\left[2(\alpha - \gamma ) - \beta\right]j} \mathbb{E} \left[e^{\beta h_{2^{-j}}(x) - \frac{\beta^2}{2} \mathbb{E}[h_{2^{-j}}(x)^2]}
\mathcal{I}\left(\lambda  F_{\gamma}^{\{x\}}(\mathbf{b})\right)
\right]\\
& \qquad =  e^{(\frac{\beta^2}{2} + \beta \gamma )C_{\kappa}}\sum_{j \ge n} 2^{-\frac{\beta}{2}\left[2(\alpha - \gamma ) - \beta\right]j} \mathbb{E} \left[ \mathcal{I}\left(\lambda  F_{\gamma, (j, \beta)}^{\{x\}}(\mathbf{b})\right)\right]
\end{align*}

\noindent where
\begin{align*}
F_{\gamma, (j,\beta)}^{\{x\}}(\mathbf{b})
:= \int_0^u e^{\gamma^2 G_0^D(x, \mathbf{b}_s) +\gamma \beta \mathbb{E}[h_{2^{-j}}(x) h(\mathbf{b}_s)]} F_{\gamma}(ds; \mathbf{b}).
\end{align*}

Next, let $\delta \in (0, \kappa / 100)$ and consider 
\begin{align*}
& \int_0^1 \frac{du}{2\pi u}\mathbb{E}\left[
\bdec{x}{x}{u}[ \mathcal{I} \left(\lambda  F_{\gamma, (j,\beta)}^{\{x\}}(\mathbf{b})\right) 1_{\mathcal{H}_k}]
\right]\\
&\qquad = \int_0^{\delta^2 k^{-2}2^{-2j}} \frac{du}{2\pi u}\mathbb{E}\left[ 
\bdec{x}{x}{u}[ \mathcal{I} \left(\lambda  F_{\gamma, (j,\beta)}^{\{x\}}(\mathbf{b})\right) 1_{\mathcal{H}_k}]
\right]\\
& \qquad \qquad +\int_{\delta^2 k^{-2}2^{-2j}}^1 \frac{du}{2 \pi u}\mathbb{E}\left[
\bdec{x}{x}{u}[ \mathcal{I} \left(\lambda F_{\gamma, (j,\beta)}^{\{x\}}(\mathbf{b})\right) 1_{\mathcal{H}_k}]
\right].
\end{align*}

\noindent The second term can be easily bounded by 
\begin{align*}
\int_{\delta^2 k^{-2}2^{-2j}}^1 \frac{du}{2\pi u} \pdec{x}{x}{u}\left(\mathcal{H}_k\right) \le  \pdec{0}{0}{1}\left(\mathcal{H}_k\right) \log( k 2^j / \delta).
\end{align*}

\noindent As for the first term, since (by \Cref{lem:mollified_cov} again, up to a redefinition of $C_\kappa$)
\begin{align*}
\left|\mathbb{E}[h_{2^{-j}}(x) h(z) ] + \log(2^{-j})\right| \le C_{\kappa} \qquad \forall z \in B(x, 2^{-j})
\end{align*}

\noindent and $\mathbf{b}_{\cdot} \in B(x, 2^{-j})$ on the event $\mathcal{H}_k$ (under the probability measure $\bdec{x}{x}{u}$ with $k\sqrt{u} \le 2^{-j}$), one obtains
\begin{align*}
e^{-\gamma \beta C_\kappa } F_{\gamma}^{\{x\}}(\mathbf{b})\le 2^{\gamma \beta j}  F_{\gamma, (j,\beta)}^{\{x\}}(\mathbf{b}) \le e^{\gamma \beta C_\kappa }F_{\gamma}^{\{x\}}(\mathbf{b})
\end{align*}

\noindent and hence
\begin{align*}
&  \int_0^{\delta^2 k^{-2}2^{-2j}} \frac{du}{2\pi u}\mathbb{E}\left[
\bdec{x}{x}{u}[ \mathcal{I} \left(\lambda  F_{\gamma, (j,\beta)}^{\{x\}}(\mathbf{b})\right) 1_{\mathcal{H}_k}]
\right]\\
& \qquad \le \int_0^{\delta^2 k^{-2}2^{-2j}}  \frac{du}{2\pi u}e^{2\gamma \beta C_\kappa}  \mathbb{E}\left[
\bdec{x}{x}{u}[ \mathcal{I} \left(\lambda e^{-\gamma \beta (C_\kappa + \log 2^{-j})}F_{\gamma}^{\{x\}}(\mathbf{b})\right) 1_{\mathcal{H}_k}]
\right]\\
& \qquad \le  e^{2\gamma \beta C_\kappa } \int_0^{\delta^2 k^{-2}2^{-2j}} \frac{du}{2\pi u}  \mathbb{E} \left[\bdec{x}{x}{u}[ \mathcal{I}\left(\widetilde{\lambda}F_{\gamma}^{\{x\}}(\mathbf{b})\right)1_{\mathcal{H}_k}]\right],
\qquad \widetilde{\lambda} := \lambda e^{-\gamma \beta (C_\kappa + \log 2^{-j})}.
\end{align*}

\noindent Since $4k\sqrt{u} \le 4\delta 2^{-j} \le \kappa$, the last expression can be bounded by $C \pdec{0}{0}{1}\left(\mathcal{H}_k\right)$ for some $C \in (0, \infty)$ uniformly in $\widetilde{\lambda}> 0$ and for all $x \in D$ satisfying $d(x, \partial D) \ge \kappa$ by \Cref{lem:G1_uniform}. 
Combining everything together, we have
\begin{align*}
& \int_0^1 \frac{du}{2\pi u}\mathbb{E}\left[ 
1_{\mathcal{G}^{\{x\}}_{[n, \infty)}(x)^c} 
\bdec{x}{x}{u}[ \mathcal{I} \left(\lambda  F_{\gamma}^{\{x\}}(\mathbf{b})\right)\right]\\
& \qquad \le \sum_{k \ge 1}
e^{(\frac{\beta^2}{2} + \beta \gamma )C_{\kappa}}\sum_{j \ge n} 2^{-\frac{\beta}{2}\left[2(\alpha - \gamma ) - \beta\right]j}  \Bigg\{
\int_0^{\delta^2 k^{-2}2^{-2j}} \frac{du}{2\pi u}\mathbb{E}\left[ 
\bdec{x}{x}{u}[ \mathcal{I} \left(\lambda  F_{\gamma, (j,\beta)}^{\{x\}}(\mathbf{b})\right) 1_{\mathcal{H}_k}]
\right]\\
& \qquad \qquad \qquad \qquad +\int_{\delta^2 k^{-2}2^{-2j}}^1 \frac{du}{2 \pi u}\mathbb{E}\left[
\bdec{x}{x}{u}[ \mathcal{I} \left(\lambda F_{\gamma, (j,\beta)}^{\{x\}}(\mathbf{b})\right) 1_{\mathcal{H}_k}]
\right]
\Bigg\}\\
& \qquad \le (C + \log \delta^{-1})  e^{(\frac{\beta^2}{2} + \beta \gamma )C_{\kappa}}\left[\sum_{k \ge 1}  k\pdec{0}{0}{1}\left(\mathcal{H}_k\right)\right]  
\left[\sum_{j \ge n} j 2^{-\frac{\beta}{2}\left[2(\alpha - \gamma ) - \beta\right]j}\right] \\
& \qquad =: \widetilde{C} \sum_{j \ge n} j 2^{-\frac{\beta}{2}\left[2(\alpha - \gamma ) - \beta\right]j} 
\end{align*}

\noindent where $\widetilde{C} \in (0, \infty)$ is independent of $n \in \mathbb{N}$ or $\lambda > 0$, uniformly for $d(x, \partial D) \ge \kappa$. Choosing $\beta = \alpha - \gamma > 0$, the above bound is summable and vanishes as $n \to \infty$ uniformly. Hence,
\begin{align*}
\limsup_{n \to \infty} \limsup_{\lambda \to \infty} \mathbb{E}\left[\int_{\{d(x, \partial D) \ge \kappa\}} 1_{\mathcal{G}_{[n, \infty)}(x)^c} \mu_{\gamma}(dx) \int_0^1 \frac{du}{2\pi u} 
\bdec{x}{x}{u}[ \mathcal{I} \left(\lambda  F_{\gamma}(\mathbf{b}) \right)]
\right] =0
\end{align*}

\noindent for any $\kappa > 0$, which concludes the proof.
\end{proof}

\paragraph{Roadmap for the remaining analysis in \Cref{sec:proofWeyl}.} 
Based on all the estimates that have appeared in the current section, \Cref{theo:Weyl} can be established if we can show, for any $\kappa > 0$ and $n_0 = n_0(\kappa) \in \mathbb{N}$ sufficiently large that
\begin{align}\label{eq:goalWeyl}
\lim_{\lambda \to \infty}
\mathbb{E}\Bigg[
\left|\int_A \mu_{\gamma}^{\kappa, n_0}(dx) \int_0^\infty \frac{du}{2\pi u} \bdec{x}{x}{u}[ \mathcal{I} \left(\lambda  F_{\gamma}(\mathbf{b})\right)] -c_{\gamma}\mu_{\gamma}^{\kappa, n_0}(A)\right|^2
\Bigg] = 0
\end{align}

\noindent where $\mu_\gamma^{\kappa, n_0}(A) := \int_A \mu_\gamma^{\kappa, n_0}(dx)$ with
\begin{align}
\mu_\gamma^{\kappa, n_0}(dx) &:=  1_{\{d(x, \partial D) \ge \kappa\}} 1_{\mathcal{G}_{[n_0, \infty)}(x)}\mu(dx).
\end{align}

\noindent Expanding the second moment on the LHS of \eqref{eq:goalWeyl}, it suffices to verify the following claim.
\begin{lem}\label{lem:DCT_limits}
For any $\kappa > 0$ and $n_0 \in \mathbb{N}$ such that $2^{1-n_0} < \kappa$, we have
\begin{align}
\label{eq:cross_limit}
\lim_{\lambda \to \infty} \mathbb{E}\left[
\mu_\gamma^{\kappa, n_0}(A) \int_A \mu_\gamma^{\kappa, n_0}(dx) \int_0^1 \frac{du}{2\pi u} 
\bdec{x}{x}{u}[ \mathcal{I} \left(\lambda  F_{\gamma}(\mathbf{b}) \right)]
\right]
&= c_\gamma \mathbb{E}\left[\mu_\gamma^{\kappa, n_0}(A)^2\right]\\
\label{eq:diagonal_limit}
\text{and} \qquad 
\lim_{\lambda \to \infty} \mathbb{E}\left[
\left( \int_A \mu_\gamma^{\kappa, n_0}(dx)  \int_0^1 \frac{du}{2\pi u} 
\bdec{x}{x}{u}[ \mathcal{I} \left(\lambda  F_{\gamma}(\mathbf{b}) \right)]
\right)^2\right]
&= c_\gamma^2 \mathbb{E}\left[\mu_\gamma^{\kappa, n_0}(A)^2\right].
\end{align}
\end{lem}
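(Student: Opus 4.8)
\textbf{Proof plan for Lemma \ref{lem:DCT_limits}.}
The plan is to prove both limits by the scheme already used in the pre-processing lemmas of \Cref{sec:proofWeyl}: apply Fubini and the Cameron--Martin theorem to root the Liouville measure(s) at the integration point(s), reducing each statement to a \emph{deterministic} integral over $A$ (resp.\ $A\times A$) of a random expectation, and then pass to the limit $\lambda\to\infty$ by dominated convergence, using \Cref{lem:main} both for the uniform domination and for the pointwise limit. Consider first the one-point estimate \eqref{eq:cross_limit}. Writing $\mu_\gamma^{\kappa,n_0}(A)=\int_A\mu_\gamma^{\kappa,n_0}(dx')$ and rooting the differential $\mu_\gamma^{\kappa,n_0}(dx)$ at $x$, the left-hand side of \eqref{eq:cross_limit} equals
\[
\int_A 1_{\{d(x,\partial D)\ge\kappa\}}R(x;D)^{\frac{\gamma^2}{2}}\,dx\;
\mathbb{E}\!\left[1_{\mathcal{G}^{\{x\}}_{[n_0,\infty)}(x)}\,\mu_\gamma^{\kappa,n_0,x}(A)\int_0^1\frac{du}{2\pi u}\,\bdec{x}{x}{u}\!\left[\mathcal{I}\!\left(\lambda F_\gamma^{\{x\}}(\mathbf{b})\right)\right]\right],
\]
where $\mu_\gamma^{\kappa,n_0,x}(A):=\int_A 1_{\{d(x',\partial D)\ge\kappa\}}1_{\mathcal{G}^{\{x\}}_{[n_0,\infty)}(x')}e^{\gamma^2 G_0^D(x,x')}\mu_\gamma(dx')$ is the re-rooted good measure (the field shifted by $\gamma G_0^D(x,\cdot)$, which produces the insertion $e^{\gamma^2 G_0^D(x,\cdot)}$ and turns every good event into its insertion-shifted version). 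The random variable $\mu_\gamma^{\kappa,n_0,x}(A)$ is independent of the Brownian bridge $\mathbf{b}$ and has finite expectation --- \emph{this is precisely what the good event $\mathcal{G}_{[n_0,\infty)}$ is there to ensure}, since without it the insertion would make the mass non-integrable once $\gamma\ge\sqrt2$ --- so it will play the role of $\mathcal{E}_0$ in \Cref{lem:main}.

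For each fixed $x$ I would then analyse the inner bridge functional exactly as in the proofs of \Cref{lem:G1_uniform} and \Cref{prop:toy}: partition the probability space according to the range event $\mathcal{H}_k$; on $\mathcal{H}_k$ use \Cref{cor:Gcompare} to replace the GFF by an exactly scale-invariant field (the Green-function errors from \Cref{lem:Green_estimate}/\Cref{cor:gff_exact} are $O(k\sqrt u/d(x,\partial D))=o(1)$ as $u\to0$, hence harmless once $\lambda\to\infty$); and invoke exact scale invariance, $X(x+r\,\cdot)\overset{d}{=}\overline X(\cdot)+B_{-\log(r/R(x;D))}$, to rewrite the rescaled Liouville clock in the form $\mathcal{E}\,e^{\gamma(B_T-(Q-\gamma)T)}$ with $T=-\log(\text{scale})$. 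After the change of variables $u=e^{-2t}$ this brings the quantity into the exact shape covered by \Cref{lem:main} with $\mathcal{E}_0=\tfrac1\pi\mu_\gamma^{\kappa,n_0,x}(A)$, $m_1=Q-\gamma$, $\gamma_1=\gamma$, $\mathcal{I}_1=\mathcal{I}$. The uniform bound \eqref{eq:uniform_main1} then gives a $\lambda$- and $x$-independent domination by $c_\gamma\,\mathbb{E}[\mu_\gamma^{\kappa,n_0,x}(A)]$, which is integrable in $x$ against $R(x;D)^{\gamma^2/2}dx$ \emph{because} $\mathbb{E}[\mu_\gamma^{\kappa,n_0}(A)^2]<\infty$; the limit \eqref{eq:limit_main1} gives the pointwise limit $c_\gamma\,\mathbb{E}[1_{\mathcal{G}^{\{x\}}_{[n_0,\infty)}(x)}\mu_\gamma^{\kappa,n_0,x}(A)]$. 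Dominated convergence therefore yields $c_\gamma\int_A 1_{\{d(x,\partial D)\ge\kappa\}}R(x;D)^{\gamma^2/2}\mathbb{E}[1_{\mathcal{G}^{\{x\}}_{[n_0,\infty)}(x)}\mu_\gamma^{\kappa,n_0,x}(A)]\,dx$, which un-roots to $c_\gamma\,\mathbb{E}[\mu_\gamma^{\kappa,n_0}(A)^2]$, establishing \eqref{eq:cross_limit}.

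For the two-point estimate \eqref{eq:diagonal_limit} I would root at both $x$ and $x'$, producing the Jacobian $\int_{A\times A}R(x;D)^{\frac{\gamma^2}{2}}R(x';D)^{\frac{\gamma^2}{2}}e^{\gamma^2 G_0^D(x,x')}1_{\{d(x,\partial D)\ge\kappa\}}1_{\{d(x',\partial D)\ge\kappa\}}\,dx\,dx'$ times an expectation of $1_{\mathcal{G}^{\{x,x'\}}_{[n_0,\infty)}(x)}1_{\mathcal{G}^{\{x,x'\}}_{[n_0,\infty)}(x')}$ against the product of the two bridge functionals, each clock now carrying both insertions. I would split the $(x,x')$-integral into $\{|x-x'|\ge\delta\}$ and $\{|x-x'|<\delta\}$. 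On the far region, as $\lambda\to\infty$ the two bridge functionals localise to scales $\ll\delta$ around $x$ and $x'$ respectively; seen from a small neighbourhood of $x$ the $x'$-insertion contributes an essentially constant factor $e^{\gamma^2 G_0^D(x,x')}(1+o(1))$, which merely rescales $\lambda$, and after the radial decomposition the two Brownian motions governing the two neighbourhoods are independent, so \Cref{lem:main} in its two-variable form \eqref{eq:uniform_main2}--\eqref{eq:limit_main2} (with $\mathcal{E}_0=\tfrac1\pi$ and $\mathcal{E}_1,\mathcal{E}_2$ the unit-ball masses at $x,x'$) gives a constant domination and the pointwise limit $c_\gamma^2$; dominated convergence then produces $c_\gamma^2\,\mathbb{E}[\mu_\gamma^{\kappa,n_0}(\{(x,x')\in A^2:|x-x'|\ge\delta\})]$, which tends to $c_\gamma^2\,\mathbb{E}[\mu_\gamma^{\kappa,n_0}(A)^2]$ as $\delta\to0$ since the diagonal carries no $\mu_\gamma\otimes\mu_\gamma$-mass. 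On the near region, the uniform bound of \Cref{lem:main} (valid uniformly in the rescaled intensity, with the good events controlling the now-singular masses $\mathcal{E}_1,\mathcal{E}_2$) bounds the contribution by $C\,\mathbb{E}[\mu_\gamma^{\kappa,n_0}(\{(x,x')\in A^2:|x-x'|<\delta\})]$, which vanishes as $\delta\to0$ because $\mathbb{E}[\mu_\gamma^{\kappa,n_0}(A)^2]<\infty$. Letting first $\lambda\to\infty$ and then $\delta\to0$ gives \eqref{eq:diagonal_limit}.

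The main obstacle is the near-diagonal regime of the two-point estimate. For $\gamma\ge\sqrt2$ the Jacobian factor $e^{\gamma^2 G_0^D(x,x')}$ is not integrable near the diagonal, so crude bounds on the bridge functionals will not suffice; one genuinely needs the good events $\mathcal{G}^{\{x,x'\}}_{[n_0,\infty)}$ to simultaneously tame the double insertion in the clock $F_\gamma^{\{x,x'\}}$ and the Jacobian singularity, via the exponential-Markov trick of \Cref{lem:bad_event} combined with a multiscale decomposition in the spirit of \cite{Ber2017}, now applied to \emph{time-integrated} bridge functionals rather than to plain GMC masses. A secondary but real difficulty is making precise the asymptotic independence of the two radial decompositions on the far region so that the two-variable \Cref{lem:main} applies cleanly, together with bookkeeping that all Gaussian-comparison and scale-invariance errors are genuinely $o(1)$ as $\lambda\to\infty$, so that no spurious multiplicative constant survives and the limits are exactly $c_\gamma$ and $c_\gamma^2$.
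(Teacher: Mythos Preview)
Your overall architecture (Fubini + Cameron--Martin, then dominated convergence governed by \Cref{lem:main}) matches the paper's, but your treatment of the cross term \eqref{eq:cross_limit} has a genuine gap. You root the GMC at only \emph{one} point $x$ and carry the surviving random factor $\mu_\gamma^{\kappa,n_0,x}(A)$ (together with $1_{\mathcal{G}^{\{x\}}_{[n_0,\infty)}(x)}$) as $\mathcal{E}_0$ in \Cref{lem:main}. The hypothesis there requires $\mathcal{E}_0$ to be independent of the Brownian motion $(B_t)_{t\ge 0}$; but that Brownian motion is the radial part of the \emph{actual} GFF near $x$, and both $\mu_\gamma^{\kappa,n_0,x}(A)$ and the good-event indicator depend on this same field. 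Gaussian comparison (\Cref{cor:Gcompare}) lets you pass to an independent scale-invariant field for the purpose of \emph{upper bounds}---so your domination step could survive---but for the pointwise \emph{limit} you need an exact identity, not an inequality, and there is no obvious way to decouple the full GMC factor $\mu_\gamma^{\kappa,n_0,x}(A)$ from the small-scale field. The paper sidesteps this entirely by rooting at \emph{both} points $x,y$ (see \eqref{eq:cross_preDCT}), which removes all GMC factors and leaves only the good-event indicators inside the expectation. Those indicators are still coupled to the small-scale radial Brownian motion; the paper handles this by first truncating the good event at a finite level $m$ (\Cref{lem:cross_pointwise_cutoff}), performing the domain-Markov and radial/lateral decomposition so that the post-truncation indicator is genuinely independent of the small-scale increments, and then sending $m\to\infty$ via the exponential-indicator trick (cf.\ \eqref{eq:ind_exp_trick0}, \eqref{eq:ind_exp_trick1}). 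You do not mention this truncation step, and your one-point rooting makes it harder rather than easier.

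For the diagonal term your plan is close to the paper's: both root at $(x,y)$ and integrate over $A\times A$. Your near/far split in $\delta$ is a legitimate organisational device, but it does not actually save work: on $\{|x-y|<\delta\}$ you still need an integrable-in-$(x,y)$ bound that beats the singular Jacobian $e^{\gamma^2 G_0^D(x,y)}$, which is precisely \Cref{lem:diagonal_uniform} (proved via the exponential-indicator trick \eqref{eq:ind_exp_trick} and the two-point scale-invariance estimate \Cref{lem:G2_uniform}). The paper simply proves this uniform bound once, valid for all $(x,y)$, and applies dominated convergence directly with the pointwise limit from \Cref{lem:diagonal_pointwise}; the latter again requires the good-event truncation at level $m$ to establish the independence needed for the two-variable form \eqref{eq:limit_main2} of \Cref{lem:main}.
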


It is standard to check that the right hand sides of \eqref{eq:cross_limit} and \eqref{eq:diagonal_limit} are finite. Our approach to \Cref{lem:DCT_limits} will be based on a dominated convergence argument. More specifically, we shall apply Fubini/Cameron-Martin to rewrite the LHS's of \eqref{eq:cross_limit} and \eqref{eq:diagonal_limit} as some integrals over $A \times A$, and then provide uniform estimates and evaluate pointwise limits for the integrands in order to conclude the desired results. The analysis of the cross term \eqref{eq:cross_limit} will be performed in \Cref{sec:pf_cross}, and that of the diagonal term \eqref{eq:diagonal_limit} in the subsequent \Cref{sec:pf_diagonal}.

\subsection{Part II: analysis of cross term \eqref{eq:cross_limit}} \label{sec:pf_cross}
As explained just now, our proof of \eqref{eq:cross_limit} starts with an application of Fubini and Cameron-Martin theorem: we have
\begin{align}
\notag
& \mathbb{E}\left[
\int_{A \times A} \mu_\gamma^{\kappa, n_0}(dy)  \mu_\gamma^{\kappa, n_0}(dx) \int_0^1 \frac{du}{2\pi u} 
\bdec{x}{x}{u}[ \mathcal{I} \left(\lambda  F_\gamma(\mathbf{b}) \right)]
\right]\\
\notag 
& \quad = \int_{A \times A} 1_{\{d(x, \partial D) \ge \kappa \}}1_{\{d(y, \partial D) \ge \kappa \}}R(x; D)^{\frac{\gamma^2}{2}}R(y; D)^{\frac{\gamma^2}{2}}e^{\gamma^2 G_0^D(x, y)} dxdy\\
\label{eq:cross_preDCT}
&\qquad \qquad \times \mathbb{E} \left[ 1_{\mathcal{G}^{\{x, y\}}_{[n_0, \infty)}(x) \cap \mathcal{G}^{\{x, y\}}_{[n_0, \infty)}(y)} \int_0^1 \frac{du}{2\pi u} 
\bdec{x}{x}{u}\left[ \mathcal{I} \left(\lambda  F_{\gamma}^{\{x, y\}}(\mathbf{b}) \right)\right]
\right] 
\end{align}

\noindent where (recalling \eqref{eq:Girsanov1} and \eqref{eq:eventGir}) 
\begin{align}
\begin{split}
\label{eq:Girsanov2}
F_\gamma^{\{x, y\}}(\mathbf{p}) 
&= \int_0^{\ell(\mathbf{p})}  e^{\gamma^2 [G_0^D(x, \mathbf{p}_s) + G_0^D(y, \mathbf{p}_s)]} F_\gamma(ds; \mathbf{p})\\
\text{and}\qquad 
\mathcal{G}_{I}^{\{x, y\}}(\cdot) 
& = \bigg\{ h_{2^{-k}}(\cdot)  + \gamma  \mathbb{E} \left[ h_{2^{-k}}(\cdot) \left(h(x)+ h(y)\right)\right]\le \alpha \log (2^k) \quad \forall k \in I \cap \mathbb{N}\bigg\}.
\end{split}
\end{align}

In order to apply dominated convergence to \eqref{eq:cross_preDCT} and \eqref{eq:diagonal_preDCT}, we have to establish integrable upper bounds (with respect to $e^{\gamma^2 G_0^D(x, y)}  \asymp |x-y|^{-\gamma^2}$) as well as pointwise limits (as $\lambda \to \infty$) of the expectation on the RHS of \eqref{eq:cross_preDCT}. 

\subsubsection{Uniform estimate for the cross term}
Recall the assumption that $\mathrm{diam}(D) < \frac{1}{2}$, which in particular implies that $-\log|x-y| > 0$ for any distinct $x, y \in D$.
\begin{lem}\label{lem:cross_uniform}
Let $\beta > 0$ and $n_0 \in \mathbb{N}$ satisfying $2^{1-n_0} < \kappa$. Then there exists some constant $C = C(\kappa, n_0, \gamma, \alpha, \beta) \in (0, \infty)$ such that 
\begin{align}
\notag
& \mathbb{E} \left[ 
1_{\mathcal{G}^{\{x, y\}}_{[n_0, \infty)}(x) \cap \mathcal{G}^{\{x, y\}}_{[n_0, \infty)}(y)}
 \int_0^1 \frac{du}{2\pi u} 
\bdec{x}{x}{u}[ \mathcal{I} \left(\lambda  F_{\gamma}^{\{x, y\}}(\mathbf{b}) \right)]
\right] \\
 \label{eq:cross_uniform}
&\qquad \qquad  \le C \left(1 - \log|x-y|\right) |x-y|^{(2\gamma - \alpha)\beta- \frac{\beta^2}{2}}
\end{align}

\noindent uniformly in $\lambda > 0$ and $x, y \in D$ satisfying $d(x, \partial D) \wedge d(y, \partial D) \ge \kappa$.
\end{lem}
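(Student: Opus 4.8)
The plan is to adapt the proof of \Cref{lem:G1_uniform}, inserting the good event at $x$ through a one-scale Chernoff estimate that produces the required power of $r:=|x-y|$, and to handle the extra insertion at $y$ by observing that, for the Brownian bridges that matter, $G_0^D(y,\cdot)$ is essentially constant. One may assume $r$ is as small as one wishes: for $r$ in a compact range bounded away from $0$ the right-hand side of \eqref{eq:cross_uniform} is bounded below while the left-hand side is bounded above by the $\mathcal H_k$-decomposition of \Cref{lem:G1_uniform}, the weight $e^{\gamma^2 G_0^D(y,\mathbf b_s)}$ being then bounded on bridges confined near $x$; so I would take $r\le 2^{-n_0}$ and set $J_r:=\lceil\log_2(1/r)\rceil\ge n_0$, so that $2^{-J_r}\in(r/2,r]$. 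Throughout, $\mathbb E[h_{2^{-J_r}}(x)h(x)]$, $\mathbb E[h_{2^{-J_r}}(x)h(y)]$, $\mathbb E[h_{2^{-J_r}}(x)^2]$ and $\mathbb E[h_{2^{-J_r}}(x)h(z)]$ for $z\in B(x,r/2)$, as well as $G_0^D(y,z)$ for such $z$, are all equal to $\log(1/r)+\mathcal O_\kappa(1)$ by \Cref{lem:mollified_cov}, the error being uniform since $x,y$ are $\kappa$-away from $\partial D$.

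First I would decompose $1=\sum_{k\ge1}1_{\mathcal H_k}$ and, on $\mathcal H_k$, split the $u$-integral at $u_*=(r/2k)^2$, so that on $\{u<u_*\}\cap\mathcal H_k$ the bridge stays inside $B(x,r/2)\subset B(x,2^{-J_r})$ and at distance $\asymp r$ from $y$. In both regimes I bound the good event by its single scale $J_r$, namely $1_{\mathcal G^{\{x,y\}}_{[n_0,\infty)}(x)}\le \exp\!\big(\beta[\alpha J_r\log2-h_{2^{-J_r}}(x)-\gamma\mathbb E[h_{2^{-J_r}}(x)(h(x)+h(y))]]\big)$ for the given $\beta>0$. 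For $u<u_*$ I would interchange $\mathbb E_h$ with $\bdec{x}{x}{u}$ and apply Cameron--Martin to this exponential weight: it factors out the deterministic prefactor $\exp(\beta\alpha J_r\log2-\beta\gamma\,\mathbb E[\cdots]+\tfrac{\beta^2}{2}\mathbb E[h_{2^{-J_r}}(x)^2])=C\,r^{(2\gamma-\alpha)\beta-\beta^2/2}$ and shifts the field inside the clock by $-\beta\mathbb E[h_{2^{-J_r}}(x)h(\cdot)]$. On $\{u<u_*\}\cap\mathcal H_k$ this shift, together with the insertion weight $e^{\gamma^2G_0^D(y,\cdot)}$, is constant up to $\mathcal O_\kappa(1)$, so $F_\gamma^{\{x,y\}}(\mathbf b)$ becomes $r^{\gamma\beta-\gamma^2}$ times $F_\gamma^{\{x\}}(\mathbf b)$ up to a bounded multiplicative random factor. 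Since $4k\sqrt u\le2r\le2^{1-n_0}<\kappa\le d(x,\partial D)$, I can then run Steps (i)--(ii) of \Cref{lem:G1_uniform} verbatim (Gaussian comparison against an exactly scale-invariant field, then rescaling of the bridge and of the field) and invoke \Cref{lem:main}, the bounded factor being absorbed into the free random variable $\mathcal E$ there; the $u$-integral turns into an integral over $(0,\infty)$ in the time variable and is controlled, uniformly in the effective parameter $\lambda r^{\gamma\beta-\gamma^2}>0$, by $c_\gamma\,\pdec{0}{0}{1}(\mathcal H_k)$. Summing over $k$ with $\sum_k\pdec{0}{0}{1}(\mathcal H_k)\le1$ bounds this part by $C\,r^{(2\gamma-\alpha)\beta-\beta^2/2}$.

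For the large-$u$ part ($u\ge u_*$ on $\mathcal H_k$) I would use only $\mathcal I\le1$: then $\mathbb E_h$ of the good-event indicator is $\mathbb P_h(\mathcal G^{\{x,y\}}_{[n_0,\infty)}(x))\le C\,r^{(2\gamma-\alpha)\beta-\beta^2/2}$ by a Chernoff bound at scale $J_r$ (or the trivial bound $\le1$ when $\alpha\ge2\gamma$, in which case the exponent is negative and $r^{(2\gamma-\alpha)\beta-\beta^2/2}\ge1$), and the remaining Brownian-bridge factor is $\int_{(r/2k)^2}^1\tfrac{du}{2\pi u}\pdec{x}{x}{u}(\mathcal H_k)\le\tfrac1\pi\log(2k/r)\cdot4e^{-(k-1)^2/2}$ by \Cref{cor:bb_bound}; summing over $k$ produces the factor $(1-\log r)$. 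Combining the two regimes yields \eqref{eq:cross_uniform}.

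The step I expect to require the most care is the small-$u$ regime: one must check that the Cameron--Martin shift coming from the good-event weight, the insertion weight at $y$, and the dilation used in the scale-invariance reduction all combine to leave precisely the one-insertion functional $F_\gamma^{\{x\}}$ governed by \Cref{lem:main}, with every discrepancy genuinely bounded in terms of $\gamma,\alpha,\beta,\kappa$ only, and not growing with $r$, with the bridge range, or with $k$. This is where \Cref{lem:mollified_cov}, the Koebe quarter theorem, and the separation of scales $2^{1-n_0}<\kappa$ are used repeatedly, and where one must be mindful that $\mathcal I$ is not monotone, so that bounded multiplicative factors have to be carried into \Cref{lem:main} rather than discarded by a crude monotonicity bound.
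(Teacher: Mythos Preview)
Your proposal is correct and follows essentially the same approach as the paper's proof. Both arguments split into the cases $|x-y|\ge 2^{-n_0}$ and $|x-y|<2^{-n_0}$; in the latter, both use the single-scale Chernoff bound on $1_{\mathcal G^{\{x,y\}}_{[n_0,\infty)}(x)}$ at the dyadic level $J_r\approx\log_2(1/r)$ to extract the factor $r^{(2\gamma-\alpha)\beta-\beta^2/2}$, then split the $u$-integral at $\asymp (r/k)^2$, reduce $F^{\{x,y\}}_{\gamma,(J_r,-\beta)}$ to $F_\gamma^{\{x\}}$ on the small-$u$ part via the constancy of $G_0^D(y,\cdot)$ and of the Cameron--Martin shift on bridges confined to $B(x,r/2)$, and finish with \Cref{lem:G1_uniform}. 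The only cosmetic difference is that in the large-$u$ regime you bound $\mathcal I\le 1$ first and then take $\mathbb P_h(\mathcal G^{\{x,y\}}_{[n_0,\infty)}(x))$ directly, whereas the paper carries the exponential weight through Cameron--Martin uniformly before bounding $\mathcal I\le 1$; both routes yield the same $(1-\log r)\,r^{(2\gamma-\alpha)\beta-\beta^2/2}$.
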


Observe that the bound \eqref{eq:cross_uniform} is integrable if one chooses $\alpha$ sufficiently close to $\gamma \in (0, \sqrt{2d})$ and $\beta = 2\gamma - \alpha$ such that $(2\gamma - \alpha)^2 / 2 < d$.

\begin{proof}
Similar to the proof of \Cref{lem:bad_event}, we will consider
\begin{align*}
\bdec{x}{x}{u}\left[ \mathcal{I} \left(\lambda  F_{\gamma}^{\{x, y\}}(\mathbf{b})\right)\right]
= \sum_{k \ge 1} 
\bdec{x}{x}{u}\left[ \mathcal{I} \left(\lambda  F_{\gamma}^{\{x, y\}}(\mathbf{b})\right) 1_{\mathcal{H}_k}\right]
\end{align*}

\noindent and split our analysis into two cases, depending on the distance between $x$ and $y$.

\paragraph{Case 1:} $|x-y| \ge 2^{-n_0}$. Using the observation that
\begin{align*}
& \mathbb{E} \left[ 
1_{\mathcal{G}^{\{x, y\}}_{[n_0, \infty)}(x) \cap \mathcal{G}^{\{x, y\}}_{[n_0, \infty)}(y)}
\int_{(k2^{n_0+1})^{-2}}^{1} \frac{du}{2\pi u}
\bdec{x}{x}{u}\left[ \mathcal{I} \left(\lambda  F_{\gamma}^{\{x, y\}}(\mathbf{b})\right) 1_{\mathcal{H}_k}\right]
 \right] \\
& \qquad \qquad \le \int_{(k2^{n_0+1})^{-2}}^{1} \frac{du}{2\pi u} \pdec{x}{x}{u}\left(\mathcal{H}_k\right)
\le \pdec{0}{0}{1}\left(\mathcal{H}_k\right) \log\left(k2^{n_0+1}\right)
\end{align*}

\noindent which is summable in $k$,  it suffices to show that the sum
\begin{align}\label{eq:cross_uniform_case1}
& \sum_{k \ge 1} \mathbb{E} \left[ 
1_{\mathcal{G}^{\{x, y\}}_{[n_0, \infty)}(x) \cap \mathcal{G}^{\{x, y\}}_{[n_0, \infty)}(y)}
\int_0^{(k2^{n_0+1})^{-2}} \frac{du}{2\pi u}
\bdec{x}{x}{u}\left[ \mathcal{I} \left(\lambda  F_{\gamma}^{\{x, y\}}(\mathbf{b})\right) 1_{\mathcal{H}_k}\right]
 \right] 
\end{align}

\noindent is bounded with the desired uniformity in the statement of \Cref{lem:cross_uniform}.

Recall on the event $\mathcal{H}_k$ (and under the probability measure $\pdec{x}{x}{u}$) that $\mathbf{b}_\cdot \in B(x, k\sqrt{u}) \subset B(x, 2^{-(n_0+1)})$. By the continuity of the Green's function away from the diagonal, there exists some $C_{D}(n_0) < \infty$ such that
\begin{align*}
|G_0^D(y, \mathbf{b}_s)| \le C_{D}(n_0) \qquad \forall s \le u \le (k2^{n_0+1})^{-2}
\end{align*}

\noindent since $|y-\mathbf{b}_s| \ge |x-y| - |x - \mathbf{b}_s| \ge 2^{-(n_0+1)}$. In particular, for any $u \in [0, (k2^{n_0+1})^{-2}]$ we have
\begin{align*}
e^{-\gamma^2 C_D(n_0)} F_{\gamma}^{\{x\}}(\mathbf{b}) 
\le F_{\gamma}^{\{x, y\}}(\mathbf{b})
\le e^{\gamma^2 C_D(n_0)}F_{\gamma}^{\{x\}}(\mathbf{b})
\end{align*}

\noindent and hence
\begin{align*}
\mathcal{I}\left(\lambda F_{\gamma}^{\{x, y\}}(\mathbf{b})\right) \le e^{2\gamma^2 C_D(n_0)}\mathcal{I}\left( \widetilde{\lambda} F_{\gamma}^{\{x\}}(\mathbf{b})\right)
\end{align*}

\noindent with $\widetilde{\lambda} := \lambda e^{-\gamma^2 C_D(n_0)}$. Therefore, the sum \eqref{eq:cross_uniform_case1} can be upper bounded by
\begin{align*}
&e^{2\gamma^2 C_D(n_0)} \sum_{k \ge 1} \mathbb{E} \left[\int_0^{(k2^{n_0+1})^{-2}} \frac{du}{2\pi u}
\bdec{x}{x}{u}[ \mathcal{I} \left(\widetilde{\lambda} F_{\gamma}^{\{x\}}(\mathbf{b}) \right) 1_{\mathcal{H}_k}]
 \right].
\end{align*}

\noindent This may be further bounded uniformly in $\widetilde{\lambda} > 0$ with \Cref{lem:G1_uniform}, which is applicable since
\begin{align*}
u \le (k2^{n_0+1})^{-2} 
\quad \Rightarrow \quad 
4k\sqrt{u} \le 2^{1-n_0} < \kappa \le d(x, \partial D).
\end{align*}

\paragraph{Case 2:} $|x-y| < 2^{-n_0}$. Using \Cref{lem:mollified_cov}, there exists some constant $C_\kappa \in (0, \infty)$ such that for any $\epsilon, \delta > 0$,
\begin{align}\label{eq:mollified_cov_precise}
\left|\mathbb{E}[h_\epsilon(a) h_{\delta}(b)] + \log \left( |a-b| \vee \epsilon \vee \delta \right) \right| &\le C_\kappa
\end{align}

\noindent uniformly for all $a, b \in D$ bounded away from $\partial D$ by at least a distance of $\kappa / 2$. If we let $n_0 \le n \in \mathbb{N}$ satisfy $2^{-(n+1)} \le |x-y| < 2^{-n}$, then
\begin{align*}
\mathcal{G}^{\{x, y\}}_{[n_0, \infty)}(x) \cap \mathcal{G}^{\{x, y\}}_{[n_0, \infty)}(y)
&\subset \bigg\{ h_{2^{-n}}(x) + \gamma \mathbb{E} \left[ h_{2^{-n}}(x) \left(h(x) + h(y)\right)\right]\le \alpha \log (2^{n_0})\bigg\}\\
& \subset \bigg\{ h_{2^{-n}}(x) \le (\alpha - 2\gamma) \log (2^{n}) + 2 C_\kappa \bigg\}.
\end{align*}

\noindent In particular, for any $\beta > 0$ we have
\begin{align}
\notag
1_{\mathcal{G}^{\{x, y\}}_{[n_0, \infty)}(x) \cap \mathcal{G}^{\{x, y\}}_{[n_0, \infty)}(y)}
& \le \exp \left\{-\beta \bigg[h_{2^{-n}}(x) - (\alpha - 2\gamma) \log (2^{n})  -2 C_\kappa\bigg]\right\}\\
\notag
& = e^{2\beta C_\kappa} e^{\beta (\alpha - 2\gamma) \log(2^{n}) + \frac{\beta^2}{2} \mathbb{E}[h_{2^{-n}}(x)^2]}  e^{-\beta h_{2^{-n}}(x) - \frac{\beta^2}{2} \mathbb{E}[h_{2^{-n}}(x)^2]}\\
\label{eq:ind_exp_trick}
& \le \widetilde{C} |x-y|^{(2\gamma - \alpha)\beta- \frac{\beta^2}{2}} e^{-\beta h_{2^{-n}}(x) - \frac{\beta^2}{2} \mathbb{E}[h_{2^{-n}}(x)^2]}
\end{align}

\noindent for some constant $\widetilde{C} = \widetilde{C}(\kappa, \gamma, \alpha, \beta) \in (0, \infty)$. Substituting this into the LHS of \eqref{eq:cross_uniform} and applying Cameron-Martin theorem, we see that
\begin{align*}
& \mathbb{E} \left[ 
1_{\mathcal{G}^{\{x, y\}}_{[n_0, \infty)}(x) \cap \mathcal{G}^{\{x, y\}}_{[n_0, \infty)}(y)}
 \int_0^1 \frac{du}{2\pi u} 
\bdec{x}{x}{u}[ \mathcal{I} \left(\lambda   F_{\gamma}^{\{x, y\}}(\mathbf{b}) \right)]
 \right] \\
& \qquad \qquad\le  \widetilde{C} |x-y|^{(2\gamma - \alpha)\beta- \frac{\beta^2}{2}}\mathbb{E} \left[\int_0^1 \frac{du}{2\pi u}
\bdec{x}{x}{u}\left[ \mathcal{I} \left(\lambda  F_{\gamma, (n, -\beta)}^{\{x, y\}}(\mathbf{b}) \right)\right]
\right]
\end{align*}

\noindent where 
\begin{align}
\label{eq:beta_mu_G1}
F_{\gamma, (n, -\beta)}^{\{x, y\}}(\mathbf{b}) 
& = \int_0^{\ell(\mathbf{b})}  e^{\gamma^2 [G_0^D(x, \mathbf{b}_s) + G_0^D(y, \mathbf{b}_s)] - \beta \gamma \mathbb{E}[h(\mathbf{b}_s) h_{2^{-n}}(x)]} F_{\gamma}(ds; \mathbf{b}).
\end{align}

Let us consider
\begin{align*}
&\mathbb{E} \left[\int_0^1 \frac{du}{2\pi u}
\bdec{x}{x}{u}\left[ \mathcal{I} \left(\lambda  F_{\gamma, (n, -\beta)}^{\{x, y\}}(\mathbf{b})  \right)\right]
\right]\\
& \qquad \le \sum_{k \ge 1} \mathbb{E} \left[\int_{( |x-y|/4k)^2}^1 \frac{du}{2\pi u}
\bdec{x}{x}{u}\left[ \mathcal{I} \left(\lambda  F_{\gamma, (n, -\beta)}^{\{x, y\}}(\mathbf{b})  \right)1_{\mathcal{H}_k}\right]
\right]\\
& \qquad \qquad +  \sum_{k \ge 1} \mathbb{E} \left[\int_0^{( |x-y|/4k)^2} \frac{du}{2\pi u}
\bdec{x}{x}{u}\left[ \mathcal{I} \left(\lambda  F_{\gamma, (n, -\beta)}^{\{x, y\}}(\mathbf{b})  \right)1_{\mathcal{H}_k}\right]
\right]
\end{align*}

\noindent and show that they are bounded with the desired uniformity, from which we can conclude the proof. The first sum on the RHS is easily bounded by
\begin{align*}
\sum_{k \ge 1} \mathbb{E} \left[\int_{( |x-y|/4k)^2}^1 \frac{du}{2\pi u}
\bdec{x}{x}{u}[1_{\mathcal{H}_k}]
\right]
\le
 \sum_{k \ge 1} \left[ -\log |x-y| + \log(4k) \right]\pdec{0}{0}{1}(\mathcal{H}_k)
\end{align*}

\noindent and when multiplied by $|x-y|^{(2\gamma - \alpha)\beta- \frac{\beta^2}{2}}$ satisfies a bound of the form \eqref{eq:cross_uniform}. As for the second sum, note that
\begin{align*}
u \le \left(\frac{|x-y|}{4k}\right)^2 \quad \Rightarrow \quad 4k\sqrt{u} \le |x-y| < 2^{-n_0} < \frac{1}{2}\kappa \le d(x, \partial D),
\end{align*}

\noindent and we would like to follow arguments similar to those in Case 1 and apply \Cref{lem:G1_uniform}. To do so, first observe on the event $\mathcal{H}_k$ that 
\begin{align*}
\mathbf{b}_s \in B(x, k\sqrt{u}) \subset B(x, |x-y|/ 4)
\end{align*}

\noindent and in particular $d(\mathbf{b}_s, \partial D) \ge \kappa /2$ for all $s \ge 0$. The estimate \eqref{eq:mollified_cov_precise} then implies
\begin{align*}
\left|G_0^D(y, \mathbf{b}_s) + \log|y-\mathbf{b}_s| \right| &\le C_\kappa\\
\text{and} \qquad \left|\mathbb{E}\left[h(\mathbf{b}_s) h_{2^{-n}}(x) \right] + \log (2^{-n})\right| & \le C_\kappa
\end{align*}

\noindent for the entire duration of the Brownian bridge $\mathbf{b}$. Since there exists some absolute constant $C > 0$ such that
\begin{align*}
\max \left\{ \left| \log |y-\mathbf{b}_s| - \log |x-y| \right|, \left| \log(2^{-n}) - \log |x-y| \right| \right\} \le C,
\end{align*}

\noindent we see (from \eqref{eq:beta_mu_G1}) that there exists some constant $\widehat{C} = \widehat{C}(\kappa, \beta, \gamma) \in (0, \infty)$ such that
\begin{align}\label{eq:unbeta_mu1}
\widehat{C}^{-1}
F_{\gamma}^{\{x\}}(\mathbf{b})
\le |x-y|^{-\gamma(\beta-\gamma)} F_{\gamma, (n, -\beta)}^{\{x, y\}}(\mathbf{b})
\le \widehat{C}  F_{\gamma}^{\{x\}}(\mathbf{b}).
\end{align}

\noindent Gathering all the work so far, we arrive at
\begin{align*}
& \sum_{k \ge 1} \mathbb{E} \left[\int_0^{( |x-y|/4k)^2} \frac{du}{2\pi u}
\bdec{x}{x}{u}\left[ \mathcal{I} \left(\lambda F_{\gamma, (n, -\beta)}^{\{x, y\}}(\mathbf{b}) \right)1_{\mathcal{H}_k}\right]
\right]\\
& \qquad \le 
\widehat{C}^2\sum_{k \ge 1} \mathbb{E} \left[\int_0^{( |x-y|/4k)^2} \frac{du}{2\pi u}
\bdec{x}{x}{u}\left[ \mathcal{I} \left(\widehat{\lambda}  F_{\gamma}^{\{x\}}(\mathbf{b}) \right)1_{\mathcal{H}_k}\right]
\right]
\end{align*}

\noindent where $\widehat{\lambda} := \lambda \widehat{C}^{-1} |x-y|^{\gamma (\beta-\gamma)}$. This expression is uniformly bounded in $\widehat{\lambda} > 0$ by \Cref{lem:G1_uniform} and we are done.
\end{proof}

\subsubsection{Pointwise limit of the cross term} \label{sec:cross_pointwise}
We now argue that
\begin{lem}\label{lem:cross_pointwise}
For any fixed $n_0 \in \mathbb{N}$ satisfying $2^{1-n_0} < \kappa$,
\begin{align}\label{eq:L1_cross_pointwise}
\begin{split}
& \lim_{\lambda \to \infty} 
\mathbb{E} \left[ 
1_{\mathcal{G}^{\{x, y\}}_{[n_0, \infty)}(x) \cap \mathcal{G}^{\{x, y\}}_{[n_0, \infty)}(y)}
\int_0^1 \frac{du}{2\pi u} 
\bdec{x}{x}{u} \left[\mathcal{I}\left(\lambda F_{\gamma}^{\{x, y\}}(\mathbf{b}) \right) \right]
\right]\\
& \qquad = c_\gamma
\mathbb{P} \left(\mathcal{G}^{\{x, y\}}_{[n_0, \infty)}(x) \cap \mathcal{G}^{\{x, y\}}_{[n_0, \infty)}(y)\right)
\end{split}
\end{align}

\noindent for any distinct points $x, y \in D$ satisfying $d(x, \partial D) \wedge d(y, \partial D) \ge \kappa$ and $-\log_2|x-y| \not \in \mathbb{N}$.
\end{lem}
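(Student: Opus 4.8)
The plan is to adapt the toy computation of Proposition~\ref{prop:toy}: the only new ingredients compared with that computation are the presence of a second insertion (at $y$), the use of a genuine rather than an exactly scale-invariant Gaussian free field, and the indicator of the good event. Since the uniform bound of Lemma~\ref{lem:cross_uniform} is already in hand, I only need to identify $\lim_{\lambda\to\infty}Q(x,y,\lambda)$, where I abbreviate the left-hand side of \eqref{eq:L1_cross_pointwise} by $Q(x,y,\lambda)$.

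\emph{Step 1: reduction to a finite good event.} For $N>n_0$ I replace each $\mathcal{G}^{\{x,y\}}_{[n_0,\infty)}(\cdot)$ by its truncation $\mathcal{G}^{\{x,y\}}_{[n_0,N]}(\cdot)$. By inclusion of events, $0\le 1_{\mathcal{G}^{\{x,y\}}_{[n_0,N]}(x)}-1_{\mathcal{G}^{\{x,y\}}_{[n_0,\infty)}(x)}\le 1_{\mathcal{G}^{\{x,y\}}_{(N,\infty)}(x)^c}$, so the error incurred is bounded by an expression of exactly the type estimated in the proof of Lemma~\ref{lem:bad_event} (the exponential--indicator trick \eqref{eq:ind_exp_trick0} with $n$ replaced by $N$, carried out with two insertions as in the proof of Lemma~\ref{lem:cross_uniform}); it tends to $0$ as $N\to\infty$, uniformly in $\lambda$. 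Since $\mathbb{P}(\mathcal{G}^{\{x,y\}}_{[n_0,N]}(x)\cap\mathcal{G}^{\{x,y\}}_{[n_0,N]}(y))\downarrow\mathbb{P}(\mathcal{G}^{\{x,y\}}_{[n_0,\infty)}(x)\cap\mathcal{G}^{\{x,y\}}_{[n_0,\infty)}(y))$ as $N\to\infty$, it suffices to show, for every fixed large $N$, that $Q_N(x,y,\lambda)\to c_\gamma\,\mathbb{P}(\mathcal{G}^{\{x,y\}}_{[n_0,N]}(x)\cap\mathcal{G}^{\{x,y\}}_{[n_0,N]}(y))$, where $Q_N$ denotes $Q$ with the good events truncated at level $N$.

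\emph{Step 2: conditioning and the main lemma.} Because $-\log_2|x-y|\notin\mathbb{N}$, for $N$ large enough the balls $B(x,2^{-N})$ and $B(y,2^{-N})$ are disjoint, stay at distance at least $\kappa/2$ from $\partial D$, and are disjoint from every circle $\partial B(x,2^{-k}),\partial B(y,2^{-k})$ with $n_0\le k\le N$; consequently the truncated good events are measurable with respect to $\mathcal{F}:=\sigma\big(h|_{D\setminus(B(x,2^{-N})\cup B(y,2^{-N}))}\big)$, and $Q_N=\mathbb{E}\big[1_{\mathcal{G}^{\{x,y\}}_{[n_0,N]}(x)\cap\mathcal{G}^{\{x,y\}}_{[n_0,N]}(y)}\,\mathbb{E}[\int_0^1\frac{du}{2\pi u}\bdec{x}{x}{u}[\mathcal{I}(\lambda F^{\{x,y\}}_\gamma(\mathbf{b}))]\mid\mathcal{F}]\big]$. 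Given $\mathcal{F}$, the GFF Markov property decomposes $h$ inside $B(x,2^{-N})$ as a harmonic field (frozen at $h_{2^{-N}}(x)$ near $x$) plus an independent Dirichlet GFF, which by Lemma~\ref{lem:GFF_dec} splits further into an exactly scale-invariant field and an independent smooth field vanishing at $x$. Substituting this into $F^{\{x,y\}}_\gamma(\mathbf{b})$, using that $G_0^D(y,\cdot)$, $R(\cdot;D)$, the harmonic and the smooth parts are slowly varying on the vanishingly small support of $\mathbf{b}$ (Lemma~\ref{lem:mollified_cov} and \eqref{eq:Green_uniform_local}), and rescaling $\mathbf{b}_s=x+\sqrt u\,\widehat{\mathbf{b}}_{s/u}$ exactly as in Proposition~\ref{prop:toy}, yields for each fixed $u$ a conditional distributional identity $F^{\{x,y\}}_\gamma(\mathbf{b})\overset{d}{=}\mathcal{E}_1^{(u)}\,e^{\gamma(B_{t(u)}-m t(u))}$, with $m=Q-\gamma$, $t(u)=-\tfrac12\log u$, $B$ a Brownian motion, and $\mathcal{E}_1^{(u)}\ge0$ independent of $B$. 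After the substitution $u=e^{-2t}$ (and discarding the vanishing contribution of $t$ in a bounded range, where $\mathcal{I}(\lambda\,\cdot)\to0$), a conditional application of Lemma~\ref{lem:main} with $\mathcal{E}_0=1/\pi$ gives $\mathbb{E}[\int_0^1\frac{du}{2\pi u}\bdec{x}{x}{u}[\mathcal{I}(\lambda F^{\{x,y\}}_\gamma(\mathbf{b}))]\mid\mathcal{F}]\to c_\gamma$, the bound $\le c_\gamma$ of \eqref{eq:uniform_main1} holding uniformly in $\lambda$; dominated convergence then yields $Q_N(x,y,\lambda)\to c_\gamma\,\mathbb{P}(\mathcal{G}^{\{x,y\}}_{[n_0,N]}(x)\cap\mathcal{G}^{\{x,y\}}_{[n_0,N]}(y))$, and together with Step~1 this proves the lemma.

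\emph{Main obstacle.} Lemma~\ref{lem:main} is stated for a fixed random factor $\mathcal{E}_1$ independent of $B$, whereas $\mathcal{E}_1^{(u)}$ above genuinely depends on $u$: its exactly scale-invariant contribution has a law not depending on $u$ (so that the ``marginals suffice'' reduction used in Proposition~\ref{prop:toy} still applies to it), but the slowly varying corrections introduce a multiplicative perturbation $e^{\pm\gamma^2\epsilon_k(u)}$ on $\mathcal{E}_1^{(u)}$ which vanishes only as $u\to0$. The real work is to show these errors do not survive the limit $\lambda\to\infty$: this is done on the event $\mathcal{H}_k$ (where they are controlled by the moduli of continuity of the smooth fields on $B(x,k\sqrt u)$ and by \eqref{eq:Green_uniform_local}), by sandwiching $\mathcal{I}(\lambda F^{\{x,y\}}_\gamma(\mathbf{b}))$ between two applications of Lemma~\ref{lem:main} with slightly perturbed $\mathcal{E}_1$ and summing over $k$ exactly as in Lemmas~\ref{lem:G1_uniform} and \ref{lem:cross_uniform} --- the novelty being that one must now track the exact value $\pi c_\gamma(m;\mathcal{I})$ of the limit rather than merely an upper bound. (The hypothesis $-\log_2|x-y|\notin\mathbb{N}$ is the harmless full-measure condition that makes the above $\mathcal{F}$-measurability hold for large $N$, and it is also what will render the limiting right-hand side almost-everywhere continuous in $(x,y)$, as needed for the subsequent integration over $A\times A$ in Lemma~\ref{lem:DCT_limits}.)
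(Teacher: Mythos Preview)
Your proposal is correct and follows essentially the paper's two-step structure: first pass to a truncated good event (the paper's Lemma~\ref{lem:cross_pointwise_cutoff}), then remove the truncation via the exponential--indicator trick (the paper's \eqref{eq:cross_removecutoff}). The only notable difference is that to obtain the exactly scale-invariant local field you invoke Lemma~\ref{lem:GFF_dec} (as the paper does later in Section~\ref{sec:proof_ptwise}), whereas the paper's proof of Lemma~\ref{lem:cross_pointwise_cutoff} instead uses the radial--lateral decomposition together with a Gaussian comparison (Lemma~\ref{lem:Gcompare}); your proposed sandwiching on $\mathcal{H}_k$ to kill the $u$-dependent smooth corrections corresponds exactly to the paper's $E_x(\delta)$ device.
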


The proof of the above lemma relies on a similar claim with an extra cutoff: 
\begin{lem}\label{lem:cross_pointwise_cutoff}
Under the same setting as \Cref{lem:cross_pointwise}, for any integer $m > 3 + \max(n_0, -\log_2 |x-y|)$ sufficiently large,
\begin{align}
\label{eq:L1_cross_pointwise_cutoff}
\begin{split}
&\lim_{\lambda \to \infty} 
\mathbb{E} \left[ 
1_{\mathcal{G}^{\{x, y\}}_{[n_0, m)}(x) \cap \mathcal{G}^{\{x, y\}}_{[n_0, m)}(y)}
\int_0^1 \frac{du}{2\pi u} 
\bdec{x}{x}{u} \left[\mathcal{I}\left(\lambda F_{\gamma}^{\{x, y\}}(\mathbf{b}) \right) \right]
\right]\\
&\qquad =  c_\gamma
\mathbb{P} \left(\mathcal{G}^{\{x, y\}}_{[n_0, m)}(x) \cap \mathcal{G}^{\{x, y\}}_{[n_0, m)}(y)\right).
\end{split}
\end{align}
\end{lem}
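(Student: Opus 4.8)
The plan is to use the finite cutoff $m$ to decouple the good event from the microscopic scales, and then to bring the remaining microscopic asymptotics into the exact form handled by \Cref{prop:toy} and \Cref{lem:main}. Throughout, write $Z_\lambda := \int_0^1 \frac{du}{2\pi u}\bdec{x}{x}{u}\bigl[\mathcal I(\lambda F_\gamma^{\{x,y\}}(\mathbf b))\bigr]$ and $\mathcal G := \mathcal G^{\{x,y\}}_{[n_0,m)}(x) \cap \mathcal G^{\{x,y\}}_{[n_0,m)}(y)$, so that the claim is $\mathbb E[1_{\mathcal G} Z_\lambda] \to c_\gamma\mathbb P(\mathcal G)$.

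First I would observe that, because the cutoff is finite, $\mathcal G$ depends on the field only through the circle averages $h_{2^{-j}}(x)$ and $h_{2^{-j}}(y)$ with $n_0 \le j < m$, all supported on circles of radius $\ge 2^{-(m-1)}$. Using $-\log_2|x-y| \notin \mathbb N$ one may choose $\delta_1 = \delta_1(m,x,y) > 0$ so small that $B(x,\delta_1)$ is disjoint from each of these finitely many circles and from $\partial D$; then $1_{\mathcal G}$ is measurable with respect to $h$ restricted to $D\setminus B(x,\delta_1)$. Conditioning on that exterior field and letting $\varphi$ denote the associated harmonic extension to $B(x,\delta_1)$, the domain Markov property gives $h = \varphi + h^{B(x,\delta_1)}$ on $B(x,\delta_1)$ with $h^{B(x,\delta_1)}$ an independent zero-boundary GFF. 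Hence $\mathbb E[1_{\mathcal G} Z_\lambda] = \mathbb E\bigl[1_{\mathcal G}\,\mathbb E[Z_\lambda \mid \varphi]\bigr]$, and it suffices to prove $\mathbb E[Z_\lambda\mid\varphi] \to c_\gamma$ almost surely together with a $\lambda$-uniform bound by a $\varphi$-integrable function; dominated convergence then gives the result.

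Next I would split $Z_\lambda = Z_\lambda^{\mathrm{in}} + Z_\lambda^{\mathrm{out}}$, where $Z_\lambda^{\mathrm{in}}$ keeps only $u \le \delta_1^2$ and bridges remaining in $B(x,\delta_1)$. Since $F_\gamma^{\{x,y\}}(\mathbf b) > 0$ a.s.\ on $\{u < \tau_D(\mathbf b)\}$, we have $\mathcal I(\lambda F_\gamma^{\{x,y\}}(\mathbf b)) \to 0$, and combining this with \Cref{cor:bb_bound} (which bounds the probability of leaving $B(x,\delta_1)$ when $u$ is small, and gives the crude mass $1\wedge 2/u$ otherwise) yields $\mathbb E[Z_\lambda^{\mathrm{out}}\mid\varphi] \to 0$. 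On $Z_\lambda^{\mathrm{in}}$ the bridge stays in $B(x,\delta_1)$, so I would decompose $h^{B(x,\delta_1)} = X - Y$ by the translate of \Cref{lem:GFF_dec} to $B(x,\delta_1)$, with $X$ exactly scale invariant about $x$ and $Y$ a.s.\ continuous with $Y(x)=0$, and then run the computation of \Cref{prop:toy}: rescale the bridge as in \eqref{eq:BB_rescale}, substitute $u = e^{-2t}$, invoke the exact scale invariance of $X$, and use \Cref{cor:gff_exact}/\Cref{lem:Green_estimate} together with the continuity of $G_0^D(y,\cdot)$, $\varphi$ and $Y$ near $x$ to replace $G_0^D(x,\mathbf b_s)$ by $-\log|x-\mathbf b_s|+\log R(x;D)$, $G_0^D(y,\mathbf b_s)$ by $G_0^D(y,x)$, $\varphi(\mathbf b_s)$ by $\varphi(x)$ and $Y(\mathbf b_s)$ by $0$, each up to a multiplicative factor tending to $1$ as $u\to0$. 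This rewrites $Z_\lambda^{\mathrm{in}}$ as $\frac1\pi\,\mathbb E\otimes\bdec{0}{0}{1}\bigl[\int_{T_0}^\infty \mathcal I\bigl(\lambda\mathcal E^{(t)}e^{\gamma(B_t-mt)}\bigr)\,dt\bigr]$ with $m = Q-\gamma$, $(B_t)$ the Brownian motion built from the scale-invariant structure of $X$, $T_0$ a finite ($B$-independent) lower cutoff coming from the restrictions on $u$ and on the bridge range, and $\mathcal E^{(t)} \to \mathcal E \in (0,\infty)$ a.s.\ as $t\to\infty$, where $\mathcal E$ is independent of $(B_t)$ and collects $\varphi(x)$, $R(x;D)$, $G_0^D(y,x)$ and the unit-scale chaos of $X$ along the loop.

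Finally, to remove the residual $t$-dependence of $\mathcal E^{(t)}$, I would use a squeeze. Since $\mathcal I$ is not monotone, for $\epsilon>0$ set $\mathcal I_\epsilon^+(z) := \sup_{a\in[1-\epsilon,1+\epsilon]}\mathcal I(az)$ and $\mathcal I_\epsilon^-(z) := \inf_{a\in[1-\epsilon,1+\epsilon]}\mathcal I(az)$: these are continuous, vanish at $0$, are dominated by $z\mapsto\sup_{a\in[1/2,2]}\mathcal I(az)$ (so $c_\gamma(m;\mathcal I_\epsilon^\pm) < \infty$ by the computation of \Cref{lem:const_finite}), and tend pointwise to $\mathcal I$, hence $c_\gamma(m;\mathcal I_\epsilon^\pm) \to c_\gamma(m;\mathcal I) = c_\gamma$ by dominated convergence in \eqref{eq:constantg}. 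Choosing a finite $B$-independent random time $T_\epsilon$ beyond which $\mathcal E^{(t)}/\mathcal E\in[1-\epsilon,1+\epsilon]$, and bounding the integrand by $\mathcal I_\epsilon^+(\lambda\mathcal E e^{\gamma(B_t-mt)})$ (resp.\ $\mathcal I_\epsilon^-$) on $t\ge T_\epsilon$ and by $1$ (resp.\ $0$) on $t<T_\epsilon$, \Cref{lem:main} applied conditionally (with $\mathcal E_0 = 1/\pi$ and $\mathcal E_1 = \mathcal E$, using both its uniform bound and its limit) lets one take first $\lambda\to\infty$, then the cutoffs to their limits, then $\epsilon\to0$, squeezing $\mathbb E[Z_\lambda\mid\varphi]$ to $c_\gamma$ a.s.; the same bounds with $\epsilon=1/2$ furnish the $\lambda$-uniform, $\varphi$-integrable domination needed in the second step. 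The hard part is precisely this last step: the whole $u$-integral is carried by a window $u\asymp\lambda^{-2/(2-\gamma^2)}$ that collapses as $\lambda\to\infty$, so there is no naive dominated convergence and the constant $c_\gamma$ emerges only through the exact cancellation encoded in \Cref{lem:main}; ensuring that the perturbations ($\varphi$, $Y$, the Green-function expansions, the loop-range decomposition $\mathcal H_k$) become negligible uniformly along this window while preserving the independence required by \Cref{lem:main} is the delicate point, and the finite cutoff $m$ is indispensable because it is what makes $\delta_1>0$ and so lets the Markov property detach $1_{\mathcal G}$ from the microscopic field.
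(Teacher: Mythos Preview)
Your strategy is sound and in fact closely mirrors what the paper itself does in Section~4 (proof of \Cref{T:HK}), rather than the Gaussian-comparison route used here in Section~3.3.2. The paper's own proof of \Cref{lem:cross_pointwise_cutoff} proceeds differently: it keeps the good event inside the expectation, performs the domain Markov decomposition at the \emph{fixed} scale $\eta = 4\cdot 2^{-m}$ on $B(x,\eta)\cup B(y,\eta)$, does a radial--lateral split of $h^{x,\eta}$, and crucially shows (using the hypothesis $-\log_2|x-y|\notin\mathbb N$ and the choice of $m$) that the \emph{lateral} component $h^{x,\mathrm{lat}}$ is independent of $\mathcal G$. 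This is what licenses a conditional Gaussian comparison (\Cref{lem:Gcompare}) replacing $h^{x,\mathrm{lat}}$ by an auxiliary scale-invariant field $\widehat X$; only \emph{after} that replacement does the paper invoke distributional scale invariance and \Cref{lem:main}. The continuity errors are absorbed into the multiplicative factors $E_x(\delta)^{\pm 2}$, which are functions of the harmonic part $\overline h$ alone, and a final $\delta\to 0$ monotone-convergence step closes the argument.

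Your approach buys a cleaner decoupling of $\mathcal G$ (by conditioning on the whole exterior of $B(x,\delta_1)$ rather than tracking which pieces of the field see the circle averages), and it avoids Gaussian comparison entirely by using the exact decomposition of \Cref{lem:GFF_dec}. This is legitimate, but one step deserves more care: when you write $Z_\lambda^{\mathrm{in}}$ in the form $\tfrac{1}{\pi}\int_{T_0}^\infty \mathcal I(\lambda\mathcal E^{(t)}e^{\gamma(B_t-mt)})\,dt$ and assert that $\mathcal E$ is independent of $(B_t)$, you are implicitly mixing a pathwise radial--lateral picture with a per-$t$ distributional scale invariance. Pathwise, the ``unit-scale chaos of $X$'' is \emph{not} independent of the radial Brownian motion; what is true is that \emph{for each fixed $t$} the distributional equality furnishes an independent $\mathcal N(0,t)$ variable. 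The paper (both here and in Section~4) handles this by first isolating the continuity perturbations into a prefactor independent of the scale-invariant field---either $E_x(\delta)^{\pm}$ or the event $\mathcal O_\epsilon$ of Section~4.1.3---and only then using the per-$u$ distributional identity inside the $u$-integral. Your $\mathcal I_\epsilon^\pm$ squeeze can be made to play exactly this role provided you separate the perturbations (in $\varphi$, $Y$, $G_0^D(y,\cdot)$, $R$) from the $X$-chaos \emph{before} invoking scale invariance, so that \Cref{lem:main} sees an $\mathcal E_1$ genuinely independent of the Brownian motion; as written, the sentence ``$\mathcal E$ is independent of $(B_t)$'' is the one place where your compression hides a real step.
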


\begin{proof}
Let us fix some $\delta \in (0, 2^{-m})$ sufficiently small, and for each $k \in \mathbb{N}$ define
\begin{align}
\label{eq:cross_ptwise_main}
I_k& := \mathbb{E} \left[  
1_{\mathcal{G}^{\{x, y\}}_{[n_0, m)}(x) \cap \mathcal{G}^{\{x, y\}}_{[n_0, m)}(y)}
\int_0^{(\delta/k)^2}  \frac{du}{2\pi u} 
\bdec{x}{x}{u}\left[ \mathcal{I}\left(\lambda F_{\gamma}^{\{x, y\}}(\mathbf{b})\right) 1_{\mathcal{H}_k}\right]
\right], \\
\label{eq:cross_ptwise_truncate}
\text{and} \quad 
I_k^c & := \mathbb{E} \left[  
1_{\mathcal{G}^{\{x, y\}}_{[n_0, m)}(x) \cap \mathcal{G}^{\{x, y\}}_{[n_0, m)}(y)}
\int_{(\delta/k)^2}^1  \frac{du}{2\pi u} 
\bdec{x}{x}{u}\left[ \mathcal{I}\left(\lambda F_{\gamma}^{\{x, y\}}(\mathbf{b})\right) 1_{\mathcal{H}_k}\right]
\right].
\end{align}

\noindent  Our goal is to show that
\begin{align*}
\lim_{\lambda \to \infty} \sum_{k \ge 1} I_k^c = 0
\qquad \text{and}\qquad 
\lim_{\lambda \to \infty} \sum_{k \ge 1} I_k =  c_\gamma \mathbb{P} \left(\mathcal{G}_{[n_0, m)}^{\{x, y\}}(x) \cap  \mathcal{G}_{[n_0, m)}^{\{x, y\}}(y)\right).
\end{align*}

\paragraph{Bounding the residual terms $I_k^c$.} Using \Cref{cor:bb_bound},
\begin{align*}
I_k^c \le 
\int_{(\delta/k)^2}^1 \frac{du}{2\pi u} \pdec{x}{x}{u}\left(\mathcal{H}_k\right)
\le -2e^{-\frac{1}{2}(k-1)^2} \log(\delta/k)
\end{align*}

\noindent which is summable in $k \in \mathbb{N}$ uniformly in $\lambda > 0$. Arguing as before using the fact that $1 \ge \mathcal{I}(\lambda F_{\gamma}^{\{x, y\}}(\mathbf{b})) \to 0$ as $\lambda \to \infty$, we obtain $\lim_{\lambda \to \infty} I_k^c = 0$ and $\lim_{\lambda \to \infty} \sum_{k \ge 1} I_k^c = 0$ by two applications of dominated convergence.

\paragraph{Gaussian comparison.}
We now treat the main term $I_k$. By a change of variable, recall
\begin{align*}
F_{\gamma}^{\{x, y\}}(\mathbf{b})
&= \int_0^u  e^{\gamma^2 [G_0^D(x, \mathbf{b}_s) + G_0^D(y, \mathbf{b}_s)]} F_\gamma(ds; \mathbf{b})\\
&= u \int_0^1  e^{\gamma^2 [G_0^D(x, \mathbf{b}_{s/u}) + G_0^D(y, \mathbf{b}_{s/u})]} F_\gamma(ds; \mathbf{b}_{\cdot / u})
= uF_\gamma^{\{x, y\}}(\mathbf{b}_{\cdot / u}).
\end{align*}

\noindent Writing everything in terms of standardised Brownian bridge, we have
\begin{align*}
\bdec{x}{x}{u}[ \mathcal{I}\left(\lambda F_{\gamma}^{\{x, y\}}(\mathbf{b}) \right) 1_{\mathcal{H}_k}]
= \bdec{0}{0}{1} \left[\mathcal{I}\left(\lambda u F_{\gamma}^{\{x, y\}}(x+\sqrt{u}\mathbf{b})\right) 1_{\mathcal{H}_k} \right]
\end{align*}

\noindent and hence
\begin{align}\label{eq:Ik_rescaled1}
I_k := \mathbb{E} \otimes \bdec{0}{0}{1} \left[
1_{\mathcal{G}^{\{x, y\}}_{[n_0, m)}(x) \cap \mathcal{G}^{\{x, y\}}_{[n_0, m)}(y)}
1_{\mathcal{H}_k}
\int_0^{(\delta/k)^2}  \frac{du}{2\pi u} \mathcal{I}\left(\lambda u F_{\gamma}^{\{x, y\}}(x+\sqrt{u}\mathbf{b})\right) 
\right].
\end{align}

Set $\eta = 4\cdot2^{-m}< \frac{|x-y|}{2} \wedge \frac{\kappa}{2}$ so that the balls $B(x, \eta), B(y, \eta)$ are disjoint and contained in our domain $D$.  Since $ 0 < -\log_2|x-y| \not\in\mathbb{N}$, there exists some $d_{x, y} \in \mathbb{N}$ such that $2^{-d_{x, y}} < |x-y| < 2^{-d_{x, y}+1}$, and it is possible to pick $m$ sufficiently large so that
\begin{align}\label{eq:m_condition}
|x-y| - \eta > 2^{-d_{x, y}} \qquad \text{and} \qquad |x-y| + \eta < 2^{-d_{x, y} + 1}.
\end{align}

\begin{figure}[h!] 
\centering
\begin{tikzpicture}[scale=0.6, every node/.style={scale=0.8}]
\draw (-6+0.5,0) arc (0:360:0.5);
\draw (-6+2.5,0) arc (0:360:2.5);
\draw [fill] (-6, 0) circle [radius=0.05] node[right]{$y$};
\draw[<->] (-6.5, -0.5) -- (-6.5, 0);
\node at (-7.5, -0.2) {$\delta < 2^{-m}$};
\draw[<->] (-6, 0) -- (-6, 2.5);
\node at (-5.1, 1) {$\eta = 4 \cdot 2^{-m}$};

\draw (6+0.5,0) arc (0:360:0.5);
\draw (6+2.5,0) arc (0:360:2.5);
\draw [fill] (6, 0) circle [radius=0.05] node[right]{$x$};

\draw[dotted, thick] (-1.5,0) arc (180:150:7);
\draw[dotted, thick]  (-1.5,0) arc (180:210:7);

\draw[thick] (-9,0) arc (180:160:15);
\draw[thick] (-9,0) arc (180:200:15);

\draw[dashed] (-1.5, -5) -- (-1.5, 5);
\draw[<->] (-1.5, 1) -- (6, 1);
\node at (1.5, 1.5) {$2^{-d_{x, y}}$};

\draw[dashed] (-9, -5) -- (-9, 5);
\draw[<->] (-9, 3) -- (6, 3);
\node at (1.5, 3.5) {$2^{-d_{x, y} + 1}$};

\draw[dashed] (-3.5, -5) -- (-3.5, 5);
\draw[<->] (-3.5, -1) -- (6, -1);
\node at (1.5, -1.5) {$|x-y| - \eta > 2^{-d_{x, y}}$ };

\draw[dashed] (-8.5, -5) -- (-8.5, 5);
\draw[<->] (-8.5, -3) -- (6, -3);
\node at (2, -3.5) {$|x-y| + \eta < 2^{-d_{x, y} + 1}$};

\end{tikzpicture}

\caption{\label{fig:scales}comparison of different scales.}

\end{figure}
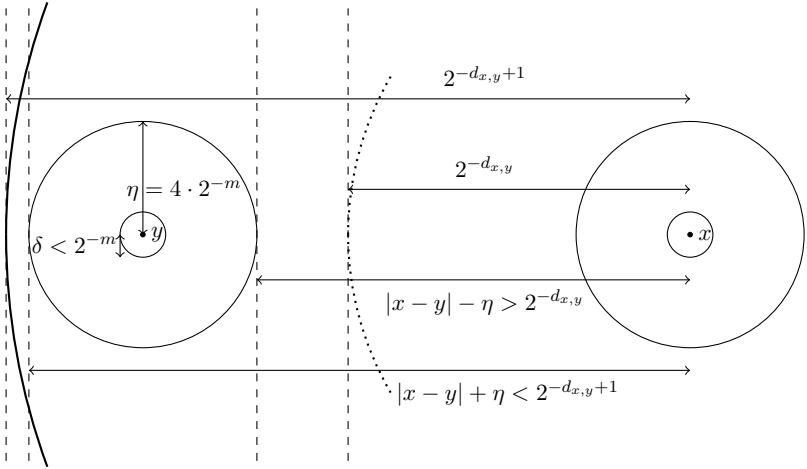

We apply the domain Markov property of Gaussian free field on $B(x, \eta) \cup B(y,\eta)$ and perform the decomposition
\begin{align}\label{eq:GFF_markov}
h(\cdot) = \overline{h}(\cdot) + h^{x, \eta}(\cdot) + h^{y, \eta}(\cdot)
\end{align}

\noindent where 
\begin{itemize}
\item $h^{x, \eta}$ and $h^{y, \eta}$ are Gaussian free fields on $B(x, \eta)$ and $B(y, \eta)$ respectively with Dirichlet boundary conditions,
\item $\overline{h}(\cdot)$ is the harmonic extension of $h$ to $B(x, \eta) \cup B(y, \eta)$,
\end{itemize}

\noindent and all these three objects are independent of each other. Let us further perform a radial-lateral decomposition of the Gaussian free field
\begin{align*}
h^{x, \eta}(\cdot) = h^{x, \mathrm{rad}}(\cdot)+ h^{x, \mathrm{lat}}(\cdot)
\end{align*}

\noindent where
\begin{align*}
\mathbb{E}\left[h^{x, \mathrm{rad}}(a) h^{x, \mathrm{rad}}(b)\right]
&= -\log \frac{|a-x| \vee |b-x|}{\eta}, \\
\mathbb{E}\left[h^{x, \mathrm{lat}}(a) h^{x, \mathrm{lat}}(b)\right]
& = G_0^{\mathbb{D}}\left(\frac{a-x}{\eta}, \frac{b-x}{\eta}\right) - \mathbb{E}\left[h^{x, \mathrm{rad}}(a) h^{x, \mathrm{rad}}(b)\right].
\end{align*}

We now clarify the choice of $\delta \in (0, 2^{-m})$, assuming that it is sufficiently small such that
\begin{align*}
\left|G_0^{\mathbb{D}}\left(\frac{a-x}{\eta}, \frac{b-x}{\eta}\right) +\log \bigg|\frac{a-b}{\eta}\bigg| \right| 
\le \delta \qquad \forall a, b \in B(x, \delta)
\end{align*}

\noindent as well as
\begin{align*}
\bigg| G_0^D(x, y) - G_0^D(z, y) \bigg| \le \delta \qquad
\text{and} \qquad \bigg| \log R(x; D) - \log R(z; D) \bigg| \le  \delta
\end{align*}

\noindent for all $z \in B(x, \delta)$ (this is possible by \Cref{lem:Green_estimate}). If we write
\begin{align*}
\mathcal{E}_x(\delta):= \sup_{z \in B(x, \delta)} | \overline{h}(z) - \overline{h}(x)|,
\qquad {e}_x(\delta):= \sup_{z \in B(x, \delta)} | \mathbb{E}[\overline{h}(z)^2 - \overline{h}(x)^2]|,
\end{align*}

\noindent then for any $\sqrt{u} \le \delta/ k$ we have 
\begin{align*}
F_{\gamma}^{\{x, y\}}(x+\sqrt{u}\mathbf{b})
\begin{dcases}
\le e^{\frac{5 \gamma^2}{2}\delta + \gamma \mathcal{E}_x(\delta) + \frac{\gamma^2}{2} e_x(\delta)}  \ R(x; D)^{\frac{3\gamma^2}{2}} e^{\gamma^2 G_0^D(x, y)} e^{\gamma \overline{h}(x) - \frac{\gamma^2}{2} \mathbb{E}[\overline{h}(x)^2] }\\
\qquad \qquad\times \int_{0}^1  e^{\gamma h^{x, \eta}(x + \sqrt{u} \mathbf{b}_s) - \frac{\gamma^2}{2}\mathbb{E}[h^{x, \eta}(x + \sqrt{u} \mathbf{b}_s)^2]} \frac{ds}{|\sqrt{u} \mathbf{b}_s|^{\gamma^2}},\\
\ge \left[e^{\frac{5 \gamma^2}{2}\delta + \gamma \mathcal{E}_x(\delta) + \frac{\gamma^2}{2} e_x(\delta)}\right]^{-1} R(x; D)^{\frac{3\gamma^2}{2}} e^{\gamma^2 G_0^D(x, y)} e^{\gamma \overline{h}(x) - \frac{\gamma^2}{2} \mathbb{E}[\overline{h}(x)^2] }\\
\qquad \qquad\times \int_{0}^1  e^{\gamma h^{x, \eta}(x + \sqrt{u} \mathbf{b}_s) - \frac{\gamma^2}{2}\mathbb{E}[h^{x, \eta}(x + \sqrt{u} \mathbf{b}_s)^2]} \frac{ds}{|\sqrt{u} \mathbf{b}_s|^{\gamma^2}}
\end{dcases}
\end{align*}

\noindent and thus 
\begin{align}
\label{eq:cross_term_ub1}
\mathcal{I}\left(\lambda u F_{\gamma}^{\{x, y\}}(x+\sqrt{u}\mathbf{b})\right)
& \le E_x(\delta)^{-2} 
\mathcal{I}\left(\widetilde{\lambda} E_x(\delta) 
u \overline{F}_{\gamma}^{\{x\}}(x+\sqrt{u}\mathbf{b}; h^{x, \eta}(\cdot) + \overline{h}(x)) \right)
\end{align}

\noindent where
\begin{align*}
\widetilde{\lambda}
& := \lambda R(x; D)^{\frac{3\gamma^2}{2}}
e^{\gamma^2 G_0^D(x, y)}, 
\qquad 
E_x(\delta)
:= \left[e^{\frac{5 \gamma^2}{2}\delta + \gamma \mathcal{E}_x(\delta) + \frac{\gamma^2}{2} e_x(\delta)}\right]^{-1},
\end{align*}

\noindent and $\overline{F}_{\gamma}^{\{x\}}(\cdot; \cdot)$ was defined in \eqref{eq:overlineF}. Substituting everything back into \eqref{eq:Ik_rescaled1}, we obtain
\begin{align}\notag
I_k \le \int_0^{(\delta / k)^2}\frac{du}{2\pi u} 
\mathbb{E}\otimes \bdec{0}{0}{1}\Bigg[
& 1_{\mathcal{G}^{\{x, y\}}_{[n_0, m)}(x) \cap \mathcal{G}^{\{x, y\}}_{[n_0, m)}(y)}
1_{\mathcal{H}_k}
E_x(\delta)^{-2} \\
\label{eq:Ik_ub}
& \qquad \times
\mathcal{I}\left(\widetilde{\lambda} E_x(\delta) 
u \overline{F}_{\gamma}^{\{x\}}(x+\sqrt{u}\mathbf{b}; h^{x, \eta}(\cdot) + \overline{h}(x)) \right)
\Bigg].
\end{align}

\noindent We shall perform a (conditional) Gaussian comparison, replacing the lateral field $h^{x, \mathrm{lat}}$ associated with $h^{x, \eta}$ 
by the field
\begin{align*}
\mathbb{E}\left[\widehat{X}(z_1) \widehat{X}(z_2)\right] = \log \frac{|z_1-x| \vee |z_2-x|}{|z_1-z_2|} \qquad \forall z_1, z_2 \in B(x, \delta).
\end{align*}

\noindent Note that this replacement is possible because $h^{x, \mathrm{lat}}$ is independent of $\mathcal{G}^{\{x, y\}}_{[n_0, m)}(x) \cap \mathcal{G}^{\{x, y\}}_{[n_0, m)}(y)$. To see why this is the case, let us go back to the decomposition \eqref{eq:GFF_markov} and consider
\begin{align*}
h_r(x) = \overline{h}_r(x) + h^{x, \eta}_r(x) + h_r^{y, \eta}(x)
\end{align*}

\noindent where the subscript $r$ refers to averaging over the circle $\partial B(x, r)$:
\begin{itemize}
\item Given the condition \eqref{eq:m_condition} on our choice of $m$ and $\eta$, we have $\partial B(x, 2^{-j}) \cap B(y, \eta) = \emptyset$ for all $j \in \mathbb{N}$ (see \Cref{fig:scales}). This means $h_{2^{-j}}^{y, \eta}(x) = 0$ for all $j \in [n_0, \infty) \cap \mathbb{N}$. On the other hand, $h_r^{x, \eta}(x) = h^{x, \mathrm{rad}}(x + r)$ is independent of $h^{x, \mathrm{lat}}$ by the definition of radial-lateral decomposition. Hence $h_{2^{-j}}(x) = \overline{h}_{2^{-j}}(x) + h^{x, \mathrm{rad}}(x + 2^{-j})$ for any $j \in \mathbb{N}$, i.e.  $h^{x, \mathrm{lat}}$ is independent of $\mathcal{G}^{\{x, y\}}_{[n_0, m)}(x)$.
\item Similarly, $\partial B(y, 2^{-j}) \cap B(x, \eta) = \emptyset$  for all $j \in \mathbb{N}$ means that $h^{x, \eta}$ (and in particular $h^{x, \mathrm{lat}}$) is independent of the circle average of $h$ centred at $y$ at all dyadic scales, and is therefore independent of $\mathcal{G}^{\{x, y\}}_{[n_0, m)}(y)$.
\end{itemize}

We also have (by \Cref{lem:Green_estimate})
\begin{align*}
&\left|\mathbb{E}\left[h^{x, \mathrm{lat}}(z_1) h^{x, \mathrm{lat}}(z_2)\right] - \mathbb{E}\left[\widehat{X}(z_1) \widehat{X}(z_2)\right] \right|\\
& \qquad = \left|G_0^{\mathbb{D}}\left(\frac{z_1-x}{\eta}, \frac{z_2-x}{\eta}\right) +\log \bigg|\frac{z_1-z_2}{\eta}\bigg| \right|
\le 20\sqrt{u}
\end{align*}

\noindent for all $z_1, z_2 \in B(x, k\sqrt{u})$ with $u \in [0, (\delta/k)^2]$ for $\delta$ sufficiently small. As a result, if we consider

\begin{align*}
&\overline{F}_{\gamma}^{\{x\}}(x+\sqrt{u}\mathbf{b}; h^{x, \mathrm{rad}} + \widehat{X} + \overline{h}(x))
:= e^{\gamma \overline{h}(x) - \frac{\gamma^2}{2} \mathbb{E}[\overline{h}(x)^2] }\\
& \qquad \times \int_{0}^1  e^{\gamma h^{x, \mathrm{rad}}(x + \sqrt{u} \mathbf{b}_s) - \frac{\gamma^2}{2}\mathbb{E}[h^{x, \mathrm{rad}}(x + \sqrt{u} \mathbf{b}_s)^2]}e^{\gamma \widehat{X}(x+\sqrt{u}\mathbf{b}_s) - \frac{\gamma^2}{2}\mathbb{E}[\widehat{X}(x+\sqrt{u}\mathbf{b}_s)^2]} \frac{ds}{| \sqrt{u}\mathbf{b}_s|^{\gamma^2}},
\end{align*}

\noindent then \Cref{lem:Gcompare} combined with the fact that
\begin{align*}
\left|x^2 \frac{\partial^2}{\partial x^2} \mathcal{I}(\lambda x)\right|
\le e^{-\lambda x} \left[2(\lambda x)^2 + |\lambda x|^3\right] \le 40 \qquad \forall \lambda,  x \ge 0,
\end{align*}

\noindent implies
\begin{align}
\notag
&\Bigg| \int_0^{(\delta/k)^2}\frac{du}{2\pi u} \mathbb{E}\otimes \bdec{0}{0}{1} \Bigg[
1_{\mathcal{G}^{\{x, y\}}_{[n_0, m)}(x) \cap \mathcal{G}^{\{x, y\}}_{[n_0, m)}(y)}
1_{\mathcal{H}_k}
 E_x(\delta)^{-2} \\
\notag
& \qquad \qquad \qquad \qquad \qquad \qquad \times
\mathcal{I}\left(\widetilde{\lambda} E_x(\delta) 
u \overline{F}_{\gamma}^{\{x\}}(x+\sqrt{u}\mathbf{b}; h^{x, \eta}(\cdot) + \overline{h}(x)) \right)
\Bigg]\\
\notag
& \qquad - \int_0^{(\delta/k)^2}\frac{du}{2\pi u} \mathbb{E}\otimes \bdec{0}{0}{1} \Bigg[
1_{\mathcal{G}^{\{x, y\}}_{[n_0, m)}(x) \cap \mathcal{G}^{\{x, y\}}_{[n_0, m)}(y)}
1_{\mathcal{H}_k}
 E_x(\delta)^{-2} \\
\notag
& \qquad \qquad \qquad \qquad \qquad \qquad \qquad \times
\mathcal{I}\left(\widetilde{\lambda} E_x(\delta) 
u \overline{F}_{\gamma}^{\{x\}}(x+\sqrt{u}\mathbf{b}; h^{x, \mathrm{rad}} + \widehat{X} + \overline{h}(x)) \right)
\Bigg]\Bigg|\\
\label{eq:cross_ptwise_GPerror}
&\le \mathbb{E}[E_x(\delta)^{-2}]\pdec{0}{0}{1}(\mathcal{H}_k)\int_0^{(\delta/k)^2} \frac{du}{2\pi u} \frac{20 \sqrt{u}}{2} \cdot 40 \le 400  \mathbb{E}[E_x(\delta)^{-2}] \frac{\delta}{k} e^{-\frac{1}{2}(k-1)^2}
\end{align}

\noindent which is summable in $k$ uniformly in $\lambda > 0$. This gives rise to a negligible contribution as we send $\delta \to 0$ towards the end of the proof.

\paragraph{Uniform control and identifying the limit.}
Let us examine the Gaussian fields appearing in the definition of $\overline{F}_{\gamma}^{\{x\}}(x+\sqrt{u}\mathbf{b}; h^{x, \mathrm{rad}} + \widehat{X} + \overline{h}(x))$. Observe that 
\begin{align*}
h^{x, \mathrm{rad}}(x + \delta e^{-t}) - h^{x, \mathrm{rad}}(x + \delta), \qquad t \ge 0
\end{align*}
\noindent is a Brownian motion independent of  $h^{x, \mathrm{rad}}(x + \delta)$. In particular, the field
\begin{align*}
\widetilde{h}(z):= \left[h^{x, \mathrm{rad}}(z) - h^{x, \mathrm{rad}}(x + \delta)\right] + \widehat{X}(z), \qquad z \in B(x, \delta)
\end{align*}

\noindent is independent of $\mathcal{G}_{[n_0, m]}^{\{x, y\}}(x)$ and $\mathcal{G}_{[n_0, m]}^{\{x, y\}}(y)$, and is furthermore exactly scale invariant with covariance
\begin{align*}
\mathbb{E}\left[\widetilde{h}(z_1) \widetilde{h}(z_2)\right]
&= -\log|z_1-z_2| +\log (\delta) \\
&= -\log\left|\frac{z_1-z_2}{k\sqrt{u}}\right| - \log\left(k\sqrt{u} / \delta\right) 
\qquad \forall z_1, z_2 \in B(x, k\sqrt{u}).
\end{align*}

\noindent We can then apply spatial rescaling and obtain
\begin{align*}
& \int_{0}^1 e^{\gamma h^{x, \mathrm{rad}}(x + \sqrt{u} \mathbf{b}_s) - \frac{\gamma^2}{2}\mathbb{E}[h^{x, \mathrm{rad}}(x + \sqrt{u} \mathbf{b}_s)^2]}e^{\gamma \widehat{X}(x+\sqrt{u}\mathbf{b}_s) - \frac{\gamma^2}{2}\mathbb{E}[\widehat{X}(x+\sqrt{u}\mathbf{b}_s)^2]} \frac{ds}{| \mathbf{b}_s|^{\gamma^2}}\\
&\quad =  e^{\gamma h^{x, \mathrm{rad}}(x+\delta) - \frac{\gamma^2}{2}\mathbb{E}[h^{x, \mathrm{rad}}(x+\delta)^2]}
\int_0^1 e^{\gamma \widetilde{h}(x + \sqrt{u}\mathbf{b}_s) - \frac{\gamma^2}{2} \mathbb{E}[\widetilde{h}(x + \sqrt{u}\mathbf{b}_s)^2]} \frac{ds}{|\mathbf{b}_s|^{\gamma^2}}\\
& \quad \overset{d}{=}e^{\gamma h^{x, \mathrm{rad}}(x+\delta) - \frac{\gamma^2}{2}\mathbb{E}[h^{x, \mathrm{rad}}(x+\delta)^2]} e^{\gamma B_{T} - \frac{\gamma^2}{2} T}
\overline{F}_\gamma^{\{0\}}(k^{-1}\mathbf{b}; X^{\mathbb{D}})
\end{align*}

\noindent where 
\begin{itemize}
\item $\overline{F}_\gamma^{\{0\}}(k^{-1}\mathbf{b}; X^{\mathbb{D}}) = \int_0^1 |\mathbf{b}_s|^{-\gamma^2} e^{\gamma X^{\mathbb{D}}(k^{-1}\mathbf{b}_s) - \frac{\gamma^2}{2}\mathbb{E}[X^{\mathbb{D}}(k^{-1}\mathbf{b}_s)^2]}ds$, with $X^{\mathbb{D}}$ being the Gaussian field on the unit disc $\mathbb{D}$ satisfying $\mathbb{E}[X^{\mathbb{D}}(z_1) X^{\mathbb{D}}(z_2)] = -\log|z_1 - z_2|$;
\item $T = T(u; k, \delta) = -\log (k\sqrt{u}/\delta)$ and $B_{T}$ is an independent $\mathcal{N}(0, T)$ random variable.
\end{itemize}

\noindent Using the fact that $\overline{h}(x) + h^{x, \mathrm{rad}}(x+\delta) = \overline{h}_{\delta}(x) + h_\delta^{x, \eta}(x) = h_{\delta}(x)$, we have
\begin{align}\label{eq:cross_ptwise_scale}
\begin{split}
&u \overline{F}_{\gamma}^{\{x\}}(x+\sqrt{u}\mathbf{b}; h^{x, \mathrm{rad}} + \widehat{X} + \overline{h}(x))\\
& \qquad  \overset{d}{=} (\delta / k)^{2 - \gamma^2} (k\sqrt{u}/\delta)^{2-\gamma^2}e^{\gamma \overline{h}(x) - \frac{\gamma^2}{2} \mathbb{E}[\overline{h}(x)^2]}\\
& \qquad \qquad \times e^{\gamma h^{x, \mathrm{rad}}(x+\delta) - \frac{\gamma^2}{2}\mathbb{E}[h^{x, \mathrm{rad}}(x+\delta)^2]} 
e^{\gamma B_{T} - \frac{\gamma^2}{2} T}
\overline{F}_\gamma^{\{0\}}(k^{-1}\mathbf{b}; X^{\mathbb{D}})\\
& \qquad = e^{\gamma (B_{T} - (Q-\gamma)T)}
(\delta / k)^{2 - \gamma^2}  e^{\gamma h_\delta(x) - \frac{\gamma^2}{2}\mathbb{E}[h_\delta(x)^2]} 
\overline{F}_\gamma^{\{0\}}(k^{-1}\mathbf{b}; X^{\mathbb{D}})\\
&\qquad  =:  e^{\gamma (B_{T} - (Q-\gamma)T)} \mathcal{R}_x.
\end{split}
\end{align}

 Substituting everything back to our main expression, and doing the change of variable $k\sqrt{u} / \delta = e^{-t}$, we obtain
\begin{align}
\notag
&  \int_0^{(\delta/k)^2}\frac{du}{2\pi u} \mathbb{E}\otimes \bdec{0}{0}{1} \Bigg[
1_{\mathcal{G}^{\{x, y\}}_{[n_0, m)}(x) \cap \mathcal{G}^{\{x, y\}}_{[n_0, m)}(y)}
1_{\mathcal{H}_k}
 E_x(\delta)^{-2} \\
\notag
&\qquad \qquad  \qquad \qquad \qquad \qquad \times
\mathcal{I}\left(\widetilde{\lambda} E_x(\delta) 
u \overline{F}_{\gamma}^{\{x\}}(x+\sqrt{u}\mathbf{b}; h^{x, \mathrm{rad}} + \widehat{X} + \overline{h}(x)) \right)
\Bigg]\\
\label{eq:cross_ptwise_final}
& = \frac{1}{\pi} \int_{0}^\infty
 \mathbb{E}\otimes \bdec{0}{0}{1}\Bigg[ 
1_{\mathcal{G}^{\{x, y\}}_{[n_0, m)}(x) \cap \mathcal{G}^{\{x, y\}}_{[n_0, m)}(y)}
1_{\mathcal{H}_k}
E_x(\delta)^{-2} \mathcal{I}\left(\widetilde{\lambda} E_x(\delta)\mathcal{R}_xe^{\gamma (B_{t} - (Q-\gamma)t)}\right)
\Bigg] dt 
\end{align}

\noindent where $(B_t)_{t \ge 0}$ is a Brownian motion independent of everything else. 
Using \Cref{lem:main}, we see that \eqref{eq:cross_ptwise_final} is uniformly bounded by
\begin{align*}
& c_\gamma \mathbb{E}\otimes \bdec{0}{0}{1}\left[
1_{\mathcal{G}^{\{x, y\}}_{[n_0, m)}(x) \cap \mathcal{G}^{\{x, y\}}_{[n_0, m)}(y)}
1_{\mathcal{H}_k}
E_x(\delta)^{-2} \right]\\
&\quad = \pi c_\gamma
\mathbb{E} \left[
1_{\mathcal{G}^{\{x, y\}}_{[n_0, m)}(x) \cap \mathcal{G}^{\{x, y\}}_{[n_0, m)}(y)}
E_x(\delta)^{-2} \right]
\pdec{0}{0}{1}\left( \mathcal{H}_k\right)
\le C e^{-\frac{1}{2}(k-1)^2}
\end{align*}

\noindent for some $C \in (0, \infty)$ independent of $k \in \mathbb{N}$, and this is summable in $k$. Moreover, the same lemma suggests that \eqref{eq:cross_ptwise_final} converges, as $\lambda \to \infty$, to
\begin{align*}
c_{\gamma}  \mathbb{E}\left[
1_{\mathcal{G}^{\{x, y\}}_{[n_0, m)}(x) \cap \mathcal{G}^{\{x, y\}}_{[n_0, m)}(y)}
E_x(\delta)^{-2}\right]\pdec{0}{0}{1}\left( \mathcal{H}_k\right).
\end{align*}

\noindent Combining these with \eqref{eq:Ik_ub} and \eqref{eq:cross_ptwise_GPerror}, we have
\begin{align*}
& \liminf_{\lambda \to \infty} \sum_{k \ge 1} I_k\\
& \le \sum_{k \ge 1} c_{\gamma} \mathbb{E}\left[
1_{\mathcal{G}^{\{x, y\}}_{[n_0, m)}(x) \cap \mathcal{G}^{\{x, y\}}_{[n_0, m)}(y)}
E_x(\delta)^{-2}\right]\pdec{0}{0}{1}\left( \mathcal{H}_k\right)
+ \sum_{k \ge 1} 400  \mathbb{E}[E_x(\delta)^{-2}] \frac{\delta}{k} e^{-\frac{1}{2}(k-1)^2}
\\
& = c_{\gamma}\mathbb{E}\left[
1_{\mathcal{G}^{\{x, y\}}_{[n_0, m)}(x) \cap \mathcal{G}^{\{x, y\}}_{[n_0, m)}(y)}
E_x(\delta)^{-2}\right] + \mathcal{O}(\delta).
\end{align*}

\noindent Now, recall that $E_x(\delta)^{-2}$ is non-negative, non-increasing in $\delta$, has finite moments and $E_x(\delta) \xrightarrow{\delta \to 0^+} 1$ almost surely. Since $\delta > 0$ is arbitrary in our analysis, it follows from monotone convergence that
\begin{align*}
&\liminf_{\lambda \to \infty} 
\mathbb{E} \left[  
1_{\mathcal{G}^{\{x, y\}}_{[n_0, m)}(x) \cap \mathcal{G}^{\{x, y\}}_{[n_0, m)}(y)}
\int_0^1  \frac{du}{2\pi u} \bdec{x}{x}{u}\left[ \mathcal{I}\left(\lambda F_{\gamma}^{\{x, y\}}(\mathbf{b})\right)\right]\right]\\
&\qquad \le \lim_{\delta \to 0^+} c_{\gamma} \mathbb{E}\left[ 
1_{\mathcal{G}^{\{x, y\}}_{[n_0, m)}(x) \cap \mathcal{G}^{\{x, y\}}_{[n_0, m)}(y)}
E_x(\delta)^{-2}\right]
= c_\gamma \mathbb{P} \left(\mathcal{G}^{\{x, y\}}_{[n_0, m)}(x) \cap \mathcal{G}^{\{x, y\}}_{[n_0, m)}(y)\right).
\end{align*}

A matching lower bound can be obtained in a similar fashion, by noting that
\begin{align*}
\mathcal{I}\left(\lambda F_{\gamma}^{\{x, y\}}(\mathbf{b})\right)
& \ge E_x(\delta)^{2} 
\mathcal{I}\left(\widetilde{\lambda} E_x(\delta)^{-1}
u \overline{F}_{\gamma}^{\{x\}}(x+\sqrt{u}\mathbf{b}; h^{x, \eta}(\cdot) + \overline{h}(x)) \right)
\end{align*}

\noindent (cf. \eqref{eq:cross_term_ub1}) so that
\begin{align*}
\limsup_{\lambda \to \infty} \sum_{k \ge 1} I_k
& \ge c_{\gamma} \mathbb{E}\left[
1_{\mathcal{G}^{\{x, y\}}_{[n_0, m)}(x) \cap \mathcal{G}^{\{x, y\}}_{[n_0, m)}(y)}
E_x(\delta)^{2}\right] + \mathcal{O}(\delta).
\end{align*}

\noindent and therefore
\begin{align*}
&\limsup_{\lambda \to \infty} 
\mathbb{E} \left[  
1_{\mathcal{G}^{\{x, y\}}_{[n_0, m)}(x) \cap \mathcal{G}^{\{x, y\}}_{[n_0, m)}(y)}
\int_0^1  \frac{du}{2\pi u} 
\bdec{x}{x}{u}\left[ \mathcal{I}\left(\lambda F_{\gamma}^{\{x, y\}}(\mathbf{b})\right)\right]
\right]\\
&\ge \lim_{\delta \to 0^+} c_{\gamma} \mathbb{E}\left[
1_{\mathcal{G}^{\{x, y\}}_{[n_0, m)}(x) \cap \mathcal{G}^{\{x, y\}}_{[n_0, m)}(y)}
E_x(\delta)^{2}\right]
=c_\gamma \mathbb{P} \left(\mathcal{G}^{\{x, y\}}_{[n_0, m)}(x) \cap \mathcal{G}^{\{x, y\}}_{[n_0, m)}(y)\right).
\end{align*}
This completes the proof of \Cref{lem:cross_pointwise_cutoff}.
\end{proof}

\begin{proof}[Proof of \Cref{lem:cross_pointwise}]
In order to obtain the desired result, we need to send the cutoff parameter $m \to \infty $ in \eqref{eq:L1_cross_pointwise_cutoff}. In particular, it suffices to show that
\begin{align}\label{eq:cross_removecutoff}
\lim_{m \to \infty}\limsup_{\lambda \to \infty} 
\mathbb{E} \left[ 
1_{\mathcal{G}^{\{x, y\}}_{[m, \infty)}(p)^c}
\int_0^1 \frac{du}{2\pi u} \bdec{x}{x}{u}\left[ \mathcal{I}\left(\lambda F_{\gamma}^{\{x, y\}}(\mathbf{b})\right)\right]
 \right]
=  0
\end{align}

\noindent for $p \in \{x, y\}$ and any fixed and distinct $x, y \in D$ satisfying $d(x, \partial D) \wedge d(y, \partial D) \ge \kappa$.

Our first step is to establish a bound on the indicator function  $1_{\mathcal{G}^{\{x, y\}}_{[m, \infty)}(p)}$ by adapting the argument in \eqref{eq:ind_exp_trick0}. Let us assume without loss of generality that $m \in \mathbb{N}$ is sufficiently large so that $2^{-m} < |x-y|$. Since $x$ and $y$ are bounded away from $\partial D$, there exists some constant $C_\kappa > 0$ such that
\begin{align} \label{eq:Ckappa}
\left|\mathbb{E}[h_{2^{-n}}(p) h_\delta(p')] - \log \left(2^{-n} \vee |p-p'|\right)\right| \le C_\kappa \qquad \forall \delta \in [0, 2^{-n}], \quad \forall n \ge m
\end{align}

\noindent for any $p, p' \in \{x, y\}$ by \Cref{lem:mollified_cov}.  Recalling
\begin{align*}
\mathcal{G}^{\{x, y\}}_{[m, \infty)}(p) 
:= \left\{ h_{2^{-n}}(p) + \gamma \mathbb{E}\left[h_{2^{-n}}(p) \left(h(x) + h(y)\right)\right] \le \alpha \log(2^n) \quad \forall n \in [m, \infty) \cap \mathbb{N} \right\},
\end{align*}

\noindent it holds for any $\beta > 0$ that
\begin{align}
\notag
1_{\mathcal{G}^{\{x, y\}}_{[m, \infty)}(p)^c}
&\le \sum_{n \ge m} \exp\left( \beta \left[h_{2^{-n}}(p) + \gamma \mathbb{E}\left[h_{2^{-n}}(p) \left(h(x) + h(y)\right)\right] - \alpha \log (2^n)\right]\right) \\
\label{eq:ind_exp_trick1}
& \le \frac{e^{(\frac{\beta^2}{2} + 2\beta \gamma )C_{\kappa}}}{|x-y|^\gamma}\sum_{n \ge m} 2^{-\frac{\beta}{2}\left[2(\alpha - \gamma ) - \beta\right]n} e^{\beta h_{2^{-n}}(p) - \frac{\beta^2}{2} \mathbb{E}[h_{2^{-n}}(p)^2]}
\end{align}

Using this bound, we obtain by Cameron-Martin theorem that
\begin{align}
\notag
&\mathbb{E} \left[ 
1_{\mathcal{G}^{\{x, y\}}_{[m, \infty)}(p)^c}
\int_0^1 \frac{du}{2\pi u} \bdec{x}{x}{u}\left[ \mathcal{I}\left(\lambda F_{\gamma}^{\{x, y\}}(\mathbf{b})\right)\right]
 \right]\\
\notag
& \le \frac{e^{(\frac{\beta^2}{2} + 2\beta \gamma )C_{\kappa}}}{|x-y|^\gamma} 
\sum_{n \ge m} 2^{-\frac{\beta}{2}\left[2(\alpha - \gamma ) - \beta\right]n} 
\mathbb{E} \left[ 
e^{\beta h_{2^{-n}}(p) - \frac{\beta^2}{2} \mathbb{E}[h_{2^{-n}}(p)^2]}
\int_0^1 \frac{du}{2\pi u} \bdec{x}{x}{u}\left[ \mathcal{I}\left(\lambda F_{\gamma}^{\{x, y\}}(\mathbf{b})\right)\right]
 \right]\\
\label{eq:cross_cutoff_limit0}
& = \frac{e^{(\frac{\beta^2}{2} + 2\beta \gamma )C_{\kappa}}}{|x-y|^\gamma} 
\sum_{n \ge m} 2^{-\frac{\beta}{2}\left[2(\alpha - \gamma ) - \beta\right]n} 
\mathbb{E} \left[ 
\int_0^1 \frac{du}{2\pi u} \bdec{x}{x}{u}\left[ \mathcal{I}\left(\lambda F_{\gamma, (p, n, \beta)}^{\{x, y\}}(\mathbf{b})\right)\right]
 \right]
\end{align}

\noindent where
\begin{align} \label{eq:Frare}
F_{\gamma, (p, n, \beta)}^{\{x, y\}}(\mathbf{b})
:= \int_0^u e^{\gamma^2 [G_0^D(x, \mathbf{b}_s) + G_0^D(y, \mathbf{b}_s)] +\gamma \beta \mathbb{E}[h_{2^{-n}}(p) h(\mathbf{b}_s)]} F_{\gamma}(ds; \mathbf{b}).
\end{align}

Let us now fix $\delta \in (0, \frac{1}{8} \min(|x-y|, \kappa))$, and split the sum in \eqref{eq:cross_cutoff_limit0} (without the prefactor) into
\begin{align}
\notag
& \sum_{n \ge m} 2^{-\frac{\beta}{2}\left[2(\alpha - \gamma ) - \beta\right]n} 
 \sum_{k \ge 1} 
\mathbb{E} \left[ 
\int_{(2^{-n}\delta/k)^2}^1 \frac{du}{2\pi u} \bdec{x}{x}{u}\left[ \mathcal{I}\left(\lambda F_{\gamma, (p, n, \beta)}^{\{x, y\}}(\mathbf{b})\right)1_{\mathcal{H}_k}\right]
 \right]\\
\label{eq:cross_cutoff_limit1}
& + \sum_{n \ge m} 2^{-\frac{\beta}{2}\left[2(\alpha - \gamma ) - \beta\right]n} 
 \sum_{k \ge 1} 
\mathbb{E} \left[ 
\int_0^{(2^{-n}\delta/k)^2} \frac{du}{2\pi u} \bdec{x}{x}{u}\left[ \mathcal{I}\left(\lambda F_{\gamma, (p, n, \beta)}^{\{x, y\}}(\mathbf{b})\right)1_{\mathcal{H}_k}\right]
 \right].
\end{align}

\noindent The first double sum is easily bounded by
\begin{align*}
\sum_{n \ge m} 2^{-\frac{\beta}{2}\left[2(\alpha - \gamma ) - \beta\right]n} 
 \sum_{k \ge 1}  \pdec{0}{0}{1}(\mathcal{H}_k) \log (2^{n} k / \delta) \lesssim m 2^{-\frac{\beta}{2}\left[2(\alpha - \gamma ) - \beta\right]m} 
\end{align*}

\noindent uniformly in $\lambda > 0$ and vanishes as $m \to \infty$ provided that $\beta \in (0, 2(\alpha - \gamma))$. To treat the remaining double sum in \eqref{eq:cross_cutoff_limit1}, recall for each $k, n \in \mathbb{N}$ and on the event $\mathcal{H}_k$ that the Brownian bridge $\mathbf{b}$ satisfies (under the probability measure $\bdec{x}{x}{u}$)
\begin{align*}
\mathbf{b}_s \in B(x, k\sqrt{u}) \subset B(x, 2^{-n} \delta)  
\end{align*}

\noindent and in particular $|\mathbf{b}_s - y| \ge \delta$ and $d(\mathbf{b}_s, \partial D)  \ge \frac{\kappa}{2}$ for all $s \le u \le (2^{-n} \delta / k)^2$.

\noindent and observe that $B(x, \delta) \cap B(y, \delta) = \emptyset$ by our choice of $\delta$. We may therefore assume (up to a re-definition) that the constant $C_\kappa$ in \eqref{eq:Ckappa} also satisfies
\begin{align*}
\left|\mathbb{E}[h_{2^{-n}}(x) h(\mathbf{b}_s)] - \log \left(2^{-n}\right)\right| &\le C_\kappa 
\qquad \text{and} \qquad
\left|\mathbb{E}[h_{2^{-n}}(y) h(\mathbf{b}_s)] \right| \le C_\kappa 
\end{align*}

\noindent for all $n \ge m$ and any $s$. This means, in particular, that
\begin{align}\label{eq:two-to-one}
e^{-\gamma(\gamma+\beta)C_\kappa}  \le 
\frac{F_{\gamma, (x,n, \beta)}^{\{x, y\}}(\mathbf{b})}{2^{\gamma \beta n} F_{\gamma}^{\{x\}}(\mathbf{b})}
\le e^{\gamma(\gamma+\beta)C_\kappa} 
\quad \text{and} \quad 
e^{-\gamma(\gamma+\beta)C_\kappa}  \le 
\frac{F_{\gamma, (y,n, \beta)}^{\{x, y\}}(\mathbf{b})}{F_{\gamma}^{\{x\}}(\mathbf{b})}
\le e^{\gamma(\gamma+\beta)C_\kappa} 
\end{align}

\noindent and hence
\begin{align} 
\notag
& \mathbb{E} \left[ 
\int_0^{(2^{-n}\delta/k)^2} \frac{du}{2\pi u} \bdec{x}{x}{u}\left[ \mathcal{I}\left(\lambda F_{\gamma, (p, n, \beta)}^{\{x, y\}}(\mathbf{b})\right)1_{\mathcal{H}_k}\right]
 \right]\\
\label{eq:two-to-one2}
& \le 
e^{2\gamma(\gamma+\beta)C_\kappa}  
\times \begin{dcases}
\mathbb{E} \left[ 
\int_0^{(2^{-n}\delta/k)^2} \frac{du}{2\pi u} \bdec{x}{x}{u}\left[  \mathcal{I}\left(\lambda e^{-\gamma(\gamma+\beta)C_\kappa} 2^{\gamma \beta n} F_{\gamma}^{\{x\}}(\mathbf{b})\right)1_{\mathcal{H}_k}\right]
 \right] & \text{for $p=x$},\\
\mathbb{E} \left[ 
\int_0^{(2^{-n}\delta/k)^2} \frac{du}{2\pi u} \bdec{x}{x}{u}\left[  \mathcal{I}\left(\lambda e^{-\gamma(\gamma+\beta)C_\kappa} F_{\gamma}^{\{x\}}(\mathbf{b})\right)1_{\mathcal{H}_k}\right]
 \right] & \text{for $p=y$}.
\end{dcases}
\end{align}

\noindent In either case this can be further upper bounded by $C \pdec{0}{0}{1}(\mathcal{H}_k)$ uniformly in $n, k \in \mathbb{N}$ and $\lambda > 0$ by \Cref{lem:G1_uniform} (as $d(x, \partial D) \ge \kappa \ge 4k \sqrt{u}$ is automatically satisfied). Substituting this back to the second sum in\eqref{eq:cross_cutoff_limit1}, we see that
\begin{align*}
& \sum_{n \ge m} 2^{-\frac{\beta}{2}\left[2(\alpha - \gamma ) - \beta\right]n} 
 \sum_{k \ge 1} 
\mathbb{E} \left[ 
\int_0^{(2^{-n}\delta/k)^2} \frac{du}{2\pi u} \bdec{x}{x}{u}\left[ \mathcal{I}\left(\lambda F_{\gamma, (p, n, \beta)}^{\{x, y\}}(\mathbf{b})\right)1_{\mathcal{H}_k}\right]
 \right]\\
& \qquad  \lesssim \sum_{n \ge m} 2^{-\frac{\beta}{2}\left[2(\alpha - \gamma ) - \beta\right]n} 
 \sum_{k \ge 1} \pdec{0}{0}{1}(\mathcal{H}_k)
\lesssim 2^{-\frac{\beta}{2}\left[2(\alpha - \gamma ) - \beta\right]m} \xrightarrow{m \to \infty} 0
\end{align*}

\noindent which concludes our proof of \eqref{eq:cross_removecutoff}.
\end{proof}

\subsection{Part III: analysis of the diagonal term \eqref{eq:diagonal_limit} } \label{sec:pf_diagonal}
We now consider the diagonal term
\begin{align}
\notag
&  \mathbb{E}\left[
\left( \int_A \mu_\gamma^{\kappa, n_0}(dx)  \int_0^1 \frac{du}{2\pi u} 
\bdec{x}{x}{u}[ \mathcal{I} \left(\lambda  F_{\gamma}(\mathbf{b}) \right)]
\right)^2\right]\\
\notag 
& = \int_{A \times A} 1_{\{d(x, \partial D) \ge \kappa \}}1_{\{d(y, \partial D) \ge \kappa \}}R(x; D)^{\frac{\gamma^2}{2}}R(y; D)^{\frac{\gamma^2}{2}}e^{\gamma^2 G_0^D(x, y)} dxdy\\
\label{eq:diagonal_preDCT}
&\times \mathbb{E} \left[ 1_{\mathcal{G}^{\{x, y\}}_{[n_0, \infty)}(x) \cap \mathcal{G}^{\{x, y\}}_{[n_0, \infty)}(y)}
\int_0^1 \frac{du}{2\pi u}\bdec{x}{x}{u}\left[ \mathcal{I} \left(\lambda  F_{\gamma}^{\{x, y\}}(\mathbf{b}) \right)\right]
\int_0^1 \frac{dv}{2\pi v} \bdec{y}{y}{v}\left[ \mathcal{I} \left(\lambda  F_{\gamma}^{\{x, y\}}(\tilde{\mathbf{b}}) \right)\right]
\right] 
\end{align}

\noindent where $\mathbf{b}$ and $\tilde{\mathbf{b}}$ are two independent Brownian bridges distributed according to $\bdec{x}{x}{u}$ and $\bdec{y}{y}{v}$ respectively.

\subsubsection{Uniform estimates for the diagonal term}
\begin{lem}\label{lem:diagonal_uniform}
Let $\beta > 0$ and $n_0 \in \mathbb{N}$ satisfying $2^{1-n_0} < \kappa$. Then there exists some constant $C = C(\kappa, n_0, \gamma, \alpha, \beta) \in (0, \infty)$ such that 
\begin{align}
\notag
& \mathbb{E} \left[ 
1_{\mathcal{G}^{\{x, y\}}_{[n_0, \infty)}(x) \cap \mathcal{G}^{\{x, y\}}_{[n_0, \infty)}(y)}
\int_0^1 \frac{du}{2\pi u}\bdec{x}{x}{u}\left[ \mathcal{I} \left(\lambda  F_{\gamma}^{\{x, y\}}(\mathbf{b}) \right)\right]
\int_0^1 \frac{dv}{2\pi v} \bdec{y}{y}{v}\left[ \mathcal{I} \left(\lambda  F_{\gamma}^{\{x, y\}}(\tilde{\mathbf{b}})\right) \right]
 \right] \\
\label{eq:diagonal_uniform}
& \qquad \qquad \le C \left[1 - \log |x-y|\right]^2 |x-y|^{(2\gamma - \alpha)\beta- \frac{\beta^2}{2}}
\end{align}

\noindent uniformly in $\lambda > 0$ and $x, y \in D$ satisfying $d(x, \partial D) \wedge d(y, \partial D) \ge \kappa$.
\end{lem}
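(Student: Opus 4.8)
The plan is to mimic the proof of \Cref{lem:cross_uniform}, the only genuinely new feature being the product of two bridge integrals, which I will control by reducing, in each piece of a dyadic decomposition, to at most one ``nontrivial'' bridge at a time while keeping track of the logarithmic factors. As there, split into the two cases $|x-y|\ge 2^{-n_0}$ and $|x-y|<2^{-n_0}$. In the first case both $|x-y|^{(2\gamma-\alpha)\beta-\frac{\beta^2}{2}}$ and $[1-\log|x-y|]^2$ lie between two positive constants, so one only needs a bound uniform in $\lambda$ and in $x,y$ with $d(x,\partial D)\wedge d(y,\partial D)\ge\kappa$, and the good-event indicator may be replaced by $1$. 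In the second case, let $n\in\mathbb{N}$ satisfy $2^{-(n+1)}\le|x-y|<2^{-n}$ and apply the exponential/Girsanov bound \eqref{eq:ind_exp_trick} to $1_{\mathcal{G}^{\{x,y\}}_{[n_0,\infty)}(x)\cap\mathcal{G}^{\{x,y\}}_{[n_0,\infty)}(y)}$: this extracts the factor $|x-y|^{(2\gamma-\alpha)\beta-\frac{\beta^2}{2}}$ and, after Cameron-Martin, replaces the two Liouville clocks $F_\gamma^{\{x,y\}}$ by Girsanov shifts of the form \eqref{eq:beta_mu_G1}, with the good event removed.

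Next I would decompose each time integral, $\int_0^1\frac{du}{2\pi u}$ for $\mathbf{b}$ and $\int_0^1\frac{dv}{2\pi v}$ for $\tilde{\mathbf{b}}$, into a ``large-range'' part (where the relevant bridge, on $\mathcal{H}_k$, resp.\ $\mathcal{H}_{k'}$, may leave the relevant small ball) and a ``small-range'' part (where it stays confined to a ball bounded away from $\partial D$ and from the other marked point), together with the sums $\sum_{k\ge1}$ and $\sum_{k'\ge1}$. Expanding the product gives four families of terms. On a large-range part I use $\mathcal{I}\le 1$ for that bridge; the corresponding integral then reduces to the deterministic quantity $\int_{c_k}^{1}\frac{du}{2\pi u}\pdec{0}{0}{1}(\mathcal{H}_k)\lesssim \pdec{0}{0}{1}(\mathcal{H}_k)\log(1/c_k)$, with $c_k\gtrsim(|x-y|/k)^2$ (or $(2^{-n_0}/k)^2$ in Case~1), so that $\sum_k$ contributes one factor $O(1-\log|x-y|)$. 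On a small-range part I use \Cref{cor:gff_exact}, \Cref{lem:Green_estimate} and \Cref{lem:mollified_cov}, exactly as in the two cases of \Cref{lem:cross_uniform}, to replace $F_\gamma^{\{x,y\}}$ (or its Girsanov shift) by a constant multiple of $F_\gamma^{\{x\}}$ (resp.\ $F_\gamma^{\{y\}}$), absorbing all powers of $|x-y|$ and $2^{-n}$ into a rescaled parameter $\widehat\lambda$, and then invoke \Cref{lem:G1_uniform} to bound it by $\lesssim\pdec{0}{0}{1}(\mathcal{H}_k)$ \emph{uniformly in} $\widehat\lambda$ (hence in $\lambda$ and in $|x-y|$), summable in $k$. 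The only genuinely two-bridge piece is small-range $\times$ small-range, where both $\mathcal{I}$'s must be kept (on a small range the trivial bound $\mathcal{I}\le\lambda F$ would leave a $\lambda$-dependent bound): after the Cameron-Martin manipulation of \eqref{eq:bound_boundary1} applied to each bridge, a Gaussian comparison via \Cref{cor:Gcompare} applied to the convex factor $e^{-\lambda(\cdot)}$, and spatial rescaling near $x$ and near $y$, one lands exactly on the setup of \Cref{lem:main}, with two independent radial Brownian motions and possibly correlated chaos remainders $\mathcal{E}_1,\mathcal{E}_2$ localised near $x$ and $y$ (independent of those Brownian motions by the radial-lateral decomposition) and $\mathcal{E}_0$ the product of the $\mathcal{H}_k,\mathcal{H}_{k'}$ indicators; the uniform bound \eqref{eq:uniform_main2} then yields a $\lambda$- and $|x-y|$-uniform estimate, summable over $k,k'$.

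Summing the four families, the large-range $\times$ large-range term carries one factor $O(1-\log|x-y|)$ from each bridge and, through $\mathbb{P}(\mathcal{G}^{\{x,y\}}_{[n_0,\infty)}(x)\cap\mathcal{G}^{\{x,y\}}_{[n_0,\infty)}(y))\lesssim|x-y|^{(2\gamma-\alpha)\beta-\frac{\beta^2}{2}}$ (the content of the exponential trick), the required power of $|x-y|$; it dominates the mixed terms (one logarithmic factor each) and the small $\times$ small term (no logarithmic factor), which gives \eqref{eq:diagonal_uniform}. The main obstacle is precisely the small-range $\times$ small-range term, where the two Liouville clocks are correlated through the Gaussian free field near $x$ and near $y$; this is exactly the situation \Cref{lem:main} was designed for, since it allows $\mathcal{E}_0,\mathcal{E}_1,\mathcal{E}_2$ to be dependent, so that no independence between the two neighbourhoods is needed for the uniform estimate (such independence will only be exploited when identifying the precise pointwise limit needed for \eqref{eq:diagonal_limit}).
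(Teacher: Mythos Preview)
Your proposal is correct and follows essentially the same strategy as the paper. Two organisational differences are worth noting. First, the paper packages the small-range $\times$ small-range estimate as a separate two-point lemma (\Cref{lem:G2_uniform}), and there the two independent Brownian motions do \emph{not} come from a radial--lateral decomposition of $h$: instead the Gaussian comparison step replaces $h$ by an explicit field $X = X_x \oplus X_y + N_{x,y}$ built from two independent scale-invariant pieces on $B(x,c(x,y))$ and $B(y,c(x,y))$ together with a shared Gaussian $N_{x,y}$; the scale invariance of $X_x, X_y$ then yields the independent $B_1, B_2$ required by \Cref{lem:main}, with the correlation between $\mathcal{E}_1,\mathcal{E}_2$ carried entirely by $N_{x,y}$. (Radial--lateral is reserved for the pointwise-limit lemmas, where one must keep track of the good-event indicator.) Second, the paper organises the range-splitting before the case split on $|x-y|$: it bounds the mixed terms by pulling out the deterministic logarithm from one bridge and then applying the already-proved cross estimate \Cref{lem:cross_uniform} to the remaining full integral \emph{with the good event still present}, rather than first removing the good event via \eqref{eq:ind_exp_trick} and then invoking \Cref{lem:G1_uniform}. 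Both routes work and give the same bound; the paper's choice simply recycles \Cref{lem:cross_uniform} wholesale.
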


As we saw earlier, the proof of \Cref{lem:cross_uniform} relies on \Cref{lem:G1_uniform}. The ``two-point" analogue of this estimate is as follows.

\begin{lem}\label{lem:G2_uniform}
Denote by $c(x,y) = c(x,y; \kappa):= \frac{1}{8}\min(|x-y|, \kappa)$. There exists some $C = C(\gamma, \kappa) \in (0, \infty)$ such that
\begin{align}\notag
\mathbb{E}\Bigg[ & 
\left(\int_{0}^{j^{-2} c(x, y)^2} \frac{du}{2\pi u} \bdec{x}{x}{u} [ \mathcal{I} \left(\lambda_1  F_\gamma^{\{x\}}(\mathbf{b})\right)1_{\mathcal{H}_j(\mathbf{b})}]\right) \\
\notag & \qquad \qquad \times 
\left(\int_{0}^{k^{-2} c(x,y)^2} \frac{dv}{2\pi v} \bdec{y}{y}{v} [ \mathcal{I} \left(\lambda_2  F_\gamma^{\{y\}}(\mathbf{b})\right)1_{\mathcal{H}_k(\mathbf{b})}]\right)
\Bigg]\\
\label{eq:G2_uniform}
& \qquad \le C \pdec{0}{0}{1}\left(\mathcal{H}_j\right)\pdec{0}{0}{1}\left(\mathcal{H}_k\right)
\end{align}

\noindent uniformly in $\lambda_1, \lambda_2 > 0$, $j, k \in \mathbb{N}$, and $x, y \in D$ satisfying $d(x, \partial D) \wedge d(y, \partial D) \ge \kappa$.
\end{lem}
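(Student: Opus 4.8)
The plan is to reduce \eqref{eq:G2_uniform} to two essentially independent copies of the estimate behind \Cref{lem:G1_uniform}, the one new ingredient being a decoupling of the Gaussian free field over the two well–separated small balls in which the bridges are confined. Set $\eta := 2c(x,y) = \tfrac14\min(|x-y|,\kappa)$. One checks that $B(x,\eta)$ and $B(y,\eta)$ are disjoint and compactly contained in $D$ (indeed $\eta \le \tfrac14\kappa < \kappa \le d(x,\partial D)$ and $2\eta \le \tfrac12|x-y|<|x-y|$), while on $\mathcal{H}_j(\mathbf{b})$ the cutoff $u < j^{-2}c(x,y)^2$ forces $\max_{s\le u}|\mathbf{b}_s-x| < j\sqrt u < c(x,y) = \eta/2$, so $\mathbf{b}$ stays in $B(x,\eta/2)$ (and likewise $\tilde{\mathbf b}$ stays in $B(y,\eta/2)$ on $\mathcal{H}_k$, with $B(x,\eta/2)\cap B(y,\eta)=\emptyset$). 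Applying the domain Markov property, write $h = \overline h + h^{x,\eta} + h^{y,\eta}$ with $h^{x,\eta}$ (resp. $h^{y,\eta}$) a Dirichlet GFF on $B(x,\eta)$ (resp. $B(y,\eta)$) and $\overline h$ the harmonic remainder, the three independent. On $\mathcal{H}_j\cap\mathcal{H}_k$ the functional $F_\gamma^{\{x\}}(\mathbf{b})$ depends on $h$ only through $\overline h$ and $h^{x,\eta}$ on $B(x,\eta/2)$, and $F_\gamma^{\{y\}}(\tilde{\mathbf b})$ only through $\overline h$ and $h^{y,\eta}$ on $B(y,\eta/2)$; hence conditionally on $\overline h$ the two bridge functionals are independent, and Fubini lets me write the left-hand side of \eqref{eq:G2_uniform} as $\mathbb{E}_{\overline h}[\Xi_x(\overline h)\,\Xi_y(\overline h)]$, where
\begin{align*}
\Xi_x(\overline h) := \int_0^{j^{-2}c(x,y)^2}\frac{du}{2\pi u}\,\bdec{x}{x}{u}\Big[1_{\mathcal{H}_j}\,\mathbb{E}_{h^{x,\eta}}\big[\mathcal{I}(\lambda_1 F_\gamma^{\{x\}}(\mathbf{b}))\mid\overline h\big]\Big]
\end{align*}
and $\Xi_y(\overline h)$ is defined analogously with $y,v,\lambda_2,k$.

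Next I would bound $\Xi_x(\overline h)$ for fixed $\overline h$. On $\mathcal{H}_j$ the bridge lies in $B(x,\eta/2)$, where $\overline h$ is harmonic, so $F_\gamma^{\{x\}}(\mathbf{b}) = e^{\gamma\overline h(x)-\frac{\gamma^2}{2}\mathbb{E}[\overline h(x)^2]}\,\widetilde F$ up to a multiplicative error in $[e^{-\gamma\mathcal{E}_x-\frac{\gamma^2}{2}\tilde e_x},\,e^{\gamma\mathcal{E}_x+\frac{\gamma^2}{2}\tilde e_x}]$, where $\mathcal{E}_x := \sup_{B(x,\eta/2)}|\overline h-\overline h(x)|$, $\tilde e_x := \sup_{B(x,\eta/2)}|\mathbb{E}[\overline h(\cdot)^2]-\mathbb{E}[\overline h(x)^2]| = \mathcal{O}(1)$, and $\widetilde F := \int_0^u e^{\gamma^2 G_0^D(x,\mathbf{b}_s)}R(\mathbf{b}_s;D)^{\gamma^2/2}e^{\gamma h^{x,\eta}(\mathbf{b}_s)-\frac{\gamma^2}{2}\mathbb{E}[h^{x,\eta}(\mathbf{b}_s)^2]}\,ds$ is a functional of $h^{x,\eta}$ alone. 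Since $\overline h\perp h^{x,\eta}$, conditionally on $\overline h$ the prefactor $e^{\gamma\overline h(x)-\frac{\gamma^2}{2}\mathbb{E}[\overline h(x)^2]}$ is a deterministic scalar, so $\mathbb{E}_{h^{x,\eta}}[\mathcal{I}(\lambda_1 F_\gamma^{\{x\}}(\mathbf{b}))\mid\overline h]$ has exactly the form analysed in the proof of \Cref{lem:G1_uniform}, with the GFF on $D$ replaced by $h^{x,\eta}$ and the spectral parameter replaced by $\lambda_1 e^{\gamma\overline h(x)-\frac{\gamma^2}{2}\mathbb{E}[\overline h(x)^2]}$. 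Running that argument verbatim — split $\mathcal{I}$ into its linear and exponential parts, Cameron--Martin in the linear variable, then a Gaussian comparison of $h^{x,\eta}$ with the exactly scale-invariant field of covariance $-\log|a-b|+\log\eta+\mathcal{O}(1)$ (valid because, by the explicit Poisson-kernel formula, $G_0^{B(x,\eta)}(a,b)=-\log|a-b|+\log\eta+\mathcal{O}(1)$ on $B(x,\eta/2)$ and is dominated by it), scale invariance, and finally \Cref{lem:main} with drift $m=Q-\gamma$ — gives
\begin{align*}
\Xi_x(\overline h) \;\le\; C\,e^{2\gamma\mathcal{E}_x}\,\pdec{0}{0}{1}\!\left(\mathcal{H}_j\right), \qquad C=C(\gamma,\kappa),
\end{align*}
uniformly in $\lambda_1>0$, $j\in\mathbb{N}$, $x$ and $\overline h$; here the factor $e^{2\gamma\mathcal{E}_x}$ (together with $e^{\mathcal{O}(\gamma^2\tilde e_x)}=e^{\mathcal{O}(1)}$) is the ratio of the constants attached to the linear and exponential parts, all other constants — in particular the whole log-normal factor $e^{\gamma\overline h(x)-\frac{\gamma^2}{2}\mathbb{E}[\overline h(x)^2]}$ and the power of $R(x;D)$ — being absorbed into the effective spectral parameter, to which \Cref{lem:main} is insensitive. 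Symmetrically $\Xi_y(\overline h)\le C e^{2\gamma\mathcal{E}_y}\pdec{0}{0}{1}(\mathcal{H}_k)$.

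Inserting these into $\mathbb{E}_{\overline h}[\Xi_x\Xi_y]$ yields
\begin{align*}
\text{LHS of }\eqref{eq:G2_uniform} \;\le\; C^2\,\pdec{0}{0}{1}(\mathcal{H}_j)\,\pdec{0}{0}{1}(\mathcal{H}_k)\,\mathbb{E}_{\overline h}\!\left[e^{2\gamma\mathcal{E}_x+2\gamma\mathcal{E}_y}\right],
\end{align*}
so it only remains to bound the last expectation uniformly in $x,y$. By Cauchy--Schwarz it is at most $(\mathbb{E}[e^{4\gamma\mathcal{E}_x}]\,\mathbb{E}[e^{4\gamma\mathcal{E}_y}])^{1/2}$, and $\overline h-\overline h(x)$ is, by harmonicity and \Cref{lem:Green_estimate}, a centred Gaussian field on $B(x,\eta/2)$ with pointwise variance $\mathcal{O}(1)$ uniformly in $x,y$ — indeed rescaling $B(x,\eta)$ to the unit disc turns $\overline h$ into the harmonic extension of a boundary GFF, whose fluctuations are scale-invariant and hence independent of $\eta$. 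Borell's inequality then gives $\mathbb{E}[e^{4\gamma\mathcal{E}_x}]\le C'=C'(\gamma)$ uniformly, which completes the proof of \eqref{eq:G2_uniform}.

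The one delicate point — and the place where real care is needed — is the decoupling. The scale $\eta\asymp c(x,y)$ must be chosen so that the two balls separate while still containing the bridges, and, crucially for uniformity in $x,y$, so that the harmonic remainder $\overline h$ contributes only a mean-one log-normal factor (which cancels between the linear and exponential parts of $\mathcal{I}$ and is therefore invisible to \Cref{lem:main}) plus an oscillation error with scale-invariant, hence uniformly bounded, exponential moments. Equivalently, comparing $h^{x,\eta}$ with a scale-invariant field \emph{at its own scale $\eta$} rather than at scale $R(x;D)$ is what keeps the Cameron--Martin reweighting in the $\mathcal{I}$-recognition step matched to the one coming from the domain-Markov field, so that no spurious power of $|x-y|$ survives — naive approaches that Cameron--Martin the two $\mathcal{I}$-factors jointly, or that decouple via a common-shift Gaussian comparison, do produce an uncancelled factor $|x-y|^{-2\gamma^2}$.
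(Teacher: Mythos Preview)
Your argument is correct, but it is genuinely different from the paper's proof. The paper does \emph{not} decouple via the domain Markov property. Instead it performs a joint Cameron--Martin on the product $\mathcal{I}(\lambda_1 F_\gamma^{\{x\}}(\mathbf{b}))\,\mathcal{I}(\lambda_2 F_\gamma^{\{y\}}(\tilde{\mathbf b}))$, then makes a single Gaussian comparison of the GFF $h$ against a field $X=X_x+X_y+N_{x,y}$ built from two independent scale-invariant fields on $B(x,c(x,y))$ and $B(y,c(x,y))$ together with a shared Gaussian $N_{x,y}$ of variance $C_\kappa-\log c(x,y)$; the shared piece is exactly what dominates the cross-covariance $G_0^D(a,b)$ for $a$ near $x$ and $b$ near $y$. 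After that comparison the two factors are independent up to $N_{x,y}$, and the two-variable form of \Cref{lem:main} finishes. The ratio of upper and lower constants produced by the joint Cameron--Martin turns out to be $(2|x-y|/c(x,y))^{2\gamma^2}$ times $\kappa$-dependent factors, which is uniformly bounded since $c(x,y)\asymp\min(|x-y|,\kappa)$ --- so your closing remark that a joint Cameron--Martin approach necessarily leaves an uncancelled $|x-y|^{-2\gamma^2}$ is not accurate.

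Your route trades that construction for the domain Markov decomposition plus a uniform exponential moment bound on the oscillation $\mathcal{E}_x=\sup_{B(x,\eta/2)}|\overline h-\overline h(x)|$. This is a legitimate alternative: the variance of $\overline h(a)-\overline h(x)$ is indeed $O(1)$ uniformly (as $G_0^D-G_0^{B(x,\eta)}$ is harmonic with $O(1)$ second differences on $B(x,\eta/2)$ by \Cref{lem:Green_estimate} and the explicit disc formula), and Dudley plus Borell--TIS then give the required uniform bound on $\mathbb{E}[e^{4\gamma\mathcal{E}_x}]$. The benefit is that you only ever invoke the one-variable \Cref{lem:main} and reuse \Cref{lem:G1_uniform} essentially verbatim; the cost is the extra probabilistic input on $\overline h$, which the paper's comparison-field device sidesteps entirely. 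Your justification that the rescaled $\overline h$ is ``the harmonic extension of a boundary GFF'' is a little loose --- it is the harmonic extension of $h|_{\partial B(x,\eta)}$, whose boundary covariance still carries the ambient $G_0^D$ --- but the conclusion you draw from it is correct.
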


\begin{proof}
By Fubini, we rewrite the LHS of \eqref{eq:G2_uniform} as
\begin{align}
\notag 
& \int_0^{j^{-2} c(x, y)^2} \int_0^{k^{-2} c(x,y)^2} \frac{du}{2\pi u}\frac{dv}{2\pi v}\\
\label{eq:G2pf0}
& \qquad\times
\bdec{x}{x}{u} \otimes \bdec{y}{y}{v} \Bigg[\mathbb{E}
\left[\mathcal{I} \left(\lambda_1  F_\gamma^{\{x\}}(\mathbf{b})\right)\mathcal{I} \left(\lambda_2  F_\gamma^{\{y\}}(\tilde{\mathbf{b}})\right)\right]
1_{\mathcal{H}_j(\mathbf{b})} 1_{\mathcal{H}_k(\tilde{\mathbf{b}})}\Bigg]
\end{align}

\noindent where $\mathbf{b}$ and $\tilde{\mathbf{b}}$ are two independent Brownian bridges distributed according to $\bdec{x}{x}{u}$ and $\bdec{y}{y}{v}$ respectively. The rest of our analysis will be divided into two steps, mirroring the structure of the proof of \Cref{lem:G1_uniform}.

\paragraph{Step (i): Gaussian comparison.} We want to derive a two-point analogue of the bound \eqref{eq:G1_uniform_step1}, i.e. for $\max(j\sqrt{u}, k\sqrt{v}) \le c(x,y)$ and on the event $\mathcal{H}_j(\mathbf{b}) \cap \mathcal{H}_k(\tilde{\mathbf{b}})$, we aim to establish an inequality of the form
\begin{align} \label{eq:G2_claim1}
& \mathbb{E}
\left[\mathcal{I} \left(\lambda_1  F_\gamma^{\{x\}}(\mathbf{b})\right)\mathcal{I} \left(\lambda_2  F_\gamma^{\{y\}}(\tilde{\mathbf{b}})\right)\right]
\le C \mathbb{E}
\left[\mathcal{I} \left(\widetilde{\lambda}_1
\overline{F}_\gamma^{\{x\}}(\mathbf{b}; X)\right)\mathcal{I} \left(\widetilde{\lambda}_2  \overline{F}_\gamma^{\{y\}}(\tilde{\mathbf{b}}; X)\right)\right]
\end{align}

\noindent where $X$ is some Gaussian field which shall be defined in \eqref{eq:G2_field}, and $\widetilde{\lambda}_1, \widetilde{\lambda}_2 > 0$ and $C \in (0, \infty)$ will be suitably chosen in \eqref{eq:G2_lambdat} and \eqref{eq:G2_const1} respectively. For now, we just emphasise that the constant $C$ on the RHS of \eqref{eq:G2_claim1} will be independent of $\lambda_1, \lambda_2$ and satisfy the desired uniformity in $j, k \in \mathbb{N}$ and $x, y \in D$ as described in the statement of  \Cref{lem:G2_uniform}.\\

To establish an inequality of the form \eqref{eq:G2_claim1}, we begin by applying Cameron-Martin to the LHS of \eqref{eq:G2_claim1} and rewrite
\begin{align}
\notag 
& \mathbb{E}
\left[\mathcal{I} \left(\lambda_1  F_\gamma^{\{x\}}(\mathbf{b})\right)\mathcal{I} \left(\lambda_2  F_\gamma^{\{y\}}(\tilde{\mathbf{b}})\right)\right]\\
\notag
& = \mathbb{E}
\left[\lambda_1 \lambda_2 F_\gamma^{\{x\}}(\mathbf{b})F_\gamma^{\{y\}}(\tilde{\mathbf{b}}) e^{-\lambda_1F_\gamma^{\{x\}}(\mathbf{b}) - \lambda_2F_\gamma^{\{y\}}(\tilde{\mathbf{b}})}\right]\\
\notag
& = \lambda_1 \lambda_2 \int_0^u ds_1 \int_0^v dt_1  e^{\gamma^2 [G_0^D(x, \mathbf{b}_{s_1}) + G_0^D(y, \tilde{\mathbf{b}}_{t_1}) + G_0^D(\mathbf{b}_{s_1}, \tilde{\mathbf{b}}_{t_1})]} R(\mathbf{b}_{s_1}; D)^{\frac{\gamma^2}{2}} 
 R(\tilde{\mathbf{b}}_{t_1}; D)^{\frac{\gamma^2}{2}}
\\
\notag 
& \qquad \times \mathbb{E}\Bigg[ 
\exp\Bigg(-\lambda_1 \int_{0}^u e^{\gamma^2 [G_0^D(x, \mathbf{b}_{s_2}) + G_0^D(\mathbf{b}_{s_1}, \mathbf{b}_{s_2})
+ G_0^D(\tilde{\mathbf{b}}_{t_1}, \mathbf{b}_{s_2})]}F_\gamma(ds_2; \mathbf{b})\\
\label{eq:G2pf1}
& \qquad \qquad \qquad \qquad 
- \lambda_2 \int_{0}^v e^{\gamma^2 [G_0^D(y, \tilde{\mathbf{b}}_{t_2}) + G_0^D(\tilde{\mathbf{b}}_{t_1}, \tilde{\mathbf{b}}_{t_2})
+ G_0^D(\mathbf{b}_{s_1}, \tilde{\mathbf{b}}_{t_2})]}F_\gamma(dt_2; \tilde{\mathbf{b}})
\Bigg)
\Bigg].
\end{align}

Since $\max(j\sqrt{u}, k \sqrt{v}) \le c(x,y)$, we have $\mathbf{b}_s \in B(x, j\sqrt{u}) \subset B(x, c(x,y))$ and $\tilde{\mathbf{b}}_t \in B(y, k\sqrt{v}) \subset B(y, c(x,y))$ on the event $\mathcal{H}_j(\mathbf{b}) \cap \mathcal{H}_k(\tilde{\mathbf{b}})$. Moreover:
\begin{itemize}[leftmargin=*]
    \item The following estimates apply for all $s_1, s_2 \le u$ and $t_1, t_2 \le v$ from \Cref{cor:gff_exact}:
\begin{align*}
 \left| G_0^D(\mathbf{b}_{s_1}, \mathbf{b}_{s_2}) - \left[ - \log |\mathbf{b}_{s_1} - \mathbf{b}_{s_2}| + \log R(x; D) \right] \right| &\le 4, \\
 \left| G_0^D(\tilde{\mathbf{b}}_{t_1}, \tilde{\mathbf{b}}_{t_2}) - \left[ - \log |\tilde{\mathbf{b}}_{t_1} - \tilde{\mathbf{b}}_{t_2}| + \log R(y; D) \right] \right| &\le 4.
\end{align*}

Let us recall again that these inequalities above imply, in particular, that
\begin{align*}
\begin{array}{r}
 \big| G_0^D(x, \mathbf{b}_{s_1}) - \big[ - \log |x - \mathbf{b}_{s_1}| + \log R(x; D) \big] \big| \le 4\\
 \big| G_0^D(y, \tilde{\mathbf{b}}_{t_1}) - \big[ - \log |y - \tilde{\mathbf{b}}_{t_1}| + \log R(y; D) \big] \big|\le 4
\end{array}
\qquad \text{(by setting $s_2, t_2 = 0$)}
\end{align*}

\noindent as well as
\begin{align*}
\begin{array}{r}
|\log R(\mathbf{b}_{s_1}; D) - \log R(x; D)|\le 4\\
|\log R(\tilde{\mathbf{b}}_{t_1}; D) - \log R(y; D)|\le 4
\end{array}
\qquad \text{(by letting $s_2 \to s_1, t_2 \to t_1$)}
\end{align*}

\noindent for all $s_1 \le u$ and $t_1 \le v$.

\item We have $d(a, \partial D) \wedge d(b, \partial D) \ge \frac{\kappa}{2}$ for all $a \in B(x, j\sqrt{u})$ and $b \in B(x, k\sqrt{v})$. This allows us to apply the estimate \eqref{eq:mollified_cov_precise} several times below; in particular,
\begin{align*}
    \left| G_0^D(\mathbf{b}_{s}, \tilde{\mathbf{b}}_{t}) + \log |\mathbf{b}_{s} - \tilde{\mathbf{b}}_{t}| \right| \le C_\kappa \qquad \forall s \le u, \quad t \le v.
\end{align*}

\item By definition, we also have
\begin{align*}
c(x, y) \le \frac{|x-y|}{8} \le |\mathbf{b}_s - \tilde{\mathbf{b}}_t| \le 2|x-y| \qquad \forall s \le u, \quad t \le v.
\end{align*}
\end{itemize}

\noindent Combining all these estimates, we can upper bound \eqref{eq:G2pf1} with
\begin{align}
\notag 
& \lambda_1 \lambda_2 \int_0^u ds_1 \int_0^v dt_1  \frac{e^{(12 + C_\kappa) \gamma^2} R(x; D)^{\frac{3\gamma^2}{2}} R(y; D)^{\frac{3\gamma^2}{2}}}{|x - \mathbf{b}_{s_1}|^{\gamma^2}|y - \tilde{\mathbf{b}}_{t_1}|^{\gamma^2} c(x, y)^{\gamma^2}}
\\
\notag 
& \quad \times \mathbb{E}\Bigg[ 
\exp\Bigg(-\lambda_1  \frac{2^{-\gamma^2} e^{-(10 + C_\kappa) \gamma^2}R(x; D)^{\frac{5\gamma^2}{2}}}{|x-y|^{\gamma^2}} 
\int_{0}^u \frac{e^{\gamma h(\mathbf{b}_{s_2}) - \frac{\gamma^2}{2} \mathbb{E}[h(\mathbf{b}_{s_2})^2]}ds_2}{|x-\mathbf{b}_{s_2}|^{\gamma^2} |\mathbf{b}_{s_1} - \mathbf{b}_{s_2}|^{\gamma^2} }
\\
\label{eq:G2pf2}
& \quad \qquad  \qquad \qquad 
-\lambda_2  \frac{2^{-\gamma^2} e^{-(10 + C_\kappa) \gamma^2}R(y; D)^{\frac{5\gamma^2}{2}}}{|x-y|^{\gamma^2}}
\int_{0}^v \frac{e^{\gamma h(\tilde{\mathbf{b}}_{t_2}) - \frac{\gamma^2}{2} \mathbb{E}[h(\tilde{\mathbf{b}}_{t_2})^2]}dt_2}{|y-\tilde{\mathbf{b}}_{t_2}|^{\gamma^2} |\tilde{\mathbf{b}}_{t_1} - \tilde{\mathbf{b}}_{t_2}|^{\gamma^2}}
\Bigg)
\Bigg].
\end{align}

Let us now introduce a new (centred) Gaussian field $X(\cdot) = X(\cdot; \kappa)$ on $B(x, c(x,y)) \cup B(y, c(x, y))$ with covariance
\begin{align}
\notag
& \mathbb{E}\left[X(a) X(b)\right] \\
\label{eq:G2_field}
& \quad = \mathbb{E}\left[X_x(a) X_x(b)\right]  1_{\{a, b \in B(x, c(x, y))\}}
+ \mathbb{E}\left[X_y(a) X_y(b)\right]1_{\{a, b \in B(y, c(x,y))\}}
+ \mathbb{E}\left[N_{x, y}^2\right]
\end{align}

\noindent where
\begin{itemize}
\item $X_x (\cdot)$ and $X_y(\cdot)$ are two independent exactly scale invariant Gaussian fields on the two balls $B(x, c(x,y))$ and $B(y, c(x,y))$ respectively, and both of their covariance kernels are of the form
\begin{align*}
(a, b) \mapsto -\log |a-b| + \log c(x,y);
\end{align*}
\item $N_{x, y}$ is an independent Gaussian random variable with zero mean and variance equal to $C_\kappa - \log c(x, y)$.
\end{itemize}

\noindent (The fact that $X_x$ and $X_y$ exist follows from the fact that the kernel $(a, b) \mapsto -\log |a-b|$ is positive definite on the unit ball in dimension $2$.) By construction, we see that
\begin{align*}
\mathbb{E}\left[X(a) X(b)\right]
& = \begin{dcases}
- \log |a-b|  + C_\kappa & \text{if $a, b$ belong to the same ball}\\
 - \log c(|x-y|) + C_\kappa& \text{otherwise}
\end{dcases}\\
& \ge - \log|a-b| + C_\kappa \\
&\ge G_0^D(a, b)
\qquad \forall a, b \in B(x, c(x, y) \cup B(y, c(x,y))
\end{align*}

\noindent where the last inequality follows from the definition of $C_\kappa$ in \eqref{eq:mollified_cov_precise} (sending $\epsilon, \delta$ to $0$). Therefore, by Gaussian comparison we further upper bound \eqref{eq:G2pf2} by
\begin{align}
\notag 
& \lambda_1 \lambda_2 \int_0^u ds_1 \int_0^v dt_1  \frac{e^{(12 + C_\kappa) \gamma^2} R(x; D)^{\frac{3\gamma^2}{2}} R(y; D)^{\frac{3\gamma^2}{2}}}{|x - \mathbf{b}_{s_1}|^{\gamma^2}|y - \tilde{\mathbf{b}}_{t_1}|^{\gamma^2} c(x,y)^{\gamma^2}}
\\
\notag 
& \quad \times \mathbb{E}\Bigg[ 
\exp\Bigg(-\lambda_1  \frac{2^{-\gamma^2} e^{-(10 + C_\kappa) \gamma^2}R(x; D)^{\frac{5\gamma^2}{2}}}{|x-y|^{\gamma^2}} 
\int_{0}^u \frac{e^{\gamma X(\mathbf{b}_{s_2}) - \frac{\gamma^2}{2} \mathbb{E}[X(\mathbf{b}_{s_2})^2]}ds_2}{|x-\mathbf{b}_{s_2}|^{\gamma^2} |\mathbf{b}_{s_1} - \mathbf{b}_{s_2}|^{\gamma^2} }
\\
\label{eq:G2pf3}
& \quad \qquad  \qquad \qquad 
-\lambda_2  \frac{2^{-\gamma^2} e^{-(10 + C_\kappa) \gamma^2}R(y; D)^{\frac{5\gamma^2}{2}}}{|x-y|^{\gamma^2}}
\int_{0}^v \frac{e^{\gamma X(\tilde{\mathbf{b}}_{t_2}) - \frac{\gamma^2}{2} \mathbb{E}[X(\tilde{\mathbf{b}}_{t_2})^2]}dt_2}{|y-\tilde{\mathbf{b}}_{t_2}|^{\gamma^2} |\tilde{\mathbf{b}}_{t_1} - \tilde{\mathbf{b}}_{t_2}|^{\gamma^2}}
\Bigg)
\Bigg].
\end{align}

Finally, recall the RHS of \eqref{eq:G2_claim1}: by Cameron-Martin we have
\begin{align}
\notag &C \mathbb{E}
\left[\mathcal{I} \left(\widetilde{\lambda}_1
\overline{F}_\gamma^{\{x\}}(\mathbf{b}; X)\right)\mathcal{I} \left(\widetilde{\lambda}_2  \overline{F}_\gamma^{\{y\}}(\tilde{\mathbf{b}}; X)\right)\right]\\
\notag 
& \qquad = C \widetilde{\lambda}_1\widetilde{\lambda}_2
\int_0^u ds_1 \int_0^v dt_1 \frac{e^{\gamma^2C_\kappa}}{|x-\mathbf{b}_{s_1}|^{\gamma^2}|y-\tilde{\mathbf{b}}_{t_1}|^{\gamma^2}c(x,y)^{\gamma^2}}\\
\notag
& \qquad \qquad \times \mathbb{E}\Bigg[
\exp \Bigg(-\widetilde{\lambda}_1 \frac{e^{2\gamma^2 C_\kappa}}{c(x,y)^{\gamma^2}}
\int_{0}^u \frac{e^{\gamma X(\mathbf{b}_{s_2}) - \frac{\gamma^2}{2} \mathbb{E}[X(\mathbf{b}_{s_2})^2]}ds_2}{|x-\mathbf{b}_{s_2}|^{\gamma^2} |\mathbf{b}_{s_1} - \mathbf{b}_{s_2}|^{\gamma^2} }\\
\label{eq:G2pf4}
& \qquad \qquad \qquad \qquad \qquad
-\widetilde{\lambda}_2 \frac{e^{2\gamma^2 C_\kappa}}{c(x,y)^{\gamma^2}}
\int_{0}^v \frac{e^{\gamma X(\tilde{\mathbf{b}}_{t_2}) - \frac{\gamma^2}{2} \mathbb{E}[X(\tilde{\mathbf{b}}_{t_2})^2]}dt_2}{|y-\tilde{\mathbf{b}}_{t_2}|^{\gamma^2} |\tilde{\mathbf{b}}_{t_1} - \tilde{\mathbf{b}}_{t_2}|^{\gamma^2} }
\Bigg)
\Bigg].
\end{align}

\noindent Comparing \eqref{eq:G2pf3} and \eqref{eq:G2pf4}, we can now choose
\begin{align}\label{eq:G2_lambdat}
\begin{split}
\widetilde{\lambda}_1 
&= \lambda_1 \left[ e^{-10 - 3C_\kappa} \frac{c(x, y)}{2|x-y|}\right]^{\gamma^2} R(x; D)^{\frac{5\gamma^2}{2}}, \\
\widetilde{\lambda}_2
&= \lambda_2 \left[ e^{-10 - 3C_\kappa} \frac{c(x, y)}{2|x-y|}\right]^{\gamma^2} R(y; D)^{\frac{5\gamma^2}{2}},
\end{split}
\end{align}

\noindent and 
\begin{align} \label{eq:G2_const1}
C = \left[ \frac{2|x-y|}{c(x, y)}\right]^{2\gamma^2} e^{(32 + 6C_\kappa)\gamma^2} R(x; D)^{-\gamma^2}R(y; D)^{-\gamma^2}
\end{align}

\noindent which can be bounded uniformly in $x, y$ satisfying $d(x, \partial D) \wedge d(y, \partial D) \ge \kappa$. This concludes Step (i) of the proof.

\paragraph{Step (ii): scale invariance.} For any $\widetilde{\lambda}_1, \widetilde{\lambda}_2 > 0$ and $j\sqrt{u}, k\sqrt{v} \le c(x,y)$, we aim to establish an identity of the form
\begin{align}
\notag
& \bdec{x}{x}{u} \otimes \bdec{y}{y}{v}  \otimes \mathbb{E}\Bigg[
\mathcal{I} \left(\widetilde{\lambda}_1
\overline{F}_\gamma^{\{x\}}(\mathbf{b}; X)\right)\mathcal{I} \left(\widetilde{\lambda}_2  \overline{F}_\gamma^{\{y\}}(\tilde{\mathbf{b}}; X)\right)1_{\mathcal{H}_j(\mathbf{b}) \cap \mathcal{H}_k(\tilde{\mathbf{b}})}\Bigg]\\
\label{eq:G2_claim2}
& = 
\bdec{0}{0}{1}^{\otimes 2}  \otimes \mathbb{E} \Bigg[
\mathcal{I} \left(\widetilde{\lambda}_1 \mathcal{E}_x(\mathbf{b}) e^{\gamma (B_{1, T_{1}(u)} - (Q-\gamma) T_1(u))}\right)
\mathcal{I} \left(\widetilde{\lambda}_2 \mathcal{E}_y(\tilde{\mathbf{b}})e^{\gamma (B_{2, T_{2}(v)} - (Q-\gamma) T_2(v))}\right)1_{\mathcal{H}_j(\mathbf{b})\cap\mathcal{H}_k(\tilde{\mathbf{b}})}\Bigg]
\end{align}

\noindent where $B_{1, T_1(u)} \sim \mathcal{N}(0, T_1(u))$ and $B_{2, T_2(v)} \sim \mathcal{N}(0, T_2(v))$ are two random variables independent of each other and everything else (including the random variables $\mathcal{E}_x(\mathbf{b})$ and $\mathcal{E}_y(\tilde{\mathbf{b}})$ which will be specified later), with
\begin{align} \label{eq:2ptB}
T_1(u) := -\log \left(\frac{j \sqrt{u}}{ c(x, y)}\right)
\quad \text{and} \quad 
 T_2(v) := -\log \left(\frac{k \sqrt{v}}{c(x,y)}\right)
\end{align}

\noindent which are non-negative for the range of values of $(u, v)$ under consideration.

To commence with, let us recall the definition of the field $X(\cdot)$ in \eqref{eq:G2_field}.  On the event $\mathcal{H}_j(\mathbf{b})\cap\mathcal{H}_k(\tilde{\mathbf{b}})$, we have
\begin{align*}
\overline{F}_\gamma^{\{x\}}(\mathbf{b}; X)
= \overline{F}_\gamma^{\{x\}}(\mathbf{b}; X_x(\cdot) + N_{x,y})
\qquad \text{and} \qquad
\overline{F}_\gamma^{\{y\}}(\tilde{\mathbf{b}}; X)
= \overline{F}_\gamma^{\{y\}}(\tilde{\mathbf{b}}; X_y(\cdot) + N_{x,y}).
\end{align*} 

\noindent Let us standardise our Brownian loops just like what was done in \eqref{eq:BB_rescale}; in other words, we rewrite
\begin{align}
\notag
&\bdec{x}{x}{u} \otimes \bdec{y}{y}{v}  \otimes \mathbb{E}\Bigg[
\mathcal{I} \left(\lambda_1
\overline{F}_\gamma^{\{x\}}(\mathbf{b}; X)\right)\mathcal{I} \left(\lambda_2  \overline{F}_\gamma^{\{y\}}(\tilde{\mathbf{b}}; X)\right)1_{\mathcal{H}_j(\mathbf{b}) \cap \mathcal{H}_k(\tilde{\mathbf{b}})}\Bigg]\\
\notag
& = \bdec{0}{0}{1}^{\otimes 2}  \otimes \mathbb{E}\Bigg[
\mathcal{I} \left(\lambda_1\overline{F}_\gamma^{\{x\}}(x + \sqrt{u}\mathbf{b}_{\cdot / u}; X_x(\cdot) + N_{x,y})\right)\\
\label{eq:G2pf5}
&\qquad \qquad \qquad \qquad \times \mathcal{I} \left(\lambda_2  \overline{F}_\gamma^{\{y\}}(y + \sqrt{v}\tilde{\mathbf{b}}_{\cdot / v}; X_y(\cdot) + N_{x,y})\right)1_{\mathcal{H}_j(\mathbf{b}) \cap \mathcal{H}_k(\tilde{\mathbf{b}})}\Bigg]
\end{align}

\noindent where (based on the same argument in \eqref{eq:Ftilde_std})
\begin{align*}
&\begin{pmatrix}
\overline{F}_\gamma^{\{x\}}(x + \sqrt{u}\mathbf{b}_{\cdot / u}; X_x(\cdot) + N_{x,y})\\
\overline{F}_\gamma^{\{y\}}(y + \sqrt{v}\tilde{\mathbf{b}}_{\cdot / v}; X_y(\cdot) + N_{x,y})
\end{pmatrix}\\
& \qquad = e^{\gamma N_{x, y} - \frac{\gamma^2}{2} \mathbb{E}[N_{x, y}^2]} \times
\begin{pmatrix}
\displaystyle
u^{1-\frac{\gamma^2}{2}} 
\int_0^1 1_{\{\mathbf{b}_s \in B(0, j)\}}\frac{e^{\gamma X_x(x + \sqrt{u} \mathbf{b}_s)- \frac{\gamma^2}{2} \mathbb{E}[X_x(x + \sqrt{u} \mathbf{b}_s)^2]}ds}{|\mathbf{b}_s|^{\gamma^2}}\\
\displaystyle 
 v^{1-\frac{\gamma^2}{2}}
\int_0^1 1_{\{\tilde{\mathbf{b}}_t \in B(0, k)\}}\frac{e^{\gamma X_y(y + \sqrt{v} \tilde{\mathbf{b}}_t)- \frac{\gamma^2}{2} \mathbb{E}[X_y(y + \sqrt{v}\tilde{ \mathbf{b}}_t)^2]}dt}{|\tilde{\mathbf{b}}_t|^{\gamma^2}}
\end{pmatrix}.
\end{align*}

\noindent Now, let $\overline{X}_x, \overline{X}_y$ be two independent Gaussian fields on the unit ball with covariance kernels $(a, b) \mapsto -\log |a-b|$ and recall \eqref{eq:2ptB}. Then for any $a, b \in B(0, 1)$, one has the following equivalence in covariance:
\begin{align*}
\mathbb{E}\left[X_x(x+ j\sqrt{u} a)X_x(x+ j\sqrt{u} b)\right]
& = \mathbb{E}\left[\overline{X}_x(a)\overline{X}_x(b)\right] + \mathbb{E}\left[B_{1,  T_1(u)}^2\right]\\
\text{and} \qquad \mathbb{E}\left[X_y(y+ k\sqrt{v} a)X_y(y+ k\sqrt{v} b)\right]
& = \mathbb{E}\left[\overline{X}_y(a)\overline{X}_y(b)\right] +\mathbb{E}\left[B_{2,T_2(v)}^2\right]
\end{align*}

\noindent and thus
\begin{align*}
& \begin{pmatrix}
\overline{F}_\gamma^{\{x\}}(x + \sqrt{u}\mathbf{b}_{\cdot / u}; X_x(\cdot) + N_{x,y})\\
\overline{F}_\gamma^{\{y\}}(y + \sqrt{v}\tilde{\mathbf{b}}_{\cdot / v}; X_y(\cdot) + N_{x,y})
\end{pmatrix}\\
& \qquad \overset{d}{=} 
e^{\gamma N_{x, y} - \frac{\gamma^2}{2} \mathbb{E}[N_{x, y}^2]}  \times
\begin{pmatrix}
\displaystyle
u^{1-\frac{\gamma^2}{2}} e^{\gamma B_{1, T_1(u)} - \frac{\gamma^2}{2} T_1(u)}
\int_0^1 1_{\{\mathbf{b}_s \in B(0, j)\}}\frac{e^{\gamma \overline{X}_x(\mathbf{b}_s)- \frac{\gamma^2}{2} \mathbb{E}[\overline{X}_x(\mathbf{b}_s)^2]}ds}{|\mathbf{b}_s|^{\gamma^2}}\\
\displaystyle 
 v^{1-\frac{\gamma^2}{2}} e^{\gamma B_{2, T_2(v)} - \frac{\gamma^2}{2} T_2(v)}
\int_0^1 1_{\{\tilde{\mathbf{b}}_t \in B(0, k)\}}\frac{e^{\gamma \overline{X}_y(\tilde{\mathbf{b}}_t)- \frac{\gamma^2}{2} \mathbb{E}[\overline{X}_y(\tilde{ \mathbf{b}}_t)^2]}dt}{|\tilde{\mathbf{b}}_t|^{\gamma^2}}
\end{pmatrix}\\
& \qquad =
e^{\gamma N_{x, y} - \frac{\gamma^2}{2} \mathbb{E}[N_{x, y}^2]}  \times
\begin{pmatrix}
\overline{F}_\gamma^{\{x\}}(\mathbf{b}; \overline{X}_x) \left[c(x,y)/j\right]^{2-\gamma^2} \exp\left(\gamma [B_{1, T_1(u)} - (Q-\gamma) T_1(u)] \right)\\
\overline{F}_\gamma^{\{y\}}(\tilde{\mathbf{b}}; \overline{X}_y) \left[c(x,y)/k\right]^{2-\gamma^2} \exp\left(\gamma [B_{2, T_2(v)} - (Q-\gamma) T_2(v)] \right)
\end{pmatrix}.
\end{align*}

\noindent Substituting this into \eqref{eq:G2pf5}, we conclude that \eqref{eq:G2_claim2} holds with
\begin{align*}
\begin{pmatrix}
\mathcal{E}_x(\mathbf{b}) \\
\mathcal{E}_y(\tilde{\mathbf{b}})
\end{pmatrix}
:=  e^{\gamma N_{x, y} - \frac{\gamma^2}{2} \mathbb{E}[N_{x, y}^2]} \times
\begin{pmatrix}
\overline{F}_\gamma^{\{x\}}(\mathbf{b}; \overline{X}_x) \left[c(x,y)/j\right]^{2-\gamma^2}\\
\overline{F}_\gamma^{\{y\}}(\tilde{\mathbf{b}}; \overline{X}_y) \left[c(x,y)/k\right]^{2-\gamma^2}
\end{pmatrix}.
\end{align*}

\paragraph{Concluding the proof of \Cref{lem:G2_uniform}.} Combining the two claims \eqref{eq:G2_claim1} and \eqref{eq:G2_claim2},  we see that \eqref{eq:G2pf0} is upper-bounded by
\begin{align}
\notag 
& \int_0^{j^{-2} c(x,y)^2} \int_0^{k^{-2} c(x,y)^2} \frac{du}{2\pi u}\frac{dv}{2\pi v}
\bdec{0}{0}{1}^{\otimes 2} \otimes \mathbb{E} \Bigg[
\mathcal{I} \left(\widetilde{\lambda}_1 \mathcal{E}_x(\mathbf{b}) e^{\gamma [B_{1, T_{1}(u)} - (Q-\gamma) T_1(u)]}\right)\\
\label{eq:G2pf6}
& \qquad\times
\mathcal{I} \left(\widetilde{\lambda}_2 \mathcal{E}_y(\tilde{\mathbf{b}})e^{\gamma [B_{2, T_{2}(v)} - (Q-\gamma) T_2(v)]}\right)1_{\mathcal{H}_j(\mathbf{b})\cap\mathcal{H}_k(\tilde{\mathbf{b}})}\Bigg]
\end{align}

\noindent up to a multiplicative constant $C \in (0, \infty)$ inherited from the RHS of \eqref{eq:G2_claim1}.

Note that by definition, the distributions of $\mathcal{E}_x(\mathbf{b})$ and $\mathcal{E}_y(\tilde{\mathbf{b}})$ do not depend on the value of $u$ and $v$. If we now consider the substitution $s = T_1(u)$ and $t = T_2(v)$, then \eqref{eq:G2pf6} can be further rewritten as
\begin{align*}
&\int_0^\infty \int_0^\infty \frac{ds dt}{\pi^2} 
\bdec{0}{0}{1}^{\otimes 2} \otimes \mathbb{E} \Bigg[
\mathcal{I} \left(\widetilde{\lambda}_1 \mathcal{E}_x(\mathbf{b}) e^{\gamma B_{1, s}^{-(Q-\gamma)}}\right)
\mathcal{I} \left(\widetilde{\lambda}_2 \mathcal{E}_y(\tilde{\mathbf{b}})e^{\gamma B_{2, t}^{-(Q-\gamma)}}\right)1_{\mathcal{H}_j(\mathbf{b})\cap\mathcal{H}_k(\tilde{\mathbf{b}})}\Bigg]\\
& \qquad = \frac{1}{\pi^2}
\bdec{0}{0}{1}^{\otimes 2} \otimes \mathbb{E} \Bigg[
\left( \int_0^\infty \mathcal{I} \left(\widetilde{\lambda}_1 \mathcal{E}_x(\mathbf{b}) e^{\gamma B_{1, s}^{-(Q-\gamma)}}\right) ds\right)\\
&\qquad \qquad \times
\left(\int_0^\infty \mathcal{I} \left(\widetilde{\lambda}_2 \mathcal{E}_y(\tilde{\mathbf{b}})e^{\gamma B_{2, t}^{-(Q-\gamma)}}\right)dt\right)
1_{\mathcal{H}_j(\mathbf{b})\cap\mathcal{H}_k(\tilde{\mathbf{b}})}\Bigg]
\end{align*}

\noindent where $(B_{i, t}^{-(Q-\gamma)})_{t \ge 0}$ are two independent Brownian motions with drift $-(Q-\gamma) < 0$ that are independent of everything else. By \Cref{lem:main} (or more precisely the estimate \eqref{eq:uniform_main2}), we see that this expectation is bounded uniformly in $\lambda_1, \lambda_2 > 0$ by
\begin{align*}
\left[\pi c_\gamma \right]^2 \bdec{0}{0}{1}^{\otimes 2} \otimes \mathbb{E} [1_{\mathcal{H}_j(\mathbf{b})\cap\mathcal{H}_k(\tilde{\mathbf{b}})}]
= \left[\pi c_\gamma \right]^2 \pdec{0}{0}{1}\left(\mathcal{H}_j\right)\pdec{0}{0}{1}\left(\mathcal{H}_k\right)
\end{align*}

\noindent which is our desired claim \eqref{eq:G2_uniform}.

\end{proof}

\begin{proof}[Proof of \Cref{lem:diagonal_uniform}]
Recall $c(x, y):= \frac{1}{8} \min(|x-y|, \kappa)$, and consider
\begin{align}\label{eq:diagonal_uniform_pf0}
\begin{split}
& \mathbb{E} \Bigg[ 
1_{\mathcal{G}^{\{x, y\}}_{[n_0, \infty)}(x) \cap \mathcal{G}^{\{x, y\}}_{[n_0, \infty)}(y)}
\int_0^1 \frac{du}{2\pi u}\bdec{x}{x}{u}\left[ \mathcal{I} \left(\lambda  F_{\gamma}^{\{x, y\}}(\mathbf{b}) \right)\right]
\int_0^1 \frac{dv}{2\pi v} \bdec{y}{y}{v}\left[ \mathcal{I} \left(\lambda  F_{\gamma}^{\{x, y\}}(\tilde{\mathbf{b}})\right) \right]
\Bigg] \\
&  \le   \sum_{k \ge 1} \mathbb{E}\Bigg[ 
1_{\mathcal{G}^{\{x, y\}}_{[n_0, \infty)}(x) \cap \mathcal{G}^{\{x, y\}}_{[n_0, \infty)}(y)}
 \left(\int_{k^{-2} c(x,y)^2}^{1} \frac{dv}{2\pi v} \pdec{y}{y}{v} \left(\mathcal{H}_k\right)\right)\\
&\qquad \qquad \qquad \qquad \qquad \times 
\left(\int_{0}^{1} \frac{du}{2\pi u} \bdec{x}{x}{u} [ \mathcal{I} \left(\lambda  F_\gamma^{\{x, y\}}(\mathbf{b})\right)1_{\mathcal{H}_j(\mathbf{b})}]\right)
\Bigg]\\
& \quad + \sum_{j\ge 1} \mathbb{E}\Bigg[ 
1_{\mathcal{G}^{\{x, y\}}_{[n_0, \infty)}(x) \cap \mathcal{G}^{\{x, y\}}_{[n_0, \infty)}(y)}
\left(\int_{j^{-2} c(x, y)^2}^{1} \frac{du}{2\pi u} \pdec{x}{x}{u} \left(\mathcal{H}_j\right)\right)\\
& \qquad \qquad  \qquad \qquad  \qquad \times
\left(\int_{0}^{1} \frac{dv}{2\pi v} \bdec{y}{y}{v} [ \mathcal{I} \left(\lambda  F_\gamma^{\{x, y\}}(\tilde{\mathbf{b}})\right)1_{\mathcal{H}_k(\tilde{\mathbf{b}})}]\right)
\Bigg]\\
& \quad + \sum_{j, k \ge 1} \mathbb{E}\Bigg[ 
1_{\mathcal{G}^{\{x, y\}}_{[n_0, \infty)}(x) \cap \mathcal{G}^{\{x, y\}}_{[n_0, \infty)}(y)}
\left(\int_{0}^{j^{-2} c(x, y)^2} \frac{du}{2\pi u} \bdec{x}{x}{u} [ \mathcal{I} \left(\lambda  F_\gamma^{\{x, y\}}(\mathbf{b})\right)1_{\mathcal{H}_j(\mathbf{b})}]\right) \\
 & \qquad \qquad \qquad \qquad \qquad   \times 
\left(\int_{0}^{k^{-2} c(x, y)^2} \frac{dv}{2\pi v} \bdec{y}{y}{v} [ \mathcal{I} \left(\lambda  F_\gamma^{\{x, y\}}(\tilde{\mathbf{b}})\right)1_{\mathcal{H}_k(\tilde{\mathbf{b}})}]\right)
\Bigg].
\end{split}
\end{align}

The first sum on the RHS is upper bounded by 
\begin{align*}
&\mathbb{E}\Bigg[ 
1_{\mathcal{G}^{\{x, y\}}_{[n_0, \infty)}(x) \cap \mathcal{G}^{\{x, y\}}_{[n_0, \infty)}(y)}
\left(\int_{0}^{1} \frac{du}{2\pi u} \bdec{x}{x}{u} [ \mathcal{I} \left(\lambda  F_\gamma^{\{x, y\}}(\mathbf{b})\right)1_{\mathcal{H}_j(\mathbf{b})}]\right)
\Bigg]
\sum_{k \ge 1}  \left(\log \frac{k}{c(x, y)}\right) \pdec{0}{0}{1}(\mathcal{H}_k)\\
& \qquad  \lesssim 
\mathbb{E}\Bigg[ 
1_{\mathcal{G}^{\{x, y\}}_{[n_0, \infty)}(x) \cap \mathcal{G}^{\{x, y\}}_{[n_0, \infty)}(y)}
\left(\int_{0}^{1} \frac{du}{2\pi u} \bdec{x}{x}{u} [ \mathcal{I} \left(\lambda  F_\gamma^{\{x, y\}}(\mathbf{b})\right)1_{\mathcal{H}_j(\mathbf{b})}]\right)
\Bigg] \left[1 - \log c(x, y)\right].
\end{align*}

\noindent Since the remaining expectation can be controlled by \Cref{lem:cross_uniform}, it follows that the first sum indeed satisfies a bound of the form \eqref{eq:diagonal_uniform}. The same argument applies to the second sum in \eqref{eq:diagonal_uniform_pf0}.

To conclude the proof we must show that the third sum in \eqref{eq:diagonal_uniform_pf0} satisfies a similar bound. We now consider two cases, following arguments similar to that of the proof of \Cref{lem:cross_uniform}.

\paragraph{Case 1: $|x-y| \ge 2^{-n_0}$.} 
Our goal here is to show that the third sum in \eqref{eq:diagonal_uniform_pf0} is bounded uniformly in $\lambda > 0$. (This is enough to conclude the estimate \eqref{eq:diagonal_uniform} as $|x-y|$ is bounded away from $0$.)

For any $\max(j\sqrt{u}, k\sqrt{v}) \le c(x, y)$, we have on the event $\mathcal{H}_j(\mathbf{b}) \cap \mathcal{H}_k(\tilde{\mathbf{b}})$ that
\begin{align*}
\mathbf{b}_{\cdot} \in B(x, j \sqrt{u}) \subset B(x, c(x, y))
\qquad \text{and} \qquad 
\tilde{\mathbf{b}}_{\cdot} \in B(y, k \sqrt{u}) \subset B(y, c(x,y)).
\end{align*}

\noindent Based on the definition of $c(x, y)$, we know that the two balls $B(x, c(x, y))$ and $B(y, c(x,y))$ are at least $|x-y| / 2 \ge 2^{-(n_0 + 1)}$ apart from each other. By the continuity of the Green's function away from the diagonal, there exists some constant $C_D(n_0) < \infty$ such that
\begin{align*}
\max \left( |G_0^D(y, \mathbf{b}_s)|, |G_0^D(x, \tilde{\mathbf{b}}_t)|\right) \le C_D(n_0)
\qquad \forall s \le u, \quad t \le v
\end{align*}

\noindent and hence
\begin{align*}
\mathcal{I}\left(\lambda F_{\gamma}^{\{x, y\}}(\mathbf{b})\right) &\le e^{2\gamma^2 C_D(n_0)}\mathcal{I}\left( \widetilde{\lambda} F_{\gamma}^{\{x\}}(\mathbf{b})\right)\\
 \text{and} \qquad 
\mathcal{I}\left(\lambda F_{\gamma}^{\{x, y\}}(\tilde{\mathbf{b}})\right) &\le e^{2\gamma^2 C_D(n_0)}\mathcal{I}\left( \widetilde{\lambda} F_{\gamma}^{\{y\}}(\tilde{\mathbf{b}})\right)
\end{align*}

\noindent for $\widetilde{\lambda} := \lambda e^{-\gamma^2 C_D(n_0)}$. Putting everything back together, we have
\begin{align*}
& \sum_{j, k \ge 1} \mathbb{E}\Bigg[ 
1_{\mathcal{G}^{\{x, y\}}_{[n_0, \infty)}(x) \cap \mathcal{G}^{\{x, y\}}_{[n_0, \infty)}(y)}
\left(\int_{0}^{j^{-2} c(x, y)^2} \frac{du}{2\pi u} \bdec{x}{x}{u} [ \mathcal{I} \left(\lambda  F_\gamma^{\{x, y\}}(\mathbf{b})\right)1_{\mathcal{H}_j(\mathbf{b})}]\right) \\
 & \qquad \qquad \qquad \qquad \qquad   \times 
\left(\int_{0}^{k^{-2} c(x,y)^2} \frac{dv}{2\pi v} \bdec{y}{y}{v} [ \mathcal{I} \left(\lambda  F_\gamma^{\{x, y\}}(\tilde{\mathbf{b}})\right)1_{\mathcal{H}_k(\tilde{\mathbf{b}})}]\right)
\Bigg]\\
& \le  e^{4\gamma^2 C_D(n_0)} \sum_{j, k \ge 1} \mathbb{E}\Bigg[ 
\left(\int_{0}^{j^{-2} c(x,y)^2} \frac{du}{2\pi u} \bdec{x}{x}{u} [ \mathcal{I} \left(\widetilde{\lambda}  F_\gamma^{\{x\}}(\mathbf{b})\right)1_{\mathcal{H}_j(\mathbf{b})}]\right) \\
\notag & \qquad \qquad\qquad \qquad \qquad  \times 
\left(\int_{0}^{k^{-2} c(x,y)^2} \frac{dv}{2\pi v} \bdec{y}{y}{v} [ \mathcal{I} \left(\widetilde{\lambda}  F_\gamma^{\{y\}}(\tilde{\mathbf{b}})\right)1_{\mathcal{H}_k(\tilde{\mathbf{b}})}]\right)
\Bigg]
\end{align*}

\noindent which is bounded uniformly in $\widetilde{\lambda} > 0$ (and hence $\lambda > 0$) by \Cref{lem:G2_uniform}.

\paragraph{Case 2: $|x-y| < 2^{-n_0}$.}
Recall \eqref{eq:ind_exp_trick} where $n \ge n_0$ is chosen to be the integer satisfying $2^{-(n+1)} \le |x-y| < 2^{-n}$. We have
\begin{align}
\notag
& \mathbb{E}\Bigg[ 
1_{\mathcal{G}^{\{x, y\}}_{[n_0, \infty)}(x) \cap \mathcal{G}^{\{x, y\}}_{[n_0, \infty)}(y)}
\left(\int_{0}^{j^{-2} c(x, y)^2} \frac{du}{2\pi u} \bdec{x}{x}{u} [ \mathcal{I} \left(\lambda  F_\gamma^{\{x, y\}}(\mathbf{b})\right)1_{\mathcal{H}_j(\mathbf{b})}]\right) \\
\notag
 & \qquad \qquad \qquad \qquad \qquad   \times 
\left(\int_{0}^{k^{-2} c(x, y)^2} \frac{dv}{2\pi v} \bdec{y}{y}{v} [ \mathcal{I} \left(\lambda  F_\gamma^{\{x, y\}}(\tilde{\mathbf{b}})\right)1_{\mathcal{H}_k(\tilde{\mathbf{b}})}]\right)
\Bigg]\\
\notag
& \lesssim |x-y|^{(2\gamma - \alpha)\beta- \frac{\beta^2}{2}} 
 \mathbb{E}\Bigg[ e^{-\beta h_{2^{-n}}(x) - \frac{\beta^2}{2} \mathbb{E}[h_{2^{-n}}(x)^2]}
\left(\int_{0}^{j^{-2} c(x, y)^2} \frac{du}{2\pi u} \bdec{x}{x}{u} [ \mathcal{I} \left(\lambda  F_\gamma^{\{x, y\}}(\mathbf{b})\right)1_{\mathcal{H}_j(\mathbf{b})}]\right) \\
\notag
 & \qquad \qquad \qquad \qquad \qquad   \times 
\left(\int_{0}^{k^{-2} c(x, y)^2} \frac{dv}{2\pi v} \bdec{y}{y}{v} [ \mathcal{I} \left(\lambda  F_\gamma^{\{x, y\}}(\tilde{\mathbf{b}})\right)1_{\mathcal{H}_k(\tilde{\mathbf{b}})}]\right)
\Bigg]\\
\notag
& = |x-y|^{(2\gamma - \alpha)\beta- \frac{\beta^2}{2}} 
 \mathbb{E}\Bigg[ 
\left(\int_{0}^{j^{-2} c(x, y)^2} \frac{du}{2\pi u} \bdec{x}{x}{u} [ \mathcal{I} \left(\lambda  F_{\gamma, (n, -\beta)}^{\{x, y\}}(\mathbf{b})\right)1_{\mathcal{H}_j(\mathbf{b})}]\right) \\
\label{eq:diagonal_uniform_sum3}
 & \qquad \qquad \qquad \qquad \qquad   \times 
\left(\int_{0}^{k^{-2} c(x,y)^2} \frac{dv}{2\pi v} \bdec{y}{y}{v} [ \mathcal{I} \left(\lambda  F_{\gamma, (n, -\beta)}^{\{x, y\}}(\tilde{\mathbf{b}})\right)1_{\mathcal{H}_k(\tilde{\mathbf{b}})}]\right)
\Bigg]
\end{align}

\noindent where the notation $F_{\gamma, (n, -\beta)}^{\{x, y\}}(\cdot)$ was defined in \eqref{eq:beta_mu_G1}. \\

By definition, on the event $\mathcal{H}_j(\mathbf{b}) \cap \mathcal{H}_k(\tilde{\mathbf{b}})$ we have
\begin{align*}
\max\left(|\mathbf{b}_s - x|, |\tilde{\mathbf{b}}_t - y|\right) 
\le c(x, y) 
\le \frac{1}{8}|x-y| 
< 2^{-n_0-2} < \frac{\kappa}{4},
\end{align*}

\noindent and in particular $d(\mathbf{b}_s, \partial D) \wedge d(\tilde{\mathbf{b}}_t, \partial D) \ge \frac{\kappa}{2}$ for any $s \le \ell(\mathbf{b}), t \le \ell(\tilde{\mathbf{b}})$. By \eqref{eq:mollified_cov_precise} we have
\begin{gather*}
\left|G_0^D(y, \mathbf{b}_s) + \log|y-\mathbf{b}_s| \right| \le C_\kappa, 
\qquad \left|G_0^D(x, \tilde{\mathbf{b}}_t) + \log|x-\tilde{\mathbf{b}}_t| \right|\le C_\kappa, \\
\left|\mathbb{E}\left[h(\mathbf{b}_s) h_{2^{-n}}(x) \right] + \log (2^{-n})\right|  \le C_\kappa,
\qquad \left|\mathbb{E}\left[h(\tilde{\mathbf{b}}_t) h_{2^{-n}}(x) \right] + \log (2^{-n})\right|  \le C_\kappa.
\end{gather*}

\noindent Combining these estimates with the fact that
\begin{align*}
\max \left\{ 
\left| \log |y-\mathbf{b}_s| - \log |x-y| \right|,
\left| \log |x - \tilde{\mathbf{b}}_t| - \log |x-y| \right|, 
\left| \log(2^{-n}) - \log |x-y| \right| \right\} \le C
\end{align*}

\noindent for some absolute constant $C>0$ (say $C = \log 2$), we obtain both \eqref{eq:unbeta_mu1} and
\begin{align}\label{eq:unbeta_mu2}
\widehat{C}^{-1}
F_{\gamma}^{\{y\}}(\widetilde{\mathbf{b}})
\le |x-y|^{-\gamma(\beta-\gamma)} F_{\gamma, (n, -\beta)}^{\{x, y\}}(\widetilde{\mathbf{b}})
\le \widehat{C}  F_{\gamma}^{\{y\}}(\widetilde{\mathbf{b}})
\end{align}

\noindent where $\widehat{C} = \widehat{C}(\kappa, \beta, \gamma) \in (0, \infty)$. This means \eqref{eq:diagonal_uniform_sum3} can be upper-bounded by
\begin{align*}
&\widehat{C}^4 |x-y|^{(2\gamma - \alpha)\beta- \frac{\beta^2}{2}} 
 \mathbb{E}\Bigg[ 
\left(\int_{0}^{j^{-2} c(x, y)^2} \frac{du}{2\pi u} \bdec{x}{x}{u} [ \mathcal{I} \left(\widehat{\lambda}  F_\gamma^{\{x\}}(\mathbf{b})\right)1_{\mathcal{H}_j(\mathbf{b})}]\right) \\
 & \qquad \qquad \qquad \qquad \qquad   \times 
\left(\int_{0}^{k^{-2} c(x, y)^2} \frac{dv}{2\pi v} \bdec{y}{y}{v} [ \mathcal{I} \left(\widehat{\lambda}  F_\gamma^{\{y\}}(\tilde{\mathbf{b}})\right)1_{\mathcal{H}_k(\tilde{\mathbf{b}})}]\right)
\Bigg]
\end{align*}

\noindent with $\widehat{\lambda} := \lambda \widehat{C}^{-1} |x-y|^{\gamma(\beta - \gamma)}$. This expression can now be controlled uniformly in $\lambda > 0$ and $j, k \in \mathbb{N}$ by \Cref{lem:G2_uniform} and we are done after taking the sum over $j, k \ge 1$. This concludes the proof of \Cref{lem:diagonal_uniform}.
\end{proof}

\subsubsection{Pointwise limit of the diagonal term}
We now state the pointwise limit for our diagonal term.
\begin{lem}\label{lem:diagonal_pointwise}
For any fixed $n_0 \in \mathbb{N}$ satisfying $2^{1-n_0} < \kappa$, 
\begin{align}
\notag
&\mathbb{E} \left[ 
1_{\mathcal{G}^{\{x, y\}}_{[n_0, \infty)}(x) \cap \mathcal{G}^{\{x, y\}}_{[n_0, \infty)}(y)}
\int_0^1 \frac{du}{2\pi u}\bdec{x}{x}{u}\left[ \mathcal{I} \left(\lambda  F_{\gamma}^{\{x, y\}}(\mathbf{b}) \right)\right]
\int_0^1 \frac{dv}{2\pi v} \bdec{y}{y}{v}\left[ \mathcal{I} \left(\lambda  F_{\gamma}^{\{x, y\}}(\tilde{\mathbf{b}})\right) \right]
 \right] \\
\label{eq:L1_diagonal_pointwise}
& \qquad =  c_\gamma^2
\mathbb{P} \left(\widetilde{\mathcal{G}}_{[n, \infty)}(x) \cap \widetilde{\mathcal{G}}_{[n, \infty)}(y)\right)
\end{align}

\noindent for any distinct points $x, y \in D$ satisfying $d(x, \partial D) \wedge d(y, \partial D) \ge \kappa$ and $-\log_2|x-y| \not \in \mathbb{N}$.
\end{lem}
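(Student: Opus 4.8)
The plan is to follow the same two-stage strategy used for the cross term in \Cref{sec:cross_pointwise}: first establish a cutoff version of \eqref{eq:L1_diagonal_pointwise} in which the good events are truncated to dyadic levels in $[n_0, m)$, and then remove the cutoff by letting $m \to \infty$. Throughout, fix $c(x,y) = \frac18\min(|x-y|,\kappa)$, pick $m$ large, and set $\eta = 4\cdot 2^{-m}$, chosen exactly as in \eqref{eq:m_condition} so that $B(x,\eta)$ and $B(y,\eta)$ are disjoint, contained in $D$, and do not straddle any dyadic scale centred at $x$ or at $y$ that is relevant to the good events.

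For the cutoff statement I would first split $\bdec{x}{x}{u}$ over $\mathcal{H}_j(\mathbf{b})$ and $\bdec{y}{y}{v}$ over $\mathcal{H}_k(\tilde{\mathbf{b}})$, and for each $(j,k)$ separate the contribution of $\{u \le (\delta/j)^2 c(x,y)^2,\ v \le (\delta/k)^2 c(x,y)^2\}$ from the complement. As in \eqref{eq:cross_ptwise_truncate}, the complementary region gives residual terms that are summable in $(j,k)$ uniformly in $\lambda$ (bound $\mathcal{I}\le 1$ and use \Cref{cor:bb_bound}) and vanish as $\lambda\to\infty$ since $\mathcal{I}(\lambda\,\cdot\,)\to0$; two applications of dominated convergence dispose of them. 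On the main region I would invoke the domain Markov property of the GFF on $B(x,\eta)\cup B(y,\eta)$, writing $h = \overline h + h^{x,\eta} + h^{y,\eta}$ with the three pieces independent, and then split $h^{x,\eta}$ and $h^{y,\eta}$ into radial and lateral parts as in the proof of \Cref{lem:cross_pointwise_cutoff}. The point of the choice of $m$ is that $h_{2^{-j}}(x)$ for $j\ge n_0$ depends only on $\overline h$ and the radial part at $x$ (and $\partial B(x,2^{-j})$ misses $B(y,\eta)$), and symmetrically at $y$, so \emph{both} lateral fields $h^{x,\mathrm{lat}}$ and $h^{y,\mathrm{lat}}$ are independent of $\mathcal{G}^{\{x,y\}}_{[n_0,m)}(x)\cap\mathcal{G}^{\{x,y\}}_{[n_0,m)}(y)$. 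This licenses a conditional Gaussian comparison (\Cref{lem:Gcompare}) replacing each lateral field by an exactly scale-invariant field, at a cost $\mathcal{O}(\sqrt u \vee \sqrt v)$ per scale that is summable in $(j,k)$ and $\mathcal{O}(\delta)$ after summation, exactly as in \eqref{eq:cross_ptwise_GPerror}.

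Next I would apply scale invariance to each Brownian bridge separately, as in \eqref{eq:cross_ptwise_scale}, rewriting the clock attached to $\mathbf{b}$ near $x$ as $e^{\gamma(B_{1,T_1(u)}-(Q-\gamma)T_1(u))}\mathcal{R}_x$ with $T_1(u) = -\log(j\sqrt u / c(x,y))$ and that attached to $\tilde{\mathbf{b}}$ near $y$ as $e^{\gamma(B_{2,T_2(v)}-(Q-\gamma)T_2(v))}\mathcal{R}_y$ with $T_2(v) = -\log(k\sqrt v / c(x,y))$; here $B_{1,\cdot}$ and $B_{2,\cdot}$ are \emph{independent} since they are built from the radial parts on the disjoint balls, and $\mathcal{R}_x,\mathcal{R}_y$ are scale-invariant GMC integrals times $e^{\gamma h_\delta(x)-\frac{\gamma^2}{2}\mathbb{E}[h_\delta(x)^2]}$, $e^{\gamma h_\delta(y)-\frac{\gamma^2}{2}\mathbb{E}[h_\delta(y)^2]}$ and harmonic-extension factors. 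Because $G_0^D(y,\mathbf{b}_s)\approx G_0^D(x,y)$ and $G_0^D(x,\tilde{\mathbf{b}}_t)\approx G_0^D(x,y)$ on the relevant small balls, the effective intensities become $\widetilde\lambda_i \asymp \lambda\, e^{\gamma^2 G_0^D(x,y)}R(\cdot;D)^{3\gamma^2/2}\times(\text{explicit }\mathcal{O}(1)\text{ factors})$, both tending to $\infty$ with $\lambda$. Changing variables $s=T_1(u)$, $t=T_2(v)$ turns the $u$- and $v$-integrals into $\int_0^\infty$ integrals against two independent Brownian motions with drift $-(Q-\gamma)$, so the whole quantity is of the form handled by \Cref{lem:main} with $\mathcal{E}_0$ the product of the two good-event indicators (post Gaussian comparison) together with the correction factors $E_x(\delta)^{\pm2}$, $E_y(\delta)^{\pm2}$, and with $\mathcal{E}_1\propto\mathcal{R}_x$, $\mathcal{E}_2\propto\mathcal{R}_y$, $\lambda_1=\lambda_2=\lambda$. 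The uniform bound \eqref{eq:uniform_main2} gives summability in $(j,k)$ and the limit \eqref{eq:limit_main2} yields, after summing over $(j,k)$, that $\liminf_\lambda$ and $\limsup_\lambda$ of the cutoff expression are squeezed between $c_\gamma^2\,\mathbb{E}\big[1_{\mathcal{G}^{\{x,y\}}_{[n_0,m)}(x)\cap\mathcal{G}^{\{x,y\}}_{[n_0,m)}(y)}E_x(\delta)^{-2}E_y(\delta)^{-2}\big]+\mathcal{O}(\delta)$ and the same with exponents $+2$; letting $\delta\to0^+$ and using $E_x(\delta),E_y(\delta)\to1$ with monotone convergence gives the cutoff version with limit $c_\gamma^2\,\mathbb{P}(\mathcal{G}^{\{x,y\}}_{[n_0,m)}(x)\cap\mathcal{G}^{\{x,y\}}_{[n_0,m)}(y))$.

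Finally, to pass $m\to\infty$ I would argue exactly as in the proof of \Cref{lem:cross_pointwise}: bound $1_{\mathcal{G}^{\{x,y\}}_{[m,\infty)}(p)^c}$ for $p\in\{x,y\}$ by the exponential sum \eqref{eq:ind_exp_trick1}, apply Cameron--Martin to produce $F_{\gamma,(p,n,\beta)}^{\{x,y\}}$ on each of the two bridges, then partition both bridges over $\mathcal{H}_j,\mathcal{H}_k$ and over $u,v$ above or below $(2^{-n}\delta/j)^2$, $(2^{-n}\delta/k)^2$; the far region is controlled by $\pdec{0}{0}{1}(\mathcal{H}_j)\pdec{0}{0}{1}(\mathcal{H}_k)\log(\cdots)$, and the near region, after using \eqref{eq:mollified_cov_precise} to compare $F_{\gamma,(p,n,\beta)}^{\{x,y\}}$ with $2^{\gamma\beta n}F_\gamma^{\{x\}}$ (when $p=x$) or with $F_\gamma^{\{y\}}$ (when $p=y$) as in \eqref{eq:two-to-one}, is bounded by $C\,\pdec{0}{0}{1}(\mathcal{H}_j)\pdec{0}{0}{1}(\mathcal{H}_k)$ via \Cref{lem:G2_uniform}; summing over $j,k$ and then over $n\ge m$ with $\beta\in(0,2(\alpha-\gamma))$ gives a tail $\lesssim m\,2^{-\frac\beta2[2(\alpha-\gamma)-\beta]m}\to0$. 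Combining the two stages yields \eqref{eq:L1_diagonal_pointwise}. The main obstacle, as in the cross-term case, is the bookkeeping required to keep the two lateral fields simultaneously independent of the good events at $x$ and at $y$, and to check that the two Gaussian random walks produced by scale invariance are genuinely independent of each other and of $\mathcal{R}_x,\mathcal{R}_y$, so that the product form of \Cref{lem:main} applies verbatim; the rest is a two-point elaboration of computations already carried out for \Cref{lem:cross_pointwise}.
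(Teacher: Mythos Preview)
Your proposal is correct and follows essentially the same two-stage approach as the paper: a cutoff version via domain Markov decomposition, radial--lateral splitting on both balls, two conditional Gaussian comparisons, scale invariance, and \Cref{lem:main}, followed by removal of the cutoff via the exponential indicator bound \eqref{eq:ind_exp_trick1} combined with \Cref{lem:G2_uniform}. A minor slip: your main-region cutoffs and scaling parameters mix the $\delta$ from the pointwise argument with the $c(x,y)$ from the uniform estimate (the paper uses $(\delta/j)^2$, $(\delta/k)^2$ and $T(u;j,\delta)=-\log(j\sqrt{u}/\delta)$ here, reserving $c(x,y)$ for \Cref{lem:G2_uniform}), but this does not affect the argument.
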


\begin{proof}
The analysis of diagonal term is very similar to that of the cross term performed in \Cref{sec:cross_pointwise}, so we only sketch the arguments here.

\paragraph{Step (i).} We need a ``two-point" analogue of \Cref{lem:cross_pointwise_cutoff}, i.e. we first show that for any $m > 3 + \max(n, -\log_2 |x-y|)$ sufficiently large, 
\begin{align}
\notag
&\lim_{\lambda \to \infty} 
\mathbb{E} \left[ 
1_{\mathcal{G}^{\{x, y\}}_{[n_0, m)}(x) \cap \mathcal{G}^{\{x, y\}}_{[n_0, m)}(y)}
\int_0^1 \frac{du}{2\pi u}\bdec{x}{x}{u}\left[ \mathcal{I} \left(\lambda  F_{\gamma}^{\{x, y\}}(\mathbf{b}) \right)\right]
\int_0^1 \frac{dv}{2\pi v} \bdec{y}{y}{v}\left[ \mathcal{I} \left(\lambda  F_{\gamma}^{\{x, y\}}(\tilde{\mathbf{b}})\right) \right]
 \right] \\
\label{eq:L1_diagonal_pointwise_cutoff}
&\qquad =  c_\gamma^2 \mathbb{P} \left(\mathcal{G}^{\{x, y\}}_{[n_0, m)}(x) \cap \mathcal{G}^{\{x, y\}}_{[n_0, m)}(y)\right).
\end{align}

Let us fix some $\delta \in (0, 2^{-m})$ as before, and define for each $j, k \in \mathbb{N}$
\begin{align*}
I_{j,k} := \mathbb{E} \Bigg[
1_{\mathcal{G}^{\{x, y\}}_{[n_0, m)}(x) \cap \mathcal{G}^{\{x, y\}}_{[n_0, m)}(y)}
&\int_0^{(\delta/j)^2} \frac{du}{2\pi u}\bdec{x}{x}{u}\left[ \mathcal{I} \left(\lambda  F_{\gamma}^{\{x, y\}}(\mathbf{b}) \right)1_{\mathcal{H}_j(\mathbf{b})}\right]\\
& \qquad \times \int_0^{(\delta/k)^2} \frac{dv}{2\pi v} \bdec{y}{y}{v}\left[ \mathcal{I} \left(\lambda  F_{\gamma}^{\{x, y\}}(\tilde{\mathbf{b}})\right)1_{\mathcal{H}_k(\tilde{\mathbf{b}})} \right]
\Bigg].
\end{align*}

\noindent In order to establish \eqref{eq:L1_diagonal_pointwise_cutoff}, it suffices to show
\begin{align*}
\lim_{\lambda \to \infty} \sum_{j, k \ge 1} I_{j,k} =  c_\gamma^2 \mathbb{P} \left(\mathcal{G}_{[n_0, m)}^{\{x, y\}}(x) \cap  \mathcal{G}_{[n_0, m)}^{\{x, y\}}(y)\right)
\end{align*}

\noindent using a similar dominated convergence approach. As in the proof of \Cref{lem:cross_pointwise_cutoff} we just highlight the steps for the upper bound of $I_{j, k}$.
\begin{itemize}
\item By considering the domain Markov property of Gaussian free field  \eqref{eq:GFF_markov} and performing a radial-lateral decomposition of the two independent Gaussian fields 
\begin{align*}
h^{x, \eta}(\cdot) = h^{x, \mathrm{rad}}(\cdot)+ h^{x, \mathrm{lat}}(\cdot) 
\quad \text{and} \quad 
h^{y, \eta}(\cdot) = h^{y, \mathrm{rad}}(\cdot)+ h^{y, \mathrm{lat}}(\cdot),
\end{align*}
one obtains the following analogue of \eqref{eq:Ik_ub}: we have
\begin{align*}
I_{j, k}
\le  & \int_0^{(\delta/j)^2} \frac{du}{2\pi u} \int_0^{(\delta/k)^2} \frac{dv}{2\pi v}\\
&  \qquad\times \mathbb{E} \otimes \bdec{0}{0}{1}^{\otimes 2} \Bigg[
1_{\mathcal{G}^{\{x, y\}}_{[n_0, m)}(x) \cap \mathcal{G}^{\{x, y\}}_{[n_0, m)}(y)} 1_{\mathcal{H}_j(\mathbf{b}) \cap \mathcal{H}_k(\tilde{\mathbf{b}})}E_x(\delta)^{-2}E_y(\delta)^{-2}\\
&\qquad\qquad \qquad \qquad \times
\mathcal{I}\left(\widetilde{\lambda}_x E_x(\delta) 
u \overline{F}_{\gamma}^{\{x\}}(x+\sqrt{u}\mathbf{b}; h^{x, \eta}(\cdot) + \overline{h}(x)) \right)\\
&\qquad \qquad \qquad \qquad \times
\mathcal{I}\left(\widetilde{\lambda}_y E_y(\delta) 
v \overline{F}_{\gamma}^{\{y\}}(y+\sqrt{v}\tilde{\mathbf{b}}; h^{y, \eta}(\cdot) + \overline{h}(y)) \right)
\Bigg]
\end{align*}

where, for $p \in \{x, y\}$, 
\begin{align*}
\widetilde{\lambda}_p
& := \lambda R(p; D)^{\frac{3\gamma^2}{2}}
e^{\gamma^2 G_0^D(x, y)}, 
\qquad 
E_p(\delta)
:= \left[e^{\frac{5 \gamma^2}{2}\delta + \gamma \mathcal{E}_p(\delta) + \frac{\gamma^2}{2} e_p(\delta)}\right]^{-1},
\end{align*}

with
\begin{align*}
\mathcal{E}_p(\delta):= \sup_{z \in B(p, \delta)} | \overline{h}(z) - \overline{h}(p)|,
\qquad {e}_p(\delta):= \sup_{z \in B(p, \delta)} | \mathbb{E}[\overline{h}(z)^2 - \overline{h}(p)^2]|.
\end{align*}

\item We need two (conditional) Gaussian comparisons to replace $h^{p, \mathrm{lat}}$ with the field
\begin{align*}
\mathbb{E}\left[\widehat{X}^p(z_1) \widehat{X}^p(z_2)\right] = \log \frac{|z_1-p| \vee |z_2-p|}{|z_1-z_2|} \qquad \forall z_1, z_2 \in B(p, \delta)
\end{align*}
for each $p \in \{x, y\}$. One can show (with a computation similar to that in \eqref{eq:cross_ptwise_GPerror}) that these replacements would yield an error that is summable in $j, k \in \mathbb{N}$ uniformly in $\lambda > 0$, and negligible as $\delta \to 0^+$. In other words, we just need to study
\begin{align}\label{eq:diagonal_ptwise_GP}
\begin{split}
\int_0^{(\delta/j)^2} &\frac{du}{2\pi u} \int_0^{(\delta/k)^2} \frac{dv}{2\pi v}\\
&  \times \mathbb{E} \otimes \bdec{0}{0}{1}^{\otimes 2} \Bigg[
1_{\mathcal{G}^{\{x, y\}}_{[n_0, m)}(x) \cap \mathcal{G}^{\{x, y\}}_{[n_0, m)}(y)} 1_{\mathcal{H}_j(\mathbf{b}) \cap \mathcal{H}_k(\tilde{\mathbf{b}})}E_x(\delta)^{-2}E_y(\delta)^{-2}\\
&\qquad \qquad \qquad \times
\mathcal{I}\left(\widetilde{\lambda}_x E_x(\delta) 
u \overline{F}_{\gamma}^{\{x\}}(x+\sqrt{u}\mathbf{b}; h^{x, \mathrm{rad}} + \widehat{X}^x + \overline{h}(x)) ) \right)\\
&\qquad \qquad \qquad \times
\mathcal{I}\left(\widetilde{\lambda}_y E_y(\delta) 
v \overline{F}_{\gamma}^{\{y\}}(y+\sqrt{v}\tilde{\mathbf{b}}; h^{y, \mathrm{rad}}(\cdot) +\widehat{X}^y + \overline{h}(y)) \right)
\Bigg].
\end{split}
\end{align}

\item Following the same scaling argument as in \eqref{eq:cross_ptwise_scale}, one can show that \eqref{eq:diagonal_ptwise_GP} is equal to (cf. \eqref{eq:cross_ptwise_final})
\begin{align*}
\begin{split}
& \int_0^{\infty} \int_0^{\infty} \frac{dsdt}{\pi^2}
 \mathbb{E} \otimes \bdec{0}{0}{1}^{\otimes 2} \Bigg[
1_{\mathcal{G}^{\{x, y\}}_{[n_0, m)}(x) \cap \mathcal{G}^{\{x, y\}}_{[n_0, m)}(y)} 1_{\mathcal{H}_j(\mathbf{b}) \cap \mathcal{H}_k(\tilde{\mathbf{b}})}E_x(\delta)^{-2}E_y(\delta)^{-2}\\
&\qquad \qquad \qquad \times
\mathcal{I}\left(\widetilde{\lambda}_x E_x(\delta) 
\mathcal{R}_x e^{\gamma (B_{x, s} - (Q-\gamma)s)} ) \right)
\mathcal{I}\left(\widetilde{\lambda}_y E_y(\delta) 
\mathcal{R}_y e^{\gamma (B_{y, t} - (Q-\gamma)t)} ) \right)
\Bigg]
\end{split}
\end{align*}
where $(B_{x, s})_{s \ge 0}$ and $(B_{y, t})_{t \ge 0}$ are two standard Brownian motions independent of each other and everything else, and we are ready to apply \Cref{lem:main} to obtain a uniform bound (summable over $j, k \ge 1$) as well as the limiting value as $\lambda \to \infty$.
\end{itemize}

\noindent Summarising all the analysis above, one obtains by dominated convergence
\begin{align*}
&\limsup_{\lambda \to \infty} \sum_{j, k \ge 1} I_{j, k}\\
& \le \limsup_{\delta \to 0^+} \sum_{j, k \ge 1} \lim_{\lambda \to \infty} \int_0^{\infty} \int_0^{\infty} \frac{dsdt}{\pi^2}
 \mathbb{E} \otimes \bdec{0}{0}{1}^{\otimes 2} \Bigg[
1_{\mathcal{G}^{\{x, y\}}_{[n_0, m)}(x) \cap \mathcal{G}^{\{x, y\}}_{[n_0, m)}(y)} 1_{\mathcal{H}_j(\mathbf{b}) \cap \mathcal{H}_k(\tilde{\mathbf{b}})}\\
&\qquad \times
E_x(\delta)^{-2}E_y(\delta)^{-2} \mathcal{I}\left(\widetilde{\lambda}_x E_x(\delta) 
\mathcal{R}_x e^{\gamma (B_{x, s} - (Q-\gamma)s)} ) \right)
\mathcal{I}\left(\widetilde{\lambda}_y E_y(\delta) 
\mathcal{R}_y e^{\gamma (B_{y, t} - (Q-\gamma)t)} ) \right)
\Bigg]\\
& = \limsup_{\delta \to 0^+}\sum_{j, k \ge 1}  c_\gamma^2  \mathbb{E}\left[
1_{\mathcal{G}^{\{x, y\}}_{[n_0, m)}(x) \cap \mathcal{G}^{\{x, y\}}_{[n_0, m)}(y)}
E_x(\delta)^{-2}E_y(\delta)^{-2}\right]\pdec{0}{0}{1}\left( \mathcal{H}_j\right)\pdec{0}{0}{1}\left( \mathcal{H}_k\right)\\
& = c_\gamma^2 \mathbb{P}\left(\mathcal{G}^{\{x, y\}}_{[n_0, m)}(x) \cap \mathcal{G}^{\{x, y\}}_{[n_0, m)}(y)\right),
\end{align*}

\noindent and when combined with an analogous lower bound this concludes the proof of \eqref{eq:L1_diagonal_pointwise_cutoff}.
\paragraph{Step (ii).} We want to establish a ``two-point" analogue of \eqref{eq:cross_removecutoff}, i.e. 
\begin{align}\notag
\lim_{m \to \infty}\limsup_{\lambda \to \infty} 
\mathbb{E} \Bigg[
1_{\mathcal{G}^{\{x, y\}}_{[m, \infty)}(p)^c} &
\left(\int_0^1 \frac{du}{2\pi u} \bdec{x}{x}{u}\left[ \mathcal{I}\left(\lambda F_{\gamma}^{\{x, y\}}(\mathbf{b})\right)\right]\right)\\
\label{eq:diagonal_removecutoff}
& \qquad \times 
\left(\int_0^1 \frac{du}{2\pi v} \bdec{y}{y}{v}\left[ \mathcal{I}\left(\lambda F_{\gamma}^{\{x, y\}}(\tilde{\mathbf{b}})\right)\right]\right)
\Bigg] =  0
\end{align}

\noindent for $p \in \{x, y\}$ and any fixed and distinct $x, y \in  D$ satisfying $d(x, \partial D) \wedge d(y, \partial D) \ge \kappa$. 

To do so, we first use \eqref{eq:ind_exp_trick1} and follow the argument in \eqref{eq:cross_cutoff_limit0} to bound the expectation in \eqref{eq:diagonal_removecutoff} by
\begin{align}
\notag
\frac{e^{(\frac{\beta^2}{2} + 2\beta \gamma )C_{\kappa}}}{|x-y|^\gamma} 
\sum_{n \ge m} 2^{-\frac{\beta}{2}\left[2(\alpha - \gamma ) - \beta\right]n} 
\mathbb{E} \Bigg[&
\left(\int_0^1 \frac{du}{2\pi u} \bdec{x}{x}{u}\left[ \mathcal{I}\left(\lambda F_{\gamma, (p, n, \beta)}^{\{x, y\}}(\mathbf{b})\right)\right]\right)\\
\label{eq:diagonal_cutoff_limit0}
& \qquad \times \left(\int_0^1 \frac{dv}{2\pi v} \bdec{y}{y}{v}\left[ \mathcal{I}\left(\lambda F_{\gamma, (p, n, \beta)}^{\{x, y\}}(\tilde{\mathbf{b}})\right)\right]\right)
\Bigg]
\end{align}

\noindent where $F_{\gamma, (p, n, \beta)}^{\{x, y\}}(\cdot)$ was defined in \eqref{eq:Frare}, and $\beta \in (0, 2(\alpha - \gamma))$ is fixed.

Recall $c(x, y) := \frac{1}{8} \min(|x-y|, \kappa)$. Based on a splitting analysis similar to that in \eqref{eq:cross_cutoff_limit1}, the proof is complete if we can show, for some $\delta \in (0,  c(x, y))$, that
\begin{align}
\notag
\limsup_{m \to \infty} \limsup_{\lambda \to \infty}
\sum_{n \ge m} & 2^{-\frac{\beta}{2}\left[2(\alpha - \gamma ) - \beta\right]n}\\
\notag
&\times  \sum_{j, k \ge 1} \mathbb{E} \Bigg[
\left(\int_0^{(2^{-n}\delta / j)^2} \frac{du}{2\pi u} \bdec{x}{x}{u}\left[ \mathcal{I}\left(\lambda F_{\gamma, (p, n, \beta)}^{\{x, y\}}(\mathbf{b})\right) 1_{\mathcal{H}_j(\mathbf{b})}\right]\right)\\
\label{eq:diagonal_cutoff_limit1}
& \qquad \times \left(\int_0^{(2^{-n}\delta / k)^2} \frac{dv}{2\pi v} \bdec{y}{y}{v}\left[ \mathcal{I}\left(\lambda F_{\gamma, (p, n, \beta)}^{\{x, y\}}(\tilde{\mathbf{b}})\right)1_{\mathcal{H}_k(\tilde{\mathbf{b}})}\right]\right)
\Bigg]=0.
\end{align}

But by \eqref{eq:two-to-one}, one can check easily that
\begin{align*}
&\mathbb{E} \Bigg[\left(\int_0^{(2^{-n}\delta / j)^2} \frac{du}{2\pi u} \bdec{x}{x}{u}\left[ \mathcal{I}\left(\lambda F_{\gamma, (p, n, \beta)}^{\{x, y\}}(\mathbf{b})\right) 1_{\mathcal{H}_j(\mathbf{b})}\right]\right)\\
& \qquad \qquad \qquad \times \left(\int_0^{(2^{-n}\delta / k)^2} \frac{dv}{2\pi v} \bdec{y}{y}{v}\left[ \mathcal{I}\left(\lambda F_{\gamma, (p, n, \beta)}^{\{x, y\}}(\tilde{\mathbf{b}})\right)1_{\mathcal{H}_k(\tilde{\mathbf{b}})}\right]\right)
\Bigg]\\
&\qquad \lesssim
\mathbb{E} \Bigg[\left(\int_0^{(2^{-n}\delta / j)^2} \frac{du}{2\pi u} \bdec{x}{x}{u}\left[ \mathcal{I}\left(\widetilde{\lambda}_{x, p} F_{\gamma}^{\{x\}}(\mathbf{b})\right) 1_{\mathcal{H}_j(\mathbf{b})}\right]\right)\\
& \qquad \qquad \qquad \times \left(\int_0^{(2^{-n}\delta / k)^2} \frac{dv}{2\pi v} \bdec{y}{y}{v}\left[ \mathcal{I}\left(\widetilde{\lambda}_{y, p} F_{\gamma}^{\{y\}}(\tilde{\mathbf{b}})\right)1_{\mathcal{H}_k(\tilde{\mathbf{b}})}\right]\right)
\Bigg]
\end{align*}

\noindent for some suitable $\widetilde{\lambda}_{x, p}, \widetilde{\lambda}_{y, p} > 0$  (cf. \eqref{eq:two-to-one2}), and the above inequality is $ \lesssim \pdec{0}{0}{1}\left(\mathcal{H}_j\right)\pdec{0}{0}{1}\left(\mathcal{H}_k\right)$ by \Cref{lem:G2_uniform}. Thus \eqref{eq:diagonal_cutoff_limit1} is upper bounded (up to a multiplicative factor) by
\begin{align*}
&\limsup_{m \to \infty} \limsup_{\lambda \to \infty}
\sum_{n \ge m}  2^{-\frac{\beta}{2}\left[2(\alpha - \gamma ) - \beta\right]n} \sum_{j, k \ge 1} \pdec{0}{0}{1}\left(\mathcal{H}_j\right)\pdec{0}{0}{1}\left(\mathcal{H}_k\right) \\
&\qquad  \lesssim \limsup_{m \to \infty} 
2^{-\frac{\beta}{2}\left[2(\alpha - \gamma ) - \beta\right]m}  = 0
\end{align*}

\noindent and this concludes the proof of Lemma \ref{lem:diagonal_pointwise}. 
Combining with the other estimates in this section, this also concludes the proof of Theorem \ref{theo:Weyl}.
\end{proof}

\subsection{Proof of Theorems \ref{T:Weyl_intro} and \ref{theo:lhk_laplace}.}

Given Theorem \ref{theo:Weyl}, the proof of Theorem \ref{theo:lhk_laplace} proceeds as explained in Section \ref{SS:mainidea}.  In short, Theorem \ref{theo:Weyl} and the bridge decomposition establish that
$$
\int_0^\infty e^{-\lambda u} u\mathbf{S}_{\gamma}(u) du \sim \frac{c_\gamma \mu_{\gamma}(D)}{\lambda}
\qquad \text{as} \qquad \lambda \to \infty
$$
which is \eqref{eq:2nd_tau}. By an application of the Tauberian theorem (Theorem \ref{theo:tauberian}) this implies 
$$
\int_0^t u \mathbf{S}_{\gamma}(u) du \sim c_{\gamma} \mu_{\gamma}(D) t
\qquad \text{as} \qquad t \to 0^+,
$$
which is \eqref{eq:2nd_trans}. Lemma \ref{lem:asympdiff} implies that 
$$
t     \mathbf{S}_{\gamma}(t) \to  {c_{\gamma}  \mu_{\gamma}(D) }
$$
in probability, as desired for Theorem \ref{theo:lhk_laplace}.

Since $\mathbf{S}_{\gamma}(t)$ is the Laplace transform of the eigenvalue counting function $\mathbf{N}_{\gamma}(\lambda)$, Theorem \ref{T:Weyl_intro} follows again from an application of the probabilistic Tauberian theorem (Theorem \ref{theo:tauberian}).
\hfill \qed

\section{Pointwise heat kernel asymptotics} \label{sec:proof_ptwise}

\subsection{Proof of \Cref{T:HK}}\label{sec:proofHK}
Based on a similar scaling argument as before, let us assume that $\mathrm{diam}(D) < \frac{1}{2}$, and we shall continue to write $c_\gamma = c_\gamma(Q-\gamma; \mathcal{I})$ throughout \Cref{sec:proofHK} without risk of confusion. By standard approximation argument, it suffices to establish \Cref{T:HK} for test functions $f$ that are uniformly bounded and Lipschitz, and without loss of generality suppose
\begin{align}\label{eq:testfn_regularity}
\sup_{x \in \overline{D}, u \in \RR_+} |f(x, u)| + \sup_{x \in \overline{D}} \left[ \sup_{u, v \in \RR_+} \left| \frac{f(x, u) - f(x, v)}{u-v}\right|\right] \le 1.
\end{align}

To begin with, we apply the bridge decomposition and rewrite the LHS of \eqref{E:HK_Taub} as
\begin{align*}
& \mathbb{E}\left[ \int_D \mu_\gamma (dx)  f ( x, J_\gamma^\lambda(x) ) \right]\\
& = \mathbb{E}\left[ \int_D \mu_\gamma (dx) f \left(x, \int_0^\infty \frac{du}{2\pi u} \bdec{x}{x}{u}[ \mathcal{I} \left(\lambda  F_{\gamma}(\mathbf{b})\right)1_{\{u< \tau_D(\mathbf{b}) \}}] \right) \right]\\
& = \int_D R(x; D)^{\frac{\gamma^2}{2}} dx \mathbb{E}\left[ f \left(x, \int_0^\infty \frac{du}{2\pi u} \bdec{x}{x}{u}[ \mathcal{I} \left(\lambda  F_{\gamma}^{\{x\}}(\mathbf{b})\right)1_{\{u< \tau_D(\mathbf{b}) \}}] \right)  \right].
\end{align*}

\noindent Since $f$ is uniformly bounded, the expectation in the integrand above is bounded, and by dominated convergence we just need to show that
\begin{align*}
\lim_{\lambda \to \infty} \mathbb{E}\left[ f \left(x, \int_0^\infty \frac{du}{2\pi u} \bdec{x}{x}{u}[ \mathcal{I} \left(\lambda  F_{\gamma}^{\{x\}}(\mathbf{b})\right)1_{\{u< \tau_D(\mathbf{b}) \}}] \right) \right]
= \mathbb{E} [ f(x, J_\gamma^\infty) ]
\end{align*}

\noindent for any $\kappa > 0$ and $x \in D$ satisfying $d(x, \partial D) \ge 2\kappa$ (see \Cref{lem:Jinfty_limit} for the definition of $J_\gamma^\infty$).

\subsubsection{Step 1: truncating the time integral}
Let $\delta_1 \in (0, 1)$ be some fixed but arbitrary number (possibly dependent on $x$). Similar to our proof of \Cref{theo:Weyl} we would first like to truncate the $u$-integral:
\begin{lem}
Let $x \in D$ satisfying $d(x, \partial D) \ge 2\kappa$. We have
\begin{align}
\notag
& \limsup_{\lambda \to \infty} \Bigg| \mathbb{E}\left[ f \left(x, \int_0^\infty \frac{du}{2\pi u} \bdec{x}{x}{u}[ \mathcal{I} \left(\lambda  F_{\gamma}^{\{x\}}(\mathbf{b})\right)1_{\{u< \tau_D(\mathbf{b}) \}}] \right) \right]\\
\label{eq:ptwise_truncate}
& \qquad \qquad -  \mathbb{E}\left[ f \left(x, \int_0^{\delta_1^2} \frac{du}{2\pi u} \bdec{x}{x}{u}[ \mathcal{I} \left(\lambda  F_{\gamma}^{\{x\}}(\mathbf{b})\right)1_{\{u< \tau_D(\mathbf{b}) \}}] \right) \right]\Bigg| = 0.
\end{align}
\end{lem}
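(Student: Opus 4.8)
The plan is to show that the contribution to $J_\gamma^\lambda(x)$ coming from durations $u \ge \delta_1^2$ vanishes as $\lambda \to \infty$, after which the Lipschitz bound \eqref{eq:testfn_regularity} on $f$ transfers this to the difference of the two expectations in \eqref{eq:ptwise_truncate}. Concretely, using $|f(x,u) - f(x,v)| \le |u-v|$ and then Jensen/monotonicity, the left-hand side of \eqref{eq:ptwise_truncate} is bounded above by
\begin{align*}
\limsup_{\lambda \to \infty} \mathbb{E}\left[ \int_{\delta_1^2}^\infty \frac{du}{2\pi u} \bdec{x}{x}{u}\left[ \mathcal{I}\left(\lambda F_\gamma^{\{x\}}(\mathbf{b})\right) 1_{\{u < \tau_D(\mathbf{b})\}}\right] \right],
\end{align*}
so it suffices to prove this quantity is zero. (Here I have used that the insertion $F_\gamma^{\{x\}}$ arises from the Cameron--Martin/Girsanov step that was already carried out just before the lemma, and that $\mathcal{I} \ge 0$.)

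The key step is a domination argument identical in spirit to Lemma \ref{lem:u_cutoff}. First, since $\mathcal{I}(z) \le 1$ for all $z \ge 0$, the inner bridge expectation is at most $\pdec{x}{x}{u}(\max_{s \le u}|\mathbf{b}_s - x| \le d(x,\partial D))$, which by Corollary \ref{cor:bb_bound} is bounded by $1 \wedge (2 d(x,\partial D)^2 / u)$. Hence the integrand is dominated, uniformly in $\lambda$, by an integrable function of $u$ on $[\delta_1^2, \infty)$: indeed $\int_{\delta_1^2}^\infty \frac{du}{2\pi u}\cdot \frac{2 d(x,\partial D)^2}{u} < \infty$. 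The resulting bound has finite expectation (it does not even involve the field once we bound $\mathcal{I} \le 1$, so it is deterministic and finite). Second, for each fixed $u > 0$, since $F_\gamma^{\{x\}}(\mathbf{b}) > 0$ almost surely on $\{u < \tau_D(\mathbf{b})\}$ (the Liouville clock of a nondegenerate bridge path inside $D$ is strictly positive a.s.), we have $\mathcal{I}(\lambda F_\gamma^{\{x\}}(\mathbf{b})) \to 0$ as $\lambda \to \infty$ by $\mathcal{I}(z) \to 0$ as $z \to \infty$; an application of dominated convergence in the inner bridge expectation gives $\bdec{x}{x}{u}[\mathcal{I}(\lambda F_\gamma^{\{x\}}(\mathbf{b})) 1_{\{u < \tau_D(\mathbf{b})\}}] \to 0$ a.s. Then a second application of dominated convergence in the $u$-integral and the outer expectation (legitimate by the uniform domination just established) yields that the whole $\limsup$ is $0$.

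The analogous lower bound (with $f$ replaced by its negative, or just reusing $|f(x,u)-f(x,v)|\le|u-v|$ symmetrically) gives the matching inequality, so the absolute value in \eqref{eq:ptwise_truncate} tends to $0$. I do not expect any real obstacle here: this is the same ``tail of the time integral is negligible'' mechanism used in Lemmas \ref{lem:u_cutoff} and \ref{lem:bad_event}, and the only point needing a word of care is the a.s.\ positivity of $F_\gamma^{\{x\}}(\mathbf{b})$ on $\{u<\tau_D(\mathbf{b})\}$, which follows since along any fixed continuous path staying in $D$ the Liouville measure $F_\gamma(ds;\mathbf{b})$ charges every subinterval a.s. (a standard property of Gaussian multiplicative chaos restricted to a curve, or alternatively one may simply note that $\mathcal{I}\le 1$ already suffices for domination and the pointwise convergence $\mathcal{I}(\lambda F_\gamma^{\{x\}})\to 0$ holds on the event $\{F_\gamma^{\{x\}}>0\}$ which has full measure on $\{u<\tau_D(\mathbf{b})\}$).
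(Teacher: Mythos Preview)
Your proposal is correct and follows essentially the same approach as the paper: bound the difference via the Lipschitz property of $f$, then use $\mathcal{I}\le 1$ together with \Cref{cor:bb_bound} to dominate the integrand on $[\delta_1^2,\infty)$ uniformly in $\lambda$, and finally apply dominated convergence using $\mathcal{I}(\lambda F_\gamma^{\{x\}}(\mathbf{b}))\to 0$. The only cosmetic difference is that the paper uses the global assumption $\mathrm{diam}(D)<\tfrac12$ to get the dominating function $1\wedge \tfrac{2}{u}$, whereas you use $d(x,\partial D)$; both work equally well.
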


\begin{proof}
Thanks to the Lipschitz control \eqref{eq:testfn_regularity}, the LHS of \eqref{eq:ptwise_truncate} (before taking the limit $\lambda \to \infty$) is bounded by
\begin{align*}
\EE\left[ \int_{\delta_1^2}^\infty   \frac{du}{2\pi u} \bdec{x}{x}{u}[ \mathcal{I} \left(\lambda  F_{\gamma}^{\{x\}}(\mathbf{b})\right)1_{\{u< \tau_D(\mathbf{b}) \}}] \right].
\end{align*}

\noindent Since $\Ia(\cdot) \le 1$, we know from \Cref{cor:bb_bound} (with the assumption $\mathrm{diam}(D) < \frac{1}{2}$) that
\begin{align*}
\bdec{x}{x}{u}[ \mathcal{I} \left(\lambda  F_{\gamma}^{\{x\}}(\mathbf{b})\right)1_{\{u< \tau_D(\mathbf{b}) \}}]  
& \le \pdec{x}{x}{u}\left( \mathbf{b}_s \in  D ~ \forall s \le u\right)\\
& \le \pdec{x}{x}{u}\left( |\mathbf{b}_s - x | \le 1 ~ \forall s \le u\right)
\le 1 \wedge \frac{2}{u}
\end{align*}

\noindent which is integrable with respect to $du/2\pi u$ on $[\delta_1^2, \infty)$. As $\mathcal{I} \left(\lambda  F_{\gamma}^{\{x\}}(\mathbf{b})\right) \xrightarrow{\lambda \to \infty} 0$ almost surely, it follows from dominated convergence that
\begin{align*}
\lim_{\lambda \to \infty} \EE\left[ \int_{\delta_1^2}^\infty   \frac{du}{2\pi u} \bdec{x}{x}{u}[ \mathcal{I} \left(\lambda  F_{\gamma}^{\{x\}}(\mathbf{b})\right)1_{\{u< \tau_D(\mathbf{b}) \}}] \right] = 0
\end{align*}

\noindent which leads to the desired claim \eqref{eq:ptwise_truncate}.
\end{proof}

\subsubsection{Step 2: restricting the range of Brownian bridge}
The next step would be to restrict the range of our Brownian bridge $\mathbf{b}$. Unlike the proof of \Cref{theo:Weyl} where we needed to partition the probability space, here we introduce a cutoff parameter $n \in \mathbb{N}$ and assume from now that $\delta_1$ is small enough such that $4n \delta_1 <\kappa$.
\begin{lem}
Let
\begin{align*}
\overline{\mathcal{H}}_n
= \overline{\mathcal{H}}_n(\mathbf{b})
=  \left\{ \max_{s\le \ell(\mathbf{b})} \frac{|\mathbf{b}_s -\iota(\mathbf{b})|}{\sqrt{\ell(\mathbf{b})}} < n \right\}
= \bigcup_{k=1}^n \mathcal{H}_k.
\end{align*}

\noindent Then for any $x \in D$ satisfying $d(x, \partial D) \ge 2\kappa$, we have
\begin{align}
\notag
\limsup_{n \to \infty} \limsup_{\delta_1 \to 0^+}  \limsup_{\lambda \to \infty} 
& \Bigg| \mathbb{E}\left[ f \left(x, \int_0^{\delta_1^2} \frac{du}{2\pi u} \bdec{x}{x}{u}[ \mathcal{I} \left(\lambda  F_{\gamma}^{\{x\}}(\mathbf{b})\right)1_{\{u< \tau_D(\mathbf{b}) \}}] \right) \right] \\
\label{eq:ptwise_restrict}
& \qquad 
 -\mathbb{E}\left[ f \left(x, \int_0^{\delta_1^2} \frac{du}{2\pi u} \bdec{x}{x}{u}\left[ \mathcal{I} \left(\lambda  F_{\gamma}^{\{x\}}(\mathbf{b})\right)1_{\overline{\mathcal{H}}_n}\right] \right) \right] \Bigg| = 0.
\end{align}
\end{lem}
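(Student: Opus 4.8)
The plan is to bound the left-hand side of \eqref{eq:ptwise_restrict} (before the limits) using the Lipschitz control \eqref{eq:testfn_regularity}, reducing it to controlling
\begin{align*}
\mathbb{E}\left[ \int_0^{\delta_1^2} \frac{du}{2\pi u} \bdec{x}{x}{u}\left[ \mathcal{I}\left(\lambda F_\gamma^{\{x\}}(\mathbf{b})\right) 1_{\overline{\mathcal{H}}_n^c}\right]\right]
= \sum_{k > n} \mathbb{E}\left[ \int_0^{\delta_1^2} \frac{du}{2\pi u} \bdec{x}{x}{u}\left[ \mathcal{I}\left(\lambda F_\gamma^{\{x\}}(\mathbf{b})\right) 1_{\mathcal{H}_k}\right]\right],
\end{align*}
where $\mathcal{H}_k$ is the annulus event from \eqref{eq:eventH}. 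First I would split each summand at the scale $u = \delta_1^2 k^{-2}$ (so that $4k\sqrt{u} \le 4\delta_1 < \kappa \le d(x,\partial D)/2$ on the inner part): on the range $u \in [\delta_1^2 k^{-2}, \delta_1^2]$ one uses $\mathcal{I} \le 1$ and \Cref{cor:bb_bound} to bound the contribution by $\int_{\delta_1^2 k^{-2}}^{\delta_1^2} \frac{du}{2\pi u}\, \pdec{0}{0}{1}(\mathcal{H}_k) \le \pdec{0}{0}{1}(\mathcal{H}_k) \log k / \pi$; on the range $u \in [0, \delta_1^2 k^{-2}]$ one invokes \Cref{lem:G1_uniform} (with $\lambda$ playing the role of $\widetilde\lambda$, which is legitimate since $d(x,\partial D) \ge 4k\sqrt{u}$ there) to bound the contribution by $C\, \pdec{0}{0}{1}(\mathcal{H}_k)$, uniformly in $\lambda > 0$. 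Summing, the tail is bounded by $C \sum_{k > n} (1 + \log k)\, \pdec{0}{0}{1}(\mathcal{H}_k)$, and by \Cref{cor:bb_bound} we have $\pdec{0}{0}{1}(\mathcal{H}_k) \le \pdec{0}{0}{1}(\max_s |\mathbf{b}_s| \ge (k-1)) \le 4 e^{-(k-1)^2/2}$, so the tail is a convergent series whose remainder vanishes as $n \to \infty$, uniformly in $\delta_1 \in (0,1)$ (subject to $4n\delta_1 < \kappa$) and in $\lambda > 0$.

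Taking $\limsup_{\lambda \to \infty}$ then $\limsup_{\delta_1 \to 0^+}$ then $\limsup_{n \to \infty}$ of this uniform bound gives \eqref{eq:ptwise_restrict}, since the bound $C\sum_{k > n}(1+\log k) e^{-(k-1)^2/2}$ does not depend on $\lambda$ or $\delta_1$ and tends to $0$ as $n \to \infty$. Note the Girsanov insertion $F_\gamma^{\{x\}}$ versus $F_\gamma$ is immaterial here because \Cref{lem:G1_uniform} is already stated for $F_\gamma^{\{x\}}$, and the indicator $1_{\{u < \tau_D(\mathbf{b})\}}$ only makes the integrand smaller so it can be dropped in the upper bound (or kept inside the same estimates — on $\mathcal{H}_k$ with $k\sqrt{u} < d(x,\partial D)$ it equals $1$ anyway).

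The main obstacle is making sure that the splitting scale $\delta_1^2 k^{-2}$ interacts correctly with the hypothesis of \Cref{lem:G1_uniform}, which requires $d(x,\partial D) \ge 4k\sqrt{u}$: this is exactly why we assume $4n\delta_1 < \kappa$ for the \emph{fixed}-$n$ truncation but must be slightly careful that the bound is uniform over \emph{all} $k > n$, not just $k \le n$ — here the constraint $u \le \delta_1^2 k^{-2}$ forces $4k\sqrt{u} \le 4\delta_1 < \kappa \le d(x,\partial D)/2$ regardless of how large $k$ is, so the hypothesis holds for every $k > n$ simultaneously. The only mild subtlety is the order of the $\limsup$'s: since the dominating series is independent of $\delta_1$ and $\lambda$, the triple $\limsup$ collapses to $\limsup_n C\sum_{k>n}(1+\log k)e^{-(k-1)^2/2} = 0$, and no interchange-of-limits issue arises.
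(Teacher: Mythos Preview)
Your proposal is correct and follows essentially the same approach as the paper: Lipschitz-reduce to the expectation with indicator $1_{\overline{\mathcal{H}}_n^c}$, decompose over $\mathcal{H}_k$ for $k>n$, split each term at $u=\delta_1^2 k^{-2}$, use $\mathcal{I}\le 1$ and \Cref{cor:bb_bound} on the outer piece and \Cref{lem:G1_uniform} on the inner piece, then sum the Gaussian tail. The only cosmetic difference is that the paper first writes the exact difference $1_{\{u<\tau_D\}\cap\overline{\mathcal{H}}_n^c}$ (using $\overline{\mathcal{H}}_n\subset\{u<\tau_D\}$ under $4n\delta_1<\kappa$) before dropping the $\tau_D$ indicator, whereas you pass directly to $1_{\overline{\mathcal{H}}_n^c}$; both routes yield the same bound.
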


\begin{proof}
The LHS of \eqref{eq:ptwise_restrict} (before taking any of the limit) is bounded by
\begin{align*}
& \mathbb{E}\left[ \int_0^{\delta_1^2} \frac{du}{2\pi u} \bdec{x}{x}{u}\left[ \mathcal{I} \left(\lambda  F_{\gamma}^{\{x\}}(\mathbf{b})\right)1_{\{u< \tau_D(\mathbf{b}) \}\cap \overline{\mathcal{H}}_n^c}\right] \right]\\
& \qquad\qquad  \le  \sum_{k  \ge n+1} \int_{\delta_1^2 k^{-2}}^{\delta_1^2} \frac{du}{2\pi u} \mathbb{E}\left[  \bdec{x}{x}{u}\left[ \mathcal{I} \left(\lambda  F_{\gamma}^{\{x\}}(\mathbf{b})\right)1_{\mathcal{H}_k}\right] \right]\\
&  \qquad \qquad \qquad \qquad +  \sum_{k  \ge n+1} \int_0^{\delta_1^2 k^{-2}} \frac{du}{2\pi u} \mathbb{E}\left[  \bdec{x}{x}{u}\left[ \mathcal{I} \left(\lambda  F_{\gamma}^{\{x\}}(\mathbf{b})\right)1_{\mathcal{H}_k}\right] \right].
\end{align*}

\noindent where $\mathcal{H}_k$ was defined in \eqref{eq:eventH}. The first sum is upper bounded by
\begin{align*}
\sum_{k  \ge n+1} \int_{\delta_1^2 k^{-2}}^{\delta_1^2} \frac{du}{2\pi u} \mathbb{E}\left[  \bdec{x}{x}{u}\left[ \mathcal{I} \left(\lambda  F_{\gamma}^{\{x\}}(\mathbf{b})\right)1_{\mathcal{H}_k}\right] \right]
& \le  \sum_{k  \ge n+1} \int_{\delta_1^2 k^{-2}}^{\delta_1^2} \frac{du}{2\pi u} \pdec{x}{x}{u} \left( \mathcal{H}_k\right) \\
& \le \sum_{k  \ge n+1} \int_{\delta_1^2 k^{-2}}^{\delta_1^2} \frac{du}{2\pi u} \cdot 4 e^{-\frac{(k-1)^2}{2}}\\
& \le \sum_{k  \ge n+1} 2 e^{-\frac{(k-1)^2}{2}} \log k
\end{align*}

\noindent  where the second last inequality follows from \Cref{cor:bb_bound}. This vanishes as $n \to \infty$ uniformly in $\lambda$ and $\delta_1$. 

Let us look at the second sum. Since $4k\sqrt{u} \le 4k \sqrt{\delta_1^2 k^{-2}} = 4 \delta_1 \le \kappa \le d(x, \partial D)$ for $u \in [0, \delta_1^2k^{-2}]$, we obtain
\begin{align*}
&  \sum_{k  \ge n+1} \int_0^{\delta_1^2 k^{-2}} \frac{du}{2\pi u} \mathbb{E}\left[  \bdec{x}{x}{u}\left[ \mathcal{I} \left(\lambda  F_{\gamma}^{\{x\}}(\mathbf{b})\right)1_{\mathcal{H}_k}\right] \right]\\
& \qquad \le \sum_{k  \ge n+1} \int_0^{1} \frac{du}{2\pi u} 1_{\{d(x, \partial D) \ge 4k \sqrt{u}\}} \mathbb{E}\left[  \bdec{x}{x}{u}\left[ \mathcal{I} \left(\lambda  F_{\gamma}^{\{x\}}(\mathbf{b})\right)1_{\mathcal{H}_k}\right] \right]\\
& \qquad \le C \sum_{k \ge n+1} \pdec{0}{0}{1}\left(\mathcal{H}_k\right) = C \pdec{0}{0}{1}\left(\mathcal{H}_{n}^c\right) 
\end{align*}

\noindent where the last inequality follows from \Cref{lem:G1_uniform} with $C > 0$  independent of $\lambda$. This bound again vanishes uniformly in $\lambda$ and $\delta_1$ as $n\to \infty$, and this concludes the proof of \eqref{eq:ptwise_restrict}.
\end{proof}

\subsubsection{Step 3: decomposition of Gaussian free field}
We now need to argue that the Gaussian free field $h(\cdot)$ locally behaves like an exactly scale invariant field. In the proof of \Cref{theo:Weyl}, this was achieved by Gaussian interpolation/comparison. It is not clear how this method could be adapted to the analysis here, though, since we are dealing with arbitrary test functions $f$. We shall therefore pursue a different strategy based on the decomposition of Gaussian fields.

Applying the domain Markov property of Gaussian free field similar to that in \eqref{eq:GFF_markov}, we can write
\begin{align*}
h(\cdot) = \overline{h}(\cdot) + h^{x, \eta}(\cdot) + h^{y, \eta}(\cdot)
\end{align*}

\noindent but here we choose $\eta \in (\kappa/2, \kappa)$ (and in particular $\delta_2 := n\delta_1 < \eta$). Since the random variable $F_{\gamma}^{\{x\}}(\mathbf{b})$ (recall \eqref{eq:Girsanov1}) only depends on  $h(\cdot)$ on $\overline{B}(x, \delta_2)$ on the event $\overline{H}_n$ when we restrict $u \in [0, \delta_1^2]$ and \eqref{eq:liouville_clock} can be rewritten as
\begin{align}\label{eq:liouville_clock_dec}
F_{\gamma}(ds; \mathbf{b}) := 
e^{\gamma \overline{h}(\mathbf{b}_s) - \frac{\gamma^2}{2} \mathbb{E}[\overline{h}(\mathbf{b}_s)^2]} 
e^{\gamma h^{x, \eta}(\mathbf{b}_s) - \frac{\gamma^2}{2} \mathbb{E}[h^{x, \eta}(\mathbf{b}_s)^2]}
R(\mathbf{b}_s; D)^{\frac{\gamma^2}{2}} 1_{\{\mathbf{b}_s \in \overline{B}(x, \delta_2)\}}ds.
\end{align}

\noindent We shall perform further decomposition with the help of \Cref{lem:GFF_dec}, and write
\begin{align*}
h^{p, \eta}(\cdot) = X^{p, \eta}(\cdot) - Y^{p, \eta}(\cdot) \qquad \text{on $B(p, \eta)$}
\end{align*}

\noindent for $p \in \{x, y\}$, where $X^{p, \eta}(\cdot) \overset{d}{=} X^{\eta \mathbb{D}}(\cdot - p)$ and $Y^{p, \eta}(\cdot) \overset{d}{=} Y^{\eta \mathbb{D}}(\cdot - p)$ in the notation of \eqref{eq:GFF_dec}.
We claim that when $\delta_1$ (and hence $\delta_2$) is small, $F_{\gamma}^{\{x\}}(\mathbf{b})$ is approximately equal to $R(x; D)^{\frac{3\gamma^2}{2}}e^{\gamma \overline{h}(x) - \frac{\gamma^2}{2} \mathbb{E}[\overline{h}(x)^2]} \overline{F}_{\gamma}^{\{x\}}(\mathbf{b};  X^{x, \eta})$ where (recalling  \eqref{eq:overlineF})
\begin{align*}
\overline{F}_{\gamma}^{\{x\}}(\mathbf{b};  X^{x, \eta})
& :=
\int_{0}^{\ell(\mathbf{b})} e^{\gamma  X^{x, \eta}(\mathbf{b}_s) - \frac{\gamma^2}{2}\mathbb{E}[X^{x, \eta}(\mathbf{b}_s)^2]} \frac{1_{\{\mathbf{b}_s \in \overline{B}(x, \delta_2)\}}ds}{| \mathbf{b}_s - x |^{\gamma^2}}.
\end{align*}

\begin{lem}
For any $x \in D$ satisfying $d(x, \partial D) \ge 2\kappa$, we have
\begin{align}
\notag
&  \limsup_{\delta_1 \to 0^+}  \limsup_{\lambda \to \infty} 
\Bigg| \mathbb{E}\left[ f \left(x, \int_0^{\delta_1^2} \frac{du}{2\pi u} \bdec{x}{x}{u}\left[ \mathcal{I} \left(\lambda  F_{\gamma}^{\{x\}}(\mathbf{b})\right)1_{\overline{\mathcal{H}}_n}\right] \right) \right.\\
\label{eq:local_exact}
& \quad 
 -\left. f \left(x, \int_0^{\delta_1^2} \frac{du}{2\pi u} \bdec{x}{x}{u}\left[ \mathcal{I} \left(\lambda  
R(x; D)^{\frac{3\gamma^2}{2}}e^{\gamma \overline{h}(x) - \frac{\gamma^2}{2} \mathbb{E}[\overline{h}(x)^2]} \overline{F}_{\gamma}^{\{x\}}(\mathbf{b};  X^{x, \eta})
\right)1_{\overline{\mathcal{H}}_n}\right] \right) \right] \Bigg| = 0.
\end{align}
\end{lem}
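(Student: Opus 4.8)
The plan is to estimate, using the Lipschitz control \eqref{eq:testfn_regularity}, the quantity inside the limit by
\begin{align*}
\mathbb{E}\left[ \int_0^{\delta_1^2} \frac{du}{2\pi u} \bdec{x}{x}{u}\left[ \left| \mathcal{I}\left(\lambda F_\gamma^{\{x\}}(\mathbf{b})\right) - \mathcal{I}\left(\lambda \rho_x \overline{F}_\gamma^{\{x\}}(\mathbf{b}; X^{x,\eta})\right)\right| 1_{\overline{\mathcal{H}}_n} \right]\right]
\end{align*}
where $\rho_x := R(x;D)^{\frac{3\gamma^2}{2}} e^{\gamma \overline{h}(x) - \frac{\gamma^2}{2}\mathbb{E}[\overline{h}(x)^2]}$, and to show this vanishes as $\lambda\to\infty$ then $\delta_1 \to 0^+$. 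The first step is a multiplicative comparison of the two clocks valid on $\overline{\mathcal{H}}_n$: decompose $h = \overline{h} + h^{x,\eta} + h^{y,\eta}$ and further $h^{x,\eta} = X^{x,\eta} - Y^{x,\eta}$ via \Cref{lem:GFF_dec}; since on $\overline{\mathcal{H}}_n$ (with $u \le \delta_1^2$) the bridge stays in $\overline{B}(x,\delta_2)$ with $\delta_2 = n\delta_1$, uniform continuity of $\overline{h}$ near $x$, of $Y^{x,\eta}$ (from \Cref{lem:GFF_dec}, using $Y^{x,\eta}(x)=0$), together with \Cref{lem:Green_estimate}/\Cref{cor:gff_exact} to compare $G_0^D(x,\cdot)$ and $\log R(\cdot;D)$ with their values at $x$, yield random constants $E_x(\delta_1)$ (measurable with respect to the background fields, with finite moments, non-increasing in $\delta_1$, and $\to 1$ a.s.\ as $\delta_1 \to 0^+$) such that $E_x(\delta_1)^{-1}\rho_x \overline{F}_\gamma^{\{x\}}(\mathbf{b};X^{x,\eta}) \le F_\gamma^{\{x\}}(\mathbf{b}) \le E_x(\delta_1)\rho_x \overline{F}_\gamma^{\{x\}}(\mathbf{b};X^{x,\eta})$ pathwise on $\overline{\mathcal{H}}_n$. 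This is analogous to the argument around \eqref{eq:cross_term_ub1} but now done pathwise rather than under a Gaussian comparison, which is why we must decompose the field explicitly rather than interpolate.

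The second step converts the clock comparison into a comparison of $\mathcal{I}$-values. Using $|\mathcal{I}(\lambda a) - \mathcal{I}(\lambda b)| \le \mathcal{I}(\lambda a) + \mathcal{I}(\lambda b)$ combined with the monotone bound $|\mathcal{I}(\lambda a)-\mathcal{I}(\lambda a')| \le \mathcal{I}(\lambda a) + \mathcal{I}(\lambda a')$ for $a' \in [E^{-1}a, Ea]$, one estimates the integrand by a sum of terms of the form $\mathcal{I}(\widetilde\lambda E_x(\delta_1)^{\pm1} \rho_x \overline{F}_\gamma^{\{x\}}(\mathbf{b};X^{x,\eta}))$. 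Alternatively (and more cleanly, following the split in \eqref{eq:cross_ptwise_GPerror}-style arguments), bound the difference directly: on the set where $\lambda F_\gamma^{\{x\}}(\mathbf{b})$ is bounded away from $0$ and $\infty$ the two arguments are within a factor $E_x(\delta_1)$, so $|\mathcal{I}(\cdot)-\mathcal{I}(\cdot)| \lesssim |E_x(\delta_1) - 1|$ times a quantity with a uniform bound, while on the complementary set both $\mathcal{I}$-values are small and are handled as in Step~1/Step~2 of the preceding lemmas. Either way, after performing the spatial rescaling $\tfrac1{\sqrt u}(\mathbf{b}_{us}-x)$ and the change of variable $k\sqrt u/\delta \mapsto e^{-t}$ exactly as in \eqref{eq:cross_ptwise_scale}, the double integral over $u$ and the expectation reduce to expressions of the form $\int_0^\infty \mathcal{I}(\widehat\lambda \,\mathcal{E}\, e^{\gamma(B_t - (Q-\gamma)t)})\,dt$ with $\widehat\lambda \to \infty$, so \Cref{lem:main} gives both a uniform bound (summable over the dyadic shells $\mathcal{H}_k$, $k \le n$, via \Cref{cor:bb_bound}) and the limit as $\lambda \to \infty$.

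The third and final step is to let $\lambda \to \infty$ first: by \Cref{lem:main} (and dominated convergence over $k$, justified by the uniform bounds of Step~2), the limit is proportional to $c_\gamma$ times $\mathbb{E}\big[(E_x(\delta_1)-1)\text{-type quantity}\big]$, and then letting $\delta_1 \to 0^+$ the factors $E_x(\delta_1) \to 1$ a.s.\ with finite moments, so monotone/dominated convergence kills the error. The main obstacle I anticipate is the uniformity of the clock comparison of Step~1: one needs the bound $F_\gamma^{\{x\}}(\mathbf{b}) \asymp \rho_x \overline{F}_\gamma^{\{x\}}(\mathbf{b};X^{x,\eta})$ to hold with a ratio controlled by a single random variable $E_x(\delta_1)$ that is independent of $\lambda$, of the scale $u$, and of which shell $\mathcal{H}_k$ we are on --- this forces a careful bookkeeping of the various $\mathcal{O}(1)$ errors (from $G_0^D$ vs $-\log|\cdot| + \log R$, from $\overline{h}$ near $x$, from $Y^{x,\eta}$ near $x$, and from the mollification covariance estimates), each of which must be absorbed into $E_x(\delta_1)$ with the correct monotonicity and integrability. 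Once this is in place, everything else is a replay of the machinery already developed for \Cref{lem:cross_pointwise_cutoff}.
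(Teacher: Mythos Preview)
Your approach is essentially the same as the paper's. The only cosmetic difference is in how the ratio error is tracked: the paper introduces an auxiliary $\epsilon>0$ and a good event $\mathcal{O}_\epsilon(x,\delta_2)$ on which the continuity errors (for $\overline{h}$ and $Y^{x,\eta}$ near $x$, plus the deterministic errors from \Cref{lem:Green_estimate}) are each at most $\epsilon$, obtaining a \emph{deterministic} two-sided ratio bound $C(\epsilon)^{\pm 1}$ there, while the complement is handled trivially via $|f|\le 1$ and $\mathbb{P}(\mathcal{O}_\epsilon(x,\delta_2)^c)\to 0$ as $\delta_1\to 0^+$; only at the very end is $\epsilon\to 0^+$ taken. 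Your random $E_x(\delta_1)$ plays the same role and also works (indeed this is exactly the device used around \eqref{eq:cross_term_ub1}), but then one must check finite moments for $E_x(\delta_1)$, which the good-event split sidesteps.

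The one place your write-up is genuinely vague is Step~2: the trivial bound $|\mathcal{I}(\lambda a)-\mathcal{I}(\lambda b)|\le \mathcal{I}(\lambda a)+\mathcal{I}(\lambda b)$ does not produce the needed smallness factor $(E-1)$, and your ``alternatively'' does not spell out the replacement. What the paper uses is the explicit estimate $|\mathcal{I}(u)-\mathcal{I}(v)|\le 2(u-v)e^{-v/2}$ for $u\ge v\ge 0$ (from $|(1-s)e^{-s}|\le 2e^{-s/2}$), which, combined with the two-sided ratio bound, yields
\[
\bigl|\mathcal{I}\bigl(\lambda F_\gamma^{\{x\}}(\mathbf{b})\bigr)-\mathcal{I}\bigl(\lambda \rho_x\overline{F}_\gamma^{\{x\}}(\mathbf{b};X^{x,\eta})\bigr)\bigr| \;\le\; 4\bigl[C(\epsilon)-1\bigr]C(\epsilon)\,\mathcal{I}\Bigl(\tfrac{\lambda}{2C(\epsilon)}\,\rho_x\overline{F}_\gamma^{\{x\}}(\mathbf{b};X^{x,\eta})\Bigr).
\]
This is precisely the shape needed to feed into \Cref{lem:main} after the rescaling $u\mapsto e^{-2t}$, giving the uniform-in-$\lambda$ bound $c_\gamma$ and making the convergence as $\epsilon\to 0^+$ (or $\delta_1\to 0^+$ in your version) transparent.
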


\begin{proof}
Fix $\epsilon \in (0, 1)$, and suppose $\delta_1 > 0$ (and hence $\delta_2 := n \delta_1 > 0$) is sufficiently small such that
\begin{align*}
(1+\epsilon)^{-1} R(x; D) \le R(w; D) \le (1+\epsilon) R(x; D) \qquad \forall w \in \overline{B}(x, \delta_2)
\end{align*}

\noindent as well as 
\begin{align*}
\bigg| G_0^D(x, w) - \left[ -\log|x-w| + \log R(x; D)\right]\bigg| \le \epsilon \qquad \forall w \in  \overline{B}(x, \delta_2)
\end{align*}

\noindent which is possible by \Cref{lem:Green_estimate}. We also introduce the event
\begin{align*}
\mathcal{O}_\epsilon(x, \delta_2) 
&:= \left\{ \left|\left(\gamma \overline{h}(w) - \frac{\gamma^2}{2} \mathbb{E}[\overline{h}(w)^2]\right) - \left(\gamma \overline{h}(x) - \frac{\gamma^2}{2} \mathbb{E}[\overline{h}(x)^2] \right)\right| \le \epsilon \quad \forall w \in \overline{B}(x, \delta_2)  \right\}\\
& \qquad \cap \left\{ \left|\gamma Y^{x, \eta}(w) - \frac{\gamma^2}{2} \mathbb{E}[Y^{x, \eta}(w)^2] \right| \le \epsilon \quad \forall w \in \overline{B}(x, \delta_2)  \right\}
\end{align*}

\noindent and bound the LHS of \eqref{eq:local_exact} by
\begin{align}
\notag
\mathbb{P}(O_\epsilon(x, \delta_2)^c)
& + \mathbb{E} \bigg\{ 1_{O_\epsilon(x, \delta_2)}\bigg| \int_0^{\delta_1^2} \frac{du}{2\pi u} \bdec{x}{x}{u}\left[ \mathcal{I} \left(\lambda  F_{\gamma}^{\{x\}}(\mathbf{b})\right)1_{\overline{\mathcal{H}}_n}\right]\\
\label{eq:local_exact_bound}
&  -
\int_0^{\delta_1^2} \frac{du}{2\pi u} \bdec{x}{x}{u}\left[ \mathcal{I} \left(\lambda  
R(x; D)^{\frac{3\gamma^2}{2}}e^{\gamma \overline{h}(x) - \frac{\gamma^2}{2} \mathbb{E}[\overline{h}(x)^2]} \overline{F}_{\gamma}^{\{x\}}(\mathbf{b};  X^{x, \eta})
\right)1_{\overline{\mathcal{H}}_n}\right] 
\bigg|\bigg\}.
\end{align}

Let us further rewrite \eqref{eq:liouville_clock_dec} (on the event $\mathcal{O}_\epsilon(x, \delta_2)$ and $\overline{\mathcal{H}}_n$) as
\begin{align*}
F_{\gamma}(ds; \mathbf{b}) 
& :=  e^{\gamma \overline{h}(\mathbf{b}_s) - \frac{\gamma^2}{2} \mathbb{E}[\overline{h}(\mathbf{b}_s)^2]} 
\left[e^{\gamma Y^{x, \eta}(\mathbf{b}_s) - \frac{\gamma^2}{2} \mathbb{E}[Y^{x, \eta}(\mathbf{b}_s)^2]}\right]^{-1}\\
& \qquad  \qquad \times 
e^{\gamma X^{x, \eta}(\mathbf{b}_s) - \frac{\gamma^2}{2} \mathbb{E}[X^{x, \eta}(\mathbf{b}_s)^2]}
R(\mathbf{b}_s; D)^{\frac{\gamma^2}{2}} 1_{\{\mathbf{b}_s \in \overline{B}(x, \delta_2)\}}ds.
\end{align*}

\noindent Then based on the definition of $\epsilon$ as well as the event $\mathcal{O}_\epsilon(x, \delta_2)$, it is straightforward to verify that
\begin{align*}
C(\epsilon)^{-1} 
\le  \frac{F_{\gamma}^{\{x\}}(\mathbf{b}) }{R(x; D)^{\frac{3\gamma^2}{2}}e^{\gamma \overline{h}(x) - \frac{\gamma^2}{2} \mathbb{E}[\overline{h}(x)^2]} \overline{F}_{\gamma}^{\{x\}}(\mathbf{b};  X^{x, \eta})}
\le C(\epsilon) 
\end{align*}

\noindent where $C(\epsilon) = (1+\epsilon)^{\frac{\gamma^2}{2}} e^{(\gamma^2 +2) \epsilon}$. Combining this two-sided control with the fact that
\begin{align*}
|\mathcal{I}(u) - \mathcal{I}(v)| 
& = \left| ue^{-u} - v e^{-v}\right|
\le \int_v^u \left |(1-s) e^{-s}\right|ds
\le 2(u-v) e^{-\frac{v}{2}}
\end{align*}

\noindent for any $u \ge v \ge 0$, one can check that
\begin{align*}
& \left| \mathcal{I} \left(\lambda  F_{\gamma}^{\{x\}}(\mathbf{b})\right)
- \mathcal{I} \left(\lambda  
R(x; D)^{\frac{3\gamma^2}{2}}e^{\gamma \overline{h}(x) - \frac{\gamma^2}{2} \mathbb{E}[\overline{h}(x)^2]} \overline{F}_{\gamma}^{\{x\}}(\mathbf{b};  X^{x, \eta})
\right)
\right|\\
& \qquad \le 
4\left[C(\epsilon) - 1\right] C(\epsilon) \mathcal{I}\left(
\frac{\lambda}{2C(\epsilon)} R(x; D)^{\frac{3\gamma^2}{2}}e^{\gamma \overline{h}(x) - \frac{\gamma^2}{2} \mathbb{E}[\overline{h}(x)^2]} \overline{F}_{\gamma}^{\{x\}}(\mathbf{b};  X^{x, \eta})
\right).
\end{align*}

Summarising everything so far, the estimate \eqref{eq:local_exact_bound} can be bounded by
\begin{align} \label{eq:local_exact_bound2}
&\mathbb{P}(O_\epsilon(x, \delta_2)^c)
 + 4\left[C(\epsilon) - 1\right] C(\epsilon) \mathbb{E} \left[ \int_0^{\delta_1^2} \frac{du}{2\pi u} \bdec{x}{x}{u}\left[ 
\mathcal{I}\left(
\lambda C_x(\epsilon) \overline{F}_{\gamma}^{\{x\}}(\mathbf{b};  X^{x, \eta})
\right)
1_{\overline{\mathcal{H}}_n}\right]\right]
\end{align}

\noindent with $\displaystyle C_x(\epsilon) := \frac{1}{2C(\epsilon)} R(x; D)^{\frac{3\gamma^2}{2}}e^{\gamma \overline{h}(x) - \frac{\gamma^2}{2} \mathbb{E}[\overline{h}(x)^2]}.$

We now perform a space-time rescaling of the Brownian bridge \eqref{eq:BB_rescale}, and write
\begin{align}\notag
& \mathbb{E} \otimes\bdec{x}{x}{u}\left[ 
\mathcal{I}\left(
\lambda C_x(\epsilon)\overline{F}_{\gamma}^{\{x\}}(\mathbf{b};  X^{x, \eta})
\right)
1_{\overline{\mathcal{H}}_n}\right] \\
\notag
&\qquad  = \mathbb{E} \otimes\bdec{0}{0}{1}\left[ 
\mathcal{I}\left(
\lambda C_x(\epsilon) 
\overline{F}_{\gamma}^{\{x\}}(x + \sqrt{u}\mathbf{b}_{\cdot/u};  X^{x, \eta})
\right)
1_{\overline{\mathcal{H}}_n}\right]\\
\label{eq:pre-exact0}
& \qquad = \mathbb{E} \otimes\bdec{0}{0}{1}\left[ 
\mathcal{I}\left(
\lambda C_x(\epsilon) 
\overline{F}_{\gamma}^{\{0\}}(\sqrt{u}\mathbf{b}_{\cdot/u};  X^{\eta \mathbb{D}})
\right)
1_{\overline{\mathcal{H}}_n}\right]
\end{align}

\noindent where
\begin{align}
\notag
\overline{F}_{\gamma}^{\{0\}}(\sqrt{u}\mathbf{b}_{\cdot/u};  X^{\eta \mathbb{D}}) 
& =  \int_0^u  \frac{e^{\gamma X^{\eta \mathbb{D}}(\sqrt{u}\mathbf{b}_{s/u}) - \frac{\gamma^2}{2} \mathbb{E}[X^{\eta \mathbb{D}}( \sqrt{u}\mathbf{b}_{s/u})^2]}ds}{| \sqrt{u}\mathbf{b}_{s/u}|^{\gamma^2}}\\
\label{eq:pre-exact1}
& = u ^{1 - \frac{\gamma^2}{2}}\int_0^1 \frac{e^{\gamma X^{\eta \mathbb{D}}(\sqrt{u}\mathbf{b}_{s}) - \frac{\gamma^2}{2} \mathbb{E}[X^{\eta \mathbb{D}}(\sqrt{u}\mathbf{b}_{s})^2]}ds}{|\mathbf{b}_{s}|^{\gamma^2}}.
\end{align}

\noindent Since
\begin{align*}
&\mathbb{E}\left[X^{\eta \mathbb{D}}( n\sqrt{u} x_1)X^{\eta \mathbb{D}}(n\sqrt{u} x_2) \right]\\
&\quad = -\log|x_1 - x_2| - \log \frac{n\sqrt{u}}{\eta}
= \mathbb{E}\left[ X^{\mathbb{D}}(x_1) X^{\mathbb{D}}(x_2)\right] + \mathbb{E}[B_{\widetilde{T}(u, n)}^2]
\quad \forall x_1, x_2 \in \mathbb{D}
\end{align*}

\noindent where $\widetilde{T}(u, n) := -\log \frac{n\sqrt{u}}{\eta} > 0$ (as $2n\sqrt{u} \le 2n\delta_1 < \frac{\kappa}{2} < \eta$) and $B_{\widetilde{T}(u, n)} \sim \mathcal{N}(0, \widetilde{T}(u, n))$ is independent of $X^{\mathbb{D}}$, we see that \eqref{eq:pre-exact1} (on the event $\overline{H}_n$) is equal in distribution to
\begin{align*}
& u^{1-\frac{\gamma^2}{2}} e^{\gamma B_{\widetilde{T}(u, n)} - \frac{\gamma^2}{2} \widetilde{T}(u, n)}
\int_0^1 \frac{e^{\gamma X^{ \mathbb{D}}(n^{-1}\mathbf{b}_{s}) - \frac{\gamma^2}{2} \mathbb{E}[X^{\eta \mathbb{D}}(n^{-1}\mathbf{b}_{s})^2]}ds}{|\mathbf{b}_{s}|^{\gamma^2}}\\
& \qquad = u^{1-\frac{\gamma^2}{2}}  e^{\gamma B_{\widetilde{T}(u, n)} - \frac{\gamma^2}{2} \widetilde{T}(u, n)} n^{-\gamma^2} \overline{F}_{\gamma}^{\{0\}}(n^{-1}\mathbf{b};  X^{\mathbb{D}})\\
& \qquad = e^{\gamma (B_{\widetilde{T}(u, n)} - (Q-\gamma)\widetilde{T}(u, n))} (n/\eta)^{-(2-\gamma^2)}n^{-\gamma^2} \overline{F}_{\gamma}^{\{0\}}(n^{-1}\mathbf{b};  X^{\mathbb{D}}).
\end{align*}

\noindent Setting $\mathcal{E}:= C_x(\epsilon) (n/\eta)^{-(2-\gamma^2)}n^{-\gamma^2} \overline{F}_{\gamma}^{\{x\}}(\mathbf{b};  X^{\mathbb{D}})$, we obtain
\begin{align*}
&\mathbb{E} \left[ \int_0^{\delta_1^2} \frac{du}{2\pi u} \bdec{x}{x}{u}\left[ 
\mathcal{I}\left(
\lambda C_x(\epsilon) \overline{F}_{\gamma}^{\{x\}}(\mathbf{b};  X^{x, \eta})
\right)
1_{\overline{\mathcal{H}}_n}\right]\right]\\
& \qquad =
\mathbb{E} \otimes \bdec{0}{0}{1} \left[ \int_0^{\delta_1^2} \frac{du}{2\pi u} 
\mathcal{I}\left(
\lambda \mathcal{E} e^{\gamma (B_{\widetilde{T}(u; n)} - (Q-\gamma) \widetilde{T}(u; n))}
\right)
1_{\overline{\mathcal{H}}_n}
\right] \\
& \qquad \le
\mathbb{E} \otimes \bdec{0}{0}{1} \left[ \int_0^{\infty} \frac{dt}{\pi}
\mathcal{I}\left(
\lambda \mathcal{E} e^{\gamma (B_{t} - (Q-\gamma) t)}\right)
1_{\overline{\mathcal{H}}_n}
\right]
\le c_\gamma
\end{align*}

\noindent where the last inequality follows from \eqref{eq:uniform_main1} of \Cref{lem:main}. Therefore, \eqref{eq:local_exact_bound2} is uniformly bounded in $\lambda \to \infty$ by
\begin{align*}
\mathbb{P}(O_\epsilon(x, \delta_2)^c)
 + 4\left[C(\epsilon) - 1\right] C(\epsilon) \cdot c_\gamma.
\end{align*}

\noindent As $\delta_1 \to 0^+$ (and hence $\delta_2 \to 0^+$), we have $\mathbb{P}(O_\epsilon(x, \delta_2)^c) \to 0$ by the continuity of the Gaussian fields $\overline{h}(\cdot)$ and $Y^{x, \eta}(\cdot)$ in a neighbourhood of $x$. Since $\epsilon > 0$ is arbitrary, we can send $\epsilon \to 0^+$ and conclude that \eqref{eq:local_exact} holds.

\end{proof}

\subsubsection{Step 4: identifying the limiting random variable $J_\gamma^\infty$}
All that remains to be done is to establish the pointwise limit.
\begin{lem} \label{lem:Jinfty_limit}
Let $C_x:=R(x; D)^{\frac{3\gamma^2}{2}}e^{\gamma \overline{h}(x) - \frac{\gamma^2}{2} \mathbb{E}[\overline{h}(x)^2]}$. For any $x \in D$ satisfying $d(x, \partial D) \ge 2\kappa$, we have
\begin{align*}
\lim_{n \to \infty}\lim_{\delta_1 \to 0^+} \lim_{\lambda \to \infty} 
\mathbb{E}\left[f\left(x, \int_0^{\delta_1^2} \frac{du}{2\pi u} \bdec{x}{x}{u}\left[  \mathcal{I} \left(
\lambda  C_x \overline{F}_{\gamma}^{\{x\}}(\mathbf{b};  X^{x, \eta}) 
\right)1_{\overline{\mathcal{H}}_n}\right]\right) \right]
= \mathbb{E}\left[ f(x, J_\gamma^\infty)\right]
\end{align*}

\noindent with
\begin{align*}
J_\gamma^\infty
:= 
\int_{-\infty}^\infty \frac{dt}{\pi} \bdec{0}{0}{1}\Bigg[  \mathcal{I} \bigg(
\int_0^1 e^{-\gamma \beta_{t - \log|\mathbf{b}_s|}^{Q-\gamma}}
\frac{e^{\gamma \widehat{X}(e^{-t}\mathbf{b}_s) - \frac{\gamma^2}{2}\mathbb{E}[\widehat{X}(e^{-t}\mathbf{b}_s)^2]}ds}{|\mathbf{b}_s|^2}
 \bigg)\Bigg]
\end{align*}

\noindent where
\begin{itemize}
\item $\widehat{X}(\cdot)$ is a scale-invariant Gaussian field defined on $\mathbb{R}^2 \cong \mathbb{C}$ with covariance kernel
\begin{align*}
\mathbb{E} \left[\widehat{X}(x_1) \widehat{X}(x_2)\right]=  \log \frac{|x_1| \vee |x_2|}{|x_1 - x_2|};
\end{align*}
\item $(\beta_t^{Q-\gamma})_{t \in \mathbb{R}}$ is the $\gamma$-quantum cone, i.e. the two-sided stochastic process defined in \eqref{eq:beta_process} with $m = Q-\gamma$.
\end{itemize}
\end{lem}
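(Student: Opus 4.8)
The statement is the last ingredient of the proof of \Cref{T:HK}: Steps~1--3 above reduce \Cref{T:HK} to \Cref{lem:Jinfty_limit}, so the plan is to prove \Cref{lem:Jinfty_limit} by carrying out, for the Brownian loop, the radial--lateral decomposition already used in the toy computation \Cref{prop:toy}, and then passing to the limit $\lambda\to\infty$ via the mechanism behind \Cref{lem:main}, i.e.\ Williams' time reversal (\Cref{lem:time_reversal}). First I would rescale the loop in space and time as in~\eqref{eq:BB_rescale}, so that under $\bdec{x}{x}{u}$ the process $s\mapsto u^{-1/2}(\mathbf{b}_{us}-x)$ is a standard duration-$1$ loop $\mathbf{b}$; using $X^{x,\eta}(x+\cdot)\overset{d}{=}X^{\eta\mathbb{D}}(\cdot)$ the inner quantity becomes $\bdec{0}{0}{1}[\mathcal{I}(\lambda C_x\,\overline{F}_{\gamma}^{\{0\}}(\sqrt{u}\mathbf{b}_{\cdot/u};X^{\eta\mathbb{D}}))\,1_{\overline{\mathcal{H}}_n}]$, as in~\eqref{eq:pre-exact0}. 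Then I would write $X^{\eta\mathbb{D}}=X^{\mathrm{rad}}+\widehat{X}$, where $X^{\mathrm{rad}}$ is the circle-average (radial) field of $X^{\eta\mathbb{D}}$, which in the variable $\tau=-\log(\cdot/\eta)$ is a standard Brownian motion $B$ started at $0$ on $\partial B(0,\eta)$, and $\widehat{X}$ is the independent lateral field; by exact scale invariance of $\widehat{X}$ the scaling by $\eta$ is absorbed, identifying $\widehat{X}$ with the field in the statement.

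Setting $t:=-\log(\sqrt{u}/\eta)$, so that $\frac{du}{2\pi u}=\frac{dt}{\pi}$ and $X^{\mathrm{rad}}(\sqrt{u}\mathbf{b}_s)=B_{t-\log|\mathbf{b}_s|}$, the key point is that the scaling prefactor $u^{1-\gamma^2/2}$ in~\eqref{eq:pre-exact1}, the radial renormalisation, and the weight $|\mathbf{b}_s|^{-\gamma^2}$ combine into exactly $e^{\gamma(B_{\tau_s}-(Q-\gamma)\tau_s)}|\mathbf{b}_s|^{-2}$ with $\tau_s:=t-\log|\mathbf{b}_s|$ --- the same collapse as the identity $u^{1-\gamma^2/2}e^{\gamma B_t-\frac{\gamma^2}{2}t}=e^{\gamma(B_t-(Q-\gamma)t)}$ used in \Cref{prop:toy}, now applied scale-by-scale along the loop. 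After this change of variables the rescaled integral is
\[
\bdec{0}{0}{1}\left[1_{\overline{\mathcal{H}}_n}\int_{-\log(\delta_1/\eta)}^{\infty}\frac{dt}{\pi}\,\mathcal{I}\left(\lambda C_x\eta^{2-\gamma^2}\int_0^1 e^{\gamma(B_{\tau_s}-(Q-\gamma)\tau_s)}\,\frac{e^{\gamma\widehat{X}(e^{-t}\mathbf{b}_s)-\frac{\gamma^2}{2}\mathbb{E}[\widehat{X}(e^{-t}\mathbf{b}_s)^2]}}{|\mathbf{b}_s|^{2}}\,ds\right)\right],
\]
where on $\overline{\mathcal{H}}_n$ one has $\tau_s\ge 0$ for all $s$ once $\delta_1$ is small. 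This is now of the type to which the proof of \Cref{lem:main} applies: the deterministic prefactor $\lambda C_x\eta^{2-\gamma^2}$ pushes the range of $t$ on which $\mathcal{I}$ is non-negligible off to $+\infty$; introducing the stopping time of $B$ at which the radial part of the argument first equals $1$ and re-centring there, the strong Markov property turns the forward increments of $B$ into a fresh drift-$(Q-\gamma)$ Brownian motion, while Williams' time reversal (\Cref{lem:time_reversal}) turns the backward increments into an independent drift-$(Q-\gamma)$ Brownian motion conditioned to stay non-negative --- together, the two-sided quantum cone $\beta^{Q-\gamma}$ of~\eqref{eq:beta_process}. Simultaneously the argument of $\widehat{X}$ is dilated by a deterministic factor, which by scale invariance leaves its law unchanged; the lower cut-off recedes to $-\infty$ relative to the re-centred origin; and the prefactor disappears. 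A careful bookkeeping of the drifts and signs through this re-centring (using reversibility of the $\gamma$-quantum cone) then identifies the limit with $J_\gamma^\infty$. Finally I would take the remaining limits in the order $\lambda\to\infty$, $\delta_1\to 0^+$, $n\to\infty$: at each stage the uniform bound $\le c_\gamma$ furnished by~\eqref{eq:uniform_main1} of \Cref{lem:main} (already used in Step~3), together with the continuity of $f$ and the tail estimates of \Cref{lem:G1_uniform}, justify dominated convergence, and along the way one checks $0<J_\gamma^\infty<\infty$ a.s.\ and $\mathbb{E}[J_\gamma^\infty]=c_\gamma$ by the same moment bounds.

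I expect the main obstacle to be exactly this re-centring step. Unlike in \Cref{lem:main}, the argument of $\mathcal{I}$ is not of the clean form $\mathcal{I}(\lambda\mathcal{E}_1 e^{\gamma(B_t-mt)})$ with $\mathcal{E}_1$ a single random variable independent of $B$, but an average over the loop in which distinct points $\mathbf{b}_s$ see the radial Brownian motion at the \emph{shifted} times $\tau_s=t-\log|\mathbf{b}_s|$ and the lateral field at $\widehat{X}(e^{-t}\mathbf{b}_s)$. Making the stopping-time and time-reversal argument rigorous so that the shifts $-\log|\mathbf{b}_s|$ are transported correctly through the re-centring and the limiting object is precisely the two-sided process of~\eqref{eq:beta_process} evaluated at $\tau_s$ --- rather than some reparametrisation of it --- is the genuinely new ingredient beyond \Cref{sec:proofWeyl}, and the delicate point of the lemma.
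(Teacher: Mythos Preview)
Your proposal is correct and follows essentially the same route as the paper: standardise the loop, perform the radial--lateral decomposition of $X^{\eta\mathbb{D}}$, change variables to logarithmic scale, then re-centre at the stopping time $\widetilde{\tau}=\inf\{u>0:e^{\gamma(B_u-(Q-\gamma)u)}=(\lambda C_x\eta^{2-\gamma^2})^{-1}\}$ and invoke Williams' decomposition to obtain the two-sided process $\beta^{Q-\gamma}$. The obstacle you anticipate is handled cleanly: since $\widetilde{\tau}$ is measurable with respect to the radial Brownian motion alone (hence independent of $\widehat{X}$), the dilation factor $\eta e^{\widetilde{\tau}}$ in the argument of $\widehat{X}$ can be removed by scale invariance, and the shifts $-\log|\mathbf{b}_s|$ pass through the re-centring untouched because they are deterministic given the loop; the paper then takes the three limits by \emph{monotone} rather than dominated convergence (the lower cut-off of the $t$-integral recedes to $-\infty$ monotonically and $1_{\overline{\mathcal{H}}_n}\uparrow 1$), which is slightly cleaner than appealing to the uniform bounds you cite.
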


\begin{proof}
We begin by standardising our Brownian bridge like \eqref{eq:pre-exact0}, i.e.
\begin{align*}
& \mathbb{E}\left[f\left(x, \int_0^{\delta_1^2} \frac{du}{2\pi u} \bdec{x}{x}{u}\left[  \mathcal{I} \left(
\lambda  C_x \overline{F}_{\gamma}^{\{x\}}(\mathbf{b};  X^{x, \eta}) \right)1_{\overline{\mathcal{H}}_n}\right]\right) \right]\\
& \qquad = \mathbb{E}\left[f\left(x, \int_0^{\delta_1^2} \frac{du}{2\pi u} \bdec{0}{0}{1}\left[  \mathcal{I} \left(
\lambda  C_x \overline{F}_{\gamma}^{\{0\}}( \sqrt{u}\mathbf{b}_{\cdot/u};  X^{\eta \mathbb{D}} \right)1_{\overline{\mathcal{H}}_n}\right]\right) \right].
\end{align*}

\noindent Unlike the proof of the last lemma where the exact scaling relation of $X^{\eta \mathbb{D}}$ was used, we have to proceed with the radial-lateral decomposition here: for $x_1, x_2 \in B(0, \eta)$ recall
\begin{align*}
\mathbb{E}\left[X^{\eta \mathbb{D}}(x_1)X^{\eta \mathbb{D}}(x_2) \right]
& = -\log \left| \frac{x_1}{\eta}\right| \vee \left| \frac{x_2}{\eta}\right|
+ \log \frac{|x_1| \vee |x_2|}{|x_1 - x_2|}\\
& = \mathbb{E}[B_{\widehat{T}(x_1)}B_{\widehat{T}(x_2)}] + \mathbb{E} \left[\widehat{X}(x_1) \widehat{X}(x_2)\right]
\end{align*}

\noindent where $\widehat{T}(\cdot) = -\log |\cdot/\eta|$ and $(B_t)_{t \ge 0}$ is a Brownian motion independent of $\widehat{X}(\cdot)$. Then \eqref{eq:pre-exact1} is equal to
\begin{align*}
&\overline{F}_{\gamma}^{\{0\}}(\sqrt{u}\mathbf{b}_{\cdot/u};  X^{\eta \mathbb{D}}) \\
& \quad = \eta^{2-\gamma^2} \int_0^1 
|\sqrt{u} \mathbf{b}_s / \eta|^{2-\gamma^2}
\frac{e^{\gamma X^{\eta \mathbb{D}}(\sqrt{u}\mathbf{b}_{s}) - \frac{\gamma^2}{2} \mathbb{E}[X^{\eta \mathbb{D}}(\sqrt{u}\mathbf{b}_{s})^2]}ds}{|\mathbf{b}_{s}|^{2}}\\
& \quad=\eta^{2-\gamma^2} \int_0^1 e^{\gamma \left[B_{\widehat{T}(\sqrt{u}\mathbf{b}_s)} -(Q-\gamma) \widehat{T}(\sqrt{u}\mathbf{b}_s)\right]}
\frac{e^{\gamma \widehat{X}(\sqrt{u}\mathbf{b}_s) - \frac{\gamma^2}{2}\mathbb{E}[\widehat{X}(\sqrt{u}\mathbf{b}_s)^2]}ds}{|\mathbf{b}_s|^2}
\end{align*}

\noindent and thus
\begin{align}
\notag
&\int_0^{\delta_1^2} \frac{du}{2\pi u} \bdec{0}{0}{1}\left[  \mathcal{I} \left(
\lambda  C_x \overline{F}_{\gamma}^{\{0\}}( \sqrt{u}\mathbf{b}_{\cdot/u};  X^{\eta \mathbb{D}} \right)1_{\overline{\mathcal{H}}_n}\right]\\
\notag
&  = \int_{-\log \delta_1}^\infty \frac{dt}{\pi} \bdec{0}{0}{1}\Bigg[  \mathcal{I} \bigg(
\lambda  C_x \eta^{2-\gamma^2} \\
\notag
&\qquad \qquad \times 
\int_0^1 e^{\gamma \left[B_{t - \log|\mathbf{b}_s / \eta|} -(Q-\gamma)(t - \log|\mathbf{b}_s / \eta|)\right]}
\frac{e^{\gamma \widehat{X}(e^{-t}\mathbf{b}_s) - \frac{\gamma^2}{2}\mathbb{E}[\widehat{X}(e^{-t}\mathbf{b}_s)^2]}ds}{|\mathbf{b}_s|^2}
 \bigg)
1_{\overline{\mathcal{H}}_n}\Bigg]\\
\notag
& =
\int_{-\log(\delta_1 / \eta) - \widetilde{\tau}}^\infty \frac{dt}{\pi} \bdec{0}{0}{1}\Bigg[  \mathcal{I} \bigg(
\int_0^1 e^{\gamma \left[B_{t - \log|\mathbf{b}_s|+\widetilde{\tau}} -(Q-\gamma)(t - \log|\mathbf{b}_s| + \widetilde{\tau})\right]
-\gamma \left[B_{\widetilde{\tau}} -(Q-\gamma)\widetilde{\tau}\right]}\\
\label{eq:pre-beta}
&\qquad \qquad  \times 
\frac{e^{\gamma \widehat{X}(\eta e^{\widetilde{\tau}} e^{-t}\mathbf{b}_s) - \frac{\gamma^2}{2}\mathbb{E}[\widehat{X}(\eta e^{\widetilde{\tau}} e^{-t}\mathbf{b}_s)^2]}ds}{|\mathbf{b}_s|^2}
 \bigg)
1_{\overline{\mathcal{H}}_n}\Bigg]
\end{align}

\noindent with
\begin{align*}
\widetilde{\tau}
:= \widetilde{\tau}_{\lambda C_x \eta^{2-\gamma^2}}
:= \inf\left\{u > 0: e^{\gamma [B_u - (Q-\gamma)u]} = (\lambda C_x \eta^{2-\gamma^2})^{-1}\right\}.
\end{align*}

\noindent Since $\eta e^{\widetilde{\tau}}$ is independent of the scale invariant field $\widehat{X}$, we see that \eqref{eq:pre-beta} has the same distribution as
\begin{align}\label{eq:pre-limit}
\int_{-\log(\delta_1 / \eta) - \widetilde{L}}^\infty \frac{dt}{\pi} \bdec{0}{0}{1}\Bigg[  \mathcal{I} \bigg(
\int_0^1 e^{-\gamma \beta_{t - \log|\mathbf{b}_s|}^{Q-\gamma}}
\frac{e^{\gamma \widehat{X}(e^{-t}\mathbf{b}_s) - \frac{\gamma^2}{2}\mathbb{E}[\widehat{X}(e^{-t}\mathbf{b}_s)^2]}ds}{|\mathbf{b}_s|^2}
 \bigg)
1_{\overline{\mathcal{H}}_n}\Bigg]
\end{align}

\noindent where $\widetilde{L} := \widetilde{L}_{\lambda C_x \eta^{2-\gamma^2}} := \sup \left\{u > 0: \beta_{-u}^{Q-\gamma} = \lambda C_x \eta^{2-\gamma^2}\right\}$ by \Cref{lem:time_reversal}. As everything inside $ \bdec{0}{0}{1}[\cdot]$ in \eqref{eq:pre-limit} is non-negative and independent of $\lambda C_x$, and $\widetilde{L} \xrightarrow{\lambda \to \infty} \infty$ a.s., it follows from monotone convergence that \eqref{eq:pre-limit} converges as $\lambda \to \infty$ to
\begin{align*}
\int_{-\infty}^\infty \frac{dt}{\pi} \bdec{0}{0}{1}\Bigg[  \mathcal{I} \bigg(
\int_0^1 e^{-\gamma \beta_{t - \log|\mathbf{b}_s|}^{Q-\gamma}}
\frac{e^{\gamma \widehat{X}(e^{-t}\mathbf{b}_s) - \frac{\gamma^2}{2}\mathbb{E}[\widehat{X}(e^{-t}\mathbf{b}_s)^2]}ds}{|\mathbf{b}_s|^2}
 \bigg)
1_{\overline{\mathcal{H}}_n}\Bigg].
\end{align*}

\noindent Now that the above expression is independent of $\delta_1 > 0$, we may first send $\delta_1 \to 0^+$ and then $n \to \infty$  (so that the condition $4n\delta_1 < \kappa$ remains satisfied)  to conclude the proof by monotone convergence and continuous mapping theorem.
\end{proof}

\begin{proof}[Proof of \Cref{T:HK}]
Combining all the analysis from Step 1--4 above, we are only left with the final task of verifying $\mathbb{E}[J_\gamma^\infty] = c_{\gamma}$. A direct computation would not be straightforward, and we shall proceed instead by reversing the sequence of arguments in the proof of \Cref{lem:Jinfty_limit} and making use of
\begin{align}\label{eq:checkmean}
\mathbb{E}[J_{\gamma}^\infty]
& = \lim_{n \to \infty} \lim_{\lambda \to \infty}
\int_0^{\delta_1^2} \frac{du}{2\pi u} \mathbb{E}\otimes\bdec{0}{0}{1}\Bigg[  \mathcal{I} \left(
\lambda  C_x \overline{F}_{\gamma}^{\{0\}}( \sqrt{u}\mathbf{b}_{\cdot/u};  X^{\eta \mathbb{D}} \right)1_{\overline{\mathcal{H}}_n}\Bigg]
\end{align}

\noindent as a result of monotone convergence. Note that the evaluation needed on the RHS is independent of the choice of $\delta_1$ (which is allowed to depend on $n$), and in particular we may take $\delta_1 = \eta / n$ to ensure that $\mathbf{b}_{\cdot} \in B(0, \eta) = \eta \mathbb{D}$ on the event $\overline{H}_n$. We then follow the strategy in \Cref{sec:proofWeyl} and invoke the scaling behaviour of $X^{\eta \mathbb{D}}$, leading us to
\begin{align*}
\overline{F}_{\gamma}^{\{0\}}(\sqrt{u}\mathbf{b}_{\cdot/u};  X^{\eta \mathbb{D}}) 
&\overset{d}{=} 
u^{1 - \frac{\gamma^2}{2}} e^{\gamma B_T - \frac{\gamma^2}{2} T}\int_0^1 \frac{e^{\gamma X^{\eta \mathbb{D}}(\frac{\eta}{n}\mathbf{b}_{s}) - \frac{\gamma^2}{2} \mathbb{E}[X^{\eta \mathbb{D}}(\frac{\eta}{n}\mathbf{b}_{s})^2]}ds}{|\mathbf{b}_{s}|^{\gamma^2}}\\
& \overset{d}{=} 
  e^{\gamma \left(B_T - (Q-\gamma)T\right)}\underbrace{(\eta/n)^{2-\gamma^2}\int_0^1 \frac{e^{\gamma X^{ \mathbb{D}}(n^{-1}\mathbf{b}_{s}) - \frac{\gamma^2}{2} \mathbb{E}[X^{\mathbb{D}}(n^{-1}\mathbf{b}_{s})^2]}ds}{|\mathbf{b}_{s}|^{\gamma^2}}}_{=:\mathcal{E}_{x, n}}.
\end{align*}

\noindent where $B_T \sim \mathcal{N}(0, T)$ with $T = T(u; n, \eta) := -\log \left(n \sqrt{u}/\eta\right)$ is independent of everything else. Substituting this back to \eqref{eq:checkmean}, we obtain
\begin{align*}
\mathbb{E}[J_{\gamma}^\infty]
& = \lim_{n \to \infty} \lim_{\lambda \to \infty}
 \int_0^\infty \frac{dt}{\pi}\mathbb{E}\otimes\bdec{0}{0}{1}\Bigg[  \mathcal{I} \left(
\lambda  C_x e^{\gamma \left(B_t - (Q-\gamma)t\right)} \mathcal{E}_{x, n}\right)1_{\overline{\mathcal{H}}_n}\Bigg]\\
& = \lim_{n \to \infty} c_{\gamma}\mathbb{E}\otimes\bdec{0}{0}{1}[1_{\overline{\mathcal{H}}_n}] 
= c_\gamma
\end{align*}

\noindent by \Cref{lem:main}, and the proof of \Cref{T:HK} is now complete.
\end{proof}

\subsection{Evaluating the constant $c_\gamma(m)$: proof of \Cref{theo:constant}}
\begin{proof}[Proof of  \Cref{theo:constant}]
Recall from \Cref{lem:const_finite} that $c_\gamma(m)$ defined by the probabilistic representation \eqref{eq:constant} or equivalently \eqref{eq:constant2} is finite for any $\gamma, m > 0$. Moreover, from \Cref{lem:main} we may write
\begin{align*}
\pi c_{\gamma}(m)
& = \lim_{\lambda \to \infty} \mathbb{E}\left[ \int_0^\infty \mathcal{I}(\lambda e^{\gamma (B_t - mt)})dt\right]\\
& = \lim_{\lambda \to \infty} \int_0^\infty dt  \int_0^\infty \lambda ue^{-\lambda u} \mathbb{P}(e^{\gamma (B_t - mt)} \in du) \\
& = \lim_{\lambda \to \infty}\lambda   \int_0^\infty  ue^{-\lambda u} \underbrace{\left[ \int_0^\infty \frac{1}{u \gamma \sqrt{2\pi t}} \exp\left(-\frac{1}{2\gamma^2 t} (\log u +\gamma m t)^2\right) dt \right]}_{(*)} du
\end{align*}

\noindent where $(*)$ is integrable for any $u > 0$. By the standard Hardy-Littlewood-Karamata Tauberian Theorem (i.e.  \Cref{theo:tauberian} in the deterministic setting), we also have
\begin{align*}
\pi c_{\gamma}(m)
& = \lim_{\lambda \to \infty}\lambda   \int_0^{1/\lambda} u \underbrace{\left[ \int_0^\infty \frac{1}{u \gamma \sqrt{2\pi t}} \exp\left(-\frac{1}{2\gamma^2 t} (\log u +\gamma m t)^2\right) dt \right]}_{(*)} du\\
& =  \lim_{\lambda \to \infty} \mathbb{E}\left[ \int_0^\infty \widetilde{\mathcal{I}}(\lambda e^{\gamma (B_t - mt)})dt\right]
\end{align*}

\noindent with $\widetilde{\mathcal{I}}(x) := x 1_{\{x \le 1\}}$ and in particular $\widetilde{\mathcal{I}}(x) = 0$ for $x > 1$. Introducing the stopping time $\widetilde{\tau}_\lambda := \inf \{t > 0: e^{\gamma (B_t - mt)} = 1/\lambda \}$,  we have for any $\lambda > 0$ that
\begin{align*}
\mathbb{E}\left[ \int_0^\infty \widetilde{\mathcal{I}}(\lambda e^{\gamma (B_t - mt)})dt\right]
& = \mathbb{E}\left[ \int_{\widetilde{\tau}_{\lambda}}^\infty \widetilde{\mathcal{I}}(\lambda e^{\gamma (B_t - mt)})dt\right]\\
& = \mathbb{E}\left[ \int_{\widetilde{\tau}_{\lambda}}^\infty \widetilde{\mathcal{I}}(e^{\gamma [(B_t - mt) - (B_{\widetilde{\tau}_\lambda} - m\widetilde{\tau}_\lambda )]})dt\right]
= \mathbb{E}\left[ \int_0^\infty \widetilde{\mathcal{I}}(e^{\gamma (B_t - mt)})dt\right]
\end{align*}

\noindent by the strong Markov property. If we denote by $\Phi(\cdot)$ the cumulative distribution function of standard Gaussian random variables, then
\begin{align*}
c_{\gamma}(m)
& = \frac{1}{\pi} \int_0^\infty e^{(\frac{\gamma^2}{2} - \gamma m) t} \mathbb{E}\left[ e^{\gamma B_t - \frac{\gamma^2}{2}t} 1_{\{B_t - mt \le 0\}}\right]dt \\
&= \frac{1}{\pi} \int_0^\infty e^{(\frac{\gamma^2}{2} - \gamma m) t} \Phi\left( (m - \gamma)\sqrt{t}\right) dt\\
&= \frac{2}{\pi \gamma(\gamma - 2m)} \left\{\left[ e^{(\frac{\gamma^2}{2} - \gamma m) t} \Phi\left( (m - \gamma)\sqrt{t}\right) \right]_{0}^\infty -  \int_0^\infty e^{(\frac{\gamma^2}{2} - \gamma m) t} \partial_t \Phi\left( (m - \gamma)\sqrt{t}\right) dt\right\}\\
&= \frac{1}{\pi \gamma(\gamma - 2m)} \left[- 1 -  2(m-\gamma)\int_0^\infty e^{(\frac{\gamma^2}{2} - \gamma m) s^2} e^{-\frac{(m-\gamma)^2 s^2}{2}} \frac{ds}{\sqrt{2\pi}}\right]\\
&= \frac{1}{\pi \gamma(\gamma - 2m)} \left[- 1 -  \frac{m-\gamma}{m}\right]
= \frac{1}{\pi \gamma m}
\end{align*}

\noindent which is our desired result.
\end{proof}

\appendix
\section{Probabilistic asymptotics}\label{app:probasy}
This appendix collects some probabilistic generalisations of common asymptotic results that are suitable in the context of convergence in probability. The first one concerns ``asymptotic differentiations".

\begin{lem}\label{lem:asympdiff}
Let $\alpha, \beta > 0$ be fixed, and $\varphi(u): \mathbb{R}_+ \mapsto \mathbb{R}_+$ a random non-increasing function. Suppose there exists some a.s. positive random variable $C$ such that
\begin{align*}
    t^{-\beta} \int_0^t u^{\alpha - 1} \varphi(u) du
    \xrightarrow[t \to 0^+]{p} C,
\end{align*}

\noindent then
\begin{align*}
    t^{\alpha - \beta} \varphi(t) \xrightarrow[t \to 0^+]{p} \beta C.
\end{align*}
\end{lem}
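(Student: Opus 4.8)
The statement is the probabilistic analogue of the classical ``monotone density'' theorem (differentiating an integrated regular variation asymptotics), so the plan is to mimic the deterministic proof but replace deterministic limits with convergence in probability throughout. Write $\Phi(t) := \int_0^t u^{\alpha-1}\varphi(u)\,du$, so that by hypothesis $t^{-\beta}\Phi(t) \xrightarrow{p} C$ as $t\to 0^+$. Since $\varphi$ is non-increasing, for any fixed $\rho \in (0,1)$ and any $t>0$ we have the sandwich
\begin{align}\label{eq:sandwich}
\int_{\rho t}^{t} u^{\alpha-1}\varphi(u)\,du
\;\le\; \varphi(\rho t)\int_{\rho t}^t u^{\alpha-1}\,du
\;=\; \varphi(\rho t)\,\frac{t^\alpha(1-\rho^\alpha)}{\alpha},
\end{align}
and symmetrically $\int_{\rho t}^t u^{\alpha-1}\varphi(u)\,du \ge \varphi(t)\,\frac{t^\alpha(1-\rho^\alpha)}{\alpha}$. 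The left-hand side of \eqref{eq:sandwich} equals $\Phi(t)-\Phi(\rho t) = t^\beta\big[t^{-\beta}\Phi(t)\big] - (\rho t)^\beta\big[(\rho t)^{-\beta}\Phi(\rho t)\big]$, which by hypothesis converges in probability to $C t^\beta(1-\rho^\beta)$ in the appropriate normalized sense; more precisely $t^{-\beta}\big(\Phi(t)-\Phi(\rho t)\big)\xrightarrow{p} C(1-\rho^\beta)$ as $t\to 0^+$, using that $(\rho t)^{-\beta}\Phi(\rho t)\xrightarrow{p}C$ as well (a subsequence of $t\to 0$).

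From the two inequalities we extract, for every fixed $\rho\in(0,1)$,
\begin{align}\label{eq:upperlower}
\frac{\alpha}{1-\rho^\alpha}\,\rho^{\alpha-\beta}\cdot (\rho t)^{\alpha-\beta}\varphi(\rho t)
\;\le\; t^{-\beta}\big(\Phi(t)-\Phi(\rho t)\big)\cdot t^{\alpha-\beta}\cdot\frac{1}{t^{\alpha-\beta}}\cdots
\end{align}
— to avoid clutter let me instead phrase it cleanly: multiplying the lower bound $\varphi(\rho t)\frac{t^\alpha(1-\rho^\alpha)}{\alpha}\ge \Phi(t)-\Phi(\rho t)$ is the wrong direction, so I use: $\varphi(t)\frac{t^\alpha(1-\rho^\alpha)}{\alpha}\le \Phi(t)-\Phi(\rho t) \le \varphi(\rho t)\frac{t^\alpha(1-\rho^\alpha)}{\alpha}$. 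Dividing by $t^\beta(1-\rho^\alpha)/\alpha$ gives
\begin{align}\label{eq:master}
t^{\alpha-\beta}\varphi(t)\;\le\;\frac{\alpha}{1-\rho^\alpha}\,t^{-\beta}\big(\Phi(t)-\Phi(\rho t)\big)\;\le\;\rho^{\beta-\alpha}\,(\rho t)^{\alpha-\beta}\varphi(\rho t).
\end{align}
The middle term converges in probability to $\frac{\alpha}{1-\rho^\alpha}C(1-\rho^\beta)$ as $t\to 0^+$. Applying \eqref{eq:master} with $t$ replaced by $t/\rho$ (and noting $\rho t \to 0$ iff $t\to 0$) yields the reverse bound $\rho^{\alpha-\beta}\cdot\frac{\alpha}{1-\rho^\alpha}(t/\rho)^{-\beta}\big(\Phi(t/\rho)-\Phi(t)\big)\le t^{\alpha-\beta}\varphi(t)$, whose left side converges in probability to $\rho^{\alpha-\beta}\cdot\frac{\alpha}{1-\rho^\alpha}\cdot C\,\rho^{-\beta}(1-\rho^\beta)\cdot\rho^{\beta}=\cdots$; the point is that both the upper and lower bounds on $t^{\alpha-\beta}\varphi(t)$ converge in probability to quantities of the form $g_\pm(\rho)C$ with $g_\pm(\rho)\to\beta$ as $\rho\uparrow 1$ (indeed $\frac{\alpha(1-\rho^\beta)}{1-\rho^\alpha}\to\frac{\beta}{\alpha}\cdot\alpha=\beta$ by L'Hôpital).

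To finish: fix $\varepsilon>0$. Choose $\rho$ close enough to $1$ that $|g_\pm(\rho)-\beta|<\varepsilon$. Then from the convergence in probability of the sandwiching expressions, for $t$ small one has $\mathbb{P}\big(t^{\alpha-\beta}\varphi(t) > (\beta+2\varepsilon)C\big)\to 0$ and $\mathbb{P}\big(t^{\alpha-\beta}\varphi(t) < (\beta-2\varepsilon)C\big)\to 0$ — here one must be slightly careful because $C$ is random, but this is handled by conditioning on the event $\{C\in(a,b)\}$ for $0<a<b$ and using that $\mathbb{P}(C\in(a,b))$ can be made arbitrarily close to $1$ by taking $a\downarrow 0$, $b\uparrow\infty$ (since $C$ is a.s. finite and positive); on such an event the random prefactor is bounded, and one can pass the deterministic $\varepsilon$-manipulations through. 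Since $\varepsilon>0$ was arbitrary this gives $t^{\alpha-\beta}\varphi(t)\xrightarrow{p}\beta C$.

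\textbf{Main obstacle.} The only genuinely non-routine point is handling the randomness of the limit $C$ when turning the two one-sided bounds \eqref{eq:master} into a two-sided in-probability statement: the deterministic monotone-density argument uses ratios freely, but here $C$ is a random variable, so one cannot simply ``divide by $C$''. The clean way around it, which I would carry out in detail, is to work on the events $\{a < C < b\}$ for rationals $0<a<b$, note the claim on each such event reduces to comparing $t^{\alpha-\beta}\varphi(t)$ with fixed constants times $C$ with a bounded multiplicative error, take $a\downarrow 0$ and $b\uparrow\infty$, and use $\mathbb{P}(0<C<\infty)=1$ together with a union bound over finitely many such events to control the total probability. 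Everything else is the standard regular-variation sandwich plus the observation $\lim_{\rho\uparrow1}\frac{\alpha(1-\rho^\beta)}{1-\rho^\alpha}=\beta$.
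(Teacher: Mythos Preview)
Your approach is essentially the same as the paper's: both use the monotone-density sandwich $\varphi(t)\frac{t^\alpha(1-\rho^\alpha)}{\alpha}\le \Phi(t)-\Phi(\rho t)\le \varphi(\rho t)\frac{t^\alpha(1-\rho^\alpha)}{\alpha}$ over $[\rho t,t]$ (the paper writes $[b^{-1}t,t]$ with $b>1$) together with $\lim_{\rho\uparrow 1}\frac{\alpha(1-\rho^\beta)}{1-\rho^\alpha}=\beta$. The paper dispatches your ``main obstacle'' more simply by normalising $\varphi\mapsto\varphi/C$ at the outset (continuous mapping, $C>0$ a.s.) to reduce to the case $C\equiv 1$, which eliminates the need for your conditioning on $\{a<C<b\}$.
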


\begin{proof}
Without loss of generality suppose $C = 1$ almost surely. We start with the upper bound, i.e. we would like to establish
\begin{align*}
    \lim_{t \to 0^+} \mathbb{P}( t^{\alpha - \beta} \varphi(t) - \beta > \epsilon) = 0 \qquad \forall \epsilon > 0.
\end{align*}

\noindent For this, consider, for fixed $b > 1$, the deterministic inequality
\begin{align*}
\int_{b^{-1}t}^{t} u^{\alpha - 1} \varphi(u) du
\ge \varphi(t) \int_{b^{-1}t}^{t} u^{\alpha-1} du
= t^\alpha \varphi(t) \frac{1 -b^{-\alpha}}{\alpha}.
\end{align*}

\noindent Then for any $\epsilon' > 0$, we have
\begin{align}
\notag
 &\lim_{t \to 0^+} \mathbb{P}( t^{\alpha - \beta} \varphi(t) - \beta > \epsilon)\\
\notag
 & \quad \le \lim_{t \to 0^+} \mathbb{P}\left(\left(\frac{\alpha}{1-b^{-\alpha}}\right) t^{-\beta}\int_{b^{-1}t}^t u^{\alpha - 1}\varphi(u) du - \beta \ge \epsilon \right)\\
\notag
 & \quad \le \lim_{t \to 0^+} \mathbb{P}\left(\left|t^{-\beta} \int_0^t u^{\alpha-1} \varphi(u) du - 1\right| > \epsilon' \right)
 +\lim_{t \to 0^+} \mathbb{P}\left(\left |(b^{-1}t)^{-\beta} \int_0^{b^{-1}t} u^{\alpha-1} \varphi(u) du - 1\right| > \epsilon' \right)\\
\notag
 &\qquad + 1\left\{\left(\frac{\alpha}{1-b^{-\alpha}}\right) \left[ (1+\epsilon') - b^{-\beta}(1-\epsilon')\right] - \beta > \epsilon\right\}\\
\label{eq:Pdiff_indicator}
 & \quad = 1\left\{\frac{\alpha(1-b^{-\beta})}{1-b^{-\alpha}} - \beta  + \frac{\alpha(1+b^{-\beta})}{1-b^{-\alpha}} \epsilon'> \epsilon\right\}.
\end{align}

Given that
\begin{align*}
    \lim_{b \to 1} \frac{\alpha(1-b^{-\beta})}{1-b^{-\alpha}} - \beta = 0,
\end{align*}

\noindent we can choose $b$ sufficiently close to $1$ and then $\epsilon' > 0$ sufficiently small such that
\begin{align*}
   \left| \frac{\alpha(1-b^{-\beta})}{1-b^{-\alpha}} - \beta\right| < \frac{\epsilon}{2}
   \qquad \text{and} \qquad
   \frac{\alpha(1+b^{-\beta})}{1-b^{-\alpha}} \epsilon'
    < \frac{\epsilon}{2},
\end{align*}

\noindent in which case the indicator function in \eqref{eq:Pdiff_indicator} is always evaluated to $0$. By a similar argument, one may obtain the lower bound
\begin{align*}
    \lim_{t \to 0^+} \mathbb{P}( t^{\alpha - \beta} \varphi(t) - \beta < \epsilon) = 0
\end{align*}

\noindent by considering the integral $\int_t^{bt} u^{\alpha - 1} \varphi(u) du$. This concludes the proof.
\end{proof}

The next result is a probabilistic generalisation of the Hardy--Littlewood Tauberian theorem. The version we are stating is slightly more general than what is needed here as it could be of independent interest. Recall that a function $L: (0, \infty) \to (0, \infty)$ is slowly varying at zero if $\lim_{t \to 0^+} L(xt)/L(t) = 1$ for any $x > 0$.\footnote{One can also talk about slow variation at infinity by considering the analogous ratio limit as $t \to \infty$.}
\begin{theo}\label{theo:tauberian}
Let $\nu(d \cdot)$ be a non-negative random measure on $\mathbb{R}_+$, $\nu(t) := \int_0^t \nu(ds)$, and suppose the Laplace transform
\begin{align*}
    \hat{\nu}(\lambda) := \int_0^\infty e^{-\lambda s} \nu(ds)
\end{align*}

\noindent exists almost surely for any $\lambda > 0$. If
\begin{itemize}\setlength\itemsep{0em}
\item $\rho \in [0, \infty)$ is fixed;
\item $L: (0, \infty) \to (0, \infty)$ is a deterministic slowly varying function at $0$; and 
\item $C_\nu$ is some non-negative (finite) random variable,
\end{itemize}

\noindent then we have:
\begin{align}\label{eq:tauberian}
    \frac{\lambda^\rho}{L(\lambda^{-1})} \hat{\nu}(\lambda) \xrightarrow[\lambda \to \infty]{p} C_\nu
    \qquad \Rightarrow \qquad 
    \frac{t^{-\rho}}{L(t)} \nu(t) \xrightarrow[t \to 0^+]{p} \frac{C_\nu}{\Gamma(1+\rho)}.
\end{align}

\noindent The same implication also holds when one considers the asymptotics as $\lambda \to 0^+$ and $t \to \infty$ in \eqref{eq:tauberian} (but with $L$ being slowly varying at infinity) instead.
\end{theo}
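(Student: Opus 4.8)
\textbf{Proof proposal for \Cref{theo:tauberian}.}
The plan is to reduce the probabilistic statement to the classical (deterministic) Hardy--Littlewood--Karamata Tauberian theorem by a subsequence/almost-sure-coupling argument, exploiting the monotonicity of $t \mapsto \nu(t)$. The deterministic theorem (see \cite[Chapter IV.6]{Kor2004} or \cite{BGT}) says: if $f$ is a deterministic non-decreasing function on $\RR_+$ whose Laplace transform $\hat f(\lambda)$ exists for all $\lambda > 0$, and $\lambda^{\rho} \hat f(\lambda)/L(\lambda^{-1}) \to c$ as $\lambda \to \infty$ for some slowly varying $L$ at $0$, then $t^{-\rho} f(t)/L(t) \to c/\Gamma(1+\rho)$ as $t \to 0^+$. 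The obstacle is that convergence in probability of $\lambda^{\rho}\hat\nu(\lambda)/L(\lambda^{-1})$ does not give us almost-sure convergence of the Laplace transform along the full continuum $\lambda \to \infty$; we only get it along subsequences, and the subsequence may depend on the realisation. So the argument cannot simply be ``apply the deterministic theorem pathwise.''

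First I would recall the standard ``convergence in probability along every sequence has an a.s.-convergent subsequence'' principle: it suffices to prove that from \emph{every} sequence $t_n \to 0^+$ one can extract a further subsequence along which $t_n^{-\rho}\nu(t_n)/L(t_n) \to C_\nu/\Gamma(1+\rho)$ almost surely. Fix such a sequence $(t_n)$. The key technical step is to upgrade the hypothesis to a statement that holds simultaneously for a \emph{countable dense} set of $\lambda$'s along a subsequence: choose a countable dense set $Q \subset (0,\infty)$ (say the positive rationals); since $\lambda^{\rho}\hat\nu(\lambda)/L(\lambda^{-1}) \to C_\nu$ in probability as $\lambda \to \infty$ (in particular along $\lambda = q 2^k$ for each fixed $q \in Q$ as $k \to \infty$), a diagonal extraction produces a subsequence $(k_j)$ along which, almost surely, $(q2^{k_j})^{\rho}\hat\nu(q2^{k_j})/L((q2^{k_j})^{-1}) \to C_\nu$ for \emph{all} $q \in Q$ at once. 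Now I would like to invoke the deterministic Tauberian theorem. The remaining gap is that we only control $\hat\nu$ along the rescaled lattices $q 2^{k_j}$ rather than as $\lambda \to \infty$ continuously; but this is exactly the kind of input the ``Tauberian theorem from sequential Abelian data'' handles, using the monotonicity of $\nu$ together with slow variation of $L$. Concretely, fix $\omega$ in the almost-sure event; set $f(t) = \nu(t)(\omega)$, which is non-decreasing. For the rescaled measures $\nu_j(dt) := 2^{-k_j\rho} L(2^{-k_j})^{-1}\, \nu(2^{-k_j}\,dt)$ one has $\hat\nu_j(q) = (q2^{k_j})^{\rho}\hat\nu(q2^{k_j}) \cdot q^{-\rho} L(2^{-k_j})^{-1} \to C_\nu q^{-\rho}$ for all $q \in Q$ (using slow variation $L((q2^{k_j})^{-1})/L(2^{-k_j}) \to 1$); since $q^{-\rho}$ is the Laplace transform of $t^{\rho}/\Gamma(1+\rho)\,dt$ and the $\nu_j$ are measures of non-decreasing distribution functions, the extended continuity theorem for Laplace transforms (vague convergence from pointwise convergence of transforms on a dense set, valid because the limit is itself a Laplace transform of a locally finite measure) gives $\nu_j(t) \to C_\nu t^{\rho}/\Gamma(1+\rho)$ at every continuity point, hence at $t = 1$: that is $2^{-k_j\rho}L(2^{-k_j})^{-1}\nu(2^{-k_j}) \to C_\nu/\Gamma(1+\rho)$.

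Finally I would remove the restriction to the dyadic sequence $s_j := 2^{-k_j}$ and pass to the original sequence $t_n \to 0^+$. For each $n$ pick $j(n)$ with $s_{j(n)+1} \le t_n < s_{j(n)} = 2 s_{j(n)+1}$ (for $n$ large, shrinking along $(t_n)$ as needed, or by a routine interlacing argument at the level of subsequences). By monotonicity of $\nu$,
\begin{equation*}
\frac{\nu(s_{j(n)+1})}{L(s_{j(n)+1})} \frac{L(s_{j(n)+1})}{L(t_n)} s_{j(n)+1}^{-\rho} \Big(\tfrac{s_{j(n)+1}}{t_n}\Big)^{\rho}
\;\le\; \frac{\nu(t_n)}{L(t_n)\, t_n^{\rho}}
\;\le\; \frac{\nu(s_{j(n)})}{L(s_{j(n)})} \frac{L(s_{j(n)})}{L(t_n)} s_{j(n)}^{-\rho} \Big(\tfrac{s_{j(n)}}{t_n}\Big)^{\rho},
\end{equation*}
and since $t_n/s_{j(n)} \in [1/2,1]$ and $s_j/s_{j+1} = 2$ while $L(s_j)/L(t_n) \to 1$ by slow variation, letting first $n \to \infty$ and then observing the dyadic gap $2^{\rho}$ can be made as close to $1$ as desired by working with a finer dyadic scale $b^{-k}$, $b \downarrow 1$, in place of $2^{-k}$ (redoing the diagonal extraction with $b$-adic lattices), we squeeze $t_n^{-\rho}\nu(t_n)/L(t_n) \to C_\nu/\Gamma(1+\rho)$ along the subsequence. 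This proves convergence in probability. The $\lambda \to 0^+$, $t \to \infty$ version is identical after the substitution $t \mapsto 1/t$, $\lambda \mapsto 1/\lambda$, with $L$ slowly varying at infinity. The main obstacle, as indicated, is the bookkeeping in the diagonal extraction and in transferring sequential Abelian control to the full Tauberian conclusion while keeping everything measurable and uniform in $\omega$ on a single almost-sure event; the monotonicity of $\nu$ and the slow variation of $L$ are exactly what make this go through.
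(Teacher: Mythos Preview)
Your overall strategy---reduce to the deterministic Tauberian theorem via the subsequence principle and the continuity theorem for Laplace transforms---is sound and genuinely different from the paper's direct Karamata argument. However, the implementation has a real gap in the final squeeze. You extract a subsequence $(k_j)$ of the integers and set $s_j := 2^{-k_j}$, but then write ``$s_{j(n)} = 2 s_{j(n)+1}$'' and ``$t_n/s_{j(n)} \in [1/2,1]$''. This is false: since $(k_j)$ is only a subsequence, the ratio $s_{j}/s_{j+1} = 2^{k_{j+1}-k_j}$ can be arbitrarily large, so the sandwich $s_{j(n)+1} \le t_n < s_{j(n)}$ does not trap $t_n$ within a bounded multiplicative factor of any $s_j$. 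Your proposed fix of redoing the extraction with base $b \downarrow 1$ does not help for the same reason: the gaps $b^{k_{j+1}^{(b)} - k_j^{(b)}}$ remain uncontrolled after extraction. The vague convergence $\nu_j(t) \to C_\nu t^\rho/\Gamma(1+\rho)$ is only locally uniform on compacts of $(0,\infty)$, which is not enough when $t_n \cdot 2^{k_{j(n)}}$ may escape to $0$.

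The fix is simple: tie the diagonal extraction to the given sequence $(t_n)$ rather than to dyadic scales. Given $t_n \to 0^+$, the hypothesis gives $(q/t_n)^\rho \hat\nu(q/t_n)/L(t_n/q) \to C_\nu$ in probability for each rational $q>0$; extract a subsequence $(n_l)$ along which this holds almost surely simultaneously for all rational $q$. Setting $\nu_l(A) := t_{n_l}^{-\rho} L(t_{n_l})^{-1}\,\nu(t_{n_l} A)$, slow variation gives $\hat\nu_l(q) \to C_\nu q^{-\rho}$ for all rational $q$, and monotonicity of $\lambda \mapsto \hat\nu_l(\lambda)$ together with continuity of the limit extends this to all $\lambda>0$. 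The continuity theorem for Laplace transforms (Feller~II, XIII.1) then yields $\nu_l([0,1]) \to C_\nu/\Gamma(1+\rho)$ directly, which is exactly $t_{n_l}^{-\rho} L(t_{n_l})^{-1} \nu(t_{n_l}) \to C_\nu/\Gamma(1+\rho)$; no squeeze is needed.

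For comparison, the paper avoids subsequences entirely and adapts Karamata's polynomial approximation: one approximates $1_{[e^{-1},1]}$ above and below by polynomials $\mathcal{P}_\pm$ without constant term, so that $\int_0^\infty \mathcal{P}_\pm(e^{-s/t})\,\nu(ds)$ is a finite linear combination of $\hat\nu(k/t)$, whose rescaled limits follow from the hypothesis and a union bound; the indicator then sandwiches $\nu(t)$ and the approximation error is absorbed by sending $\epsilon \to 0$. This works directly with convergence in probability throughout and is self-contained, whereas your route outsources the Tauberian step to the continuity theorem. Both are legitimate; the paper's is arguably more robust (no subsequence bookkeeping), while yours is more conceptual once the extraction is done correctly.
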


Following \cite[Chapter I, Section 15]{Kor2004} as well as \cite[Appendix A]{NPS2023}, our proof of \Cref{theo:tauberian} is based on adapting Karamata's argument to the probabilistic setting, and the main ingredient is the following deterministic approximation lemma.
\begin{lem}\label{lem:polyapprox}
For each $\alpha \ge 0$ and $\epsilon \in (0, 1/2e)$, there exist some constant $C = C(\alpha) < \infty$ independent of $\epsilon$ and polynomials $\Pa_\pm(\cdot)$ without constant terms (i.e. $\Pa_\pm(0) = 0$) such that $\Pa_-(x) \le 1_{[e^{-1}, 1]}(x) \le \Pa_+(x)$ for any $x \in [0,1]$ and
\begin{align} \label{eq:polyapprox}
\int_0^1 \left|\Pa_\pm(x) - 1_{[e^{-1}, 1]}(x) \right| \alpha \left(\log \frac{1}{x}\right)^{\alpha-1}\frac{dx}{x} \le C(\alpha) \epsilon.
\end{align}
\end{lem}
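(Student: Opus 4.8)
\textbf{Proof plan for \Cref{lem:polyapprox}.}

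The plan is to construct the approximating polynomials via the classical Weierstrass/Korovkin strategy, exploiting the fact that $x \mapsto 1_{[e^{-1},1]}(x)$ has a single jump in $[0,1]$ at $x = e^{-1}$ and is constant (equal to $0$, then $1$) away from it. First I would reduce the problem to approximating the Heaviside step function: the target is a characteristic function of an interval whose left endpoint is $e^{-1}$ and whose right endpoint coincides with the right end of the domain $[0,1]$, so the only genuine discontinuity is at $e^{-1}$. Thus it suffices to sandwich $1_{[e^{-1},1]}$ between two polynomials $\Pa_\pm$ agreeing with it outside a window $[e^{-1}-\eta, e^{-1}+\eta]$ up to small error, and controlling the integral of the discrepancy against the finite measure $d\rho_\alpha(x) := \alpha (\log(1/x))^{\alpha-1}\, x^{-1}\, dx$ on $(0,1]$ (note $\int_0^1 d\rho_\alpha = 1$, so this is a probability measure; near $x=0$ the density is integrable for every $\alpha \ge 0$, and near $x = 1$ it behaves like $\alpha(1-x)^{\alpha-1}$, still integrable).

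The key steps, in order: (i) Fix a smooth non-decreasing function $\psi_\pm$ that agrees with $1_{[e^{-1},1]}$ outside $(e^{-1}-\eta, e^{-1}+\eta)$ and satisfies $\psi_- \le 1_{[e^{-1},1]} \le \psi_+$ pointwise on $[0,1]$, with $\eta$ to be chosen proportional to $\epsilon$ (say $\eta = c\epsilon$ for a suitable $c = c(\alpha)$). Since $d\rho_\alpha$ has bounded density on any compact subinterval of $(0,1)$ around $e^{-1}$ — explicitly the density at $x$ is $\alpha(\log(1/x))^{\alpha-1}/x$, which is bounded above by some $K(\alpha)$ on $[e^{-1}/2, (1+e^{-1})/2]$ — we get $\int_0^1 |\psi_\pm - 1_{[e^{-1},1]}|\, d\rho_\alpha \le 2 K(\alpha)\, \eta \le C_1(\alpha)\epsilon$. (ii) Approximate each $\psi_\pm$ uniformly on $[0,1]$ by a polynomial via Weierstrass: choose $Q_\pm$ with $\sup_{[0,1]}|Q_\pm - \psi_\pm| \le \epsilon$, then shift to $\Pa_\pm := Q_\pm \pm \epsilon$ so that the one-sided bounds $\Pa_- \le 1_{[e^{-1},1]} \le \Pa_+$ hold on all of $[0,1]$ while still $\sup_{[0,1]}|\Pa_\pm - \psi_\pm| \le 2\epsilon$; then $\int_0^1 |\Pa_\pm - \psi_\pm|\, d\rho_\alpha \le 2\epsilon \int_0^1 d\rho_\alpha = 2\epsilon$. (iii) Finally, remove the constant term: $\Pa_\pm(0)$ equals the constant $Q_\pm(0)\pm\epsilon$, which is within $3\epsilon$ of $\psi_\pm(0) = 0$ (since $\psi_\pm$ agrees with $1_{[e^{-1},1]}$ near $0$, as $e^{-1} - \eta > 0$ for $\epsilon$ small, i.e. $\epsilon < 1/(2ec)$, consistent with the stated range $\epsilon < 1/2e$ after choosing $c \le 1$); subtract $\Pa_\pm(0)$ from $\Pa_\pm$. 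This subtraction changes the one-sided bounds by at most $3\epsilon$ in the wrong direction near the endpoints, which is fixed by re-inflating: replace $\Pa_+$ by $\Pa_+ - \Pa_+(0) + 3\epsilon$ and $\Pa_-$ by $\Pa_- - \Pa_-(0) - 3\epsilon$, preserving $\Pa_-(0) = \Pa_+(0) = 0$ up to the already-absorbed $O(\epsilon)$ and contributing another $3\epsilon\int_0^1 d\rho_\alpha = 3\epsilon$ to the integral bound. Collecting, $\int_0^1 |\Pa_\pm - 1_{[e^{-1},1]}|\, d\rho_\alpha \le C_1(\alpha)\epsilon + 2\epsilon + 3\epsilon =: C(\alpha)\epsilon$, which is \eqref{eq:polyapprox}.

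The main obstacle — though a mild one — is bookkeeping the constant-term removal cleanly while simultaneously maintaining the strict sandwich $\Pa_- \le 1_{[e^{-1},1]} \le \Pa_+$ on the \emph{entire} interval $[0,1]$ (not just near the jump), since the crude $\pm 3\epsilon$ corrections must be shown not to destroy the bounds where $1_{[e^{-1},1]}$ equals $0$ or $1$. This is handled by noting that after the uniform approximation one has honest inequalities with margin $2\epsilon$ everywhere, so a correction of size $3\epsilon$ forces one to instead start from a $5\epsilon$-margin uniform approximation (harmless, just rename $\epsilon \to \epsilon/5$ in the Weierstrass step), after which everything goes through. A secondary point worth a sentence in the writeup is verifying that $d\rho_\alpha$ is genuinely a finite measure on $(0,1]$ for $\alpha \ge 0$ — the substitution $t = \log(1/x)$ turns $\int_0^1 \alpha(\log(1/x))^{\alpha-1} x^{-1}\, dx$ into $\int_0^\infty \alpha t^{\alpha-1} e^{-t}\cdot e^{t}\cdot e^{-t}\, dt$... more simply $\int_0^\infty \alpha t^{\alpha - 1}\, dt$ diverges, so one should be careful: in fact with the weight as written the correct substitution gives $\int_0^\infty \alpha t^{\alpha-1}e^{-0\cdot t}dt$ is not finite, indicating the measure is actually defined on $[0,1]$ and one uses it paired against bounded functions on $[0,1]$ where the relevant quantity is $\int_0^1 |\Pa_\pm - 1_{[e^{-1},1]}| \,d\rho_\alpha$ with the integrand vanishing identically on $[0, e^{-1}-\eta]$ by construction — so only the behaviour of $d\rho_\alpha$ on $[e^{-1}-\eta, 1]$ matters, where it is a finite measure with bounded density near $e^{-1}$ and the integrable singularity $\alpha(1-x)^{\alpha-1}$ near $1$. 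This localisation is the real reason the estimate is uniform in $\epsilon$, and should be emphasised.
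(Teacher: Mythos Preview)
Your high-level strategy (smooth the jump, Weierstrass-approximate, adjust) is the same as the paper's, but the execution has a genuine gap that the paper's proof resolves with a trick you are missing.

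The measure $d\rho_\alpha(x) = \alpha(\log(1/x))^{\alpha-1}x^{-1}\,dx$ is \emph{not} finite on $(0,1]$: the substitution $t=\log(1/x)$ turns $\int_0^1 d\rho_\alpha$ into $\int_0^\infty \alpha t^{\alpha-1}\,dt = \infty$ for $\alpha>0$. You notice this at the end, but your fix (``the integrand vanishes identically on $[0,e^{-1}-\eta]$ by construction'') is false for the integrand you actually need: $|\Pa_\pm - \psi_\pm|$ does \emph{not} vanish on $[0,e^{-1}-\eta]$, because a Weierstrass approximant of $\psi_\pm$ has no reason to be exactly zero there --- it is only uniformly within $O(\epsilon)$ of zero. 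Consequently your step~(ii) bound $\int_0^1 |\Pa_\pm - \psi_\pm|\,d\rho_\alpha \le 2\epsilon\int_0^1 d\rho_\alpha$ is $+\infty$, not $2\epsilon$. The same problem recurs in step~(iii): your ``re-inflation'' $\Pa_+ \mapsto \Pa_+ - \Pa_+(0) + 3\epsilon$ both fails to give $\Pa_+(0)=0$ (its value at $0$ is $3\epsilon$) and, even if repaired by adding $3\epsilon$ as a constant, contributes $3\epsilon\int_0^1 d\rho_\alpha = \infty$ to the error.

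The paper handles both issues with a single move: instead of approximating $h$ (your $\psi_+$), it approximates the continuous function $h(x)/x + \epsilon$ by a polynomial $\widetilde{\Pa}$ and sets $\Pa_+(x) := x\widetilde{\Pa}(x)$. This automatically forces $\Pa_+(0)=0$, and the approximation error now carries a factor of $x$, namely $0 \le \Pa_+(x) - h(x) \le 2\epsilon x$. That extra $x$ cancels the $x^{-1}$ in the weight, so the error integral becomes $2\epsilon\int_0^1 \alpha(\log(1/x))^{\alpha-1}\,dx = 2\Gamma(\alpha+1)\epsilon$, which is finite. This is the missing idea; once you use it, your steps~(ii)--(iii) collapse into one line and the bookkeeping you flagged as ``the main obstacle'' disappears.
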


\begin{proof}
We focus on the construction of $\Pa_+$ since the other one is similar. To begin with, define a continuous function $h: [0, 1] \to \RR_+$ by
\begin{align*}
h(x) = \begin{cases}
0  & \text{if $x \in [0, e^{-1} - \epsilon]$},\\
\epsilon^{-1} [x - (e^{-1} - \epsilon)] & \text{if $x \in [e^{-1} - \epsilon, e^{-1}]$},\\
1 & \text{if $x \in [e^{-1}, 1]$.}
\end{cases}
\end{align*}

\noindent It is straightforward to see that $h(x) \ge 1_{[e^{-1}, 1]}(x)$ for all $x \in [0,1]$ and
\begin{align*}
\int_0^1 \left[h(x) - 1_{[e^{-1}, 1]}(x)\right]  \alpha \left(\log \frac{1}{x}\right)^{\alpha-1}\frac{dx}{x} 
&\le \int_{e^{-1} - \epsilon}^{e^{-1}}  [x - (e^{-1} - \epsilon)]^2  \alpha \left(\log \frac{1}{x}\right)^{\alpha-1}\frac{dx}{x}\\
& \le \alpha e \left(\log (2e)\right)^{\alpha} \epsilon^2.
\end{align*}

Next, using Weierstrass theorem, there exists some polynomial $\widetilde{\Pa}(\cdot)$ such that
\begin{align*}
\left|\widetilde{\Pa}(x) - \left(\frac{h(x)}{x} + \epsilon\right)\right| \le \epsilon \qquad \forall x \in [0, 1].
\end{align*}

\noindent This means in particular that $\Pa_+(x) := x \widetilde{\Pa}(x)$ (which is a polynomial without constant term) satisfies $\Pa_+(x) \ge h(x) \ge 1_{[e^{-1}, 1]}(x)$ for all $x \in [0,1]$ and
\begin{align*}
\int_0^1 \left[\Pa_+(x) - h(x)\right]  \alpha \left(\log \frac{1}{x}\right)^{\alpha-1}\frac{dx}{x}
\le 2 \epsilon \alpha \int_0^1 \left(\log 1/x\right)^{\alpha-1} dx
= 2\Gamma(\alpha + 1) \epsilon.
\end{align*}

\noindent Combining everything, we arrive at
\begin{align*}
\int_0^1 \left|\Pa_\pm(x) - 1_{[e^{-1}, 1]}(x) \right| \alpha \left(\log \frac{1}{x}\right)^{\alpha-1}\frac{dx}{x} \le
\left[ \alpha e \left(\log (2e)\right)^{\alpha}  + 2\Gamma(\alpha+1)\right] \epsilon
\end{align*}

\noindent which concludes the proof.
\end{proof}

\begin{proof}[Proof of \Cref{theo:tauberian}]
We shall focus on the claim \eqref{eq:tauberian}, as the other case (i.e. the same implication but with $\lambda \to 0^+$ and $t \to \infty$) follows from the arguments below ad verbatim. To begin with, observe that for each $k \in \mathbb{N}$,
\begin{align}\notag 
\frac{t^{-\rho}}{L(t)} \int_0^\infty e^{-\frac{k}{t}{s}} \nu(ds)
& = k^{-\rho} \frac{L(t/k)}{L(t)} \left[ \frac{(k/t)^{\rho}}{L(t/k)} \hat{\nu}(k/t)\right]\\
\label{eq:tauproof_step0}
& \xrightarrow[t \to 0^+]{p} k^{-\rho} C_\nu
= \frac{C_{\nu}}{\Gamma(1+\rho)} \int_0^\infty e^{-ks} d(s^\rho).
\end{align}

Let us fix some $\epsilon > 0$ to be chosen later, and find a polynomial $\Pa_+(x) = \sum_{k=1}^m p_k x^k$ satisfying the conditions in \Cref{lem:polyapprox}. Since $m = m(\epsilon) > 0$ is finite, \eqref{eq:tauproof_step0} combined with a simple union bound argument suggests that
\begin{align*}
\frac{t^{-\rho}}{L(t)} \int_0^\infty \Pa_+(e^{-s/t}) \nu(ds)
& = \frac{t^{-\rho}}{L(t)} \sum_{k=1}^m p_k \int_0^\infty e^{-\frac{k}{t}s} \nu(ds)\\
& \xrightarrow[t \to 0^+]{p}
\frac{C_\nu}{\Gamma(1+\rho)} \sum_{k=1}^m p_k \int_0^\infty e^{-ks} d(s^\rho)
= \frac{C_\nu}{\Gamma(1+\rho)} \int_0^\infty \Pa_+(e^{-s}) d(s^\rho).
\end{align*}

On the other hand,
\begin{align*}
\nu(t) = \int_0^\infty 1_{[e^{-1}, 1]}(e^{-s/t}) \nu(ds)
\le \int_0^\infty \Pa_+(e^{-s/t}) \nu(ds).
\end{align*}

\noindent Thus for any $\delta > 0$, we have
\begin{align*}
& \limsup_{t \to 0^+} \mathbb{P}\left( \frac{t^{-\rho}}{L(t)} \nu(t) - \frac{C_\nu}{\Gamma(1+\rho)} > \delta\right)\\
& \qquad \le \limsup_{t \to 0^+} \mathbb{P}\left( \frac{t^{-\rho}}{L(t)}\int_0^\infty \Pa_+(e^{-s/t})\nu(ds) - \frac{C_\nu}{\Gamma(1+\rho)} > \delta\right)\\
& \qquad \le \limsup_{t \to 0^+} \mathbb{P}\left( \frac{t^{-\rho}}{L(t)}\int_0^\infty \Pa_+(e^{-s/t})\nu(ds) - \frac{C_\nu}{\Gamma(1+\rho)} \int_0^\infty \Pa_+(e^{-s}) d(s^\rho) > \frac{\delta}{2} \right)\\
& \qquad \qquad + \mathbb{P}\left( \frac{C_\nu}{\Gamma(1+\rho)} \left[ \int_0^\infty \Pa_+(e^{-s}) d(s^\rho)-  1 \right]> \frac{\delta}{2} \right) \\
& \qquad = \mathbb{P}\left( \frac{C_\nu}{\Gamma(1+\rho)} \int_0^\infty  \left[\Pa_+(e^{-s}) - 1_{[e^{-1}, 1]}(e^{-s})\right]d(s^\rho)> \frac{\delta}{2} \right)
\le \mathbb{P}\left( \frac{C_\nu}{\Gamma(1+\rho)} \cdot C(\rho) \epsilon > \frac{\delta}{2} \right)
\end{align*}

\noindent where $C(\rho) \epsilon$ comes from the deterministic bound \eqref{eq:polyapprox}. Since $\epsilon > 0$ is arbitrary, we can send $\epsilon \to 0^+$ and obtain
\begin{align*}
\limsup_{t \to 0^+} \mathbb{P}\left( \frac{t^{-\rho}}{L(t)} \nu(t) - \frac{C_\nu}{\Gamma(1+\rho)} > \delta\right) = 0.
\end{align*}

\noindent Similarly, using the polynomial approximation $\Pa_-(\cdot)$ we can also obtain
\begin{align*}
\limsup_{t \to 0^+} \mathbb{P}\left(\frac{C_\nu}{\Gamma(1+\rho)} - \frac{t^{-\rho}}{L(t)} \nu(t) > \delta\right) = 0
\end{align*}

\noindent and the proof is complete.

\end{proof}

\bibliographystyle{alpha}
\bibliography{main}

\end{document}